\newtheorem{theorem}{Theorem}[section]
\newtheorem{conjecture}[theorem]{Conjecture}
\newtheorem{fact}[theorem]{Fact}
\newtheorem{lemma}[theorem]{Lemma}
\newtheorem{corollary}[theorem]{Corollary}
\newtheorem{claim}[theorem]{Claim}
\newtheorem{proposition}[theorem]{Proposition}
\newtheorem*{mainthm}{Main Theorem}
\newtheorem*{thm}{Theorem}
\theoremstyle{definition}
\newtheorem{definition}[theorem]{Definition}
\newtheorem{convention}[theorem]{Convention}
\newtheorem{notation}[theorem]{Notation}
\newtheorem{assumption}[theorem]{Assumption}
\theoremstyle{remark}
\newtheorem{remark}[theorem]{Remark}
\newcommand{\rk}{\operatorname{rk}}
\begin{document}
	\title{Restricted Trichotomy in Characteristic Zero}
	\author{Benjamin Castle}
	\address{Department of mathematics\\
		Ben Gurion University of the Negev\\
		Be'er Sehva\\
		Israel}
	\email{bcastle@berkeley.edu}
	\subjclass{Primary 0C345; Secondary 14A99}
	\thanks{Partially supported by the Field Institute for Research in Mathematical Sciences, and by NSF Grant DMS \#1800692}
	\maketitle
	
	\begin{abstract}
		We prove the characteristic zero case of Zilber's Restricted Trichotomy Conjecture. That is, we show that if $\mathcal M$ is any non-locally modular strongly minimal structure interpreted in an algebraically closed field $K$ of characteristic zero, then $\mathcal M$ itself interprets $K$; in particular, any non-1-based structure interpreted in $K$ is mutually interpretable with $K$. Notably, we treat both the `one-dimensional' and `higher-dimensional' cases of the conjecture, introducing new tools to resolve the higher-dimensional case and then using the same tools to recover the previously known one-dimensional case.
	\end{abstract}
	
	\section{Introduction} 
	
	In this paper we prove Zilber's Restricted Trichotomy Conjecture for algebraically closed fields of characteristic zero in all dimensions:
	
	\begin{mainthm}[Theorem \ref{general main theorem}] Let $K$ be an algebraically closed field of characteristic zero, and let $\mathcal M$ be a structure interpretable in $K$. If $\mathcal M$ is not 1-based, then $K$ is interpretable in $\mathcal M$.
	\end{mainthm}

	Here `1-based' is a model theoretic notion capturing those structures of `at most linear complexity'; in particular, any 1-based structure interpretable in $K$ can be equipped with a dimension theory satisfying certain properties found in vector spaces. Meanwhile the structure $\mathcal N=(N,...)$ is said to \textit{interpret} the structure $\mathcal N'=(N',...)$ if one can describe $\mathcal N'$ up to isomorphism using only the language of $\mathcal N$, allowing for division by equivalence relations (formally this means there are an $\mathcal N$-definable set $X$, an $\mathcal N$-definable equivalence relation $E$ on $X$, and a bijection $X/E\rightarrow N'$, such that every $\mathcal N'$-definable subset of any power $(N')^k$ is the image of an $\mathcal N$-definable subset of $X^k$ under $X\rightarrow X/E\rightarrow N'$). 
	
	Intuitively, if $\mathcal N$ interprets $\mathcal N'$, then one thinks of $\mathcal N'$ as `weaker' than $\mathcal N$, obtained by retaining only some of the full information from $\mathcal N$. The above theorem thus concerns which restricted information from algebraic geometry over $K$ is enough to reconstruct the field $K$. It is well known that a 1-based structure can never interpret an algebraically closed field (in our setting, this follows from Fact \ref{1-based fact}); conversely, our theorem says this is the only obstruction. In other words, we show that if (1) $\mathcal M$ can be defined using only constructible sets over $K$, and (2) $\mathcal M$ is `richer' than a pure vector space in a precise sense, then a copy of $K$ can be found inside $\mathcal M$. 
	
	This theorem can be seen as a general reconstruction result for constructible sets over $K$, equipped with any data admitting a first-order description. As such, it is expected that one can find applications in anabelian geometry, and in more general reconstruction problems along the lines of \cite{KLOS}. In particular, we note that in many situations there are simple tests for a structure being non-1-based (see Remark \ref{1-based remark}).
	
	\subsection{Strong Minimality and Zilber's Conjecture}
	
	The problem treated in this paper was originally proposed by Boris Zilber, as part of his program of classifying \textit{strongly minimal} structures. Note that it follows from standard results in model theory that one can assume the structure $\mathcal M$ in our theorem is strongly minimal (see Fact \ref{1-based fact}); indeed, the proof of the theorem will take place almost exclusively in the strongly minimal setting. Formally, we recall that a definable set $X$ in a structure $\mathcal M$ is said to be \textit{strongly minimal} if  $X$ is infinite, while every definable subset of $X$ is uniformly finite or cofinite: that is, for every definable family $\{Y_t:t\in T\}$ of subsets $Y_t\subset X$, there is an integer $d$ such that for every $t\in T$, one of the sets $Y_t$ and $X-Y_t$ has size at most $d$. Meanwhile $\mathcal M=(M,...)$ is a \textit{strongly minimal structure} if its universe $M$ is a strongly minimal set.
	
	The class of strongly minimal structures serves as a common generalization of pure sets, vector spaces, and algebraically closed fields, which allows for the purely combinatorial development of a well-behaved dimension theory for definable sets (see Section 2); for example, over an algebraically closed field this dimension theory agrees with the usual dimension theory of varieties. Originated in the 1970s and 1980s, Zilber's program was an attempt to classify all strongly minimal structures in terms of the these three main examples. Among many other formulations, for example, Zilber made the following influential conjecture (see \cite{Zil84}, Theorem 3.1 and Conjecture B):

	\begin{conjecture}[Zilber's Trichotomy Conjecture]\label{zc} If $\mathcal M=(M,...)$ is strongly minimal, then exactly one of the following holds:
		\begin{enumerate}
			\item The geometry of $\mathcal M$ is the trivial geometry.
			\item The geometry of $\mathcal M$ is an affine or projective geometry over a division ring.
			\item $\mathcal M$ interprets an algebraically closed field.
		\end{enumerate} 
	\end{conjecture}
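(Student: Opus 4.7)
The plan is to split the argument into two main cases according to a fundamental dichotomy in combinatorial geometry, and then analyze each case separately to land in exactly one of the three listed possibilities.

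First I would pass to a sufficiently saturated elementary extension and work with the pregeometry $(M,\mathrm{acl})$ induced by model-theoretic algebraic closure. After localizing at a finite parameter set to obtain an actual geometry, the central invariant to consider is \emph{local modularity}: whether every pair of closed flats satisfies the modular law. This sets up the dichotomy between locally modular geometries (targeted at cases (1) and (2)) and non-locally modular geometries (targeted at case (3)). Mutual exclusivity of the three cases will follow at the end from the fact that a one-based structure cannot interpret an infinite field, while only the trivial geometry admits no nontrivial definable binary interaction.

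In the locally modular case, I would argue as follows. If the geometry is degenerate, in the sense that algebraic closure of a set reduces to the union of algebraic closures of singletons, we are in case (1). Otherwise, I would extract a nontrivial two-dimensional family of interalgebraic triples and feed it into the group configuration theorem, producing an interpretable connected abelian group $G$ acting regularly on a strongly minimal coset space. Local modularity should force the ring of quasi-endomorphisms of $G$ to be a division ring $D$, making $G$ a one-dimensional $D$-vector space whose affine or projective geometry recovers that of $\mathcal M$, placing us in case (2).

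In the non-locally modular case, I would aim to interpret an algebraically closed field. Non-local-modularity should produce a two-dimensional family of plane curves on $M^2$. I would then attempt to compose generic curves within this family to obtain a definable pseudo-group, and apply the group configuration theorem a second time to obtain an interpretable connected group $H$. The residual freedom in the curve family should translate into a second interpretable operation on $H$, compatible with the first, yielding a ring structure. Distributivity and absence of zero divisors would follow from the intersection calculus of the curve family, upgrading $H$ to a field, which must be algebraically closed because the strong minimality of $\mathcal M$ transfers to $H$.

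The main obstacle will be the final step of the non-locally modular case, namely the passage from an interpreted group to an interpreted field. A priori the two operations harvested from the curve family could fail to distribute, or could produce only a near-ring, a twisted multiplication, or a multiplication whose compatibility with the group law is too weak to iterate. Controlling this requires a rigidity statement governing how plane curves in a two-dimensional family intersect and compose generically. Establishing such rigidity using only strong minimality, without supplementary hypotheses of Zariski or presmoothness type, is the sharp difficulty: it is precisely here that one must rule out any pathological amalgamation-style combinatorial geometry that mimics the presence of a field without actually interpreting one.
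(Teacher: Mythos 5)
This statement is labeled a \emph{conjecture}, not a theorem, and the paper neither proves it nor claims to; in fact the paper explicitly states, immediately after presenting it, that the conjecture \emph{fails} in full generality, citing Hrushovski's amalgamation constructions from \cite{Hru93} which produce non-locally modular strongly minimal structures interpreting no infinite group at all, let alone a field. So there is nothing to compare your proof against: no correct proof of this statement can exist, because the statement is false.

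Your proposal is nonetheless diagnostic of exactly where things break. The locally modular half of your argument is sound and is indeed a theorem: trivial geometries give case (1), and the group configuration theorem plus the structure theory of locally modular groups recover case (2) (this is due to Zilber and Hrushovski). The fatal gap is in the non-locally modular half. You propose to compose generic plane curves into a pseudo-group, apply the group configuration theorem to extract a group $H$, then harvest a second operation to get a field. But a non-locally modular strongly minimal structure need not admit \emph{any} interpretable infinite group — Hrushovski's ab initio constructions are precisely witnesses to this, and your own closing paragraph names the obstruction correctly without recognizing that it is insurmountable: there is no ``rigidity statement'' available from strong minimality alone that forbids the pathological amalgamation geometries. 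This is why the actual theorem of the paper (Theorems \ref{main theorem} and \ref{general main theorem}) carries the extra hypothesis that $\mathcal M$ is interpretable in an algebraically closed field of characteristic zero: the ambient field supplies the geometric rigidity (tangency, ramification, dimension theory of constructible sets) that pure strong minimality lacks. If you want to pursue a trichotomy result, you must either add such a hypothesis (as this paper does, or as the Zariski geometries program does via presmoothness) or lower the target from ``interprets a field'' to something weaker.
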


	Here the \textit{geometry} of $\mathcal M$ is a certain associated combinatorial object which captures the dimension theory of $\mathcal M$. One should note that by results of Zilber and Hrushovski (see \cite{Hru85}), the union of cases (1) and (2) above is equivalent to an abstract model theoretic notion called \textit{local modularity} (see Definition \ref{local modularity}), which places a bound on the dimensions of certain families of definable sets, and is equivalent to 1-basedness for strongly minimal structures. Thus Conjecture \ref{zc} equivalently predicts that every non-locally modular (or non-1-based) strongly minimal structure interprets an algebraically closed field. One should also note that Conjecture \ref{zc} can be seen as a model theoretic analog of Artin's construction of a field from an abstract plane geometry (see the second chapter of \cite{Artin}). Indeed, one can show that any non-locally modular strongly minimal structure interprets a \textit{pseudoplane} -- an incidence system satisfying a coarser set of axioms (see for example \cite{Bue85}). Combined with the fact that every infinite $\omega$-stable field is algebraically closed (\cite{Mac71}), Conjecture \ref{zc} then further reduces to the assertion that every strongly minimal pseudoplane interprets an infinite field. 
	
	\subsection{The Restricted Trichotomy Conjecture}
		
	As it turns out, Conjecture \ref{zc} fails in full generality, as a large class of counterexamples was constructed by Hrushovski in \cite{Hru93}. Conjecture \ref{zc} has remained a guiding principle in model theory, however, and is known to hold in several restricted settings (e.g. \cite{HZ}, \cite{HruSok},\cite{PZ}). Indeed, known cases of the conjecture have been at the forefront of many exciting applications of model theory to algebraic geometry (e.g. \cite{Hru96}, \cite{Hru01}, \cite{Sca06}). Notably, the common theme behind cases where Conjecture \ref{zc} holds is the presence of some underlying geometric structure controlling the definable sets in $\mathcal M$.
	
	Among the various special cases of Conjecture \ref{zc}, the following (dating back to the 1980s, though most explicitly stated in \cite{Zil14}) has remained open. 
	
	\begin{conjecture}[Zilber's Restricted Trichotomy Conjecture]\label{rst} Let $\mathcal M$ be a strongly minimal structure which is interpretable in an algebraically closed field $K$. If $\mathcal M$ is not locally modular, then $\mathcal K$ is interpretable in $\mathcal M$.
	\end{conjecture}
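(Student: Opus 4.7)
The plan is to prove the theorem by a model-theoretic reconstruction argument, producing an interpretable copy of $K$ out of a group configuration built from the curve-incidences encoded in $\mathcal M$.

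First I would make the standard reductions. By Fact \ref{1-based fact} it suffices to treat $\mathcal M$ strongly minimal. Working in the imaginary sorts of $K$ and eliminating the interpreting equivalence relation at the level of each definable family, I would arrange for the universe $M$ to sit inside some $K^n$ as a Zariski irreducible constructible set of Zariski dimension $d\geq 1$. Non-local-modularity then yields, by the standard plane-curve construction, an $\mathcal M$-definable two-parameter family $\{C_t : t\in T\}$ of $\mathcal M$-strongly-minimal subsets of $M\times M$, each of which is constructible in $K^{2n}$ and has a well-defined Zariski closure $\overline{C_t}\subseteq M\times M$ varying in a $K$-constructible family over $T$.

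The key obstacle is to bridge the model-theoretic data carried by the $C_t$ with the algebraic geometry of their Zariski closures $\overline{C_t}$. In the previously settled case $d=1$, the closures $\overline{C_t}$ are algebraic curves coinciding with $C_t$ up to finitely many points, so intersection numbers $\#\,(\overline{C_t}\cap\overline{C_s})$ are read directly from $\mathcal M$; this is what makes the one-dimensional case tractable. When $d>1$, the Zariski dimension of $\overline{C_t}$ may strictly exceed the $\mathcal M$-dimension of $C_t$, and $C_t$ need not be locally closed near generic points, so algebraic incidences are a priori invisible to $\mathcal M$. The new tool I would develop here is a notion of \emph{generic formal trace} of the family $\{C_t\}$ at a generic point, capturing the local geometry of $\overline{C_t}$ via intersections with nearby curves from the family and showing that these invariants are already encoded by $\mathcal M$-definable data. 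This is the main innovation and I expect the hardest step of the argument.

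With this correspondence in hand, I would construct a group configuration inside $\mathcal M$ from a generic composition operation on the two-parameter family $\{C_t\}$, and invoke Hrushovski's group configuration theorem to obtain an $\mathcal M$-interpretable strongly minimal group $G$. Since $G$ is also interpretable in $K$, it is definably isogenous to an algebraic group over $K$; non-local-modularity of $\mathcal M$ is inherited by $G$, excluding abelian varieties and modular-type extensions. In characteristic zero this forces $G$ to be, generically, a one-dimensional linear algebraic group, and a further standard argument — producing a faithful definable action between two copies of such $G$-orbits — recovers an $\mathcal M$-interpretable copy of the field $K$, yielding the conclusion.
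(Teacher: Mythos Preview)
First, a framing point: the statement you are attempting is stated in the paper as a \emph{conjecture}, and the paper only proves the characteristic-zero case (Theorems \ref{main theorem} and \ref{general main theorem}). Your sketch also invokes characteristic zero at the crucial step, so I will compare your outline to the paper's characteristic-zero argument.

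The central gap in your proposal is the treatment of the higher-dimensional case $d=\dim M>1$. Your endgame runs: build a group configuration, extract an $\mathcal M$-interpretable strongly minimal group $G$, identify $G$ with an algebraic group over $K$, and then argue that non-local-modularity forces $G$ to be a one-dimensional linear algebraic group. But a strongly minimal set in $\mathcal M$ has $K$-dimension $d$ (Lemma \ref{dim rk equality}), so $G$ is a $d$-dimensional algebraic group, not a one-dimensional one. When $d>1$ the classification you invoke simply does not apply: there are many connected $d$-dimensional algebraic groups with no abelian-variety quotient (unipotent groups, tori of higher rank, non-commutative linear groups, mixed products), and none of these hand you a field by a ``standard argument.'' Non-local-modularity of $G$ rules out that $G$ \emph{with its pure group structure} is all of the induced structure, but it does not collapse $G$ to $\mathbb G_a$ or $\mathbb G_m$. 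So as written, your argument only has a chance when $d=1$, and you have no mechanism for reducing to that case.

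This is exactly the obstacle the paper is organized around. The paper's main technical contribution is a weak frontier-detection statement for arbitrary $\mathcal M$-definable sets (Proposition \ref{closure prop}), obtained by an inductive argument on rank rather than by any local ``trace'' invariant. From this, the paper shows that $\mathcal M$ detects generic non-transversalities (Theorem \ref{detecting generic non-transversalities}) and then uses purity of the ramification locus to force $\dim M=1$ (Theorem \ref{n=1}): a generic non-transversality sits in codimension one in a certain intersection family, and since $\mathcal M$-definable dimensions are multiples of $\dim M$, this is only consistent with $\dim M=1$. Nothing in your ``generic formal trace'' sketch suggests a route to this dimension reduction; without it, the group-then-field strategy stalls.

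A secondary gap: even once $d=1$, the passage ``group $\Rightarrow$ field'' is not a standard argument one can cite. It is the content of a substantial part of \cite{HS}, and the present paper deliberately avoids the two-stage route by building a \emph{field} configuration directly (Section 7), using a novel ``formal sum'' operation (Definition \ref{formal sum}) that simulates pointwise addition without an ambient group. If you want to go through a group first, you need to supply the second-stage tangency/slope argument in full, and that is where most of the difficulty of the one-dimensional case lives.
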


	The main result of this paper, equivalently stated, is a solution of Conjecture \ref{rst} for the field of complex numbers: 
	
	\begin{thm}[Theorem \ref{main theorem}] Conjecture \ref{rst} holds when $K=\mathbb C$.
	\end{thm}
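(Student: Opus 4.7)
The plan is to use non-local modularity to produce a rich 2-dimensional family of plane curves inside $\mathcal M$, and then, using tools available specifically over $\mathbb C$, to extract first a group and then a field which must be $\mathbb C$ itself.

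First, by the standard reductions (elimination of imaginaries in ACF$_0$, and the fact that 1-basedness is detected by strongly minimal reducts) I would assume $\mathcal M$ is strongly minimal with universe a Zariski-constructible set $M \subseteq \mathbb C^n$, all of whose $\mathcal M$-definable relations are constructible in the ACF sense. Non-local modularity then yields a 2-dimensional $\mathcal M$-definable family $\{C_t : t \in T\}$ of irreducible plane curves in $M^2$ --- a pseudoplane --- such that through two generic points of $M^2$ passes a member of the family. Each $C_t$ is Zariski-constructible in $\mathbb C^{2n}$, so the family is algebraic in the ambient ACF sense, and the task is to find inside it enough structure to produce a Hrushovski group configuration.

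Second, analysing the family geometrically over $\mathbb C$ (tangent directions, generic intersection numbers, behaviour at base points) one extracts a group configuration, and hence via Hrushovski's theorem a strongly minimal group $G$ interpretable both in $\mathcal M$ and in $\mathbb C$. By Weil's construction, $G$ is an open subset of a one-dimensional algebraic group, so $G$ is isogenous to $\mathbb G_a$, $\mathbb G_m$, or an elliptic curve. To upgrade $G$ to a field I would exploit the fact that the 2-dimensional family must contain curves not accounted for by translations of $G$; generically these yield additional definable quasi-endomorphisms, producing a field configuration in the sense of Hrushovski and hence an infinite definable field. By Macintyre's theorem this field is algebraically closed, and by standard rigidity of algebraically closed fields in ACF it is then definably isomorphic to $\mathbb C$, so $\mathcal M$ interprets $\mathbb C$.

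The main obstacle I expect is the passage from the pseudoplane to the group configuration in the \emph{higher-dimensional} case, when $M$ has ACF-dimension strictly greater than one, so that a plane curve in $M^2$ is a Zariski-constructible subset of $\mathbb C^{2n}$ of potentially high codimension. Here the interaction between the abstract $\mathcal M$-dimension and the ACF-dimension of $M$ is subtle: one must prevent the family picked out by non-local modularity from being algebraically degenerate (e.g.\ a family whose generic members share a common irreducible component, or one which lives on a proper subvariety), and one must control singular and non-generic behaviour across fibres. This is presumably where the new tools promised in the introduction enter, reducing the higher-dimensional case to techniques analogous to the one-dimensional case previously handled by Hasson--Sustretov and others.
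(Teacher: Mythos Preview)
Your proposal sketches the historical two-stage strategy (first a group via a group configuration, then a field via extra quasi-endomorphisms), and correctly flags the higher-dimensional case as the obstacle. But the proposal does not contain the ideas that actually make the proof go through, and in two places the approach you describe is not what the paper does.

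First, the phrase ``analysing the family geometrically over $\mathbb C$ (tangent directions, generic intersection numbers, behaviour at base points) one extracts a group configuration'' hides the entire difficulty. The classical way to detect tangency is by counting intersections (two tangent curves meet in fewer points than generic pairs), but this fails outright when the curves in your family are not pure-dimensional: an extra isolated point can cancel the drop caused by a double intersection. In dimension one Hasson--Sustretov found workarounds, but their methods rely on the finiteness of the set of isolated points and do not survive when $\dim M>1$. The paper's new idea is to abandon intersection-counting entirely and instead characterize non-transversality as a \emph{frontier} phenomenon: a non-transverse intersection at $\hat x$ corresponds to $(\hat x,\hat x,\hat t,\hat u)$ lying in the closure of the off-diagonal part of the fiber product (Lemma~\ref{detecting tangency equivalence}). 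Detecting tangency then becomes a question about closures of $\mathcal M$-definable sets, and the technical heart of the paper is Proposition~\ref{closure prop}, a weak ``frontier-detection'' statement provable by induction on rank without any group structure. The repeated-coordinate shape of the point $(\hat x,\hat x,\hat t,\hat u)$ is exactly what lets one apply the weak statement to get the strong conclusion needed. None of this is in your proposal.

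Second, the higher-dimensional case is not handled by ``reducing to techniques analogous to the one-dimensional case.'' Rather, the paper shows that once $\mathcal M$ detects generic non-transversalities, purity of the ramification locus forces such non-transversalities to occur in codimension one, and a short rank/dimension comparison then gives $\dim M=1$ directly (Theorem~\ref{n=1}). The concerns you raise about ``algebraically degenerate'' families or shared components are not the relevant obstructions.

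Finally, the paper does \emph{not} pass through an intermediate group: the field is built in a single stage, using generic (not diagonal) points and an ad hoc ternary relation $\Gamma\subset M^3$ playing the role of a group operation at the level of differentials (Lemma~\ref{slope addition}). Your two-stage outline would in fact run into trouble here, since the ``second stage'' after obtaining a group requires working at a fixed (non-generic) point, which is incompatible with the generic-point-only tangency detection that the paper's method provides.
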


	The general characteristic zero case of Conjecture \ref{rst} then follows from Theorem \ref{main theorem} and basic model theory. Meanwhile the statement for arbitrary interpreted structures in characteristic zero (Theorem \ref{general main theorem}) then follows since any non-1-based structure of finite Morley rank interprets a non-locally modular strongly minimal structure (see Fact \ref{1-based fact}).
	
	In the remainder of the introduction we will give some historical remarks on Conjecture \ref{rst}, and then a brief outline of the method of proof. To begin, let us examine Conjecture \ref{rst} in more detail.
	
	First, by well-known facts in the model theory of algebraically closed fields (i.e. elimination of imaginaries and quantifiers (\cite{PoiEli} and \cite{Tar}), and the classification of interpretable fields (\cite{Poi}, Theorem 4.15)), Conjecture \ref{rst} has the following concrete reformulation:
	
	\begin{conjecture}[Restricted Trichotomy Conjecture, Concrete Formulation]\label{concrete rst} Let $K$ be an algebraically closed field, and let $\mathcal M=(M,...)$ be a non-locally modular strongly minimal structure. Assume that the universe $M$ of $\mathcal M$, as well as all $\mathcal M$-definable subsets of cartesian powers of $M$, are constructible sets over $K$ (i.e. finite Boolean combinations of affine algebraic sets). Then $\mathcal M$ interprets an infinite field.
	\end{conjecture}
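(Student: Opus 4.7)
The plan is to execute the standard Hrushovski-Zilber strategy: from non-local modularity produce a rich two-dimensional family of plane curves in $M^2$, compose them to build a group configuration, apply a Hrushovski-Weil style theorem to extract an infinite interpretable group $G$, and finally recover a field from a faithful definable action of $G$. The novelty — corresponding to the higher-dimensional case where $\dim \overline{M} \geq 2$ as a variety — will lie in controlling the algebraic geometry of this family of curves inside $M^k \subseteq K^{nk}$ by topological methods available in characteristic zero; after reducing to $K = \mathbb{C}$ via a standard transfer argument, complex-analytic techniques become available.

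In detail, I would first use non-local modularity to produce an $\mathcal{M}$-definable family $\{C_t : t \in T\}$ of strongly minimal plane curves $C_t \subseteq M^2$ with $\dim T = 2$ and such that distinct generic members meet in finitely many points. By the hypothesis on $\mathcal{M}$, each $C_t$ is constructible in $K^{2n}$ and hence has a Zariski closure $\overline{C_t}$ which is an algebraic curve in $\overline{M}\times \overline{M}$. The core new tool is an analysis of the intersections $\overline{C_s} \cap \overline{C_t}$ that separates genuine intersections in $M^2$ from ``phantom'' intersections escaping to the boundary $\overline{M} - M$: using the complex-analytic topology one shows that, generically, intersection multiplicities are preserved under specialization inside the parameter space, so that a partial composition $(C_t, C_s) \mapsto C_t \ast C_s$ on the family is well-defined up to a finite coset ambiguity. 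Iterating this composition and appealing to Hrushovski's group configuration theorem then yields an $\mathcal{M}$-interpretable infinite group $G$, acting definably and faithfully on a strongly minimal set.

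Since $G$ is ultimately interpretable in $K$, the classification of algebraic groups forces $G$ to be (isogenous to) one of $\mathbb{G}_a$, $\mathbb{G}_m$, a simple abelian variety of dimension one, or a simple noncommutative algebraic group; combined with the fact that the original two-dimensional family of plane curves translates into a two-dimensional family of definable functions compatible with the $G$-action, the standard end-of-trichotomy argument produces an interpretable infinite field. The step I expect to be the hardest, and which I would treat as the technical heart of the paper, is the intersection-control step outlined above: the classical one-dimensional treatments (Rabinovich, Marker-Pillay, Hasson-Sustretov) exploit the fact that $\overline{M} - M$ is zero-dimensional and use purely algebraic tools, whereas when $\dim \overline{M} \geq 2$ one must instead develop a robust notion of ``good'' specialization of constructible families of curves through a positive-dimensional boundary, for which the global structure of complex-analytic limits — and not merely formal or valuative machinery — seems essential.
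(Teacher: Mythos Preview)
Your outline follows the classical Rabinovich/Hasson--Sustretov template (detect tangency by counting intersections, build a group configuration, then a field), and the step you flag as hardest --- the ``intersection-control step'' --- is precisely where this template is known to break. The difficulty is not phantom intersections escaping to $\overline M\setminus M$; it is that $\mathcal M$-definable plane curves need not be pure-dimensional as constructible sets, so a genuine tangency (which should drop the intersection count) can be cancelled by an extra intersection appearing at an isolated point or, when $\dim M>1$, along a component of intermediate dimension. You assert that complex-analytic limits will control specialization of intersection multiplicities, but you give no mechanism for ruling out such cancellation, and Hasson--Sustretov's workaround for isolated points genuinely uses $\dim M=1$. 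Without this, the composition $C_t\ast C_s$ is not well-defined even up to finite ambiguity, and the group configuration does not get off the ground.

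The paper takes a different route on both fronts. First, it replaces ``counting intersections'' with a topological reformulation: a tangency of $C_t$ and $D_u$ at $x$ is equivalent to $(x,x,t,u)$ lying in the frontier of the off-diagonal fiber product $\{(x,x',t,u):x\neq x',\ x,x'\in C_t\cap D_u\}$. Detection of tangency thus becomes detection of frontier points of $\mathcal M$-definable sets, and the paper proves a weak frontier bound (its Proposition~\ref{closure prop}) by an induction on rank; the repeated coordinate in $(x,x,t,u)$ is exactly what forces the strict rank drop needed. Second, the higher-dimensional case is not handled by making the construction work there: once tangency is detectable, purity of the ramification locus produces a codimension-one phenomenon, which forces $\dim M=1$. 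So in the paper the higher-dimensional case collapses to local modularity rather than to a field. Finally, the paper builds the field in a single stage via a field configuration at a generic (non-diagonal) point, avoiding the intermediate group entirely; your two-stage plan would reintroduce difficulties the paper explicitly sidesteps.
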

		
	That is, Conjecture \ref{rst} predicts that Conjecture \ref{zc} should hold whenever the structure $\mathcal M$ can be defined using only algebraic geometry over a fixed algebraically closed field. We stress that the definable sets in $\mathcal M$ consist of \textit{some}, but not necessarily \textit{all}, constructible subsets of powers of $M$. In particular, while in other special cases of Conjecture \ref{zc} one has access to \textit{all} definable sets coming from some ambient geometry (so one can work with closed sets, smooth sets, etc.), in Conjecture \ref{rst} one does not a priori have access to \textit{any} relatively closed (resp. smooth, etc.) sets. Part of the challenge, then, is sorting through an abstractly presented collection of `random' constructible sets (which may have large frontiers, many singularities and small components, etc.) in order to recover the `correct' smooth geometric structure lying in the background. 
	
	We should point out that the structure $\mathcal M$ in Conjecture \ref{rst} can only interpret $K$ if the universe $M$ has dimension one as a constructible set. For this reason Conjecture \ref{rst} has historically been divided into the so called `one-dimensional' and `higher-dimensional' cases (i.e. when $\dim M=1$ and when $\dim M>1$) -- where the goal of the higher dimensional case is really to show that all such $\mathcal M$ are locally modular. Essentially all past results have been in the one-dimensional case, culminating in a full solution in dimension one by Hasson and Sustretov in the preprint \cite{HS}; meanwhile the higher-dimensional case has remained a wide open problem. The main accomplishment of the current paper, then, is really a solution of the higher-dimensional case in characteristic zero. Coincidentally, the method used applies in both cases (one-dimensional and higher-dimensional), in particular yielding a new proof of the one-dimensional case; thus we have chosen to include both parts, so that there is a single, self-contained proof in the literature.
	
	As stated above, we expect our theorem to be applicable to reconstruction problems in algebraic geometry. However, as a point of caution let us clarify what we mean. Indeed, note that even if the structure $\mathcal M=(M,...)$ from Conjecture \ref{rst} recovers a copy of $K$, it might not recover the entire collection of constructible subsets of powers of $M$; this is essentially because $\mathcal M$ might only see the full structure on an interpreted `copy' of $M$, coming from the given copy of $K$. In fact there is a precise sense in which the `original' $M$ is a sort of `finite cover' of the interpreted copy (see \cite{HZ}, Theorem B); however there are simple examples where this finite cover cannot be eliminated. Thus in most applications one will need to both (1) use our theorem to recover a field, and (2) make an additional argument to show that the `finite cover' in question can be made trivial in the appropriate context. There is already one example due to Zilber where this outline was successful, in which a previously known case of Conjecture \ref{rst} was used to solve a reconstruction problem pertaining to Jacobian varieties (\cite{Zil14}). We note, then, that the main idea of Zilber's approach toward item (2) above is well known in model theory and seems fairly general; so it is likely that much more general statements will follow from the full solution of Conjecture \ref{rst}. 
	
	\subsection{Past Results} 
	
	Let us now discuss the main historical developments in the one-dimensional case of Conjecture \ref{rst}. The works of Martin (\cite{Mart}), Rabinovich-Zilber (\cite{RZ}), and Marker-Pillay (\cite{MP}) each treat the case that $M=K$ and addition is definable in $\mathcal M$; in particular Marker and Pillay completely solve Conjecture \ref{rst} for structures on $M=\mathbb C$ containing addition. Here the presence of addition is very helpful, as strong minimality and a group operation combine to simplify the possible geometric abnormalities of definable sets in $\mathcal M$.
	
	Shortly after Marker and Pillay's result, the monograph \cite{Rab} of Rabinovich claimed to solve the general case of Conjecture \ref{rst} when $M=K$. This setting represents a significant jump in complexity, as one loses many geometric properties of $\mathcal M$ that follow from having addition. Accordingly, Rabinovich's argument is very technical, and has not been universally accepted by the community. Thus it seemed natural to hope for a more general and accessible treatment of the result; but no such treatment emerged over the ensuing twenty years, and Rabinovich's paper remained the furthest point of progress toward the full conjecture.
	
	Approximately twenty years after its publication, Zilber used Rabinovich's result to solve the aforementioned problem on Jacobians (\cite{Zil14}). In his paper he attempted to renew interest in the full statement of Conjecture \ref{rst}, challenging the community again to find a new proof of a more general case than Rabinovich's. This challenge was answered a few years later Hasson and Sustretov, whose preprint mentioned above (\cite{HS}) treated the entire one-dimensional case. Their paper is considerably less technical than that of Rabinovich, but they still need to navigate through various issues related to singularities and isolated points of definable sets. In particular, the methods used for dealing with isolated points only seem to work under the assumption that $\dim M=1$. 
	
	\subsection{The Historical Strategy: Defining Tangency}
	
	Most of the previous papers on the one-dimensional case of Conjecture \ref{rst} follow the same general strategy, which goes back to ideas of Zilber and Rabinovich. Let us now review this strategy and compare it to the current paper. 
	
	The main idea of the existing strategy is to show that $\mathcal M$ can in some weak sense `define' when two \textit{plane curves} are tangent at a diagonal point $(x,x)\in M^2$ (here a \textit{plane curve} refers to a one-dimensional $\mathcal M$-definable subset of $M^2$, not an algebraic curve in $K^2$). One then views such plane curves as generalizations of functions $M\rightarrow M$ (which can take finitely many values at each point), and in doing so recovers a natural notion of \textit{slope} for plane curves at $(x,x)$. The elements of $K$ are then identified, roughly, with a suitable one-dimensional family of plane curves through $(x,x)$ -- say $\{C_t:t\in T\}$ -- by identifying each $t\in T$ with the slope of $C_t$ at $(x,x)$. Of course such an identification requires the availability (in $\mathcal M$) of a `rich enough' family which witnesses all possible slopes; the existence of such a family is one of the main uses of the non-local modularity of $\mathcal M$.
	
	To truly identify a field, one then needs to recover the addition and multiplication operations on the underlying set. To do this one introduces definable operations on plane curves, and shows that they induce the desired operations on slopes. For multiplication this is done using the chain rule applied to an abstract `composition' operation (generalizing the composition of functions to the multi-valued case). Of course one needs to deal with the possibility that the $\{C_t\}$ might not be closed under composition; this is why the identification of tangency is so important, as it allows one to definably identify a composition $C_s\circ C_t$ with some $C_u$ ($u\in T$) attaining the same slope at $(x,x)$.
	
	On the other hand, there is no obvious operation on general plane curves which induces addition of slopes. For this reason, the general approach (and that used by Hasson and Sustretov) has been to build a field in two stages. In the first stage, a one-dimensional group (say $G$) is constructed using the composition operation described above; once this is done, the entire process is started over using $G$ in place of $M$, working at the point $(0,0)$. One can now generate multiplication in the same way as before, while recovering addition via a `point-wise sum' operation. 
	
	We should point out here that the `two-stage' method above is not the approach taken by Rabinovich: indeed, though she still uses a detection of tangency, her construction of a field happens in only one stage. We should also point out that since one does not have a perfect definition of tangency (typically only a weak approximation can be achieved), the above method is quite imprecise. In reality, one does not directly build a group or a field, but instead builds an abstract configuration of points carrying the same independence structure as a certain tuple from a group or a field. One then quotes results of Hrushovski (the \textit{group configuration} and \textit{field configuration} theorems, see \cite{Hru85} or \cite{Bou}) to deduce the interpretability of a genuine group or field from the presence of such a collection of points.
	
	\subsection{The Main Complications} There are many difficulties in the rough strategy outlined above; we highlight three. First, there is no obvious way to make an analogous field construction work if $\dim M>1$, as the one-dimensional case relies heavily on the identification of slopes through a point with elements of a fixed one-dimensional algebraic group. Second, because one is fixing a point $(x,x)$ to lie on all plane curves considered, and then performing operations on these curves, it is possible that $(x,x)$ is a singularity on many of the curves appearing in the argument; thus the notion of slope is not truly well-defined, and one needs to navigate through more difficult geometric reasoning than desired.
	
	Most important, though, is the difficulty of finding any reasonable way to detect the tangency of two curves $C_1,C_2$ at a point. Historically this is done with formulas of the form $|C_1\cap C_2|<N$, where $N$ is the number of intersection points of a `generic' pair of curves of the same complexity. However, because the curves used are not guaranteed to be pure-dimensional, there is nothing preventing the decreased intersection cardinality at a `multiple intersection' from being canceled out by an `extra' intersection at an isolated point. Thus in a sense one needs to be able to identify and remove isolated points of curves before this `counting intersections' argument can work -- a task which has proven quite difficult over the years (indeed this is the main reason the problem becomes easier if one already has addition, because in this case one can always find large families of pure-dimensional plane curves). Notably Hasson and Sustretov found a novel way to `avoid' isolated points in certain key pieces of the argument when $\dim M=1$. However their approach does not seem to work in higher dimensions, as they rely heavily on the finiteness of the set of isolated points of a one-dimensional set; while if $\dim M>1$ the analogous complications are more generally caused by `components of intermediate dimension.'
	
	\subsection{The New Strategy: From Tangency to Closure}
	
	The main observation of this paper is that there is an alternative way to detect tangency of curves, which does not depend on pure-dimensionality, and thus avoids certain technically demanding parts of e.g. Hasson and Sustretov's argument. Moreover, this alternate method can be developed in any dimension (in general replacing `tangency' with `non-transversality'), and in higher dimensions happens to reduce Conjecture \ref{rst} to a relatively simple problem in algebraic geometry.
	
	The idea has already been used to an extent in similar papers (e.g. \cite{HZ}, \cite{HK}, \cite{EHP}). Namely, we view the `double intersection' caused by a non-transversality as a ramification point of a certain projection, and then use the topological characterization of ramification via the non-openness of the diagonal (e.g. \cite{EGA4}, Corollaire 17.4.2). More precisely, assume our curves come from larger families, say $C_{\hat t}\in\{C_t:t\in T\}$ and $D_{\hat u}\in\{D_u:u\in U\}$, with intersection point $\hat x\in M^2$. Then, assuming certain smoothness conditions, we will see that the intersection at $\hat x$ is non-transversal if and only if $(\hat x,\hat x,\hat t,\hat u)$ belongs to the Zariski frontier of the set of $(x,x',t,u)$ with $x\neq x'$ and $x,x'\in C_t\cap D_u$ (see Proposition \ref{smooth fiber product}(3) and Lemma \ref{detecting tangency equivalence}). By reinterpreting the problem in this way, we therefore reduce the detection of non-transversality down to the detection of closure points of $\mathcal M$-definable sets.
	
	We then draw inspiration from several past papers on strongly minimal structures interpretable in \textit{o-minimal} theories (\cite{PetStaACF},\cite{HK},\cite{EHP}), which developed a common method for detecting closure points of definable sets under certain conditions. Unfortunately we have weaker hypotheses and a loftier goal than previous iterations of this method: namely, all previous instances assumed that $\mathcal M$ was already a group, and only attempted to detect the frontier points of plane curves in $\mathcal M$. However, the result was quite powerful: in each case it was shown that the frontier an $\mathcal M$-definable plane curve was finite (even though the plane curves in question were 2-dimensional in the ambient geometry), and this was used to conclude various other strong geometric properties.
	
	Our strategy, then, is to develop a general method for detecting closure points of \textit{arbitrary} $\mathcal M$-definable sets (without the assumption of a group structure), by adapting the strategy of the aforementioned papers and generalizing it to higher dimensions; we then seek to `recognize' the frontier point $(\hat x,\hat x,\hat t,\hat u)$ described above by placing it into this more general framework. Now the most perfect statement (let us call it the `ideal case') would be if the frontier of \textit{any} $\mathcal M$-definable set is contained in an $\mathcal M$-definable set of smaller dimension -- indeed, it is fairly easy to see that this would suffice for our purposes (see Lemma \ref{ideal scenario}). However, it turns out to be somewhat difficult to find a correct higher dimensional adaptation of the method, especially without a group operation built in -- and directly establishing the ideal case seems quite far out of reach.
	
	Instead, our main innovation is the identification of a certain \textit{weakening} of the ideal case, which rather surprisingly can be proven in full generality (with no need for a group operation) using an induction on dimension. Precisely, if $x\in\overline X$, we show that one of three things happens (Proposition \ref{closure prop}): (1) $x$ belongs to an $\mathcal M$-definable set of smaller dimension than $X$ (the ideal case), (2) some coordinate $x_i\in M$ of $x$ belongs to a constructible subset of $M$ of smaller dimension than $M$ (i.e. $x$ is not \textit{coordinate-wise generic}), or (3) in a certain precise sense (related to depencies between the coordinates of a point), $x$ has a similar appearance to `most' elements of $X$. Curiously, the effect of condition (3) will be that we are often able to `fully' detect frontier points $x=(x_1,...,x_k)\in\operatorname{Fr}(X)$ (i.e. in the ideal sense) if two of the $x_i$'s are equal; in particular this will work for the aforementioned point $(\hat x,\hat x,\hat t,\hat u)$, which rather intriguingly gives us exactly the result we need (see Theorem \ref{detecting generic non-transversalities}).
	
	\subsection{Summary of the Paper}
	
	We now outline the presentation of the argument. Section 2 gives some background on strongly minimal structures and local modularity, and the associated machinery of Morley rank and plane curves. The most important notions, which do not all appear in the literature, are \textit{canonical bases} (Definition \ref{Cb}), \textit{almost faithfulness} (Definition \ref{almost faithful}), \textit{local modularity} (Definition \ref{local modularity}), \textit{standard} and \textit{excellent} families (Definitions \ref{standard family} and \ref{excellent family}) and \textit{semi-indistinguishable} points (Definition \ref{indistinguishable}).
	
	Section 3 gives the setting and conventions that we use throughout. The most important content here is Lemma \ref{dim rk equality} and its consequences, as well as the notion of \textit{coherence} (Definition \ref{coherent}).
	
	Section 4 then gives some geometric preliminaries, isolating many of the facts we use on general constructible sets. The content of this section is essentially summarized in Proposition \ref{smooth fiber product}, whose purpose is to make the geometric reasoning as easy as possible for the rest of the paper.
	
	Section 5 introduces two key notions -- namely that of a \textit{generic non-transversality} (Definition \ref{generic non-transversality}), as well as what it means for $\mathcal M$ to \textit{detect generic non-transversalities} (Definition \ref{detect generic non-transversality}). These notions attempt to formalize the `weak definition of tangency' that is needed for the proof to work. We then give the aforementioned characterization of generic non-transversalities in terms of closure points (Lemma \ref{detecting tangency equivalence}).
	
	Note that we use the term `generic' non-transversality because we only consider intersections between generic curves in infinite families, and require the intersection point to be generic both in each curve and in $M^2$. In particular we never consider intersections at diagonal points, so we are not in the situation of previous authors. In turns out, however, that our definition is sufficient for the historical strategy to work, and is even necessary for our proof in higher dimensions to work. Moreover, working only at generic points allows us to ignore singularities and other unwanted phenomena -- so that our `random' constructible sets can effectively be treated as smooth complex varieties throughout (this is the essential content of Proposition \ref{smooth fiber product}). Thus working with only generic non-transversalities seems to be an advantageous way to approach the problem.
	
	Once we have the notion of generic non-transversalities, we present the main theorem as a corollary of three smaller theorems, whose proofs occupy Sections 6 through 9. Precisely, assuming $\mathcal M$ is strongly minimal, not locally modular, and interpreted in $\mathbb C$ (and after making some harmless adjustments to the setting -- see Assumption \ref{M and K}), we will show:

	\begin{enumerate}
		\item If $\mathcal M$ detects generic non-transversalities, then $\dim M=1$ (Theorem \ref{n=1}).
		\item If $\mathcal M$ detects generic non-transversalities and $\dim M=1$, then $\mathcal M$ interprets an algebraically closed field (Theorem \ref{field interpretation}).
		\item $\mathcal M$ detects generic non-transversalities (Theorem \ref{detecting generic non-transversalities}).
	\end{enumerate}
	
	Note that it is immediate from the combination of (1), (2), and (3) that $\mathcal M$ interprets an algebraically closed field. The reader should view the three theorems above as disjoint pieces of the proof, which serve as an organizational guide for the paper as a whole. Let us now discuss these three results in more detail.
		
	Section 6 contains the proof of (1) -- that is, assuming $\mathcal M$ detects generic non-transversalities we show that the universe $M$ can be taken to be an algebraic curve (Theorem \ref{n=1}). This is really a short exercise in compactness and the purity of the ramification locus. The idea is that generic non-transversalities, when they exist, always occur in codimension 1 in a precise sense. Using this (and the detection of generic non-transversalities), we obtain two $\mathcal M$-definable sets whose dimensions differ by 1; it is then easy to see this implies $\dim M=1$.
		
	Section 7 contains the construction of a field when $\dim M=1$ and $\mathcal M$ detects generic non-transversalities (Theorem \ref{field interpretation}). The reader could view this as a shortened version of Hasson and Sustretov's paper, where many of the technical issues they encounter have been eliminated. However there are two interesting differences to point out. First, our assumption on generic non-transversalities restricts us to working with curves through a \textit{generic} point $(x,y)\in M^2$ (rather than a diagonal point). This complicates the recovery of the field operations, because in general the sum or composition of two curves through $(x,y)$ need not contain $(x,y)$. To fix this, we build the operations on curves through $(x,y)$ via sequences of several iterated sums and compositions (Lemma \ref{detecting products at a point}), applied along a generic sequence of coordinates which `wraps around' back to $(x,y)$ in the end. Note that while this process certainly complicates matters on some level, the proofs still seem easier overall due to the lack of singularities and isolated points at each step. 
	
	Second, our construction of a field happens in only one stage, as in Rabinovich's monograph (this is rather fortunate, as the methods we use would not actually work in the `second stage' of Hasson and Sustretov's argument). The trick is an unconventional interpretation of the `sum' operation which does not rely on the presence of a group: namely, we build a certain set $\Gamma\subset M^3$ which serves as an abstract replacement for a group operation, even though it is likely not one (Lemma \ref{gamma exists} and Definition \ref{formal sum}); and then show that, nevertheless, substituting $\Gamma$ into the definition of `point-wise addition' still generates additive behavior at the level of differentials (Lemma \ref{slope addition}). A key point here is that we can freely and independently `coordinatize' the tangent spaces to $M$ at each element appearing in the argument (see Lemma \ref{normalization exists}), allowing us to essentially `force' our definition of slopes to respect addition. This seems to be a new idea, since in previous papers all points appearing were the same, and thus so were the coordinatizations.
	
	Section 8 contains our `higher dimensional' detection process for closures of definable sets (Proposition \ref{closure prop}), as described above. This is the most technical part of the paper. In addition to the difficulty of placing the original argument of \cite{PetStaACF}, \cite{HK}, and \cite{EHP} in a higher dimensional context, we need to account for the lack of a group operation. In part, this is taken care of by considering only \textit{coordinate-wise generic} points (i.e. points each of whose coordinates is generic in $M$). However, the group operation is also very helpful in dealing with \textit{semi-indistinguishable} points (Definition \ref{indistinguishable}); since we don't have a group, we will have to perform some rather intricate steps to tread through the issues these points can create (see Propositions \ref{frontier proof} and \ref{generic non-trivial hypersurfaces}).
	
	Finally, in Section 9 we use Proposition \ref{closure prop} to show that $\mathcal M$ detects generic non-transversalities (Theorem \ref{detecting generic non-transversalities}). As described above, the trick is that the frontier point we need to detect has two equal coordinates, which lets us reduce to the `ideal case' of frontier detection. Once we have Theorem \ref{detecting generic non-transversalities}, we then end the paper by easily concluding the main theorem.	
	
	\subsection{About the Characteristic Assumption} The vast majority of the proof goes through in characteristic $p>0$ as well, with some minor modifications. Most of the differences boil down to the availability of simpler differential reasoning in characteristic zero, caused by generic smoothness of dominant morphisms (see Proposition \ref{smooth fiber product}, for example). In particular, in the absence of generic smoothness the `right' definition of generic non-transversalities would be more complicated (the challenge being that in order to connect non-transversality to closure points, one needs a suitable version of Lemma \ref{detecting tangency equivalence}(3)). 
	
	As it turns out, however, the only unavoidable consequence of dropping generic smoothness would be that the results of section 7 do not imply the one-dimensional case in positive characteristic; in particular, the results of section 6 on the higher-dimensional case can be salvaged without generic smoothness in an only slightly more complicated way, which we plan to address in an upcoming paper (see the next paragraph). We should remind the reader, then, that because Hasson and Sustretov give a complete proof of the one-dimensional case in all characteristics (getting around generic smoothness by using `higher-order slopes'), there is in general no harm in omitting positive characteristic in dimension one.
	
	Outside of generic smoothness, the main issues with positive characteristic occur in Section 8 (e.g. the treatment of analytically open maps, and Lemma \ref{open is enough}; while it is possible that a purely algebraic adaptation of this argument exists, the author encountered some subtle difficulties in searching for one). The problem is that, especially in this section, one needs a well-behaved Hausdorff refinement of the Zariski topology. In a future paper, we plan to address the higher-dimensional case in positive characteristic by introducing a valuation topology and working in the theory ACVF of algebraically closed valued fields. However, since this will require a more complicated and abstract argument, and because of the already high complexity of the current paper, we have opted to stick to the complex numbers for now.
	 
\subsection{Acknowledgements} This work is based on ideas from the author's PhD thesis, completed at UC Berkeley in 2021 under the supervision of Tom Scanlon. We thank Tom for his guidance during these earlier stages, including suggesting this problem.

Subsequent work was carried out during postdoctoral study, both at the Fields Institute for Research in Mathematical Sciences in Fall 2021, and at Notre Dame University in Spring 2022. We thank both of these institutions for hosting the author.

Finally, we thank Assaf Hasson, Dave Marker, Kobi Peterzil, Jinhe Ye, and Dmitry Sustretov for helpful conversations on the material; and moreover Assaf Hasson, Sergei Starchenko, and Chieu-Minh Tran for helpful comments on earlier versions of the paper.
	
	\section{Preliminaries on Strongly Minimal Structures}
	
	We begin our work by discussing some of the main tools used in strongly minimal structures -- notably Morley rank, genericity, canonical bases, and families of plane curves. Most of this material is well-known to model theorists, so we omit known or easy proofs. In particular, any `facts' without explicit references can be found in (or follow easily from results in) either \cite{Mar} or \cite{Pil}. We will however introduce some new terminology for families of plane curves -- namely the notions of standard, good, and excellent families -- so we will include more details when discussing these topics. Note that we will assume familiarity with basic model theory (e.g. languages, structures, definable sets, etc.); we refer the reader to any of the standard texts (e.g. \cite{Mar} or \cite{CK}) for background on these notions.
	
	\subsection{Rank}
	
	\begin{convention}\label{sm conventions}
		Throughout the rest of this section, we fix a strongly minimal structure $\mathcal M=(M,...)$ in a language $\mathcal L(\mathcal M)$, and assume the cardinality of $M$ is strictly greater than $|\mathcal L(\mathcal M)|+|\aleph_0|$. By an \textit{interpretable set} in $\mathcal M$, we mean the quotient of a definable set by a definable equivalence relation. We work throughout in the structure $\mathcal M^{\textrm{eq}}$ obtained by adjoining to $\mathcal M$ all interpretable sets (without parameters). However, to avoid confusion the word \textit{definable} will still only refer to subsets of cartesian powers of $M$. By a \textit{set of parameters} we always mean a set of tuples from $\mathcal M^{\textrm{eq}}$ of cardinality less than $|M|$; we then say that $X$ is definable (respectively interpretable) \textit{over} $A$ if it can be defined (respectively interpreted) using only parameters from $A$. Finally, the notation $\operatorname{acl}(A)$ refers to the model theoretic algebraic closure of $A$ in $\mathcal M^{\textrm{eq}}$ -- that is, the union of all finite sets interpretable over $A$.
	\end{convention} 
	\begin{remark} Alternatively, one could just think of $M$ as a strongly minimal definable set in a stable structure $\mathcal N$. This is an equivalent formulation, because such a set can be equipped with all $\mathcal N$-definable subsets of powers of $M$, thereby turning it into a strongly minimal structure in its own right.
	\end{remark}
	We first recall the existence of a well-behaved \textit{rank} (given by \textit{Morley rank}) for interpretable sets and tuples.
	
	\begin{fact}\label{rank exists} For each non-empty interpretable set $X$ in $\mathcal M$, there is an associated non-negative integer $\rk(X)$, satisfying the following properties:
		\begin{enumerate} 
			\item $\rk(X)=0$ if and only if $X$ is finite; in general $\rk(M^r)=r$, $\rk(X\cup Y)=\max\{\rk(X),\rk(Y)\}$, and $\rk(X\times Y)=\rk(X)+\rk(Y)$.
			\item Rank is invariant under automorphisms and finite-to-one interpretable maps.
			\item Rank is interpretable in families: that is, if $\{X_t:t\in T\}$ is an interpretable family of sets, then for each $r\geq 0$ the set $\{t:\rk(X_t)=r\}$ is interpretable without additional parameters.
			\item Every interpretable set $X$ of rank $r\geq 0$ is a finite disjoint union $X_1\cup...\cup X_k$ of interpretable sets of rank $r$, each of which cannot be further broken down into a disjoint union of interpretable sets of rank $r$.
			\item If $X$ is definable over $A$, then $\rk(X)$ is the smallest $r$ such that there is a finite-to-one definable map $f:X\rightarrow M^r$; moreover, for $r=\rk(X)$, there is such a map which is definable over $A$.
		\end{enumerate}
	\end{fact}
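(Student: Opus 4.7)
The plan is to define rank via the pregeometry on $M$ induced by model-theoretic algebraic closure $\operatorname{acl}$. Strong minimality is precisely what is needed to verify the exchange property for $\operatorname{acl}$ on subsets of $M$, and this yields a pregeometry with a dimension function $\dim(\bar a / A)$ on finite tuples $\bar a \in M^n$. I would then extend this to $\mathcal M^{\textrm{eq}}$ by setting $\dim(e / A) := \dim(\bar a / A) - \dim(\bar a / Ae)$ for any tuple $\bar a$ from $M$ with $e \in \operatorname{dcl}(\bar a)$; well-definedness is a short diagram chase in the pregeometry. For a non-empty interpretable set $X$ over $A$, I would set $\rk(X) := \max\{\dim(x / A) : x \in X\}$; the maximum exists because all coding tuples lie in a fixed power $M^n$, and independence from the choice of $A$ is a routine check.

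With this definition, properties (1) and (2) are essentially algebraic. Part (1): $\rk(X) = 0$ iff every $x \in X$ lies in $\operatorname{acl}(A)$, iff $X$ is finite. Additivity $\rk(X \times Y) = \rk(X) + \rk(Y)$ reduces to the pregeometry identity $\dim((x, y) / A) = \dim(x / A) + \dim(y / Ax)$ after using saturation of $\mathcal M$ to realize a $y \in Y$ generic over $Ax$; iterating with $X = Y = M$ gives $\rk(M^r) = r$, and $\rk(X \cup Y) = \max$ is immediate. For (2), $\dim$ is automorphism-invariant, and $\dim(x / A) = \dim(f(x) / A)$ whenever $f$ is finite-to-one and interpretable over $A$, since then $x \in \operatorname{acl}(A, f(x))$.

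For (3) I would induct on $r$. The base case $r = 1$---the definability of $\{t : X_t \text{ is infinite}\}$---is a direct reformulation of the uniform finiteness built into strong minimality, after coding the family by a finite-to-one projection into a fixed power $M^n$. For the inductive step, I would characterize $\rk(X_t) \geq r + 1$ by the existence of a coordinate projection whose generic fiber has rank $\geq r$, and apply the inductive hypothesis fiber-by-fiber. Part (4) then follows from (3) by a Morley-degree counting argument: repeatedly peel off a proper interpretable rank-$r$ subset, using a uniform bound on the number of disjoint such pieces forced by the uniform family version of (3). Part (5) reduces to picking a generic $x \in X$, choosing $r$ acl-independent coordinates of a coding tuple, projecting, and handling the leftover locus of smaller rank by induction; the pieces can be reassembled into a single $A$-definable finite-to-one map.

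The main obstacle I expect is the inductive step of (3), where the definability of ``some fiber is infinite'' must be upgraded to the definability of ``some fiber has rank $\geq r$''; making the uniform bound from strong minimality propagate cleanly through the fiber construction is the delicate point, since one must treat an entire parametric family of fibered sets at once rather than a single set. Once (3) is in hand, the finite decomposition (4) and the finite-to-one map (5) follow by the outlines above without serious further difficulty.
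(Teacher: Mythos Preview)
The paper does not prove this statement: it is stated as a \textbf{Fact} and the surrounding text says that ``any `facts' without explicit references can be found in (or follow easily from results in) either [Mar] or [Pil].'' So there is no proof in the paper to compare against; the result is treated as standard background on Morley rank in strongly minimal structures.

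Your sketch is essentially the standard development and is correct in outline. Defining rank via the $\operatorname{acl}$-pregeometry on $M$, extending to $\mathcal M^{\textrm{eq}}$ by the additivity formula, and then verifying (1)--(5) is exactly how this is done in the references the paper cites. You have also correctly identified the one genuinely nontrivial point: the inductive step of (3), where uniform finiteness (the content of strong minimality) must be bootstrapped to uniform definability of ``rank $\geq r$'' across a family. Your proposed characterization---$\rk(X_t)\geq r+1$ iff some coordinate projection has infinitely many fibers of rank $\geq r$---is the right idea, and the induction does go through once one is careful to quantify over the finitely many coordinate projections and apply the inductive hypothesis to the two-parameter family of fibers. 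The remaining parts (4) and (5) are, as you say, routine once (3) is available.
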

	\begin{remark} Note that an arbitrary structure interpreted in an algebraically closed field has \textit{finite Morley rank}, and thus carries a rank satisfying many of the above properties. The properties specific to the strongly minimal setting are (3), (5), and the formulas for $\rk(M^r)$ and $\rk(X\times Y)$ in (1).
	\end{remark}
	For example, if $\mathcal M$ is an algebraically closed field, and $X\subset M^n$ is definable, then $\rk(X)$ is the dimension of the Zariski closure of $X$ (in $M^n$) as an algebraic variety.
	\begin{definition} We will refer to an interpretable set $X\neq\emptyset$ as \textit{stationary} if it is not the disjoint union of two interpretable subsets of the same rank. 
	\end{definition} For example, $M^r$ is stationary for each $r$. Note that Fact \ref{rank exists}(4) says that every interpretable set is a finite union of stationary sets of the same rank. These sets will be called the \textit{stationary components} of $X$. They are well-defined up to \textit{almost equality} (see Definition \ref{almost equal}). In general, if $X$ is interpretable over $A$, then one can arrange $X_1,...,X_k$ to be interpretable over $\operatorname{acl}(A)$.
	
	\begin{definition} The notion of rank extends to tuples over parameters sets, as follows: if $a$ is a tuple from $\mathcal M^{\textrm{eq}}$, and $A$ is a set of parameters, then $\rk(a/A)$ is the smallest rank of an interpretable set over $A$ containing $a$. If $A=\emptyset$, we sometimes write just $\rk(a)$.
	\end{definition}

	Throughout the paper, we will implicitly use various properties of ranks of tuples. The most important property, from which most others can be deduced, is the following:

	\begin{convention} As is customary in model theory, the notation $Aa$ means $A\cup\{a\}$ (and similarly for $AB$, $ab$, etc.).
	\end{convention} 
	
	\begin{fact}\label{ranks of tuples facts}
		Let $a$ and $b$ be tuples, and $A$ a set of parameters. Then $$\rk(ab/A)=\rk(a/A)+\rk(b/Aa).$$
	\end{fact}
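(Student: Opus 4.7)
I would prove the two inequalities separately. Write $r = \rk(a/A)$ and $s = \rk(b/Aa)$.

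For the upper bound $\rk(ab/A) \le r + s$, I would fix an $A$-interpretable $X \ni a$ of rank $r$ and a formula $\varphi(y, z)$ over $A$ such that $Y := \varphi(\mathcal{M}^{\mathrm{eq}}, a)$ has rank $s$ and contains $b$. Two appeals to Fact~\ref{rank exists}(3) let me shrink $X$ to an $A$-interpretable subset (still containing $a$) on which $\rk(\varphi(\mathcal{M}^{\mathrm{eq}}, z)) \le s$. Applying Fact~\ref{rank exists}(5) over $Aa$ yields a finite-to-one $g: Y \to M^s$, which I view as the specialization $g = g_a$ of an $A$-definable family $\{g_z\}$; after shrinking $X$ once more to the locus where $g_z$ is finite-to-one on $\varphi(\mathcal{M}^{\mathrm{eq}}, z)$, the set $Z := \{(z, y) : z \in X,\ \varphi(y, z)\}$ is $A$-interpretable, contains $(a, b)$, and admits the $A$-interpretable finite-to-one map $(z, y) \mapsto (z, g_z(y))$ into $X \times M^s$. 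Then Fact~\ref{rank exists}(1), (2) give $\rk(Z) \le \rk(X \times M^s) = r + s$, hence $\rk(ab/A) \le \rk(Z) \le r + s$.

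For the lower bound $\rk(ab/A) \ge r + s$, I would start with an $A$-interpretable $W \ni (a, b)$ of minimum rank $t = \rk(ab/A)$. The projection $V$ of $W$ to the $a$-coordinates is $A$-interpretable and contains $a$, so $\rk(V) \ge r$; the fiber $W_a$ is $Aa$-interpretable and contains $b$, so $\rk(W_a) \ge s$. Using Fact~\ref{rank exists}(3) to shrink $V$ to the $A$-interpretable subset where $\rk(W_{a'}) = \rk(W_a)$ (still containing $a$), and restricting $W$ accordingly, the problem reduces to the following \emph{constant-fiber-rank lemma}: if $\pi : W \to V$ is an $A$-interpretable surjection all of whose fibers have the same rank $s'$, then $\rk(W) \ge \rk(V) + s'$. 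I would prove this lemma by induction on $s'$: the base case $s' = 0$ is immediate from finite-to-one invariance (Fact~\ref{rank exists}(2)), and the inductive step picks off one coordinate of the fiber of rank $1$ over the generic point—using Fact~\ref{rank exists}(3) to guarantee that the drop to a fiber of rank $s' - 1$ happens uniformly on an $A$-interpretable subset of $V$—then invokes the inductive hypothesis.

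I expect the main obstacle to be the inductive step of the constant-fiber-rank lemma. The delicate point is to ensure that, after the shrinking required to make the coordinate projection drop the fiber rank by exactly one uniformly in the base, the resulting base remains $A$-interpretable of rank at least $r$ (i.e. still contains $a$), and the new fibers are uniformly of rank $s' - 1$; both requirements rely essentially on the definability of rank in families (Fact~\ref{rank exists}(3)). A secondary subtlety is that $a$ and $b$ may be tuples from $\mathcal{M}^{\mathrm{eq}}$ rather than from $M$, so Fact~\ref{rank exists}(5) must be applied to definable representatives of the interpretable sets involved; these technicalities are standard for strongly minimal structures but need care in the bookkeeping.
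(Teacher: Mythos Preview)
The paper does not prove this statement: it is recorded as a \textbf{Fact} (with the blanket convention that unreferenced facts ``can be found in (or follow easily from results in) either \cite{Mar} or \cite{Pil}''), and the only commentary is the remark that it is ``generally known as `additivity' of rank.'' So there is no paper proof to compare against.

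Your plan is a correct and standard route to additivity in the strongly minimal setting, relying exactly on the properties collected in Fact~\ref{rank exists}. The upper bound is clean. For the lower bound, your reduction to the constant-fiber-rank lemma is fine (note that after shrinking $V$ and restricting $W$ you have $W' \subset W$, so $\rk(W') \le t$, while the lemma gives $\rk(W') \ge r + s$). The point you flag as the main obstacle---the inductive step of that lemma---is indeed where the care goes: you need Fact~\ref{rank exists}(5) over $Aa$ to produce a finite-to-one map from the fiber into $M^{s'}$, then project to $M$ to split off a rank-one piece, and use Fact~\ref{rank exists}(3) to make this uniform over an $A$-interpretable subset of $V$ still containing $a$. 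This works, but if you want a shorter argument you could instead invoke the pregeometry characterization of rank in strongly minimal structures (rank equals the size of a basis in the $\operatorname{acl}$-pregeometry), from which additivity is immediate via exchange.
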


	\begin{remark} Fact \ref{ranks of tuples facts} is generally known as `additivity' of rank.
	\end{remark} 
	
	\subsection{Genericity and Independence}
	
	We next recall the following definitions:
	
	\begin{definition}\label{general generic point} Let $X$ be interpretable over $A$, and let $a\in X$. Then $a$ is \textit{generic in $X$ over $A$} if $\rk(a/A)=\rk(X)$.
	\end{definition}
	\begin{definition}
		Let $a_1,...,a_n$ be tuples, and $A$ a set. Then $a_1,...,a_n$ are \textit{independent over $A$} if $\rk(a_1...a_n/A)=\rk(a_1/A)+...+\rk(a_n/A)$.
	\end{definition}

	For example, by additivity, two tuples $a$ and $b$ are independent over $A$ if and only if $\rk(a/Ab)=\rk(a/A)$. Moreover, if $X$ is interpretable over $A$, then two generics $x,y\in X$ over $A$ are independent over $A$ if and only if $(x,y)$ is generic in $X^2$ over $A$.	
	
	Recalling that sets of parameters are assumed to be smaller than $|M|$, we have:
	\begin{lemma} Let $X$ be interpretable over $A$ and non-empty. Then there is a generic element of $X$ over $A$.
	\end{lemma}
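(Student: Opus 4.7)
The plan is to construct a partial type $\Sigma(x)$ over $A$ whose realizations are exactly the generics of $X$ over $A$, check its consistency via compactness using the additivity of rank from Fact \ref{rank exists}(1), and then realize $\Sigma$ in $\mathcal M^{\textrm{eq}}$ using the cardinality assumption on $|M|$.

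Explicitly, I would let $\Sigma(x)$ consist of the formula $\phi_X(x)$ defining membership in (a representative of) $X$, together with $\neg\phi_Y(x)$ for each $A$-interpretable subset $Y\subseteq X$ with $\rk(Y)<\rk(X)$. Any realization $a\in\mathcal M^{\textrm{eq}}$ of $\Sigma$ then lies in $X$ and, by construction, avoids every $A$-interpretable subset of $X$ of smaller rank, hence satisfies $\rk(a/A)\geq\rk(X)$; the reverse inequality is automatic from the definition of $\rk(a/A)$, giving equality.

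To check finite satisfiability, I would take any $A$-interpretable $Y_1,\dots,Y_n\subseteq X$ with $\rk(Y_i)<\rk(X)$ and apply Fact \ref{rank exists}(1) to get
\[
\rk(Y_1\cup\cdots\cup Y_n)=\max_i \rk(Y_i)<\rk(X).
\]
If $X=Y_1\cup\cdots\cup Y_n$, this would force $\rk(X)<\rk(X)$, a contradiction; therefore $X\setminus(Y_1\cup\cdots\cup Y_n)$ is nonempty, and any element of it satisfies the corresponding finite fragment of $\Sigma$.

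The main obstacle is the realization step: to produce an actual $a\in\mathcal M^{\textrm{eq}}$ realizing $\Sigma$, one needs $\mathcal M$ to realize partial types of cardinality less than $|M|$ over parameter sets of cardinality less than $|M|$. The type $\Sigma$ has at most $|\mathcal L(\mathcal M)|+|A|+\aleph_0 < |M|$ many formulas, so this falls within the range permitted by the cardinality convention of Convention \ref{sm conventions}, and $\omega$-stability of strongly minimal theories gives enough saturation to realize it. I would simply invoke this as a standard consequence of the conventions in force — making the brief remark that the convention on $|M|$ has precisely been set up to allow such realization arguments throughout the paper.
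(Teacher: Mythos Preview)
Your argument is correct and is essentially an unpacking of the paper's one-line proof, which simply reads ``By Convention~\ref{sm conventions}, $\mathcal M$ is saturated.'' You have spelled out the standard consistency-plus-saturation argument that this sentence abbreviates: build the partial type of generics, check finite satisfiability via $\rk(X\cup Y)=\max\{\rk(X),\rk(Y)\}$, and realize it by saturation. One small remark: the saturation of $\mathcal M$ follows not from $\omega$-stability alone but from the combination of the cardinality hypothesis $|M|>|\mathcal L(\mathcal M)|+\aleph_0$ with uncountable categoricity (or, if you prefer, with the fact that $\omega$-stable theories have saturated models in every uncountable cardinality); your final paragraph gestures at this correctly but the phrase ``$\omega$-stability \ldots\ gives enough saturation'' is a little loose.
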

	\begin{proof} By Convention \ref{sm conventions}, $\mathcal M$ is saturated.
	\end{proof}

	We also have a notion of genericity of sets:
	
	\begin{definition}\label{general generic set} Let $\emptyset\neq X\subset Y$ be interpretable. We will say that $X$ is \textit{generic} in $Y$ if $\rk(X)=\rk(Y)$, and \textit{large} in $Y$ if $\rk(Y-X)<\rk(Y)$.
	\end{definition}
	Note that if $X$ is large in $Y$ then $X$ is generic in $Y$; the converse holds if $Y$ is stationary.
	
	The following notions can be defined using Definition \ref{general generic set}. They are not necessarily standard (especially (5)), but are useful for our purposes:
	
	\begin{definition}\label{almost equal}
		Let $X\neq\emptyset$, $Y$, and $f:X\rightarrow Y$ be interpretable. 
		\begin{enumerate}
			\item $X$ is \textit{almost contained in }$Y$ if $X\cap Y$ is large in $X$.
			\item $X$ and $Y$ are \textit{almost equal} if they are almost contained in each other.
			\item $f$ is \textit{almost surjective} if $f(X)$ is large in $Y$.
			\item $f$ is \textit{almost finite-to-one} if the union of the finite fibers of $f$ is large in $X$.
			\item $f$ is \textit{generically dominant} if the image of any generic subset of $X$ is generic in $Y$.
		\end{enumerate}
	\end{definition}

	\begin{remark}\label{generic dominance equivalence} It is easy to see that $f:X\rightarrow Y$ is generically dominant if and only if the union of all fibers of rank $\rk(X)-\rk(Y)$ is large in $X$. This will be useful in the next section.
	\end{remark} 

	We note that each of the above notions can also be expressed in terms of generic points:
	
	\begin{lemma}\label{generic point characterization of function properties} Suppose in Definition \ref{almost equal} that $X$, $Y$, and $f$ are all interpretable over $A$. 
		\begin{enumerate}
			\item $X$ is almost contained in $Y$ if every generic element of $X$ over $A$ belongs to $Y$.
			\item $f$ is almost surjective if and only if whenever $y$ is generic in $Y$ over $A$, there is some $x\in X$ with $f(x)=y$.
			\item $f$ is almost finite-to-one if and only if whenever $x$ is generic in $X$ over $A$ we have $x\in\operatorname{acl}(Af(x))$.
			\item $f$ is generically dominant if and only if whenever $x$ is generic in $X$ over $A$, $f(x)$ is generic in $Y$ over $A$.
		\end{enumerate}
	\end{lemma}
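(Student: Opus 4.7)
The overall plan is to observe that each of (1)--(4) amounts to translating a \emph{largeness} statement about an $A$-interpretable subset into a statement about generic points. The engine is the following elementary fact: if $Z \subseteq W$ are both interpretable over $A$, then $Z$ is large in $W$ (i.e.\ $\rk(W \setminus Z) < \rk(W)$) if and only if every generic element of $W$ over $A$ lies in $Z$ --- one direction is immediate from the definition of genericity, and the other uses saturation (Convention \ref{sm conventions}) to produce a generic of $W \setminus Z$ whenever that set has rank $\rk(W)$.

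Part (1) is then a direct application of this fact to $X \cap Y$ as a subset of $X$: $X \setminus Y$ is $A$-interpretable, and largeness of $X \cap Y$ in $X$ coincides with the condition that $X \setminus Y$ contain no generic of $X$ over $A$. Part (2) follows similarly once we note that $f(X)$ is interpretable over $A$ (as $f$ is), so the same translation applies to $f(X) \subseteq Y$; the condition that a generic $y \in Y$ over $A$ lies in $f(X)$ is exactly that it has a preimage.

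For (3), I would first apply Fact \ref{rank exists}(3) to see that the set $X_{\mathrm{fin}} := \{x \in X : f^{-1}(f(x)) \text{ is finite}\}$ is $A$-interpretable. By definition, $f$ is almost finite-to-one iff $X_{\mathrm{fin}}$ is large in $X$, which by part (1) is equivalent to: every generic $x \in X$ over $A$ lies in $X_{\mathrm{fin}}$. It then suffices to show, for generic $x$, that $x \in X_{\mathrm{fin}}$ iff $\rk(x/Af(x)) = 0$. The forward direction is trivial, since $f^{-1}(f(x))$ is a finite $Af(x)$-interpretable set containing $x$. For the converse, suppose for contradiction that $\rk(f^{-1}(f(x))) \geq 1$; saturation yields some $x' \in f^{-1}(f(x))$ with $\rk(x'/Af(x)) \geq 1$, and additivity (Fact \ref{ranks of tuples facts}) applied to the pair $(x',f(x))$ gives $\rk(x'/A) = \rk(f(x)/A) + \rk(x'/Af(x)) \geq \rk(x/A) + 1 > \rk(X)$, contradicting $x' \in X$ (here $\rk(f(x)/A) = \rk(x/A)$ by additivity applied to $(x,f(x))$ together with the hypothesis $\rk(x/Af(x)) = 0$).

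Part (4) runs on the same template: by Remark \ref{generic dominance equivalence}, $f$ is generically dominant iff the set $V := \{x \in X : \rk(f^{-1}(f(x))) = \rk(X) - \rk(Y)\}$ --- which is $A$-interpretable by Fact \ref{rank exists}(3) --- is large in $X$, and by (1) this is equivalent to every generic $x \in X$ over $A$ lying in $V$. Two applications of additivity, again with the saturation trick of (3) to control fiber ranks from above, then identify this condition with ``$f(x)$ is generic in $Y$ over $A$'' whenever $x$ is generic in $X$ over $A$. The only mildly delicate step throughout is this saturation-plus-additivity bookkeeping used to bound fiber ranks; otherwise the proofs are a uniform unpacking of definitions.
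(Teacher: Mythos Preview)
The paper does not actually give a proof of this lemma; it is stated without justification as one of several standard facts about rank and genericity in strongly minimal structures. So there is no ``paper's own proof'' to compare against.

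Your argument is correct and is essentially the standard one. The engine fact you isolate (largeness of an $A$-interpretable subset is equivalent to containing all generics over $A$, via saturation) is exactly right, and parts (1) and (2) follow immediately. Your treatment of (3) is clean: the key computation showing that for generic $x$ the condition $\rk(x/Af(x))=0$ forces the fiber to be finite is done correctly via additivity and saturation. For (4), your sketch is terse but accurate; spelling it out, for generic $x$ one has $\rk(x/Af(x)) = \rk(f^{-1}(f(x)))$ (the inequality $\leq$ is trivial, and $\geq$ comes from picking $x'$ generic in the fiber and bounding $\rk(x'/A)\leq\rk(X)$), whence the fiber has rank exactly $\rk(X)-\rk(Y)$ iff $\rk(f(x)/A)=\rk(Y)$. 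One small remark: the lemma as stated in the paper only asserts (1) as a one-way implication (``if''), though your engine of course yields the biconditional, which is true and harmless to prove.
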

	We also note the following facts, which are most easily seen using \ref{generic point characterization of function properties}:
	\begin{lemma}\label{generic dominance facts} Let $f:X\rightarrow Y$ be interpretable.
		\begin{enumerate}
			\item If $f$ is almost finite-to-one then $\rk(X)\leq\rk(Y)$.
			\item If $f$ is almost surjective then $\rk(X)\geq\rk(Y)$.
			\item If $f$ is almost surjective and almost finite-to-one then $\rk(X)=\rk(Y)$ and $f$ is generically dominant.
			\item If $X\subset Z\times Y$ is a family of sets $X_y\subset Z$, with each $X_y$ having the same rank, and $f:X\rightarrow Y$ is the projection, then $f$ is generically dominant.
		\end{enumerate}
	\end{lemma}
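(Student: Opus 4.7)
The plan is to derive all four parts from additivity of rank (Fact \ref{ranks of tuples facts}) together with the generic-point characterizations of Lemma \ref{generic point characterization of function properties}. Throughout, let $A$ be a parameter set over which $X$, $Y$, and $f$ are all interpretable.

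For (1), I would take $x$ generic in $X$ over $A$. Almost finite-to-oneness, via Lemma \ref{generic point characterization of function properties}(3), gives $x\in\operatorname{acl}(Af(x))$, i.e.\ $\rk(x/Af(x))=0$. Since $x$ determines $f(x)$, additivity yields
\[
\rk(X)=\rk(x/A)=\rk(x,f(x)/A)=\rk(f(x)/A)+\rk(x/Af(x))=\rk(f(x)/A)\leq\rk(Y).
\]
For (2), I would take $y$ generic in $Y$ over $A$ and use almost surjectivity (via Lemma \ref{generic point characterization of function properties}(2)) to pick $x\in X$ with $f(x)=y$; then
\[
\rk(X)\geq \rk(x/A)\geq\rk(f(x)/A)=\rk(y/A)=\rk(Y),
\]
where the middle inequality is by additivity applied to the tuple $(x,f(x))=(x,y)$.

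Part (3) follows quickly: (1) and (2) together give $\rk(X)=\rk(Y)$, and the computation in the proof of (1) shows that for any generic $x\in X$ over $A$ we have $\rk(f(x)/A)=\rk(x/A)=\rk(X)=\rk(Y)$, so $f(x)$ is generic in $Y$ over $A$. This is precisely the generic-point characterization of generic dominance from Lemma \ref{generic point characterization of function properties}(4).

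Part (4) is the most substantial piece, and the main obstacle is really just establishing the rank equality $\rk(X)=\rk(Y)+r$, where $r$ is the common rank of the fibers $X_y$. Once this is known, the hypothesis reads that every fiber has rank exactly $\rk(X)-\rk(Y)$, so Remark \ref{generic dominance equivalence} gives generic dominance immediately. For the upper bound, any $(z,y)\in X$ satisfies $\rk(y/A)\leq\rk(Y)$ and $\rk(z/Ay)\leq r$ (since $z\in X_y$ and $X_y$ is interpretable over $Ay$ of rank $r$), so additivity gives $\rk((z,y)/A)\leq\rk(Y)+r$, hence $\rk(X)\leq\rk(Y)+r$. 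For the lower bound, I would pick $y$ generic in $Y$ over $A$ and then $z$ generic in $X_y$ over $Ay$; additivity then yields $\rk((z,y)/A)=\rk(Y)+r$, so $\rk(X)\geq\rk(Y)+r$. The only subtlety is ensuring $X_y\neq\emptyset$ for the generic $y$ chosen; under the natural reading that each $X_y$ is non-empty this is automatic, and otherwise one replaces $Y$ by the (interpretable) image $f(X)$, which has the same rank as $Y$ after noting that the empty fibers form an interpretable subset of $Y$ that must itself have smaller rank lest it destroy the common-rank hypothesis. No step should present genuine difficulty; the lemma is essentially a bookkeeping exercise in additivity of rank.
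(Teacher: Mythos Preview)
Your argument is correct and is exactly the approach the paper gestures at: the paper's entire proof is the remark that the lemma is ``most easily seen using [Lemma] \ref{generic point characterization of function properties},'' and you have carried out precisely that computation via additivity. One small note on (4): your hedge about empty fibers is unnecessary and the reasoning there is slightly muddled---since rank is only defined for non-empty sets (Fact \ref{rank exists}), the hypothesis ``each $X_y$ has the same rank'' already forces every fiber to be non-empty, so $f$ is surjective and your lower-bound argument goes through without modification.
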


	\subsection{Canonical Bases and Almost Faithfulness}
	
	We next discuss canonical bases and their application in producing almost faithful families of sets.
	
	\begin{definition}\label{Cb} Let $X$ be a stationary interpretable set. Then by the \textit{canonical base of $X$}, we mean the canonical base (in the sense of stability theory) of the unique generic type of $X$. That is, a tuple $c$ from $\mathcal M^{\textrm{eq}}$ is a canonical base of $X$ if it codes the set of generic elements of $X$, in the sense that an automorphism of $M$ fixes $c$ if and only if it preserves some generic subset of $X$.
	\end{definition}

	The canonical base of $X$ is not truly well-defined, but it is well-defined up to interdefinability: that is, if $c$ and $c'$ are canonical bases of $X$, then $c$ is interpretable over $c'$ and vice versa. For this reason one identifies all canonical bases with each other in practice, and just writes $c=\operatorname{Cb}(X)$.
	
	The main associated facts are:
	
	\begin{fact}\label{cb facts}
		Let $X$ be stationary.
		\begin{enumerate}
			\item $\operatorname{Cb}(X)$ exists, and can be taken to be a finite tuple in $\mathcal M^{\textrm{eq}}$.
			\item If $c=\operatorname{Cb}(X)$, then there is a set $X'$ which is interpretable over $c$ and almost equal to $X$.
			\item If $X$ is a stationary component of $Y$, and $Y$ is interpretable over $A$, then $\operatorname{Cb}(X)\in\operatorname{acl}(A)$. 
		\end{enumerate}
	\end{fact}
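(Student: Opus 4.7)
The plan is to exploit the fact that a stationary interpretable set $X$ has a unique generic type, so $\operatorname{Cb}(X)$ should morally be a code for this generic type --- or equivalently, a code for the ``almost equality class'' of $X$. With this guiding idea, each of the three parts reduces to a standard use of elimination of imaginaries in $\mathcal{M}^{\textrm{eq}}$ together with the interpretability of rank in families (Fact \ref{rank exists}(3)).

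For (1), I would start with a formula $\varphi(x;a)$ defining $X$ over some finite tuple $a$, and consider the interpretable equivalence relation $E$ on the parameter sort given by $a\,E\,a'$ iff $\varphi(M;a)$ and $\varphi(M;a')$ are almost equal. Almost equality is itself interpretable, since it can be expressed as a rank condition on the symmetric difference, and rank is definable in families. The $E$-class $[a]_E$ then has a canonical code $c$ in $\mathcal{M}^{\textrm{eq}}$, which is a finite tuple. An automorphism fixes $c$ iff it sends $X$ to an almost equal set; using stationarity (so that $X$ has a unique generic type) one checks this is equivalent to preserving some generic subset of $X$, which verifies the universal property of the canonical base.

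For (2), given the code $c$ produced above, I want to exhibit a concrete $c$-interpretable $X'$ almost equal to $X$. The natural candidate is obtained by a ``diagonal'' construction: using the interpretability of rank, I can define, over $c$, the set $X'$ of all points $x$ in the ambient sort such that the set of $a'\in[a]_E$ with $x\in\varphi(M;a')$ has maximal rank within $[a]_E$. This $X'$ is interpretable over $c$, clearly contains the generics of $X$, and a fiberwise rank count shows $\operatorname{rk}(X') = \operatorname{rk}(X)$. Part (3) is then purely an orbit argument: Fact \ref{rank exists}(4) gives a decomposition of $Y$ into finitely many stationary components of rank $\operatorname{rk}(X)$, unique up to almost equality, and any $A$-automorphism of $\mathcal{M}^{\textrm{eq}}$ permutes these components. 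Hence the orbit of $X$, and therefore the orbit of $c=\operatorname{Cb}(X)$, is finite, i.e.\ $c\in\operatorname{acl}(A)$.

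The main obstacle I anticipate is in part (2): verifying that $X'$ is genuinely \emph{almost equal} to $X$, rather than merely containing its generics. This is the step where stationarity enters essentially --- one must rule out the possibility that $X'$ picks up additional rank-$\operatorname{rk}(X)$ components disjoint from $X$. The cleanest way to handle this is to show that any such extra component would itself be the generic locus of a set almost equal to $X$ (since it is controlled by $c$), contradicting stationarity; but making this rigorous requires a careful analysis of how the rank of the fibers $\{a'\in[a]_E : x\in\varphi(M;a')\}$ varies as $x$ ranges over $X'$, and it is here that most of the real stability-theoretic content lies.
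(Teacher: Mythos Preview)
Your arguments for (1) and (3) are fine; the paper simply declares these ``standard'' and does not prove them, so there is nothing to compare.

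For (2), your approach is correct but genuinely different from the paper's, and considerably more work. The paper exploits the stability-theoretic characterization of the canonical base directly: if $q$ is the generic type of $X$ over some $A\ni c$, then by definition of $\operatorname{Cb}$ the restriction $p=q\upharpoonright c$ is stationary with $\rk(p)=\rk(q)$; one then takes $X'$ to be any stationary formula in $p$ of rank $\rk(p)$, and since $X\cap X'\in q$ one immediately gets $\rk(X\cap X')=\rk(X)$, hence almost equality. That is the entire argument.

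Your diagonal construction avoids invoking this property of $p$ explicitly, but the obstacle you flag --- ruling out extra rank-$r$ components of $X'$ --- is exactly where that property re-enters. To show a generic $x\in X'$ over $c$ realizes the generic type of $X$ over $c$, you pick $a'$ generic in the fiber over $x$, compute that $a'$ is generic in $[a]_E$ over $c$ and $x$ is generic in $\varphi(M;a')$ over $ca'$, and then use that $\varphi(M;a')$ is almost equal to the \emph{stationary} set $X$ to conclude $x$ realizes the (unique) generic type of $X$ over $c$. This works, but you have essentially rederived that $q\upharpoonright c$ is stationary rather than used it. What your route buys is a concrete $c$-definable $X'$ built uniformly from the data; what the paper's route buys is a three-line proof.
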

	\begin{proof} (1) and (3) are standard. For (2), let $X$ be interpretable over $A$, and without loss of generality assume $c\in A$. Let $q$ be the generic type of $X$ over $A$, and let $p$ be the restriction of $q$ to $c$; so by definition of the canonical base, $p$ is stationary and $\rk(p)=\rk(q)$. Let $X'\in p$ be stationary of the same rank as $p$. Then $$\rk(X')=\rk(p)=\rk(q)=\rk(X).$$ Moreover, since $X\cap X'\in q$, we get $\rk(X\cap X')\geq\rk(q)=\rk(X)$. Thus $X$ and $X'$ are almost equal.
	\end{proof}

	Suppose that $\{X_t:t\in T\}$ is an interpretable family of sets, each of which has rank $r$. We say that this family is \textit{faithful} if for $t\neq t'$ we have $\rk(X_t\cap X_{t'})<r$. One of the main uses of canonical bases, at least in the context of strong minimality, is in producing faithful families. Namely, using canonical bases, one can show that if $\{X_t:t\in T\}$ is an interpretable family of rank $r$ sets whose generic members are stationary, then $\{X_t\}$ admits a faithful reparametrization $\{Y_u:u\in U\}$: this means that for generic $t\in T$ the set $X_t$ is almost equal to some $Y_u$, and vice versa, and moreover the family $\{Y_u\}$ is faithful. 
	
	The ability to take faithful families is nice -- however in some situations we might wish to stick exclusively to \textit{definable} (not interpretable) families. In this context we can find reparametrizations with a slightly weaker property called \textit{almost faithfulness}. We first state the following well-known fact, which is the reason that almost faithful families exist (see \cite{Hru92b}):
	
	\begin{fact}\label{weak e of i} $\mathcal M$ eliminates imaginaries up to codes for finite sets. That is, let $X$ be any set which is interpretable over a set $A$. Then there are a definable set $Y$ and an interpretable finite-to-one surjective map $f:Y\rightarrow X$. Moreover, if the set $\operatorname{acl}(\emptyset)\cap M$ is infinite, then one can take $Y$ and $f$ to be interpretable over $A$. 
	\end{fact}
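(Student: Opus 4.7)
My plan is to follow Hrushovski's original strategy, reducing first to stationary $X$ and then to a coding lemma for single elements of $\mathcal{M}^{\mathrm{eq}}$. By Fact \ref{rank exists}(4), $X$ decomposes as a finite disjoint union of stationary components $X_1,\dots,X_k$, each interpretable over $\operatorname{acl}(A)$ by Fact \ref{cb facts}(3); since disjoint unions of definable sets are definable and disjoint unions of finite-to-one surjections are again finite-to-one and surjective, it suffices to treat each $X_i$ separately. For stationary $X$, Fact \ref{cb facts}(2) replaces $X$ (up to almost equality, which preserves our task) by a set interpretable over its canonical base $c=\operatorname{Cb}(X)$, a finite tuple in $\mathcal{M}^{\mathrm{eq}}$ by Fact \ref{cb facts}(1). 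The whole problem then reduces to the following coding lemma: for any finite tuple $c\in\mathcal{M}^{\mathrm{eq}}$, there is a finite tuple $y$ from $M$ interalgebraic with $c$, i.e.\ $c\in\operatorname{acl}(y)$ and $y\in\operatorname{acl}(c)$.

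To prove the coding lemma, I will start from a witness $a\in M^n$ with $c\in\operatorname{dcl}(a)$ -- such $a$ exists because, by the definition of interpretability, $c$ is a class of a $\emptyset$-definable equivalence relation on $M^n$ -- and then choose $y$ from $M$ with $c\in\operatorname{acl}(y)$ of minimal rank. The claim is that $\rk(y)=\rk(c)$: by additivity (Fact \ref{ranks of tuples facts}) this immediately gives $\rk(y/c)=0$ and hence $y\in\operatorname{acl}(c)$. If instead $\rk(y)>\rk(c)$, then $y$ carries a coordinate $y_i$ generic over $c$ together with the remaining coordinates; strong minimality and the exchange property of algebraic closure in $M$ then let one trade $y_i$ for an element of $\operatorname{acl}(c,y_{\neq i})\cap M$, lowering $\rk(y)$ while preserving $c\in\operatorname{acl}(y)$, contradicting minimality. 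With such a $y$ in hand, the desired $Y$ is carved out from a definable superset of the orbit of $y$, and the interpretable map sending $y'\mapsto c'\mapsto$ the corresponding element of $X$ is finite-to-one by the interalgebraicity of $y$ and $c$, and surjective after uniting finitely many orbits.

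I expect the coding lemma, and particularly the exchange-property reduction inside it, to be the main obstacle: this is the substantive content of Hrushovski's argument in \cite{Hru92b} and relies crucially on saturation (Convention \ref{sm conventions}). For the ``moreover'' clause, when $\operatorname{acl}(\emptyset)\cap M$ is infinite one can use this infinite $\emptyset$-definable set to code the finite sets of canonical bases $\operatorname{Cb}(X_i)$ that appeared in $\operatorname{acl}(A)\setminus A$ during the reduction to stationary components, so that the resulting $Y$ and $f$ become interpretable over $A$; without this hypothesis there is in general no way to avoid the passage to $\operatorname{acl}(A)$.
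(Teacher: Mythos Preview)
The paper does not prove this fact; it is quoted with a reference to \cite{Hru92b}, so there is no in-paper argument to compare against. I will evaluate your sketch on its own.

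Your coding lemma (every imaginary is interalgebraic with a real tuple) is indeed the heart of weak elimination of imaginaries in strongly minimal theories, and your minimal-rank strategy for it is essentially correct. Two details need repair. First, for the eventual map $Y\to X$ to be a \emph{function}, you need $c\in\operatorname{dcl}(y)$, not merely $c\in\operatorname{acl}(y)$; you start correctly with $c\in\operatorname{dcl}(a)$ but then silently weaken. Second, ``exchange'' does not literally apply, since $c$ is imaginary and the pregeometry lives on $M$: the clean version uses additivity of rank (Fact~\ref{ranks of tuples facts}) to see that if $y_i$ is generic over $c,y_{\neq i}$ then already $\rk(c/y_{\neq i})=0$, and then observes that the formula ``$f(y_{\neq i},z)=c$'' is cofinite in $z$, so one may pick a replacement $z\in\operatorname{acl}(c,y_{\neq i})\cap M$ preserving $c\in\operatorname{dcl}(\cdot)$.

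The substantive gap is your reduction. You apply the coding lemma to $c=\operatorname{Cb}(X)$, but coding the \emph{parameter} defining $X$ by a real tuple accomplishes nothing: $X$ is then interpretable over a real tuple, yet its \emph{elements} remain imaginaries and you still have no definable $Y$ surjecting onto them. The stationary decomposition and canonical-base detour are irrelevant here. The correct reduction applies the coding lemma to each \emph{element} $x\in X$: each such $x$ is an imaginary, the lemma yields a real $b_x$ with $x\in\operatorname{dcl}(b_x)$ and $b_x\in\operatorname{acl}(x)$, and compactness makes this uniform in $x$, producing finitely many $\emptyset$-definable partial functions from powers of $M$ into the sort of $X$ whose images cover $X$ and whose fibres over $X$ are finite. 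Your closing line (``$y'\mapsto c'\mapsto$ the corresponding element of $X$'') suggests you may have been reaching for this, but with $c=\operatorname{Cb}(X)$ a single fixed tuple there is no ``corresponding element of $X$'' to speak of.
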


	\begin{assumption}\label{acl(0) infinite}
		For the rest of this section, we assume the set $\operatorname{acl}(\emptyset)\cap M$ is infinite. This can be arranged by adding infinitely many constants symbols to $\mathcal L(\mathcal M)$.
	\end{assumption}

	We now give the definition of almost faithfulness and the relevant facts:
	
	\begin{definition}\label{almost faithful} Let $\mathcal X=\{X_t:t\in T\}$ be an interpretable family of rank $r$ sets. We say that $\mathcal X$ is \textit{almost faithful} if for each $t$ there are only finitely many $t'$ such that $\rk(X_t\cap X_{t'})=r$. If $\mathcal X$ is almost faithful, then by $\rk(\mathcal X)$ we mean the rank of the parameter space $T$.
	\end{definition}

	\begin{remark} If $\mathcal X=\{X_t:t\in T\}$ is an almost faithful family of rank $r$ subsets of an interpretable set $S$, then we identify $\mathcal X$ with its \textit{graph} $X\subset S\times T$. We caution the reader that while $\rk(\mathcal X)$ refers the rank of $T$, $\rk(X)$ will refer to the rank of $X$ as an interpretable set in its own right. Thus $\rk(X)$ is typically larger than $\rk(\mathcal X)$.
	\end{remark}
	
	Most of the key properties of almost faithful families boil down to the following, which are easy to check:
	
	\begin{lemma}\label{almost faithful facts} Let $\{X_t:t\in T\}$ be an almost faithful family of rank $r$ sets over $A$.
		\begin{enumerate}
			\item If $Y$ is any rank $r$ set, then there are only finitely many $t$ such that $X_t\cap Y$ has rank $r$.
			\item If $\{Y_u:u\in U\}$ is another almost faithful family of rank $r$ sets over $A$, and $X_t\cap Y_u$ has rank $r$ for some $t$ and $u$, then $t$ and $u$ are interalgebraic over $A$ (i.e. $t\in\operatorname{acl}(Au)$ and vice versa).
			\item If $X$ is a stationary component of $X_t$ for some $t\in T$, and $c=\operatorname{Cb}(X)$, then $c$ and $t$ are interalgebraic over $A$.
		\end{enumerate}
	\end{lemma}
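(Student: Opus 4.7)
The plan is to prove (1) first, and then derive (2) and (3) from it together with the canonical base facts (Fact \ref{cb facts}).

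For (1), suppose for contradiction that infinitely many $t \in T$ satisfy $\rk(X_t \cap Y) = r$. The key observation is that $Y$ has only finitely many stationary components, and any rank $r$ subset of $Y$ must be generic in at least one of them. For each such $t$, choose a stationary component $Z_t$ of $X_t$ and a stationary component $W_t$ of $Y$ with $\rk(Z_t \cap W_t) = r$; by stationarity, $Z_t$ and $W_t$ share a canonical base and are almost equal. By pigeonhole on the finitely many stationary components of $Y$, there is a fixed component $W$ of $Y$ such that infinitely many $t$ admit a stationary component of $X_t$ almost equal to $W$. For any two such $t, t'$, the almost equality is transitive up to rank, so $\rk(X_t \cap X_{t'}) = r$, contradicting the almost faithfulness of $\{X_t\}$.

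For (2), I would apply (1) to the family $\{X_t\}$ with the fixed set $Y := Y_u$, obtaining only finitely many $t' \in T$ with $\rk(X_{t'} \cap Y_u) = r$. If $t \notin \operatorname{acl}(Au)$, then by saturation there are infinitely many $Au$-conjugates of $t$, each of which would still satisfy $\rk(X_{t'} \cap Y_u) = r$ since $Y_u$ is fixed; this is a contradiction. The reverse direction $u \in \operatorname{acl}(At)$ is symmetric, applying (1) to the family $\{Y_u\}$ with fixed set $X_t$.

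For (3), the inclusion $c \in \operatorname{acl}(At)$ is immediate from Fact \ref{cb facts}(3) applied to the stationary component $X$ of $X_t$, since $X_t$ is interpretable over $At$. For the converse, invoke Fact \ref{cb facts}(2) to produce a set $X'$ interpretable over $c$ and almost equal to $X$, so in particular $\rk(X' \cap X_t) = r$. Now $X'$ is a fixed rank $r$ set over $Ac$, so by (1) only finitely many $t' \in T$ satisfy $\rk(X_{t'} \cap X') = r$; if $t$ were not in $\operatorname{acl}(Ac)$, the $Ac$-conjugates of $t$ would produce infinitely many such $t'$.

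The only real obstacle is the stationary-component argument in (1); once that is in hand, (2) and (3) are routine ``take a conjugate over the parameters of the fixed set'' arguments combined with Fact \ref{cb facts}.
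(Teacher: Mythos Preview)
Your proof is correct and is precisely the kind of standard argument the paper has in mind when it says the properties are ``easy to check'' and omits the proofs. The stationary-component pigeonhole for (1), followed by the conjugate/finiteness arguments for (2) and (3) using Fact \ref{cb facts}, is the natural route; there is nothing to compare since the paper gives no alternative.
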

			
	 As hinted at above, perhaps the most important fact about almost faithful families is that they always exist in a precise sense, and can be taken to be indexed by definable sets:
	
	\begin{lemma}\label{almost faithful families exist} Let $X\subset M^n$ be a stationary definable set of rank $r$, and let $c=\operatorname{Cb}(X)$. Let $A$ be a set of parameters, and let $k=\rk(c/A)$. Then there are a definable set $T$ over $A$, an almost faithful family $\{X_t:t\in T\}$ of rank $r$ subsets of $M^n$ which is definable over $A$, and an element $\hat t\in T$, having the following properties:
		\begin{enumerate}
			\item $T$ has rank $k$, and $\hat t$ is generic in $T$ over $A$.
			\item $X$ is almost contained in $X_{\hat t}$.
			\item $c$ and $\hat t$ are interalgebraic over $A$.
		\end{enumerate}
	Moreover, one can choose $\mathcal X$ to satisfy either of the following (but not both simultaneously):
		\begin{enumerate}
			\item[(a)] $X$ is almost equal to $X_{\hat t}$.
			\item[(b)] $T$ is a generic subset of $M^k$.
		\end{enumerate}
	\end{lemma}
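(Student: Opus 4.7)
The plan is to first use weak elimination of imaginaries to find a real tuple $\hat t$ coding the canonical base $c$---definably (for clause (a)) or algebraically (for clause (b))---and then build the family by uniformly substituting $t$ for $\hat t$ in a formula that describes (a set closely related to) $X$.

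For the choice of $\hat t$ in clause (a): by the definition of $\rk(c/A) = k$, there is an $A$-interpretable set $D$ of rank $k$ containing $c$. Applying Fact \ref{weak e of i} to $D$ (legitimate by Assumption \ref{acl(0) infinite}), I obtain a definable set $Y \subset M^\ell$ and an $A$-interpretable finite-to-one surjection $f : Y \to D$. Choose $\hat t \in f^{-1}(c)$; then $c = f(\hat t) \in \operatorname{dcl}(A\hat t)$ while $\hat t \in \operatorname{acl}(Ac)$, so $\rk(\hat t/A) = k$ as well. Take $T$ to be an $A$-definable set of rank $k$ containing $\hat t$ as a generic point---any formula of rank $k$ in $\operatorname{tp}(\hat t/A)$ will do. For clause (b), replace $\hat t$ by a projection to $k$ generic coordinates using Fact \ref{rank exists}(5); this preserves $\operatorname{acl}$-interalgebraicity with $c$ (though not $\operatorname{dcl}$) and places the projected tuple generically in a rank-$k$ definable subset $T \subset M^k$.

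To build the family, in clause (a), since $c \in \operatorname{dcl}(A\hat t)$, the set $X^*$ provided by Fact \ref{cb facts}(2) (interpretable over $c$, almost equal to $X$) is in fact $A\hat t$-definable, say by $\varphi(x, \hat t)$ with $\varphi$ over $A$. Setting $X_t := \varphi(x, t)$ yields the desired family, with $X_{\hat t}$ almost equal to $X$. In clause (b) we have only $c \in \operatorname{acl}(A\hat t)$, so take $X_{\hat t}$ to be the union of the (finitely many) $A\hat t$-conjugates of $X^*$; this is $A\hat t$-definable and almost contains $X$. In both cases almost faithfulness follows from Lemma \ref{almost faithful facts}(3): any rank-$r$ stationary component of $X_t \cap X_{t'}$ has a canonical base lying in a finite set determined by either of $t$ or $t'$ (via the $A$-interpretable map $t \mapsto f(t)$ constructed above), forcing $t' \in \operatorname{acl}(At)$.

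The main obstacle is the precise extraction of $\hat t$ in clause (a): applying Fact \ref{weak e of i} to a rank-$k$ imaginary set \emph{containing} $c$, rather than to $c$ alone, is what yields a real tuple with $c \in \operatorname{dcl}(A\hat t)$ rather than merely $\operatorname{acl}(A\hat t)$. The trade-off between clauses (a) and (b) stems precisely from this point: projecting down to $M^k$ loses the $\operatorname{dcl}$ information needed for clause (a), which is why the two clauses cannot be achieved simultaneously.
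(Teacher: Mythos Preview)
Your overall shape is close to the paper's, but the argument for almost faithfulness has a real gap. You invoke Lemma \ref{almost faithful facts}(3), but that lemma \emph{presupposes} almost faithfulness, so citing it here is circular. The substantive claim behind your sentence is that a rank-$r$ stationary component $S$ of $X_t \cap X_{t'}$ has $\operatorname{Cb}(S) \in \operatorname{acl}(At) \cap \operatorname{acl}(At')$; this much is true (by Fact \ref{cb facts}(3), since $S$ is a component of each of the $At$-definable $X_t$ and the $At'$-definable $X_{t'}$). But from this you cannot deduce $t' \in \operatorname{acl}(At)$: that would require $t' \in \operatorname{acl}(A,\operatorname{Cb}(S))$, i.e., that the canonical base of some component of $X_{t'}$ recovers $t'$ up to finitely many choices. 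Your map $f$ does give $f(t') \in \operatorname{dcl}(At')$, but nothing you have written guarantees that $f(t')$ is interalgebraic with $\operatorname{Cb}(S)$ for an arbitrary $t' \in T$; you only arranged this relationship at the single point $\hat t$, where $f(\hat t) = c$. For generic $t'$ there is no reason the stationary components of $\varphi(x,t')$ should have canonical bases near $f(t')$ at all.

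The paper fills exactly this gap with a compactness step that you are missing. Rather than starting from an arbitrary $A$-interpretable $D \ni c$, it first constructs a \emph{faithful} interpretable family $\{Y_u : u \in U\}$ over $A$ with $\hat u = c$ and $Y_c = X'$, by taking $U$ to be a formula in $\operatorname{tp}(c/A)$: for any realization $u$ of this type, $Y_u$ is stationary with canonical base $u$ (by automorphism), so distinct $u$'s give sets with distinct generic types and hence rank-$<r$ intersection; compactness then cuts this down to a single $A$-formula on which faithfulness persists. Only after this does the paper apply Fact \ref{weak e of i}, and reparametrizing a faithful family along a finite-to-one map immediately yields an almost faithful one. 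Your argument can be repaired by inserting the same compactness step---shrink $T$ so that for every $t$ the canonical base of $X_t$ is interalgebraic with $f(t)$---but as written, almost faithfulness is not established.
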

	Let us sketch the proof:
	\begin{proof}
		Note that (3) follows from (2) and Lemma \ref{almost faithful facts}(3). So we need only show (1) and (2). Now by Fact \ref{cb facts}, there is a set $X'$ almost equal to $X$ which is definable over $c$. Using the Compactness Theorem, we thus find a rank $k$ faithful interpretable family $\{Y_u:u\in U\}$ over $A$, containing $X'$ as a generic member $Y_{\hat u}$ over $A$, and indexed by a sufficiently large finite fragment of the type of $c$ over $A$ (so in fact $\hat u=c$). By Fact \ref{weak e of i}, there is an interpretable finite-to-one surjective function $f:T\rightarrow U$ over $A$, where $T$ is definable. Note that $\rk(T)=\rk(U)=k$. We then reparametrize by $T$, setting $X_t=Y_{f(t)}$ for each $t$. The result is a rank $k$ almost faithful definable family containing $X'$ as a generic member $X_{\hat t}$ over $A$ whenever $f(\hat t)=\hat u$. We thus satisfy (1), (2), and (a). 
		
		Now suppose we want to satisfy (b) instead. By Fact \ref{rank exists}(5), there is a finite-to-one definable map $g:T\rightarrow M^k$ over $A$. We thus reparametrize by $g(T)$: for $v\in g(T)$, set $Z_v$ to be the union of all $X_t$ for $g(t)=v$. Using that $g$ is finite-to-one, it follows that each $Z_v$ has rank $r$, and $\{Z_v\}$ is still almost faithful. Moreover, $X'$ is contained in $Z_{g(\hat t)}$, which shows that $X$ is almost contained in $Z_{g(\hat t)}$. So this family satisfies (2) and (b). Now as mentioned above we deduce (3) from (2), and since $\rk(g(T))=\rk(T)=\rk(c/A)=k$, (2) and (3) imply (1). 
		
		We note, however, that since each $Z_v$ might now have several stationary components, we cannot hope to preserve (a).
		
	\end{proof}

	\subsection{1-Basedness and Local Modularity}

		We now discuss 1-basedness and local modularity, including the characterization of local modularity in terms of plane curves. We present this characterization as a definition, because it is what is most often used in practice.
	
	\begin{definition} A \textit{plane curve} in $\mathcal M$ is a rank 1 definable subset of $M^2$.
		\end{definition} 
	
	The following fact follows from Theorem 3.4.2 of \cite{Hru85}.
	
	\begin{fact}\label{characterization of local modularity} The following are equivalent:
		\begin{enumerate}
			\item There is a stationary plane curve whose canonical base has rank at least $2$.
			\item For each positive integer $r$, there is a stationary plane curve whose canonical base has rank at least $r$.
			\item There is a definable almost faithful family of plane curves of rank 2.
			\item For each positive integer $r$, there is a definable almost faithful family of plane curves of rank $r$. 
			\item For each positive integer $r$, there is a definable almost faithful family of plane curves over $\emptyset$ of rank $\geq r$.
		\end{enumerate}
	\end{fact}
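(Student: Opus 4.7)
The implications $(2) \Rightarrow (1)$, $(4) \Rightarrow (3)$, and $(5) \Rightarrow (4)$ are immediate. My plan is to close the remaining equivalences through $(1) \Leftrightarrow (3)$ and $(2) \Leftrightarrow (4)$ (both via canonical bases), $(2) \Rightarrow (5)$ (a direct appeal to Lemma \ref{almost faithful families exist}), and the single substantive implication $(1) \Rightarrow (2)$.

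The equivalence $(1) \Leftrightarrow (3)$---and, by the identical argument, $(2) \Leftrightarrow (4)$---is a direct application of the canonical-base machinery just developed. Given a stationary plane curve $C$ with $\rk(\operatorname{Cb}(C)) = k \geq 2$, I would invoke Lemma \ref{almost faithful families exist} with $A = \emptyset$ (noting that $C$ itself need not be defined over $\emptyset$) to produce a definable almost faithful family $\{C_t : t \in T\}$ over $\emptyset$ with $\rk(T) = k$; if $k > 2$, cut $T$ down to a generic rank-$2$ definable subset to get exactly (3). Conversely, given any almost faithful family of rank $\geq 2$, take a stationary component of a generic member and apply Lemma \ref{almost faithful facts}(3): its canonical base is interalgebraic with the parameter and hence has rank $\geq 2$. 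For $(2) \Rightarrow (5)$ no new idea is needed: applying Lemma \ref{almost faithful families exist} with $A = \emptyset$ to a stationary plane curve whose canonical base has rank $\geq r$ directly yields an almost faithful family over $\emptyset$ with parameter space of rank $\geq r$.

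The real content is therefore $(1) \Rightarrow (2)$: amplifying the canonical-base rank from $2$ to arbitrary $r$. I would follow the classical route (essentially Theorem 3.4.2 of \cite{Hru85}) by iterating composition of plane curves. Starting from a rank-$2$ almost faithful family $\{C_t : t \in T\}$ supplied by the equivalent condition (3), form for each $k \geq 1$ the $k$-fold composite family
\begin{equation*}
C_{t_1,\ldots,t_k} := \{(x,y) \in M^2 : \exists z_1,\ldots,z_{k-1}\ (x,z_1) \in C_{t_1},\ (z_1,z_2) \in C_{t_2},\ \ldots,\ (z_{k-1},y) \in C_{t_k}\}.
\end{equation*}
For generic independent $(t_1,\ldots,t_k) \in T^k$ this is again a plane curve; the raw parameter space $T^k$ has rank $2k$, and canonical-base reparametrization via Lemma \ref{almost faithful families exist} converts this into an almost faithful family whose rank I want to be at least $r$.

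The main obstacle is showing that the canonical-base rank of a generic $k$-fold composite tends to infinity with $k$. The intuition is that if this rank were bounded by some $N$, then for $k \gg N$ the map $(t_1,\ldots,t_k) \mapsto \operatorname{Cb}(C_{t_1,\ldots,t_k})$ would have generic fibers of positive rank, so a positive-dimensional family of distinct tuples would yield the same composite curve; unwinding this via additivity of Morley rank and stationarity of generic composites would force a nontrivial algebraic relation among independent generic members of the original family $\{C_t\}$, contradicting its almost faithfulness. The combinatorics here---and the essential use of strong minimality through additivity of rank---are exactly what Hrushovski works out in \cite{Hru85}, and I would appeal to that theorem directly rather than attempt to reprove it from scratch.
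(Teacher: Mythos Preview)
Your proposal is correct and aligned with the paper's approach: the paper itself gives no proof at all, simply prefacing the fact with ``The following fact follows from Theorem 3.4.2 of \cite{Hru85}.'' You do strictly more, spelling out the easy equivalences via Lemmas~\ref{almost faithful facts} and~\ref{almost faithful families exist} and isolating $(1)\Rightarrow(2)$ as the substantive step, for which you (like the paper) defer to Hrushovski's composition argument.
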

	
	\begin{definition}\label{local modularity} $\mathcal M$ is \textit{not locally modular} if the conditions of Fact \ref{characterization of local modularity} hold. Otherwise $\mathcal M$ is \textit{locally modular}.
	\end{definition}

	\begin{remark}\label{local modularity when not saturated}
		To be precise, Fact \ref{characterization of local modularity} only holds if $\operatorname{acl}(\emptyset)\cap M$ is infinite and $|M|>|\mathcal L(\mathcal M)|+\aleph_0$. For general strongly minimal $\mathcal M$, locally modularity can be defined by either (3) or (4) in Fact \ref{characterization of local modularity}, and the resulting notion is invariant under taking elementary extensions and adding parameters. 
	\end{remark}
	We now briefly consider structures $\mathcal N=(N,...)$ which are not strongly minimal, in order to discuss 1-basedness (though we still use $\mathcal M$ for our given strongly minimal structure). Precisely we consider such $\mathcal N$ which have finite Morley rank, though it would be enough to assume $\mathcal N$ is interpretable in an algebraically closed field. We note that the theory of generics and canonical bases also applies in this more general framework: in particular if $\mathcal N$ is saturated, then every stationary $\mathcal N$-definable set has a canonical base in $\mathcal N^{\textrm{eq}}$.
	
	The following is a corollary of standard facts in stability theory:
	
	\begin{fact}\label{1-based fact} Assume $\mathcal N$ is saturated, has finite Morley rank. Then the following are equivalent:
		\begin{enumerate}
			\item Whenever $X$ is stationary and interpretable over a set $A$, and $a\in X$ is generic over $A$, the canonical base of $X$ is algebraic over $a$.
			\item Every strongly minimal set in $\mathcal N^{\textrm{eq}}$ (viewed with its full induced structure from $\mathcal N$) is locally modular.
		\end{enumerate}
			If moreover $\mathcal N$ is strongly minimal, then $\mathcal N$ is locally modular if and only if (1) and (2) hold.
	\end{fact}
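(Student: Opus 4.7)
The plan is to prove $(1)\Leftrightarrow(2)$ via the plane-curve characterization of local modularity (Fact~\ref{characterization of local modularity}). The implication $(1)\Rightarrow(2)$ is a short rank calculation; the implication $(2)\Rightarrow(1)$ is deeper and requires an induction on Morley rank using Morley sequences.

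For $(1)\Rightarrow(2)$, let $D$ be a strongly minimal set in $\mathcal{N}^{\mathrm{eq}}$ and let $C\subset D^2$ be any stationary plane curve. By Fact~\ref{characterization of local modularity} it suffices to show $\rk(\operatorname{Cb}(C))\leq 1$. Set $c=\operatorname{Cb}(C)$, and pick $(a,b)\in C$ generic over $c$, so $\rk(a,b/c)=1$. Applying (1) to $X=C$ interpretable over $A=\{c\}$, we obtain the strong conclusion $c\in\operatorname{acl}(a,b)$ --- the content of (1) being precisely that the canonical base is algebraic over the realization alone, not merely over the realization together with $A$. Additivity of rank then yields
$$\rk(c)+1 \;=\; \rk(c)+\rk(a,b/c) \;=\; \rk(a,b,c) \;=\; \rk(a,b) \;\leq\; \rk(D^2) \;=\; 2,$$
so $\rk(c)\leq 1$ as required.

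For $(2)\Rightarrow(1)$, suppose (1) fails: some stationary $X$ over a set $A$ admits a generic realization $a\in X$ over $A$ with $c:=\operatorname{Cb}(X)\notin\operatorname{acl}(a)$. I would take a Morley sequence $(a_i)_{i<\omega}$ in the generic type of $X$ over $Ac$ and choose the minimal $n$ with $c\in\operatorname{acl}(Aa_1\ldots a_n)$; by hypothesis $n\geq 2$. An induction on the Morley rank of $X$ then peels off strongly minimal quotients of the stationary sets along the sequence until one arrives at a situation where a pair of elements in some strongly minimal sort $D\subset\mathcal{N}^{\mathrm{eq}}$ lies on an interpretable plane curve whose canonical base still has rank $\geq 2$ over the accumulated parameters. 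This produces an almost faithful family of plane curves in $D^2$ of rank $\geq 2$, contradicting local modularity of $D$ via Fact~\ref{characterization of local modularity}. The final clause of the statement is then immediate from the main equivalence: for strongly minimal $\mathcal{N}$, condition (2) specialized to $D=M$ is precisely local modularity of $\mathcal{N}$, while conversely local modularity of $\mathcal{N}$ combined with the equivalence just established recovers both (1) and (2).

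The main obstacle is the inductive extraction in $(2)\Rightarrow(1)$: converting an abstract failure of 1-basedness into a concrete almost faithful plane-curve family of rank at least $2$ in some strongly minimal set of $\mathcal{N}^{\mathrm{eq}}$. This requires the full apparatus of forking and canonical-base analysis in finite Morley rank theories --- originally due to work of Buechler, Hrushovski, and Pillay --- which is exactly why the statement is cited here from the literature as a \emph{fact} rather than proved from first principles.
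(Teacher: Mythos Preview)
Your approach differs substantially from the paper's: the paper gives no argument at all beyond citing Buechler's theorems, routing the equivalence through an intermediate condition $2'$ (``every minimal type in $\mathcal N^{\mathrm{eq}}$ is locally modular''). The equivalence of (1) and $2'$ is attributed to \cite{Bue86}, the equivalence of $2'$ and (2) to \cite{Bue85b}, and the final clause to \cite{Bue85}. Your direct rank computation for $(1)\Rightarrow(2)$ is correct in spirit and more informative than a bare citation, though you silently identify canonical bases and Morley ranks computed in $\mathcal N$ with those computed in the induced structure on $D$; this identification is standard (via stable embeddedness) but worth flagging.

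There is, however, a genuine gap in your treatment of the final clause. You write that ``local modularity of $\mathcal N$ combined with the equivalence just established recovers both (1) and (2),'' but the equivalence $(1)\Leftrightarrow(2)$ does nothing here unless you already have one of (1) or (2). Local modularity of a strongly minimal $\mathcal N$ says only that $M$ itself has no rank-$2$ family of plane curves; it does not immediately say the same for \emph{every} strongly minimal set in $\mathcal N^{\mathrm{eq}}$, nor does it immediately give the canonical-base condition (1) for arbitrary stationary sets. Bridging this requires precisely the non-orthogonality transfer of local modularity (Buechler's theorem, \cite{Bue85}), which is what the paper cites. Your sketch for $(2)\Rightarrow(1)$ is also too schematic to count as a proof---``peeling off strongly minimal quotients'' is the right intuition but hides the real work---though you correctly diagnose this as the reason the statement is quoted as a Fact.
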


	\begin{proof} Let 2' be the statement `every minimal type in $\mathcal N^{\textrm{eq}}$ is locally modular.' The equivalence of 1 and 2' is given by Theorems 2.7 and 5.3 of \cite{Bue86}, and the equivalence of 2' and 2 follows from Theorem 1 of \cite{Bue85b}. For strongly minimal $\mathcal N$, the equivalence of 2 and local modularity follows from Theorem 2 of \cite{Bue85}. 
	\end{proof}

	\begin{definition} Let $\mathcal N$ be a structure of finite Morley rank.
		\begin{enumerate}
			\item If $\mathcal N$ is saturated, we say that $\mathcal N$ is \textit{1-based} if the conditions of Fact \ref{1-based fact} hold.
			\item If $\mathcal N$ is not saturated, we say that $\mathcal N$ is 1-based if some (equivalently any) saturated elementary extension of it is.
		\end{enumerate}
	\end{definition}

	\begin{remark}\label{1-based remark} Condition (1) of Fact \ref{1-based fact} roughly places a bound on the ranks of definable families of sets. In general, one thinks of 1-based theories as either `trivial' or `linear,' with no further complexity. For example, no infinite field can ever be 1-based. Moreover, if $(G(K),\cdot)$ is the group of points of an algebraic group over an algebraically closed field $K$, and $V(K)\subset G^n(K)$ is the set of points of a proper relatively closed subvariety $V$ of some $G^n$ which is not a coset of an algebraic subgroup, then the structure $(G(K),\cdot,V(K))$ is never 1-based (by the main theorem of \cite{HP87}).
	\end{remark}

	We now revert to our strongly minimal structure $\mathcal M$. In the ensuing subsections, we will proceed to discuss various properties of almost faithful families of plane curves in $\mathcal M$. Our main goal is to `preen' a given almost faithful family until it has a number of nice properties. We will then deduce that non-local modularity is further equivalent to the existence of a certain special type of family (which we call an \textit{excellent family}).
		
	\subsection{Special Types of Points}
	
	Let $\mathcal C=\{C_t\}_{t\in T}$ be a rank 2, almost faithful family of plane curves. In an ideal world, each element of $M^2$ would belong to a rank 1 subfamily of the curves in $\mathcal C$, and any two distinct elements of $M^2$ would belong to finitely many common curves. Unfortunately this is not guaranteed to be the case. In this subsection we study obstructions to these phenomena. Our goal is to show that such obstructions are `rare enough' that they won't impede on our work.
	
	\begin{notation} Let $\mathcal C=\{C_t\}_{t\in T}$ be a family of plane curves in $\mathcal M$. For each $x\in M^2$, we denote the fiber $\{t\in T:x\in C_t\}$ by ${_xC}$.
	\end{notation}

	We begin with the following definitions:

	\begin{definition}\label{common points} Let $\mathcal C=\{C_t:t\in T\}$ be an almost faithful family of plane curves of rank $r\geq 1$.
	\begin{enumerate}
	\item $x\in M^2$ is \textit{common} in $\mathcal C$ if ${_xC}$ has rank $r$.
	\item $x\in M^2$ is \textit{normal} in $\mathcal C$ if ${_xC}$ has rank $r-1$.
	\item $x\in M^2$ is \textit{rare} in $\mathcal C$ if ${_xC}$ is non-empty and has rank less than $r-1$.
	\item $x\in M^2$ is \textit{absent} in $\mathcal C$ if ${_xC}$ is empty.
	\item $t\in T$ is \textit{elitist} in $\mathcal C$ if $C_t$ contains infinitely many non-normal points of $\mathcal C$.
	\end{enumerate}
	\end{definition}
	
	So a normal point belongs to the `expected number' of curves in the family; while a common point belongs to `too many', and a rare or absent point belongs to `too few'. Note that all five of these notions are definable. Moreover, note that $t\in T$ is elitist if and only if $C_t$ contains infinitely many common or rare points.
	
	The following facts are easy to check:
	
	\begin{lemma}\label{finitely many common points} Let $\mathcal C=\{C_t:t\in T\}$ be an almost faithful family of plane curves of rank $r\geq 1$. 
		\begin{enumerate}
			\item The set of common points of $\mathcal C$ is finite.
			\item The set of normal points of $\mathcal C$ is generic, and thus large, in $M^2$.
			\item The sets of rare and absent points of $\mathcal C$ each have rank at most 1.
			\item The set of elitist indices of $\mathcal C$ is finite.
		\end{enumerate}
		\end{lemma}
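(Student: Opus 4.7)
The plan is to form the total incidence set $X = \{(x,t) \in M^2 \times T : x \in C_t\}$, which is definable and has rank $r+1$ by additivity applied to the projection $X \to T$ (whose fibers are the rank-$1$ curves $C_t$). I will then read off rank bounds on the four special sets via fiber-dimension bookkeeping using the two projections $\pi_1 : X \to M^2$ and $\pi_2 : X \to T$, and use almost faithfulness of $\mathcal C$ to finish in the two places where pure rank counts are insufficient.

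A preliminary step will be to check that $\rk(Y) = 2$ where $Y = \pi_1(X)$: otherwise every $C_t$ sits inside the definable rank-$\leq 1$ set $Y$, and after passing to a stationary component one obtains infinitely many $t'$ with $\rk(C_t \cap C_{t'}) = 1$ for any fixed generic $t$, contradicting almost faithfulness. Once this is in hand, a fiber-rank decomposition of $X$ over $M^2$ (writing $r+1 = \rk(X) = \max_k (\rk(Y_k) + k)$ where $Y_k = \{y : \rk({_y}C) = k\}$) shows that the normal locus is the unique part of $Y$ of rank $2$. This yields the generic half of (2), and it reduces (3) to bounding $\rk(S) \leq 1$: the absent points are $M^2 \setminus Y$, which has rank $\leq 1$ since $Y$ is large in the stationary set $M^2$, and the rare points lie in $Y$ minus the normal locus, also of rank $\leq 1$; the `large' half of (2) then follows from (1) and (3).

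For (1), fibering $\pi_1^{-1}(S) \subset X$ over the common set $S$ gives $\rk(\pi_1^{-1}(S)) = \rk(S) + r \leq r+1$, hence $\rk(S) \leq 1$. Assuming $\rk(S) = 1$ for contradiction, I would reproject $\pi_1^{-1}(S)$ onto $T$ via $\pi_2$: a rank count forces $\rk(C_t \cap S) = 1$ for generic $t \in T$, and pigeonholing on the stationary components of $S$ produces a fixed stationary $S_i$ and a positive-rank set of $t$'s for which $C_t \cap S_i$ is large in $S_i$. Then for generic independent $t, t'$ in this set, $C_t \cap C_{t'} \supset (C_t \cap S_i) \cap (C_{t'} \cap S_i)$ has rank $\geq \rk(S_i) = 1$, contradicting almost faithfulness. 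Part (4) will run by exactly the same mechanism: since $S$ is finite, an elitist $t$ is precisely one with $\rk(C_t \cap R) = 1$, so if the elitist set $E$ had positive rank, a pigeonhole on the stationary components of $R$ would again produce generic pairs $t, t'$ with $\rk(C_t \cap C_{t'}) = 1$, contradicting almost faithfulness.

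The main obstacle will be the potential non-stationarity of the auxiliary sets $S$, $R$, and $Y$, which prevents any direct use of almost faithfulness on `large' intersections; in each case the remedy will be the same pigeonhole step, and the care needed is to verify that passing to a stationary component does not drop the index-space rank below $1$.
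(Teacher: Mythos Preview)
Your overall strategy is sound and the arguments for (1) and (4) are correct. However, there is a genuine gap in your derivation of (2). The fiber-rank identity $r+1 = \max_k(\rk(Y_k) + k)$ together with $\rk(Y) = 2$ does \emph{not} force the normal stratum $Y_{r-1}$ to be the unique rank-$2$ piece. The decomposition only gives $\rk(Y_r) \leq 1$ and that the maximum is attained at $k=r-1$ or $k=r$; it is consistent with this that $\rk(Y_r)=1$ (achieving the maximum), some rare stratum $Y_j$ with $j<r-1$ has rank $2$ (accounting for $\rk(Y)=2$), and $\rk(Y_{r-1})\leq 1$. What rules this out is precisely the full strength of (1), namely $\rk(Y_r)=0$, which then forces the maximum to occur at $k=r-1$ and hence $\rk(Y_{r-1})=2$. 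So you must prove (1) first; once you reorder, everything goes through.

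On (1) itself, the paper takes a shorter pointwise route (stated in the more general form of Lemma~\ref{hypersurfaces finitely many common points}): given a common point $x$, pick $\mathcal M$-independent $\mathcal M$-generics $t,t'\in{_xC}$ over $x$. Commonness gives $\rk(tt'/x)=2r$, so $t,t'$ are independent generics in $T$ and $x$ is $\mathcal M$-independent from $tt'$; almost faithfulness then yields $\rk(C_t\cap C_{t'})=0$, whence $\rk(x)=\rk(x/tt')=0$. This bypasses your global rank count and the pigeonhole on stationary components of $S$ entirely, and it is this form that generalizes cleanly to hypersurface families later in the paper. Your argument for (4) is essentially the paper's: the paper simply invokes Lemma~\ref{almost faithful facts}(1), which is exactly your pigeonhole step packaged once.
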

	\begin{proof} (1) is proven in more generality in Lemma \ref{hypersurfaces finitely many common points}. (2) follows from (1) via a short rank computation. (3) follows immediately from (2). (4) follows from (3) and Lemma \ref{almost faithful facts}(1).
	\end{proof}
		
		We also discuss the following relation on pairs of points in the plane, roughly saying that they belong to too many common curves.
	
		\begin{definition}\label{indistinguishable} Let $\mathcal C=\{C_t:t\in T\}$ be an almost faithful family of plane curves of rank $r\geq 2$. Let $x,x'\in M^2$. Then $x$ and $x'$ are \textit{semi-indistinguishable} in $\mathcal C$ if the set ${{_xC}}\cap{_{x'}C}$ has rank at least $r-1$.
			\end{definition}
		
		\begin{remark} We note that this is not in general an equivalence relation -- though it would be if each $x\in M^2$ was normal and each ${_xC}$ was stationary. As is, the sets ${_xC}$ could generally have two components each, for example, and then one could have $x$ and $x'$ such that ${_xC}$ and ${_{x'}C}$ share only one component. It is because of this possibility that we have called such points \textit{semi-indistinguishable} instead of \textit{indistinguishable}.
		\end{remark}
		
		The main observation to make about semi-indistinguishable points is the following:
		
		\begin{lemma}\label{indistinguishable interalgebraic}
			Let $\mathcal C=\{C_t:t\in T\}$ be an almost faithful family of plane curves of rank $r\geq 2$ over $A$.
			\begin{enumerate}
			\item If $x\in M^2$ is not common in $\mathcal C$, then there are only finitely many $x'\in M^2$ which are semi-indistinguishable from $x$ in $\mathcal C$.
			\item If $\mathcal C$ has no common points, then any two points in $M^2$ which are semi-indistinguishable in $\mathcal C$ are interalgebraic over $A$.
			\end{enumerate}
		\end{lemma}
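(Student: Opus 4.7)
The plan is to fix a non-common $x$ and show that each $x'$ semi-indistinguishable from $x$ is determined up to a finite set by a canonical base associated with a stationary component of ${_xC}$. Because ${_xC} \cap {_{x'}C} \subseteq {_xC}$ and $\rk({_xC}) \leq r-1$, the semi-indistinguishability hypothesis forces $\rk({_xC}) = r-1$ and $\rk({_xC} \cap {_{x'}C}) = r-1$. Hence there is a stationary component $S$ of ${_xC}$ of rank $r-1$ which is almost equal to some stationary component $S'$ of ${_{x'}C}$; setting $c = \operatorname{Cb}(S) = \operatorname{Cb}(S')$, Fact \ref{cb facts}(3) gives $c \in \operatorname{acl}(Ax) \cap \operatorname{acl}(Ax')$. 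Define $V_c = \{y \in M^2 : S \text{ is almost equal to some stationary component of } {_yC}\}$, which is definable over $Ac$ and contains both $x$ and $x'$.

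The key step is to prove $V_c \subseteq \operatorname{acl}(Ac)$. Given $y \in V_c$, I choose $t_1, t_2$ generic and independent in $S$ over $Acy$; stationarity of $S$ over $c$ gives $\rk(t_i/Acy) = r-1$, and since $S$ is almost contained in ${_yC}$, we have $y \in C_{t_1} \cap C_{t_2}$. Almost faithfulness of $\mathcal C$ now enters: for fixed $t_1$, only finitely many $t'$ satisfy $\rk(C_{t_1} \cap C_{t'}) = 1$, and all such $t'$ lie in $\operatorname{acl}(At_1)$. Since $\rk(t_2/Act_1) = r-1 \geq 1$, we have $t_2 \notin \operatorname{acl}(At_1)$, so $C_{t_1} \cap C_{t_2}$ is finite and $y \in \operatorname{acl}(At_1 t_2)$. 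Expanding $\rk(y t_1 t_2/Ac)$ via additivity in two orders and using $\rk(y/Act_1 t_2) = 0$ then yields $\rk(y/Ac) + 2(r-1) = 2(r-1)$, forcing $\rk(y/Ac) = 0$.

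Since $V_c$ is a non-empty definable set all of whose elements have rank zero over $Ac$, it is itself of rank zero, hence finite by Fact \ref{rank exists}(1). Because ${_xC}$ has only finitely many stationary components of rank $r-1$, only finitely many canonical bases $c$ arise, and the set of $x'$ semi-indistinguishable from $x$ is the union of the corresponding finite sets $V_c$, proving (1). For (2), when $\mathcal C$ has no common points the hypothesis of (1) applies to every point, so $x' \in V_c \subseteq \operatorname{acl}(Ac) \subseteq \operatorname{acl}(Ax)$; the symmetry of semi-indistinguishability then gives $x \in \operatorname{acl}(Ax')$, proving interalgebraicity over $A$. The main obstacle is establishing $V_c \subseteq \operatorname{acl}(Ac)$, which hinges on combining stationarity of $S$ over its canonical base with almost faithfulness to ensure that two independent generic members of $S$ index curves intersecting only finitely.
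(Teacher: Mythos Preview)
Your proof is correct, but it takes a longer route than the paper's one-line argument. The paper simply observes that if $x'$ is semi-indistinguishable from $x$ (with $x$ not common), then $\rk({_xC} \cap {_{x'}C}) = r-1 = \rk({_xC})$, which says exactly that $x'$ is a \emph{common} point of the subfamily $\{C_t : t \in {_xC}\}$; since this subfamily is almost faithful of rank $r-1 \geq 1$, Lemma~\ref{finitely many common points}(1) (proved in greater generality as Lemma~\ref{hypersurfaces finitely many common points}) gives finiteness immediately, and (2) follows from (1). What you have done is essentially reprove the finitely-many-common-points lemma from scratch in this special case: your choice of two independent generics $t_1, t_2 \in S$ and the use of almost faithfulness to force $C_{t_1} \cap C_{t_2}$ finite is precisely the engine of that lemma's proof. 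Your approach has the merit of being self-contained and of making the role of the canonical base explicit, which yields (2) directly via $x' \in \operatorname{acl}(Ac) \subseteq \operatorname{acl}(Ax)$ rather than by invoking symmetry; the paper's approach is shorter because it packages this work into a reusable lemma.

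One small repair: as written, your $V_c$ need not contain $x'$ when $x'$ happens to be common in $\mathcal C$, since then the stationary components of ${_{x'}C}$ have rank $r$, not $r-1$, and $S$ is not almost equal to any of them. The cleanest fix is to replace your definition by $V_c = \{y \in M^2 : S \text{ is almost contained in } {_yC}\}$; this is transparently definable over $Ac$ (it is the rank condition $\rk(S \setminus {_yC}) < r-1$), contains every $x'$ semi-indistinguishable from $x$ regardless of whether $x'$ is common, and your argument for $V_c \subseteq \operatorname{acl}(Ac)$ goes through verbatim.
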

		\begin{proof} (1) follows by realizing that, if $x'$ is semi-indistinguishable from $x$, then it is a common point of the subfamily of $\mathcal C$ indexed by ${_xC}$. (2) follows from (1).
		\end{proof}
	
	\subsection{Families of Intersections and Standard Families}
	
		In this subsection we define the family of intersections of two almost faithful families of plane curves, and point out several easy rank properties. We then isolate the notion of a \textit{standard family}, and observe that they always exist in a precise sense. This subsection, combined with the later notion of \textit{excellence}, contains much of the language and material regarding families of curves that we will revisit for the rest of the paper.
		
		\begin{definition} Let $\mathcal C=\{C_t:t\in T\}$ and $\mathcal D=\{D_u:u\in U\}$ be almost faithful families of plane curves, with graphs $C\subset M^2\times T$ and $D\subset M^2\times U$. Then by the \textit{family of intersections of $\mathcal C$ and $\mathcal D$}, we mean the family $\mathcal I=\{I_{(t,u)}:(t,u)\in T\times U\}$ given by $I_{(t,u)}=C_t\cap D_u$. That is, the graph of $\mathcal I$ is the set $I=\{(x,t,u)\in M^2\times T\times U:x\in C_t\cap D_u\}$.
		\end{definition}
	
		The following properties are all easy to check; we omit the proofs:
		
		\begin{lemma}\label{family ranks}
			Let $\mathcal C=\{C_t:t\in T\}$ and $\mathcal D=\{D_u:u\in U\}$ be almost faithful families of plane curves of ranks $r_{\mathcal C},r_{\mathcal D}\geq 1$, respectively. Let $\mathcal I=\mathcal I_{\mathcal C,\mathcal D}$ be the family of intersections, and let $C$, $D$, and $I$ be the graphs of $\mathcal C$, $\mathcal D$, and $\mathcal I$.
			\begin{enumerate}
				\item $\rk(C)=r_{\mathcal C}+1$, and the projections $C\rightarrow M^2$ and $C\rightarrow T$ are generically dominant. 
				\item $\rk(R)=r_{\mathcal C}+r_{\mathcal D}$, and if $\mathcal C$ and $\mathcal D$ have no common points then the projections $I\rightarrow C$ and $I\rightarrow D$ are generically dominant. 
				\item If either $r_{\mathcal C}\geq 2$ or $r_{\mathcal D}\geq 2$ then the projection $I\rightarrow T\times U$ is almost surjective and almost finite-to-one.
			\end{enumerate}
		\end{lemma}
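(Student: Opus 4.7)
The plan is to prove each of the three parts by applying rank additivity (Fact \ref{ranks of tuples facts}) to suitable projections, combined with the bookkeeping of common/normal/rare/absent points from Lemma \ref{finitely many common points} and with Remark \ref{generic dominance equivalence} as the working criterion for generic dominance.

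For part (1), I would compute $\rk(C)$ via the projection $C \to T$: the fiber over $t$ is the plane curve $C_t$, always of rank $1$, so additivity gives $\rk(C) = r_{\mathcal C} + 1$ and Lemma \ref{generic dominance facts}(4) gives generic dominance. For $C \to M^2$, the fiber over $x$ is ${_xC}$, which by Definition \ref{common points} has rank $r_{\mathcal C} - 1 = \rk(C) - \rk(M^2)$ exactly on the normal locus; since normal points are large in $M^2$ (Lemma \ref{finitely many common points}(2)), the preimage of the normal locus is large in $C$ with constant fiber rank $r_{\mathcal C} - 1$, so Remark \ref{generic dominance equivalence} applies.

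For part (2), I would estimate $\rk(I)$ via the projection $I \to M^2$, where the fiber over $x$ is ${_xC} \times {_xD}$: at normal-in-both points (large in $M^2$) this contributes exactly $(r_{\mathcal C} - 1) + (r_{\mathcal D} - 1)$, while the finitely many common points and the rank-$\leq 1$ rare/absent points contribute no more, yielding $\rk(I) = r_{\mathcal C} + r_{\mathcal D}$. For the generic dominance of $I \to C$ (assuming no common points in either family), the fiber over $(x, t)$ is ${_xD}$ with rank $\leq r_{\mathcal D} - 1$, attaining this bound on the large-in-$C$ subset where $x$ is normal in $\mathcal D$ (large by part (1) applied to $C \to M^2$); since $r_{\mathcal D} - 1 = \rk(I) - \rk(C)$, Remark \ref{generic dominance equivalence} applies. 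The case $I \to D$ is symmetric.

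The main obstacle is part (3), where the hypothesis $r_{\mathcal C} \geq 2$ or $r_{\mathcal D} \geq 2$ becomes essential (without it, two copies of a horizontal-line family already break the conclusion). Assuming without loss of generality that $r_{\mathcal D} \geq 2$, let $A$ be a parameter set over which $\mathcal C$ and $\mathcal D$ are defined, and let $(x, t, u)$ be generic in $I$ over $A$. I would argue by contradiction that $\rk(C_t \cap D_u) = 0$: otherwise $C_t \cap D_u$ contains a rank-$1$ piece shared by both almost faithful rank-$1$ families, so Lemma \ref{almost faithful facts}(2) forces $t$ and $u$ to be interalgebraic over $A$, whence $\rk(tu/A) \leq r_{\mathcal C}$; combined with $\rk(x/Atu) \leq 1$, this contradicts $\rk(xtu/A) = r_{\mathcal C} + r_{\mathcal D} \geq r_{\mathcal C} + 2$. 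The same computation then gives $\rk(tu/A) = r_{\mathcal C} + r_{\mathcal D} = \rk(T \times U)$, so that the generic fiber of $I \to T \times U$ is finite (almost finite-to-one) and its image contains a generic of $T \times U$; a short saturation argument via Lemma \ref{generic point characterization of function properties}(2) then upgrades this to almost surjectivity.
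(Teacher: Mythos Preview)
The paper omits the proof of this lemma entirely, saying only that the properties ``are all easy to check.'' Your sketch supplies exactly the routine rank bookkeeping one would expect, and parts (1) and (2) are clean and correct.

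One small caution on the closing step of part (3): you show that for a generic $(x,t,u)\in I$ the pair $(t,u)$ is generic in $T\times U$, and then invoke ``a short saturation argument'' to conclude almost surjectivity. That inference---from \emph{some} generic of $T\times U$ lying in the image to \emph{every} generic lying there---uses that $T\times U$ has a unique generic type, i.e.\ that $T$ and $U$ are stationary. Nothing in the hypotheses guarantees this. The fix is immediate: pass to stationary components $T_i\subset T$ and $U_j\subset U$ over $\operatorname{acl}_{\mathcal M}(A)$; your rank computation for $I$ and your finite-fiber argument go through verbatim for each restricted pair of families $(\mathcal C|_{T_i},\mathcal D|_{U_j})$, and now the saturation step is legitimate since $T_i\times U_j$ is stationary. (Alternatively, one can observe that every application of this lemma in the paper is to standard families, where $T$ and $U$ are generic subsets of powers of $M$ and hence already stationary.)
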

	
		For example, it follows from Lemma \ref{family ranks} that $(x,t)\in C$ is generic if and only if $t\in T$ is generic and $x\in C_t$ is generic over $t$, if and only if $x\in M^2$ is generic and $t\in{_xC}$ is generic. We will freely use facts of this nature throughout the paper.
		
		We now discuss standard families:
		
		\begin{definition}\label{standard family}
			Let $A$ be a set of parameters. By a \textit{standard family of plane curves over $A$}, we mean an almost faithful family $\{C_t:t\in T\}$ of rank $r\geq 1$ over $A$ which has no common points and is indexed by a generic subset of $M^r$. If $A=\emptyset$, we simply call $\mathcal C$ a \textit{standard family of plane curves}.
		\end{definition}
	
		Thus Lemma \ref{family ranks}(2) applies whenever $\mathcal C$ and $\mathcal D$ are standard families.
		
		We point out the following essential restatement of Lemma \ref{almost faithful families exist} (at least in case (b)) for standard families:
		
		\begin{lemma}\label{standard families exist} Let $X$ be a strongly minimal plane curve with canonical base $c$. Let $A$ be a set of parameters, and assume that $\rk(c/A)=r\geq 1$. Then there are a rank $r$ standard family $\mathcal C=\{C_t:t\in T\}$ of plane curves over $A$, and an element $\hat t\in T$, with the following properties:
			\begin{enumerate}
				\item $\hat t$ is generic in $T$ over $A$.
				\item $X$ is almost contained in $C_{\hat t}$.
				\item $c$ and $\hat t$ are interalgebraic over $A$.
			\end{enumerate}
		\end{lemma}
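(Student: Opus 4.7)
The plan is to deduce this from Lemma \ref{almost faithful families exist}, invoking clause (b), and then patch up the one missing requirement for standardness (no common points) by a finite surgery on the individual curves.

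First I would apply Lemma \ref{almost faithful families exist} to the stationary plane curve $X$ with the given parameter set $A$. Because $k := \rk(c/A) = r$, clause (b) produces a definable, rank $r$, almost faithful family $\{X_t : t \in T\}$ over $A$, together with a point $\hat t \in T$, such that $T$ is a generic subset of $M^r$, $\hat t$ is generic in $T$ over $A$, $X$ is almost contained in $X_{\hat t}$, and $c$ and $\hat t$ are interalgebraic over $A$. This already delivers properties (1)--(3) of the conclusion and every condition of Definition \ref{standard family} except for the absence of common points.

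Next, by Lemma \ref{finitely many common points}(1) the set $F \subset M^2$ of common points of $\{X_t\}$ is finite; it is moreover $A$-definable, since the condition ``$\rk({_xX}) = r$'' on $x \in M^2$ is $A$-definable by Fact \ref{rank exists}(3). Now set $C_t := X_t \setminus F$ for every $t \in T$, and let $\mathcal C = \{C_t : t \in T\}$. Because $F$ is a fixed finite set, each $C_t$ differs from $X_t$ in only finitely many points; in particular $\rk(C_t) = \rk(X_t)$ whenever this rank is positive, and for any $t, t' \in T$ the sets $C_t \cap C_{t'}$ and $X_t \cap X_{t'}$ are almost equal, so almost faithfulness of $\mathcal C$ is inherited from that of $\{X_t\}$. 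For the new family, any $x \in F$ has empty fiber ${_xC}$, while any $x \notin F$ satisfies ${_xC} = {_xX}$, whose rank is strictly less than $r$ by definition of $F$; hence $\mathcal C$ has no common points and is a standard family of plane curves over $A$.

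It only remains to check that properties (1)--(3) survive the surgery. Property (1) is untouched, since $\hat t$ and $T$ have not moved. For (2), from $X$ almost contained in $X_{\hat t}$ we know $X \setminus X_{\hat t}$ is finite, hence $X \setminus C_{\hat t} \subset (X \setminus X_{\hat t}) \cup (X \cap F)$ is finite as well, so $X$ is almost contained in $C_{\hat t}$. For (3), the family $\mathcal C$ is still definable over $A$ and the index of $C_{\hat t}$ is the same $\hat t$, so interalgebraicity of $c$ and $\hat t$ over $A$ is unaffected (and can alternatively be reread off from Lemma \ref{almost faithful facts}(3)). There is no substantive obstacle here; the only point one needs to be careful about is that the subtraction of $F$ is performed before passing to stationary components, so as not to inadvertently accumulate common points through the choice of representatives --- but this is automatic since $F$ itself is $A$-definable.
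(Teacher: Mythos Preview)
Your proof is correct and follows essentially the same approach as the paper: apply Lemma \ref{almost faithful families exist}(b) to obtain an almost faithful family indexed by a generic subset of $M^r$ satisfying (1)--(3), then delete the finitely many common points (given by Lemma \ref{finitely many common points}(1)) from every curve to achieve standardness while preserving (1)--(3). The paper's proof is terser but identical in content; your final caution about stationary components is unnecessary, as no such passage occurs in the argument.
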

		\begin{proof} In light of Lemma \ref{almost faithful families exist}, the only thing to check is that $\mathcal C$ can be taken to have no common points. This follows from Lemma \ref{finitely many common points} -- indeed, if $\mathcal C$ has common points then we can replace it with the family $\mathcal C'$, defined by letting $C'_t$ contain those elements of $C_t$ which are not common in $\mathcal C$. Then $\mathcal C'$ is still definable over $A$, and each $C'_t$ is cofinite in $C_t$; it follows easily that $\mathcal C'$ is standard over $A$, and (1)-(3) still hold.
		\end{proof}
	
	\subsection{Dual Families}
	
		Let $\mathcal C=\{C_t:t\in T\}$ be a rank 2 almost faithful family of plane curves. By Lemma \ref{finitely many common points}(2), the sets ${_xC}$ for $x\in M$ are generically of rank 1. It is thus natural to think of the ${_xC}$ as a family of curves in $T$ indexed by $M^2$. If moreover $T$ is a generic subset of $M^2$, we obtain a `dual' family of plane curves, exactly analogous to the duality between points and lines in a projective plane. In this subsection we make this connection precise.
		
		\begin{definition}\label{good family} A family $\mathcal C=\{C_t:t\in T\}$ of plane curves is called \textit{good} if each of the following holds:
			\begin{enumerate}
				\item $\mathcal C$ is almost faithful.
				\item $T$ is a generic subset of $M^2$.
				\item $\mathcal C$ has no common or rare points.
				\end{enumerate}
			\end{definition}
		
		So a good family is standard of rank 2 over any set defining it. Our goal is to show that every good family of plane curves has a natural dual in the sense described above, and that the dual is also a good family. We do this now:
		
		\begin{lemma}\label{dual is good} Let $\mathcal C=\{C_t:t\in T\}$ be a good family of plane curves, with graph $C\subset M^2\times T$. Then the set $C^\vee=\{(t,x):(x,t)\in C\}$ is also the graph of a good family of plane curves, which is indexed by the set $T^\vee$ of normal points of $\mathcal C$.
			\end{lemma}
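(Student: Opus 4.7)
The plan is to verify the three conditions of Definition \ref{good family} directly for the family $\mathcal C^\vee$ whose members are the fibers ${_xC} = \{t \in T : x \in C_t\}$ for $x \in T^\vee$. First, since each $x \in T^\vee$ is normal by definition, we have $\rk({_xC}) = r-1 = 1$, and since ${_xC} \subseteq T \subseteq M^2$, each member is a plane curve in $M^2$. Meanwhile, condition (2) of Definition \ref{good family}---that the index set $T^\vee$ is generic in $M^2$---follows immediately from Lemma \ref{finitely many common points}(2).

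For condition (3) (no common or rare points in $\mathcal C^\vee$), I would first note that a point $t \in T$ is common in $\mathcal C^\vee$ precisely when $\{x \in T^\vee : t \in {_xC}\} = C_t \cap T^\vee$ has rank $2$; but this set is contained in $C_t$, which has rank $1$, so no $t$ is common. For rare points, the key observation is that because $\mathcal C$ has no common and no rare points, every $x$ lying on some $C_t$ must be normal---the options common and rare are excluded by hypothesis on $\mathcal C$, and absent is excluded because $t \in {_xC}$. Hence $C_t \subseteq T^\vee$, so $C_t \cap T^\vee = C_t$ has rank exactly $1$, and no $t$ is rare in $\mathcal C^\vee$ either.

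Condition (1), almost faithfulness, is where the content lies. For $x, x' \in T^\vee$, the intersection ${_xC} \cap {_{x'}C}$ sits inside ${_xC}$ and hence has rank at most $1$, so the almost-faithfulness condition $\rk({_xC} \cap {_{x'}C}) = 1$ coincides exactly with the semi-indistinguishability relation of Definition \ref{indistinguishable} applied to the rank-$2$ family $\mathcal C$ (where the threshold is $r-1 = 1$). Since $\mathcal C$ has no common points, Lemma \ref{indistinguishable interalgebraic}(1) applies and yields that each $x \in T^\vee$ is semi-indistinguishable from only finitely many $x'$, giving almost faithfulness of $\mathcal C^\vee$.

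The only subtlety to watch is the trichotomy argument used for condition (3)---that the absence of both common and rare points in $\mathcal C$ forces the full containment $C_t \subseteq T^\vee$ for every $t$. Once this observation is made, the verification reduces to straightforward bookkeeping with the established preliminary lemmas, with no nontrivial model-theoretic obstacles.
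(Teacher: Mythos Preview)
Your proof is correct and follows essentially the same approach as the paper's proof: you invoke Lemma~\ref{finitely many common points}(2) for the genericity of $T^\vee$, Lemma~\ref{indistinguishable interalgebraic} for almost faithfulness, and the fact that each $C_t$ has rank~$1$ (together with the containment $C_t\subseteq T^\vee$) for the absence of common and rare points. The paper's proof is terser, but the underlying argument is the same; in particular your observation that goodness of $\mathcal C$ forces $C_t\subseteq T^\vee$ is exactly the paper's remark that $C\subset T^\vee\times T$.
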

		\begin{proof} We only sketch the proof, as it is straightforward. First, it follows from the goodness of $\mathcal C$ that $C\subset T^\vee\times T$, so $C^\vee\subset T\times T^\vee$. Also, by the definition of normality it follows that each $C^\vee_x={_xC}$ is a plane curve. Thus $C^\vee $ is indeed the graph of a family of plane curves indexed by $T^\vee$. Call this family $\mathcal C^\vee$. It remains to show that $\mathcal C^\vee$ is good. This is essentially immediate -- namely, we note:
			\begin{enumerate}
			\item That $\mathcal C^\vee$ is almost faithful is a restatement of Lemma \ref{indistinguishable interalgebraic}.
			\item That $T^\vee$ is generic in $M^2$ is a restatement of Lemma \ref{finitely many common points}(2).
			\item That $\mathcal C^\vee$ has no common or rare points follows since each $C_t$ has rank 1.
		\end{enumerate}
		\end{proof}
	
		\begin{definition} If $\mathcal C$ is a good family of plane curves, then the family constructed in Lemma \ref{dual is good} will be called the \textit{dual family of $\mathcal C$}, and denoted $\mathcal C^\vee$.
		\end{definition}
		The reader might wish to note that $\mathcal C^{\vee\vee}=\mathcal C$, but we will not need this.
	
		\subsection{Non-triviality and Excellent Families}
		In the final subsection we discuss an improvement of the condition of goodness, which we call \textit{excellence}. First we discuss non-triviality of plane curves:
		
		\begin{lemma}\label{non-trivial characterization} Let $X\subset M^2$ be a plane curve. Then exactly one of the following holds:
				\begin{enumerate}
					\item There is some $m\in M$ such that $X$ contains a cofinite subset of either $M\times\{m\}$ or $\{m\}\times M$.
					\item Each of the two projections $X\rightarrow M$ is finite-to-one.
				\end{enumerate}
			\end{lemma}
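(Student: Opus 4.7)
The plan is to prove both mutual exclusion and exhaustivity directly, using only strong minimality of $M$ together with elementary fiber considerations for the two projections $\pi_1,\pi_2:X\to M$. The main ingredient will be the fact that any infinite definable subset of $M$ is cofinite, which upgrades any ``big fiber'' of a projection into a cofinite fiber of the type demanded by clause (1).

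For mutual exclusion I would argue as follows. Assume (1) holds, say $X$ contains a cofinite subset $S$ of $\{m\}\times M$. Then the fiber of the first projection $\pi_1:X\to M$ over $m$ contains $S$, which is infinite; hence $\pi_1$ is not finite-to-one, and (2) fails. The symmetric case (where $S$ is cofinite in $M\times\{m\}$) shows that $\pi_2$ fails to be finite-to-one. So (1) and (2) cannot both hold.

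For exhaustivity, suppose (2) fails, and without loss of generality assume $\pi_1:X\to M$ is not finite-to-one. Then there exists $a\in M$ such that the fiber $X_a:=\{y\in M:(a,y)\in X\}$ is infinite. Since $X_a$ is a definable subset of the strongly minimal set $M$, it must in fact be \emph{cofinite} in $M$. But then $\{a\}\times X_a\subset X$ is a cofinite subset of $\{a\}\times M$, so (1) holds with $m=a$. If instead $\pi_2$ is the projection that fails to be finite-to-one, the same argument with the roles of coordinates swapped produces some $m\in M$ for which $X$ contains a cofinite subset of $M\times\{m\}$. Thus (1) holds in either case.

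I do not anticipate any real obstacle here; the only point worth noting is the automatic passage from ``infinite fiber'' to ``cofinite fiber,'' which is precisely the content of strong minimality of $M$ and is what forces the dichotomy to be as clean as stated. The argument does not require any nontrivial facts about plane curves beyond the definition $\rk(X)=1$, which is used implicitly only to ensure that $X$ itself is a definable subset of $M^2$ whose projections land in $M$.
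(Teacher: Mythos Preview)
Your proof is correct and is exactly the argument the paper has in mind: the paper's own proof is the single line ``Immediate by the strong minimality of $M$,'' and you have simply unpacked that statement by showing that an infinite fiber must be cofinite. There is nothing to add.
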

			\begin{proof} Immediate by the strong minimality of $M$.
			\end{proof}
			
			\begin{definition} We call plane curves satisfying (1) of Lemma \ref{non-trivial characterization} \textit{trivial}, and those satisfying (2) \textit{non-trivial}.
			\end{definition}
			
			If $X$ is a stationary (i.e. strongly minimal) plane curve, then triviality is equivalent to $X$ being `horizontal' or `vertical' (i.e. having finite symmetric difference with a set of the form $M\times\{m\}$ or $\{m\}\times M$). For general $X$, triviality is equivalent to one of the stationary components of $X$ being horizontal or vertical.
			
			Note that triviality is definable in families, since finiteness is. Thus if $X$ is a stationary non-trivial plane curve, in applying Lemma \ref{standard families exist} we can insist that the resulting family contain only non-trivial plane curves.
			
			Note also that if $X$ is stationary and trivial, then $\operatorname{Cb}(X)$ is a single point in $M$ (i.e. the coordinate appearing infinitely often in $X$) -- and thus $\rk(\operatorname{Cb}(X))\leq 1$. The following is then easy to check:
			
			\begin{lemma}\label{generic non-triviality}
				If $\mathcal C=\{C_t:t\in T\}$ is an almost faithful family of plane curves of rank $\geq 2$ over $A$, and $t\in T$ is generic over $A$, then $C_t$ is non-trivial.
			\end{lemma}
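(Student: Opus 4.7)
The plan is to prove the contrapositive by a short rank computation, leveraging the remark made just before the lemma (that a stationary trivial plane curve has canonical base of rank at most $1$) together with Lemma \ref{almost faithful facts}(3).

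First, I would fix $t \in T$ generic over $A$ and suppose for contradiction that $C_t$ is trivial. By the characterization given in Lemma \ref{non-trivial characterization} together with the remark afterward, this means some stationary component $X$ of $C_t$ is horizontal or vertical, i.e.\ almost equal to $\{m\} \times M$ or $M \times \{m\}$ for some $m \in M$. Then $\operatorname{Cb}(X)$ is interdefinable with the single coordinate $m \in M$, so $\rk(\operatorname{Cb}(X)) \leq 1$.

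Next, I would apply Lemma \ref{almost faithful facts}(3) to the stationary component $X$ of $C_t$: since $\mathcal C$ is almost faithful over $A$, the canonical base $\operatorname{Cb}(X)$ and the index $t$ are interalgebraic over $A$. In particular,
\[
\rk(t/A) = \rk(\operatorname{Cb}(X)/A) \leq \rk(\operatorname{Cb}(X)) \leq 1.
\]
On the other hand, since $t$ is generic in $T$ over $A$ and $\rk(\mathcal C) = \rk(T) \geq 2$, we have $\rk(t/A) = \rk(T) \geq 2$. This contradicts the displayed inequality, completing the proof.

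There is no real obstacle here; the only point requiring a bit of care is the passage from the triviality of $C_t$ (which is not assumed stationary) to the existence of a trivial \emph{stationary} component, which is exactly what the remark after Lemma \ref{non-trivial characterization} provides. Once one has that stationary component, the combination of the canonical-base-is-a-point observation with the interalgebraicity statement of Lemma \ref{almost faithful facts}(3) immediately forces $\rk(t/A) \leq 1$, contradicting genericity in a family of rank at least $2$.
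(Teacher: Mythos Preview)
Your proof is correct and is essentially the same as the paper's: both use Lemma \ref{almost faithful facts}(3) to conclude that $\rk(\operatorname{Cb}(X)/A)=\rk(t/A)\geq 2$ for every stationary component $X$ of $C_t$, and then invoke the observation that a stationary trivial plane curve has canonical base of rank at most $1$. The only difference is cosmetic---you phrase it as a contrapositive focusing on one trivial component, while the paper states it directly for all components.
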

			\begin{proof} By Lemma \ref{almost faithful facts}(3), for each stationary component $X\subset X_t$ we have $$\rk(\operatorname{Cb}(X))\geq\rk(\operatorname{Cb}(X)/A)=\rk(t/A)\geq 2,$$ so $X$ cannot be trivial.
			\end{proof}
			
			We now define:
			
			\begin{definition}\label{excellent family}
				A family $\mathcal C$ of plane curves is \textit{excellent} if it is good, and moreover every plane curve in $\mathcal C$ or $\mathcal C^\vee$ is non-trivial.
			\end{definition}
		
			Our main goal is to show the following lemma and proposition. Again, we only sketch the proofs:
			
			\begin{lemma}\label{excellent families always exist} Let $\mathcal C=\{C_t:t\in T\}$ be an almost faithful family of plane curves, with $T$ a generic subset of $M^2$. If $C$ is the graph of $\mathcal C$, then there is a large definable subset $C'\subset C$ which is the graph of an excellent family of plane curves. 
			\end{lemma}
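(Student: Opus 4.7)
The plan is to produce $C'$ by two successive large-subset prunings of $C$: first to force $\mathcal C$ to become good with all curves non-trivial, and then to force the dual curves to be non-trivial as well.

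For the first prune, I would let $S \subset M^2$ be the union of the common, rare, and absent points of $\mathcal C$, and $E \subset T$ the union of the elitist indices and the indices $t$ with $C_t$ trivial. By Lemma \ref{finitely many common points} and Lemma \ref{generic non-triviality}, $\rk S \leq 1$ and $\rk E \leq 1$. After removing a finite ``bad'' set $F \subset M^2 \setminus S$ (addressed below), I would set $C_1 := C \cap (((M^2 \setminus S) \setminus F) \times (T \setminus E))$. The removed portion of $C$ has rank at most $2$, while $\rk C = 3$ by Lemma \ref{family ranks}(1), so $C_1$ is large in $C$. The associated family $\mathcal C_1$ will then be an almost faithful subfamily of $\mathcal C$ with parameter set generic in $M^2$ and no common points (a common point would lie in $S$); and each of its curves is a cofinite subset of a non-trivial $C_t$ (since $t$ non-elitist means $C_t$ has only finitely many non-normal points), hence non-trivial.

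For the second prune, the dual $\mathcal C_1^\vee$ exists by Lemma \ref{dual is good}, is good of rank $2$, and so its generic members are non-trivial by Lemma \ref{generic non-triviality}. Let $X \subset M^2$ be the large set of $x$ normal in $\mathcal C_1$ with ${_x\mathcal C_1}$ non-trivial, and let $T' \subset T \setminus E$ be obtained by removing the finitely many indices $t$ for which $(C_1)_t \cap X$ fails to be cofinite in $(C_1)_t$ (finite by Lemma \ref{almost faithful facts}(1) applied to $\mathcal C_1$ with the rank-$\leq 1$ set $Y := M^2 \setminus X$). Setting $C' := C_1 \cap (X \times T')$, one gets $C' \subset C$ large, $\mathcal C'$ good with every curve cofinite in a non-trivial member of $\mathcal C_1$, and every normal point of $\mathcal C'$ lying inside $X$, so that the dual curves are cofinite in non-trivial members of $\mathcal C_1^\vee$. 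Hence $\mathcal C'$ is excellent.

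The main obstacle will be verifying that no rare points are introduced during either prune: shrinking a curve $C_t$ to a cofinite subset can convert a previously normal $x$ into a rare point of the new family precisely when a full rank-$1$ component of ${_x\mathcal C}$ lies inside the discarded portion of $T$. I would control this by decomposing the relevant rank-$\leq 1$ discarded set (namely $E$ in the first prune, and $T \setminus T'$ in the second) into stationary components via Fact \ref{rank exists}(4), and then invoking Lemma \ref{indistinguishable interalgebraic}(1): any two $x$, $x'$ each admitting a common stationary rank-$1$ subset inside their fibers would be semi-indistinguishable in $\mathcal C$, forcing the pathological $x$'s to form a finite set. This finite set is precisely the set $F$ in the first prune (and is vacuous in the second, since $T \setminus T'$ is already finite by construction).
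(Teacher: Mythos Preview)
Your argument is correct but takes a different route from the paper's. The paper proceeds by a more uniform iterative scheme: it proves a three-part claim showing that (i) any family of plane curves has a large subgraph which is good, (ii) any good family has a large subgraph on which all four projections $M^4 \to M^3$ are finite-to-one, and (iii) any rank-$3$ set with this projection property has a large subgraph which is again a family of plane curves. It then chains these as $C \supset D \supset E \supset F \supset G$, applying (i), (ii), (iii), (i) in turn; the resulting good family $\mathcal G$ is excellent simply because $G \subset E$ inherits the four finite-to-one projections, and that condition is exactly non-triviality of both $\mathcal G$ and $\mathcal G^\vee$.

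Your approach is more hands-on: you prune directly to goodness with non-trivial curves, then prune again to make the dual curves non-trivial. This is perfectly valid --- the semi-indistinguishability argument for the finiteness of $F$ goes through as you describe, and the second prune removes only finitely many indices so no new rare points appear. The price is exactly that extra work controlling rare points via $F$. The paper sidesteps this entirely by treating ``make good'' as a black box (Claim part (1)) and observing that the four-projection condition, once achieved, is automatically inherited by any subset; your approach trades that abstraction for explicitness. Both get there.
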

			\begin{proof}
				First we note the following:
				\begin{claim} Let $D\subset M^4$ be a definable set of rank 3.
					\begin{enumerate}
						\item If $D$ is the graph of a family of plane curves, then there is a large definable subset $D'\subset D$ which is the graph of a good family of plane curves.
						\item If $D$ is the graph of a good family of plane curves, then there is a large definable subset $D'\subset D$ such that each of the four projections $D'\rightarrow M^3$ is finite-to-one.
						\item If each of the four projections $D\rightarrow M^3$ is finite-to-one, then there is a large definable subset $D'\subset D$ which is the graph of a family of plane curves.
					\end{enumerate}
				\end{claim}
			\begin{proof}
				Assume that $D$ is definable over $A$.
				\begin{enumerate}
					\item Let $D$ be the graph of $\mathcal D=\{D_u:u\in U\}$. Let $U'$ be the set of $u\in U$ which are not elitist in $\mathcal D$, and for $u\in U'$ let $D'_u$ be the set of $x\in D_u$ which are normal in $\mathcal D$. It is easy to check that the family $\mathcal D'=\{D'_u:u\in U'\}$ will do, for example using Lemma \ref{finitely many common points}.
					\item For any generic $(x_1,x_2,x_3,x_4)\in D$ over $A$, it follows from Lemma \ref{generic non-triviality} (applied to each of $\mathcal D$ and $\mathcal D^\vee$) that each $x_i$ is algebraic over $A$ and the other $x_j$'s. Now apply the Compactness Theorem.
					\item Let $D'$ be the set of $(x_1,x_2,x_3,x_4)\in D$ such that the fiber $D_{(x_3,x_4)}$ has rank 1. It is easy to see that $D'$ is large in $D$.
				\end{enumerate}
			\end{proof}
			Now using the claim, we build a chain of large subsets $C\supset D\supset E\supset F\supset G$ with the following properties:
			\begin{itemize}
				\item $D$ is the graph of a good family of plane curves.
				\item Each of the four projections $E\rightarrow M^3$ is finite-to-one.
				\item $F$ is the graph of a family of plane curves.
				\item $G$ is the graph of a good family of plane curves.
			\end{itemize}
			Let $G$ be the graph of $\mathcal G$. Then $\mathcal G$ is good and $G\subset E$, which implies that $\mathcal G$ is excellent. 
			\end{proof}	
		
		Finally, we conclude:
		
	\begin{proposition}\label{excellent families exist} $\mathcal M$ is not locally modular if and only if there is an excellent family of plane curves in $\mathcal M$.
	\end{proposition}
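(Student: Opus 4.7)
The proof breaks into two directions, of which the harder is the forward implication.

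For the backward direction, suppose $\mathcal{C} = \{C_t : t \in T\}$ is an excellent family of plane curves. By definition $\mathcal{C}$ is good, so it is almost faithful and $T$ is a generic subset of $M^2$, hence $\rk(\mathcal{C}) = \rk(T) = 2$. This gives condition (3) of Fact \ref{characterization of local modularity}, so $\mathcal{M}$ is not locally modular.

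For the forward direction, assume $\mathcal{M}$ is not locally modular. By Fact \ref{characterization of local modularity}(3), there exists a definable almost faithful family $\mathcal{C}_0 = \{C_t : t \in T_0\}$ of plane curves with $\rk(T_0) = 2$. My plan is to (i) reparametrize $\mathcal{C}_0$ so that its index set becomes a generic subset of $M^2$, and then (ii) apply Lemma \ref{excellent families always exist} to carve out an excellent subfamily from the result.

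For step (i), apply Fact \ref{rank exists}(5) to get a finite-to-one definable map $g : T_0 \to M^2$; then $g(T_0)$ is a definable subset of $M^2$ of rank $2$, hence generic. Define a new family $\mathcal{C}' = \{C'_v : v \in g(T_0)\}$ by setting $C'_v = \bigcup_{g(t) = v} C_t$. Since $g$ has finite fibers, each $C'_v$ is a finite union of plane curves and thus itself a plane curve. To verify that $\mathcal{C}'$ remains almost faithful, observe that if $\rk(C'_v \cap C'_{v'}) = 1$ for some $v \neq v'$ in $g(T_0)$, then by the finitely many possibilities for pairs $(t, t')$ with $g(t) = v, g(t') = v'$, some such pair must satisfy $\rk(C_t \cap C_{t'}) = 1$; the almost faithfulness of $\mathcal{C}_0$ combined with the finiteness of the fibers of $g$ then bounds the number of such $v'$ for each fixed $v$. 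So $\mathcal{C}'$ is a definable almost faithful family of plane curves whose parameter space is a generic subset of $M^2$.

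For step (ii), let $C'$ be the graph of $\mathcal{C}'$. Lemma \ref{excellent families always exist} applies directly and produces a large definable subset of $C'$ which is the graph of an excellent family of plane curves, completing the proof. The main conceptual work here has already been done in Lemma \ref{excellent families always exist}; the only genuinely new step in my plan is the reparametrization, and its main subtlety is just the book-keeping to confirm that replacing $T_0$ by $g(T_0)$ preserves almost faithfulness, as checked above.
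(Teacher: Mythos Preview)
Your proof is correct and follows essentially the same route as the paper: the backward direction is immediate, and for the forward direction both you and the paper reduce to Lemma \ref{excellent families always exist} after producing an almost faithful rank-2 family indexed by a generic subset of $M^2$. The only difference is cosmetic: the paper cites Lemma \ref{standard families exist} (applied to a plane curve with canonical base of rank $\geq 2$) to obtain such a family in one step, whereas you start from Fact \ref{characterization of local modularity}(3) and carry out the finite-to-one reparametrization by hand --- which is precisely the content of case (b) in the proof of Lemma \ref{almost faithful families exist}.
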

	\begin{proof} Clearly an excellent family witnesses non-local modularity. Conversely, assume that $\mathcal M$ is not locally modular. Then there is a plane curve $X$ whose canonical base $c$ has rank $\geq 2$. Choose any set $A$ such that $\rk(c/A)=2$. Then by Lemma \ref{standard families exist} there is a standard family of plane curves over $A$ of rank 2, from which Lemma \ref{excellent families always exist} produces an excellent family.
	\end{proof}

	\section{The Setting, Conventions, and Coherence}
	
	In this section we fix the setting we will work in for the rest of the paper. Notably, this will involve clarifying some issues surrounding the terminology of rank and genericity. We then introduce the convenient notion of coherence, which will help us to deal with these issues smoothly throughout.
	
	\subsection{The Setup}\label{setup}
We will work throughout with a strongly minimal structure $\mathcal M=(M,...)$ which is interpretable in the complex field and is not locally modular. By elimination of imaginaries and quantifiers in algebraically closed fields, we may assume the universe $M$ to be a \textit{constructible} subset of some $\mathbb C^d$ (that is, a finite Boolean combination of affine algebraic sets). Note that this implies $|M|=|\mathbb C|$, so in particular $M$ is uncountable.

Now by Proposition \ref{excellent families exist}, there is an excellent family of plane curves in $\mathcal M$, which we may assume after adding finitely many constants to be definable over $\emptyset$ in $\mathcal M$. Since this excellent family is already a witness to non-local modularity, we may without loss of generality disregard the rest of the structure of $\mathcal M$; in particular, we may assume that the language of $\mathcal M$ is countable, so that the cardinality assumption of Convention \ref{sm conventions} holds. On the other hand, if we then add a countably infinite set of constant symbols to the language, we may further assume that Assumption \ref{acl(0) infinite} holds, and thereby that we can use all facts from the previous section. In particular, note that adding constants does not affect the interpretability of a field in $\mathcal M$, so these added assumptions are harmless.

Now assuming that the language of $\mathcal M$ is countable, it follows that $\mathcal M$ is interpretable in $\mathbb C$ over a countable set of parameters. We may thus work with a fixed expansion $\mathcal K$ of $\mathbb C$ by countably many constants, and thereby assume that $\mathcal M$ is interpretable over $\emptyset$ in $\mathcal K$. In particular, since we only add countably many constant symbols, $\mathcal K$ is also strongly minimal, and also satisfies the cardinality assumption of Convention \ref{sm conventions}. This means that we can take generic points of definable sets in the sense of either $\mathcal M$ or $\mathcal K$.

Putting the last three paragraphs together, we now make the following assumption:

\begin{assumption}\label{M and K} From now through the proof of Theorem \ref{main theorem}, we fix the following data:
	\begin{enumerate}
		\item A structure $\mathcal K$ expanding the complex field by countably many constants.
		\item A strongly minimal structure $\mathcal M$ in a countable language which is interpretable in $\mathcal K$ over $\emptyset$, and whose universe $M$ is a constructible subset of some affine space over $\mathbb C$.
		\item A countably infinite set of constant symbols in the language of $\mathcal M$ whose interpretations in $M$ are distinct.
	\end{enumerate}
	Moreover, we assume that there is an excellent family of plane curves in $\mathcal M$ which is definable over $\emptyset$ in $\mathcal M$.
\end{assumption}

	\begin{convention} We will refer to definable (respectively interpretable) sets in $\mathcal M$ as \textit{$\mathcal M$-definable} (respectively \textit{$\mathcal M$-interpretable}). As in Convention \ref{sm conventions}, we reserve the term \textit{$\mathcal M$-definable} for subsets of cartesian powers of $M$. Definable sets in $\mathcal K$ (equivalently, constructible subsets of powers of $\mathbb C$) will either be called \textit{$\mathcal K$-definable} or \textit{constructible}, and interpretable sets in $\mathcal K$ will be called \textit{$\mathcal K$-interpretable}.
	\end{convention}

	We also make the following convention:
	
	\begin{convention}\label{variety} We will interpret the term \textit{variety} in the classical sense, as a set of points defined by equations -- that is, we freely identify a variety over $\mathbb C$ in the scheme-theoretic sense with its set of $\mathbb C$-points. We also interpret the \textit{Zariski topology} on $V$ as the topology whose closed sets are given by closed subvarieties of $V$ (again, in the classical sense).
	\end{convention}
	
	We adopt Convention \ref{variety} not as an insult to scheme theory, but because it is more suited to doing model theory. Of course, since $\mathbb C$ is algebraically closed, there is little harm in forgetting scheme-theoretic points. 
	
	Since $M$ is embedded into an affine space over $\mathbb C$, we will treat all $\mathcal M$-definable sets as inheriting both the Zariski and analytic topologies from $\mathbb C$. Thus, for example, even if we started with a strongly minimal structure on a projective variety $V$, we have opted to endow $V$ with the affine topologies inherited from a discontinuous embedding into a higher dimensional affine space. We stress that we will work with \textit{both} topologies (Zariski and analytic) on each power of $M$ -- and in many cases we will be intentionally vague about which topology we refer to. This is because of the following:

\begin{fact}[\cite{Mum}, I.10, Corollary 1]\label{zariski and analytic closures agree} Let $X\subset\mathbb C^d$ be any constructible set. Then the Zariski and analytic closures of $X$ are the same.
\end{fact}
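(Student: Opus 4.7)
The plan is to prove the two inclusions separately. The easy direction is $\overline{X}^{\mathrm{an}}\subseteq\overline{X}^{\mathrm{Zar}}$: since every polynomial map $\mathbb C^d\to\mathbb C$ is continuous in the analytic topology, each Zariski closed set is analytically closed, so $\overline{X}^{\mathrm{Zar}}$ is an analytically closed set containing $X$ and therefore contains $\overline{X}^{\mathrm{an}}$.

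For the reverse inclusion I would reduce to a single clean assertion about irreducible varieties. Writing $X$ as a finite union of locally closed subsets (which one can do for any constructible set), and decomposing each piece according to the irreducible components of its Zariski closure, one may write $\overline{X}^{\mathrm{Zar}}=Y_1\cup\dots\cup Y_n$ where each $Y_i$ is an irreducible subvariety and $X$ contains a nonempty Zariski open subset $U_i\subseteq Y_i$. It then suffices to show each $Y_i$ is contained in $\overline{U_i}^{\mathrm{an}}$, which reduces the whole problem to the following assertion: if $Y$ is an irreducible subvariety of $\mathbb C^d$ and $U\subseteq Y$ is a nonempty Zariski open subset, then $U$ is analytically dense in $Y$.

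To prove this key assertion I would pass to the smooth locus $Y^{\mathrm{sm}}$, a nonempty Zariski open subset of $Y$. A standard fact (provable, e.g., via the local parametrization theorem or normalization) is that $Y^{\mathrm{sm}}$ is both dense and connected in the analytic topology; granted this, $Y^{\mathrm{sm}}$ is a connected complex manifold, and $Y^{\mathrm{sm}}\setminus U$ is a proper analytic subset of strictly smaller complex dimension. Since a proper analytic subset of a connected complex manifold has empty interior (an application of the identity theorem for holomorphic functions to local parametrizations), $U\cap Y^{\mathrm{sm}}$ is analytically dense in $Y^{\mathrm{sm}}$, hence analytically dense in $Y$.

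The main obstacle is the complex-analytic input that an irreducible complex variety is analytically connected and that its smooth locus is analytically dense; both are nontrivial results in the local theory of analytic sets and are essentially packaged together with the statement we are trying to prove. In a self-contained write-up I would either invoke the local parametrization theorem for analytic sets (as in Mumford's reference) or, alternatively, proceed by Noether normalization: realize $Y$ as a finite cover of an affine space $\mathbb C^{\dim Y}$, note that the ramification locus is a proper Zariski closed subset, and use the fact that a nonempty Zariski open in $\mathbb C^{\dim Y}$ is clearly analytically dense to transport analytic density upstairs through the finite cover.
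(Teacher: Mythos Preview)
Your proposal is a correct outline of the standard proof, but note that the paper does not give its own proof of this statement: it is quoted as a Fact with a citation to Mumford (\cite{Mum}, I.10, Corollary 1) and used as a black box. So there is no in-paper argument to compare against; your sketch essentially reconstructs the classical proof that the cited reference provides.
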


In particular, the following is justified:

\begin{convention}
	Let $X$ be any constructible subset of some $\mathbb C^d$ (so in particular $X$ could be any $\mathcal M$-definable set). The notation $\overline X$ will denote the closure of $X$ with respect to either the Zariski or analytic topology on $\mathbb C^d$. The notation $\operatorname{Fr}(X)$ will denote the frontier of $X$, namely $\overline{X}-X$.
\end{convention}

\subsection{Dimension and Rank}

Note that since there are two strongly minimal structures at play in our setting, we have two ways to interpret each of the concepts of the previous section. We now discuss how these interpretations relate to each other, and specify the terminology we will use to distinguish them. In large part, we will treat $\mathcal M$ as the `default' structure, referring to $\mathcal K$ only in the background. However, there are a few exceptions. Let us make them explicit: 

\begin{convention}\label{dim rk conventions} Throughout the rest of the paper, we adopt the following:
	\begin{enumerate}
		\item Unless otherwise stated (and in particular everywhere outside Sections 4 and 7), all tuples are assumed to be taken in $\mathcal M^{\textrm{eq}}$. As in Convention \ref{sm conventions}, all parameter sets are assumed to be sets of tuples from $\mathcal M^{\textrm{eq}}$ of cardinality less than $|M|=2^{\aleph_0}$. 
		\item The term \textit{rank}, and the notation rk, will always refer to the notion of rank in $\mathcal M$. 
		\item The term \textit{plane curve}, and all properties of plane curves and families of plane curves, are interpreted in the sense of $\mathcal M$. Similarly, the terms \textit{stationary}, \textit{stationary component}, and \textit{canonical base} are always interpreted in the sense of $\mathcal M$.
		\item When referring to the rank functions of $\mathcal K$, we will use the term \textit{dimension}, and the notation dim.
		\item On the other hand, the terms \textit{generic}, \textit{independent}, and \textit{algebraic}, and the notation acl(A), are always interpreted in the sense of $\mathcal K$. We will refer to the corresponding terms in $\mathcal M$ with the prefix $\mathcal M$ (e.g. $\mathcal M$-generic), and algebraic closure in $\mathcal M^{\textrm{eq}}$ will be denoted $\operatorname{acl}_{\mathcal M}(A)$. 
	\end{enumerate}
\end{convention}

\begin{remark} It is not truly necessary to differentiate between parameter sets in $\mathcal K$ and $\mathcal M^{\textrm{eq}}$. Indeed, the reader may wish to verify as an exercise that every tuple from $\mathbb C$ is necessarily $\mathcal K$-definable over a finite tuple from $\mathcal M^{\textrm{eq}}$; after doing this, one can treat even parameters from $\mathcal K$ as residing in $\mathcal M^{\textrm{eq}}$. We have chosen to differentiate the two structures anyway because it seemed more natural to the author.
\end{remark}

The choices made in Convention \ref{dim rk conventions} were intended to allow for the most intuitive presentation of the paper. In particular, note that our use of \textit{dimension} agrees with the usual dimension theory of varieties; our use of \textit{genericity} is geometrically natural, encompassing notions like smoothness, etc.; and our use of \textit{algebraicity} is just the usual field-theoretic algebraic closure. However, things might still get confusing, and we will try to remind the reader at times how certain statements should be interpreted. Perhaps most importantly, note that a \textit{plane curve} is always a rank 1 subset of $M^2$ -- so in particular does not need to be an algebraic curve, or contained in the actual `plane' $\mathbb A^2(\mathbb C)$. There should be no confusion created here, as we will not need any of the theory of plane curves in the algebraic sense.

Note that $\dim M>0$, because $M$ is infinite. Note also that $M^r$ has rank $r$ and dimension $r\cdot\dim M$ for each $r$, since both rank and dimension respect products (i.e. applying Fact \ref{rank exists}(2) in both $\mathcal M$ and $\mathcal K$). Generalizing this, the most important fact to note about dimension and rank is the following: 

\begin{lemma}\label{dim rk equality} Let $X$ be $\mathcal M$-interpretable and of rank $r\geq 0$. Then $\dim X=r\cdot\dim M$.
\end{lemma}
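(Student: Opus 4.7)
The plan is induction on the rank $r=\rk(X)$, with the key input being Fact \ref{rank exists}(5), which provides a finite-to-one $\mathcal M$-definable map $X\to M^r$ whenever $X$ is $\mathcal M$-definable of rank $r$. Such maps have the crucial property that they preserve geometric dimension (indeed, the image of a finite-to-one constructible map has the same dimension as its source), which lets us translate the abstract rank data in $\mathcal M$ into the geometric dimension data in $\mathcal K$. The other essential ingredient is that each $M^r$ is \emph{stationary} in $\mathcal M$ (which holds because $\mathcal M$ is strongly minimal, so $M^r$ has Morley degree $1$), so that any $\mathcal M$-definable subset of $M^r$ of full rank $r$ is large in $M^r$.

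Before the induction, I would first reduce from the interpretable case to the definable case. Given $\mathcal M$-interpretable $X$ of rank $r$, apply Fact \ref{weak e of i} to obtain an $\mathcal M$-definable set $Y$ and a finite-to-one $\mathcal M$-interpretable surjection $Y\to X$; since finite-to-one maps preserve both rank and geometric dimension (using elimination of imaginaries in $\mathcal K$ to view $X$ as constructible), we have $\rk(Y)=r$ and $\dim Y=\dim X$, and the claim for $Y$ implies it for $X$. So from here on I would assume $X$ is $\mathcal M$-definable.

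The base case $r=0$ is immediate: $X$ is finite, so $\dim X=0$. For the inductive step with $r\geq 1$, take a finite-to-one $\mathcal M$-definable map $f\colon X\to M^r$ via Fact \ref{rank exists}(5). Then $f(X)$ is $\mathcal M$-definable, $\rk(f(X))=r=\rk(M^r)$, and $\dim X=\dim f(X)$. Since $M^r$ is stationary in $\mathcal M$, the set $M^r\setminus f(X)$ has rank strictly less than $r$, so by the inductive hypothesis
\[
\dim\bigl(M^r\setminus f(X)\bigr)<r\cdot\dim M=\dim M^r,
\]
where the final equality is just the product formula for dimension of constructible sets (e.g.\ applying the analogue of Fact \ref{rank exists}(1) in $\mathcal K$, since Morley rank in $\mathcal K$ coincides with algebro-geometric dimension). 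Therefore $\dim f(X)=\dim M^r=r\cdot\dim M$, completing the induction.

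The main conceptual obstacle is really a bookkeeping one: ensuring that the two rank notions (abstract Morley rank in $\mathcal M$ versus geometric dimension in $\mathcal K$) interact cleanly under the interpretation. The delicate step is the invariance of $\dim$ under finite-to-one $\mathcal M$-definable (or $\mathcal M$-interpretable) maps: one must remember that an $\mathcal M$-interpretable set, being $\mathcal K$-interpretable, can be represented as a constructible set via elimination of imaginaries in $\mathcal K$, and that under this identification the finite-to-one quotient map becomes a finite-to-one map of constructible sets, which preserves dimension. Once this identification is made, the rest of the argument is a clean induction driven entirely by Fact \ref{rank exists}(5) and stationarity of $M^r$.
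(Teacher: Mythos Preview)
Your proposal is correct and follows essentially the same approach as the paper: induction on $r$, reduction from interpretable to definable via Fact \ref{weak e of i}, then Fact \ref{rank exists}(5) to map finite-to-one into $M^r$, and finally the stationarity of $M^r$ combined with the inductive hypothesis to force $\dim f(X)=r\cdot\dim M$. The only cosmetic difference is that the paper performs the reduction to the definable case inside the inductive step rather than beforehand, but the content is identical.
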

\begin{proof} We induct on $r$. First suppose $r=0$. Then $X$ is finite, so also $\dim X=0$, and there is nothing to prove.
	
	Now assume $r\geq 1$ and the lemma holds for all $r'<r$. By Fact \ref{weak e of i}, there are an $\mathcal M$-definable set $Y$ and an $\mathcal M$-interpretable finite-to-one surjective map $f:Y\rightarrow X$. Since both rank and dimension are preserved under finite-to-one maps, we get $\rk(Y)=r$ and $\dim Y=\dim X$. So it suffices to show that $\dim Y=r\cdot\dim M$.
	
	Now by Fact \ref{rank exists}(5), there is an $\mathcal M$-definable finite-to-one map $g:Y\rightarrow M^r$. By the same reasoning as above, $\rk(g(Y))=r$. Then since $M^r$ is stationary of rank $r$, we get $\rk(M^r-g(Y))<r$, so by induction $\dim(M^r-g(Y))<r\cdot\dim M$. Then since $\dim(M^r)=r\cdot\dim M$, it must be that $\dim(g(Y))=r\cdot\dim M$. Finally, again by preservation under finite-to-one maps, we get $\dim Y=r\cdot\dim M$, as desired.
\end{proof}

\begin{remark} For intuition, we note that Lemma \ref{dim rk equality} is analogous to the fact that the real dimension of a complex space is always double its complex dimension, which follows from the equality $\dim_{\mathbb R}(\mathbb C)=2$.
\end{remark}

\begin{remark}\label{no prefix for sets} It follows easily from Lemma \ref{dim rk equality} that if $\emptyset\neq X\subset Y$ are $\mathcal M$-interpretable, then $X$ is generic in $Y$ if and only if $X$ is $\mathcal M$-generic in $Y$. Thus, for genericity of $\mathcal M$-interpretable sets, we will omit the prefix $\mathcal M$. In fact we will do the same for all terms defined in Definitions \ref{general generic set} and \ref{almost equal}, since each of these notions can be expressed in terms of the genericity of a particular set (for generic dominance this is by Remark \ref{generic dominance equivalence}, and for all other notions it is by definition). 
\end{remark}

\begin{remark} We caution that Lemma \ref{dim rk equality} does not say anything about stationarity. In general a set which is stationary (in the sense of $\mathcal M$) might break into several components over $\mathcal K$. In particular each $M^r$ is stationary, but it is possible for example that $M$ could be the union of two irreducible curves; in this case each $M^r$ would be the union of $2^r$ irreducible $r$-dimensional varieties.
\end{remark}

For genericity of tuples, we do get distinct notions in $\mathcal M$ and $\mathcal K$, since a priori an $\mathcal M$-generic element of a set need not necessarily be generic. However, the converse holds -- indeed, the following two corollaries are essentially immediate from Lemma \ref{dim rk equality} (and, technically, the fact that $\mathcal M$ is interpreted over $\emptyset$ in $\mathcal K$):

\begin{corollary}\label{dim rk inequality} Let $a$ be a tuple, and $A$ a set of parameters. Then $\dim(a/A)\leq\rk(a/A)\cdot\dim M$.
\end{corollary}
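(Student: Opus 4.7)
The plan is to reduce this to Lemma \ref{dim rk equality} by finding a single $\mathcal{M}$-interpretable witness set for the rank of $a$ over $A$ and then interpreting that set in $\mathcal{K}$. Set $r := \rk(a/A)$. By the very definition of Morley rank applied in $\mathcal{M}$, there is an $\mathcal{M}$-interpretable set $X$ (over $A$) with $a \in X$ and $\rk(X) = r$. By Lemma \ref{dim rk equality} applied to $X$, we have $\dim X = r \cdot \dim M$.

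The only remaining point is that $X$ is also $\mathcal{K}$-interpretable over $A$, so that $X$ is an admissible witness for computing $\dim(a/A)$. This is essentially built into the setup of Assumption \ref{M and K}: the structure $\mathcal{M}$ is interpretable in $\mathcal{K}$ over $\emptyset$, so every $\mathcal{M}$-interpretable set over $\emptyset$ is $\mathcal{K}$-interpretable over $\emptyset$, and adjoining parameters on either side preserves this. The remark in Convention \ref{dim rk conventions} noting that every tuple from $\mathbb{C}$ is $\mathcal{K}$-definable over a finite tuple from $\mathcal{M}^{\mathrm{eq}}$ (and vice versa, since $\mathcal{M}^{\mathrm{eq}}$ lives inside $\mathcal{K}^{\mathrm{eq}}$) lets us treat $A$ as a parameter set on either side without ambiguity. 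Hence $X$ is $\mathcal{K}$-interpretable over $A$ and contains $a$, giving $\dim(a/A) \leq \dim X = r \cdot \dim M$, which is exactly the claimed inequality.

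There is no real obstacle here; the corollary is a direct translation of Lemma \ref{dim rk equality} from sets to tuples, combined with the fact that any $\mathcal{M}$-interpretable set is a fortiori $\mathcal{K}$-interpretable. The one thing worth being careful about is that the reverse inequality fails in general: an $\mathcal{M}$-generic element of $X$ need not be $\mathcal{K}$-generic in $X$ (a $\mathcal{K}$-interpretable proper subset of $X$ may fail to be $\mathcal{M}$-interpretable), which is exactly why the corollary is stated as an inequality rather than an equality.
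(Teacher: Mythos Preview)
The proposal is correct and follows essentially the same argument as the paper: pick an $\mathcal M$-interpretable set $X$ over $A$ of rank $r$ containing $a$, apply Lemma \ref{dim rk equality} to get $\dim X = r\cdot\dim M$, and use that $X$ is also $\mathcal K$-interpretable over $A$ to bound $\dim(a/A)$. Your added commentary on why the inequality need not be an equality is accurate but goes beyond what the paper includes.
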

\begin{proof} Let $r=\rk(a/A)$. Let $X$ be $\mathcal M$-interpretable over $A$ of rank $r$ with $a\in X$. Then $X$ is also $\mathcal K$-interpretable over $A$, so $\dim(a/A)\leq\dim(X)=r\cdot\dim M$. 
\end{proof}

\begin{corollary}\label{K gen implies M gen} Let $X$ be $\mathcal M$-interpretable over a set $A$ of parameters. Let $a\in X$. If $a$ is generic in $X$ over $A$, then $a$ is $\mathcal M$-generic in $X$ over $A$.
\end{corollary}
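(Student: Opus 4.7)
The plan is to prove this as an immediate consequence of the two previous results (Lemma \ref{dim rk equality} and Corollary \ref{dim rk inequality}) together with the trivial inequality $\rk(a/A) \leq \rk(X)$ coming from the fact that $a \in X$ and $X$ is $\mathcal M$-interpretable over $A$. The only content is a single dimension/rank comparison, exploiting that $\dim M > 0$ to cancel a factor.

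More precisely, assume $a$ is generic in $X$ over $A$, i.e.\ $\dim(a/A) = \dim X$. By Lemma \ref{dim rk equality} applied to $X$, we have $\dim X = \rk(X)\cdot \dim M$. On the other hand, Corollary \ref{dim rk inequality} gives $\dim(a/A) \leq \rk(a/A)\cdot \dim M$. Chaining these yields
\[
\rk(X)\cdot \dim M \;=\; \dim X \;=\; \dim(a/A) \;\leq\; \rk(a/A)\cdot \dim M.
\]
Since $M$ is infinite, $\dim M > 0$, so we may divide to obtain $\rk(X) \leq \rk(a/A)$. The reverse inequality $\rk(a/A) \leq \rk(X)$ is immediate from the definition of $\rk(a/A)$, since $X$ itself is an $\mathcal M$-interpretable set over $A$ containing $a$. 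Hence $\rk(a/A) = \rk(X)$, which is exactly the assertion that $a$ is $\mathcal M$-generic in $X$ over $A$.

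There is essentially no obstacle here: the corollary is a bookkeeping statement whose whole point is that $\dim$ is just a uniform scaling of $\rk$ (by the factor $\dim M$) on $\mathcal M$-interpretable sets, so the two notions of genericity can only differ in the direction stated (generics in $\mathcal K$ are automatically $\mathcal M$-generic, but not conversely, since a generic point of $X$ in $\mathcal M$ could still satisfy further algebraic relations over $\mathcal K$). The only subtlety worth flagging is the trivial but necessary observation that $\dim M \neq 0$, which is where infiniteness of $M$ is used.
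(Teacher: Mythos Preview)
Your proof is correct and essentially identical to the paper's: both use Lemma \ref{dim rk equality} to get $\dim X=\rk(X)\cdot\dim M$, then Corollary \ref{dim rk inequality} to bound $\dim(a/A)\leq\rk(a/A)\cdot\dim M$, and divide by $\dim M>0$. You are slightly more explicit about the reverse inequality $\rk(a/A)\leq\rk(X)$ and the use of $\dim M>0$, but there is no substantive difference.
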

\begin{proof} Let $r=\rk(X)$, and assume $a$ is generic in $X$ over $A$. So $\dim(a/A)=r\cdot\dim M$. Then by Corollary \ref{dim rk equality} we get $\rk(a/A)\geq\frac{\dim(a/A)}{\dim M}=r$, which shows that $a$ is $\mathcal M$-generic in $X$ over $A$. 
\end{proof}

\begin{remark} It is not immediate that independence should imply $\mathcal M$-independence, but we will not need this.
\end{remark}

\subsection{Coherence}
In this subsection we introduce the notion of \textit{coherence}, which will help to streamline many computational arguments as we go. We begin with the definition, which is natural in light of Corollary \ref{dim rk inequality}:

\begin{definition}\label{coherent} Let $a$ be a tuple, and $A$ a set of parameters. Then $a$ is \textit{coherent over} $A$ if $\dim(a/A)=\rk(a/A)\cdot\dim M$.
\end{definition}

The idea is that coherent points are those for which $\mathcal M$-genericity can be transferred to full genericity. Precisely, we note the following:

\begin{lemma}\label{coherent iff generic} Let $X$ be $\mathcal M$-interpretable over $A$, and let $a\in X$. Then $a$ is generic in $X$ over $A$ if and only if it is both $\mathcal M$-generic in $X$ over $A$ and coherent over $A$.
\end{lemma}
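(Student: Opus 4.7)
The plan is to unwind the definitions using Lemma \ref{dim rk equality} and Corollary \ref{K gen implies M gen}. Set $r = \rk(X)$, so that by Lemma \ref{dim rk equality} we have $\dim X = r \cdot \dim M$. Now the statement amounts to comparing the two numerical conditions $\dim(a/A) = \dim X$ (genericity) and the pair of conditions $\rk(a/A) = r$ together with $\dim(a/A) = \rk(a/A) \cdot \dim M$ ($\mathcal M$-genericity and coherence).

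For the forward direction, assume $a$ is generic in $X$ over $A$. Then Corollary \ref{K gen implies M gen} immediately gives $\mathcal M$-genericity, so $\rk(a/A) = r$. Plugging in gives
\[
\dim(a/A) = \dim X = r \cdot \dim M = \rk(a/A) \cdot \dim M,
\]
so $a$ is coherent over $A$.

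For the converse, assume $a$ is $\mathcal M$-generic in $X$ over $A$ and coherent over $A$. Then $\rk(a/A) = r$ by $\mathcal M$-genericity, and coherence gives $\dim(a/A) = \rk(a/A) \cdot \dim M = r \cdot \dim M = \dim X$. Since $a \in X$ and $X$ is $\mathcal K$-interpretable over $A$ (because $\mathcal M$ is interpretable over $\emptyset$ in $\mathcal K$, so every $\mathcal M$-interpretable set over $A$ is $\mathcal K$-interpretable over $A$), we have $\dim(a/A) \leq \dim X$, and the equality forces $a$ to be generic in $X$ over $A$.

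There is really no obstacle here: everything reduces to bookkeeping between $\rk$ and $\dim$ via the master identity $\dim X = \rk(X) \cdot \dim M$ from Lemma \ref{dim rk equality}, plus the one-way implication in Corollary \ref{K gen implies M gen}. The only subtle point worth flagging explicitly is that $X$ being $\mathcal M$-interpretable over $A$ entails that $X$ is $\mathcal K$-interpretable over $A$, which is what makes $\dim(a/A) \leq \dim X$ legitimate in the backward direction.
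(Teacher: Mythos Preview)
Your proof is correct and essentially identical to the paper's own argument: both directions are handled by the same chain of equalities $\dim(a/A) = r \cdot \dim M = \dim X$ via Lemma \ref{dim rk equality} and Corollary \ref{K gen implies M gen}. Your explicit remark that $X$ is $\mathcal K$-interpretable over $A$ (to justify $\dim(a/A) \leq \dim X$) is a small clarification the paper leaves implicit, but otherwise there is no difference.
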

\begin{proof}
	Let $r=\rk(X)$. First assume that $a$ is generic in $X$ over $A$. By Corollary \ref{K gen implies M gen}, $a$ is also $\mathcal M$-generic over $A$. By $\mathcal M$-genericity, $\rk(a/A)=r$. By Lemma \ref{dim rk equality} and genericity, $\dim(a/A)=r\cdot\dim M=\rk(a/A)\cdot\dim M$. So $a$ is coherent over $A$.
	
	Now assume that $a$ is both $\mathcal M$-generic in $X$ over $A$ and coherent over $A$. By $\mathcal M$-genericity, $\rk(a/A)=r$. Then by coherence, $\dim(a/A)=r\cdot\dim M=\dim(X)$. So $a$ is generic in $X$ over $A$.
\end{proof}

Coherent tuples enjoy the following useful preservation properties:

\begin{lemma}\label{coherent preservation} Let $a$ and $b$ be tuples, and $A$ a set of parameters.
	\begin{enumerate}
		\item $ab$ is coherent over $A$ if and only if $a$ is coherent over $A$ and $b$ is coherent over $Aa$.
		\item If $a$ is coherent over $A$ and $b\in\operatorname{acl}_{\mathcal M}(Aa)$, then $b$ is coherent over $A$ and $a$ is coherent over $Ab$.
		\item If $a$ and $b$ are each coherent over $A$, and $a$ and $b$ are independent over $A$, then $a$ and $b$ are also $\mathcal M$-independent over $A$, and $ab$ is coherent over $A$.
	\end{enumerate}
\end{lemma}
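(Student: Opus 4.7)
The plan is to derive everything from two basic inputs: the inequality $\dim(a/A)\leq \rk(a/A)\cdot\dim M$ from Corollary \ref{dim rk inequality}, and the additivity formula $\rk(ab/A)=\rk(a/A)+\rk(b/Aa)$ from Fact \ref{ranks of tuples facts} together with its analogue $\dim(ab/A)=\dim(a/A)+\dim(b/Aa)$ in $\mathcal K$ (which holds because $\mathcal K$ is strongly minimal, so ranks of tuples are additive in $\mathcal K$ as well). I will also use monotonicity of both rank and dimension under enlarging the parameter set.

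For part (1), I add the two instances of Corollary \ref{dim rk inequality}
\[
\dim(a/A)\leq \rk(a/A)\cdot\dim M,\qquad \dim(b/Aa)\leq \rk(b/Aa)\cdot\dim M,
\]
and use additivity on both sides to get $\dim(ab/A)\leq \rk(ab/A)\cdot\dim M$, with equality if and only if equality holds in each summand. That is exactly the claim: $ab$ is coherent over $A$ iff $a$ is coherent over $A$ and $b$ is coherent over $Aa$.

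For part (2), I first observe that $b\in\operatorname{acl}_{\mathcal M}(Aa)$ means $b$ lies in a finite $\mathcal M$-interpretable set over $Aa$; since $\mathcal M$ is interpretable in $\mathcal K$ over $\emptyset$, this set is also $\mathcal K$-interpretable over $Aa$ and remains finite, so $\dim(b/Aa)=0=\rk(b/Aa)$. Additivity then yields $\dim(ab/A)=\dim(a/A)$ and $\rk(ab/A)=\rk(a/A)$, so coherence of $a$ over $A$ immediately gives coherence of $ab$ over $A$. Part (1) applied to the decomposition $ab$ and again to the decomposition $ba$ then delivers both conclusions of (2).

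Part (3) is the main step, and the trick is to chain the standard inequalities so tightly that they are forced to be equalities. Independence of $a$ and $b$ over $A$ means $\dim(b/Aa)=\dim(b/A)$. Using additivity of dimension, coherence of $a$ and $b$ over $A$, monotonicity of rank, additivity of rank, and Corollary \ref{dim rk inequality}, I run the chain
\[
\dim(ab/A)=\dim(a/A)+\dim(b/A)=\bigl(\rk(a/A)+\rk(b/A)\bigr)\cdot\dim M\geq\rk(ab/A)\cdot\dim M\geq\dim(ab/A).
\]
Every inequality must therefore be an equality. Equality in the first $\geq$ forces $\rk(b/Aa)=\rk(b/A)$, which is $\mathcal M$-independence; equality in the second gives coherence of $ab$ over $A$. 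The only delicate point in the whole lemma is making sure the dimension-side additivity is available in exactly the same form as the rank-side, but since $\mathcal K$ is itself a strongly minimal structure satisfying the hypotheses of Fact \ref{ranks of tuples facts}, this is free.
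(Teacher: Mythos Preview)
Your proof is correct. Parts (1) and (2) match the paper's argument essentially verbatim: the paper also adds the two instances of Corollary~\ref{dim rk inequality} and reads off the ``equality in the sum iff equality in each term'' conclusion, and for (2) it likewise observes $\rk(b/Aa)=0\Rightarrow\dim(b/Aa)=0$ and then applies (1) twice with the roles of $a$ and $b$ swapped.

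For part (3) you take a slightly different route. The paper argues more conceptually via Lemma~\ref{coherent iff generic}: it picks an $\mathcal M$-interpretable set $X$ of rank $\rk(b/A)$ containing $b$, notes that coherence of $b$ over $A$ makes $b$ genuinely generic in $X$ over $A$, uses $\mathcal K$-independence to transport that genericity to $Aa$, and then reads off both coherence of $b$ over $Aa$ and $\mathcal M$-genericity over $Aa$ (hence $\mathcal M$-independence) from Lemma~\ref{coherent iff generic} and Corollary~\ref{K gen implies M gen}. Your approach instead squeezes the single chain
\[
\dim(ab/A)=\bigl(\rk(a/A)+\rk(b/A)\bigr)\cdot\dim M\geq\rk(ab/A)\cdot\dim M\geq\dim(ab/A)
\]
and extracts both conclusions from the forced equalities. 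Your argument is more elementary in that it avoids the detour through genericity and the auxiliary set $X$, relying only on additivity, monotonicity, and Corollary~\ref{dim rk inequality}; the paper's version, on the other hand, makes the \emph{mechanism} (genericity transfers along independence) more visible and foreshadows how Lemma~\ref{coherent iff generic} will be used later in the paper.
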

\begin{proof} Throughout we assume $A=\emptyset$ for ease of notation.
	\begin{enumerate}
		\item We have each of the following:
		\begin{enumerate}
			\item $\dim(a,b)=\dim a+\dim(b/a)$.
			\item $\rk(a,b)\cdot\dim M=\rk(a)\cdot\dim M+\rk(b/a)\cdot\dim M$.
			\item $\dim(a,b)\leq\rk(a,b)\cdot\dim M$.
			\item $\dim a\leq\rk(a)\cdot\dim M$.
			\item $\dim(b/a)\leq\rk(b/a)\cdot\dim M$. 
		\end{enumerate}
		We are asked to prove that (c) is an equality if and only if (d) and (e) are equalities. But this follows immediately from (a) and (b).
		\item We have $\rk(b/Aa)=0$. By Corollary \ref{dim rk inequality}, we get $\dim(b/Aa)=0$ as well. Thus $b$ is coherent over $Aa$. Then by (1), $ab$ is coherent over $A$; and by (1) again (switching the order of $a$ and $b$), we conclude that $b$ is coherent over $A$ and $a$ is coherent over $Ab$.
		\item Let $X$ be a set of rank $\rk(b/A)$ which is $\mathcal M$-interpretable over $A$ and contains $b$. Then $b$ is $\mathcal M$-generic in $X$ over $A$, so by Lemma \ref{coherent iff generic} $b$ is actually generic in $X$ over $A$. Since $a$ and $b$ are independent, $b$ is thus also generic in $X$ over $Aa$. So again by Lemma \ref{coherent iff generic}, $b$ is coherent over $Aa$. It now follows by (1) that $ab$ is coherent over $A$. Moreover, since $b$ is generic in $X$ over both $A$ and $Aa$, Corollary \ref{K gen implies M gen} gives that $b$ is $\mathcal M$-generic in $X$ over both $A$ and $Aa$, which shows that $a$ and $b$ are $\mathcal M$-independent over $A$.
	\end{enumerate}
\end{proof}

Throughout, we will often use Lemma \ref{coherent iff generic} to reduce questions of dimension and genericity to questions of rank and $\mathcal M$-genericity, while separately verifying the coherence of the required tuples using Lemma \ref{coherent preservation}. We do this mainly for ease of understanding, as computations with rank tend to be simpler and (we hope) easier to follow. As an illustration of this method, we end this section by giving two examples:

The first example was already mentioned in Remark \ref{no prefix for sets}, but as we will use it repeatedly we give a more explicit treatment, and a (hopefully illuminating) alternate proof:

\begin{lemma}\label{generic dominance no prefix} Let $f:X\rightarrow Y$ be $\mathcal M$-interpretable. Then $f$ is generically dominant in the sense of $\mathcal M$ if and only if it is generically dominant in the sense of $\mathcal K$. Thus we have only one notion of generic dominance of functions.
\end{lemma}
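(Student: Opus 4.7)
The plan is to reduce both notions of generic dominance to a single concrete statement about the fiber structure of $f$, via the characterization of Remark \ref{generic dominance equivalence}. That remark says that, in either structure, $f$ is generically dominant iff the union of fibers of the ``correct generic size'' is large in $X$; so I would argue (i) that the notion of ``correct generic size'' picks out the same subset $Y_0 \subset Y$ in both senses, and (ii) that for $\mathcal{M}$-interpretable subsets of $X$ the notion of ``large'' is the same in both senses.

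For (i), set $d = \rk(X) - \rk(Y)$, and let $Y_0 \subset Y$ be the set of $y$ such that $\rk f^{-1}(y) = d$. Since rank is $\mathcal{M}$-interpretable in families (Fact \ref{rank exists}(3)), $Y_0$ is $\mathcal{M}$-interpretable, hence so is $f^{-1}(Y_0)$. Each fiber $f^{-1}(y)$ is $\mathcal{M}$-interpretable, so by Lemma \ref{dim rk equality}, $\dim f^{-1}(y) = \rk f^{-1}(y)\cdot \dim M$; applying the same lemma to $X$ and $Y$ gives $\dim X - \dim Y = d\cdot\dim M$. Therefore $\rk f^{-1}(y) = d$ iff $\dim f^{-1}(y) = \dim X - \dim Y$, and $Y_0$ is indeed the set of ``dimensionally generic fibers'' on the $\mathcal{K}$-side as well.

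For (ii), apply Remark \ref{no prefix for sets} (or directly Lemma \ref{dim rk equality}) to the $\mathcal{M}$-interpretable pair $X - f^{-1}(Y_0) \subset X$: the strict inequality $\rk\bigl(X - f^{-1}(Y_0)\bigr) < \rk(X)$ is equivalent to $\dim\bigl(X - f^{-1}(Y_0)\bigr) < \dim(X)$. Thus ``$f^{-1}(Y_0)$ is large in $X$'' holds in the $\mathcal{M}$-sense iff it holds in the $\mathcal{K}$-sense. Combining with Remark \ref{generic dominance equivalence} in both structures, the two notions of generic dominance for $f$ coincide.

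There is no real obstacle here; the only thing to be careful about is that the fibers being $\mathcal{M}$-interpretable is exactly what is needed to invoke Lemma \ref{dim rk equality} and thereby translate between rank and dimension for each individual fiber. The proof thus proceeds entirely by bookkeeping, with all substance contained in Lemma \ref{dim rk equality} and Remark \ref{generic dominance equivalence}.
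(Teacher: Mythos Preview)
Your proof is correct. It is in fact the argument the paper itself alludes to in Remark \ref{no prefix for sets} (and in the sentence introducing Lemma \ref{generic dominance no prefix}, which calls the paper's version an ``alternate proof''): express generic dominance via Remark \ref{generic dominance equivalence} as largeness of an $\mathcal M$-interpretable subset of $X$, then use Lemma \ref{dim rk equality} fiberwise and globally to see the $\mathcal M$- and $\mathcal K$-versions pick out the same set and the same notion of ``large.''

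The paper's explicit proof takes a genuinely different route. For the direction $\mathcal K\Rightarrow\mathcal M$ it argues directly on sets; for $\mathcal M\Rightarrow\mathcal K$ it uses the \emph{point} characterization of Lemma \ref{generic point characterization of function properties}(4): given $x$ generic in $X$ over $A$, Lemma \ref{coherent iff generic} gives that $x$ is $\mathcal M$-generic and coherent, whence $f(x)$ is $\mathcal M$-generic in $Y$ by hypothesis and coherent by Lemma \ref{coherent preservation}(2), hence generic in $Y$. The point of doing it this way is pedagogical: it illustrates the coherence machinery that will be used repeatedly later. Your approach is more self-contained and avoids coherence entirely, at the cost of not showcasing that machinery.
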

\begin{proof} Assume that $X$, $Y$, and $f$ are $\mathcal M$-interpretable over $A$, and first assume that $f$ is generically dominant in the sense of $\mathcal K$. Let $X'\subset X$ be $\mathcal M$-generic. Then $X'$ is also generic in $X$, so $f(X')$ is generic, and thus also $\mathcal M$-generic, in $Y$. Thus $f$ is generically dominant in the sense of $\mathcal M$.
	
	Now assume that $f$ is generically dominant in the sense of $\mathcal M$. In this case we use the equivalent condition in Lemma \ref{generic point characterization of function properties}. That is, let $x\in X$ be generic over $A$, and let $y=f(x)$. We claim that $y$ is generic in $Y$ over $A$. To see this, we first note by Lemma \ref{coherent iff generic} that $x$ is both $\mathcal M$-generic over $A$ and coherent over $A$. By assumption on $f$, we conclude that $y$ is $\mathcal M$-generic in $Y$ over $A$. Moreover, by definition we have $y\in\operatorname{acl}_{\mathcal M}(Ax)$, so by Lemma \ref{coherent preservation} $y$ is also coherent over $A$. Then by Lemma \ref{coherent iff generic} again, $y$ is generic in $Y$ over $A$. This shows that $f$ is generically dominant in the sense of $\mathcal K$.
\end{proof}

For example, Lemma \ref{generic dominance no prefix} immediately gives the following useful corollary (by combining with Lemma \ref{family ranks}):

\begin{corollary}\label{coherent family ranks}
	Let $\mathcal C=\{C_t:t\in T\}$ be a rank $\geq 1$ almost faithful family of plane curves, $\mathcal M$-interpretable over a set $A$, with graph $C\subset M^2\times T$. Then an element $(x,t)\in M^2\times T$ is generic in $C$ over $A$ if and only if $t$ is generic in $T$ over $A$ and $x$ is generic in $C_t$ over $At$, if and only if $x$ is generic in $M^2$ over $A$ and $t$ is generic in ${_xC}$ over $Ax$.
\end{corollary}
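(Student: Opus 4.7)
The plan is to reduce the corollary to its obvious $\mathcal M$-analogue via the coherence machinery just developed. The $\mathcal M$-version asserts the same two equivalences with ``generic'' replaced everywhere by ``$\mathcal M$-generic''. Once that is known, the transfer to genericity in the sense of $\mathcal K$ is essentially mechanical: Lemma \ref{coherent iff generic} lets us trade each occurrence of ``generic over $B$'' for the conjunction of ``$\mathcal M$-generic over $B$'' and ``coherent over $B$'', while Lemma \ref{coherent preservation}(1) decomposes coherence of $(x,t)$ over $A$ either as coherence of $t$ over $A$ together with coherence of $x$ over $At$, or (reversing the roles) as coherence of $x$ over $A$ together with coherence of $t$ over $Ax$. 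Combining these two observations with the $\mathcal M$-version yields both claimed equivalences simultaneously.

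For the $\mathcal M$-version itself I would simply compute with additivity of rank. Lemma \ref{family ranks}(1) tells us $\rk(C)=r+1$, where $r=\rk(\mathcal C)$, and that both projections $C\to T$ and $C\to M^2$ are $\mathcal M$-generically dominant. The first equivalence then falls out immediately from Fact \ref{ranks of tuples facts}: for any $(x,t)\in C$ we have $\rk(x,t/A)=\rk(t/A)+\rk(x/At)$ with $\rk(t/A)\leq r$ and $\rk(x/At)\leq 1$, so the equality $\rk(x,t/A)=r+1$ is equivalent to equality in each factor, which is exactly what the first equivalence asserts. The second equivalence is the same argument run along the other projection: $\mathcal M$-generic dominance of $C\to M^2$ gives that $x$ is $\mathcal M$-generic in $M^2$ over $A$ whenever $(x,t)$ is $\mathcal M$-generic in $C$ over $A$, and Lemma \ref{finitely many common points}(2) then guarantees that such an $x$ is a normal point of $\mathcal C$, so $\rk({_xC})=r-1$; additivity applied to $\rk(x,t/A)=\rk(x/A)+\rk(t/Ax)$ closes the loop.

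The hard part is essentially nonexistent: this corollary is really a convenience packaging of the preceding work, whose function is to permit us to drop the $\mathcal M$ prefix on ``generic'' when we manipulate points of $C$ throughout the rest of the paper. The only step requiring even a moment's thought is the use of Lemma \ref{finitely many common points}(2) in the second equivalence, which is what rules out the possibility that an $\mathcal M$-generic $x\in M^2$ happens to lie on too many (or too few) curves of $\mathcal C$.
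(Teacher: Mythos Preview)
Your proof is correct and follows essentially the same route as the paper. The paper's one-line justification (``by combining Lemma \ref{generic dominance no prefix} with Lemma \ref{family ranks}'') is precisely what you have unpacked: the $\mathcal M$-version is the content of Lemma \ref{family ranks}(1) together with additivity (as the paper itself notes in the remarks following that lemma), and the transfer to $\mathcal K$-genericity is exactly the coherence argument underlying Lemma \ref{generic dominance no prefix}, which you have made explicit via Lemmas \ref{coherent iff generic} and \ref{coherent preservation}(1).
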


\begin{remark}
	Note that the more obvious argument used in the `right to left' direction of Lemma \ref{generic dominance no prefix} does not work for the second direction, because an arbitrary generic subset of $X$ might not be $\mathcal M$-definable. The point, then, is that working with coherent tuples allows us even in this sort of situation to reduce our work to the structure $\mathcal M$. 
\end{remark}

Our second example is a useful improvement on Lemmas \ref{almost faithful families exist} and \ref{standard families exist} in the context of coherence:

\begin{lemma}\label{coherent version of almost faithful family existence} Let $X\subset M^n$ be a stationary $\mathcal M$-definable set of rank $r$, and let $c=\operatorname{Cb}(X)$. Let $A$ be a set of parameters, and assume that $\rk(c/A)=k$ and $c$ is coherent over $A$. Then there are an $\mathcal M$-definable set $T$ over $A$, an almost faithful family $\{X_t:t\in T\}$ of rank $r$ subsets of $M^n$ which is $\mathcal M$-definable over $A$, and an element $\hat t\in T$, having the following properties:
	\begin{enumerate}
		\item $T$ has rank $k$, and $\hat t$ is generic in $T$ over $A$ (that is, in the full sense of $\mathcal K$).
		\item $X$ is almost contained in $X_{\hat t}$.
		\item $c$ and $\hat t$ are $\mathcal M$-interalgebraic over $A$.
	\end{enumerate}
	Moreover, one can arrange either of the following (but not both simultaneously):
	\begin{enumerate}
		\item[(a)] $X$ is almost equal to $X_{\hat t}$.
		\item[(b)] If $X$ is a plane curve, then $\{X_t:t\in T\}$ is a standard family of plane curves over $A$.
	\end{enumerate}
\end{lemma}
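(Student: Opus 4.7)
The plan is to apply the earlier existence results (Lemmas \ref{almost faithful families exist} and \ref{standard families exist}) to produce a family and a point $\hat t$ satisfying the $\mathcal M$-versions of conditions (1)--(3), and then to use the coherence of $c$ over $A$ to upgrade `$\mathcal M$-generic' to `generic in the full sense of $\mathcal K$'.

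First I would invoke Lemma \ref{almost faithful families exist} with the given stationary set $X$ of rank $r$ and parameter set $A$ to obtain an $\mathcal M$-definable almost faithful family $\{X_t : t\in T\}$ of rank $r$ sets over $A$ together with a point $\hat t\in T$ satisfying: $T$ has rank $k$, $\hat t$ is $\mathcal M$-generic in $T$ over $A$, $X$ is almost contained in $X_{\hat t}$, and $c,\hat t$ are $\mathcal M$-interalgebraic over $A$. For option (a) I would ask Lemma \ref{almost faithful families exist} to produce a family with $X$ almost equal to $X_{\hat t}$; for option (b), when $X$ is a (strongly minimal) plane curve, I would instead invoke Lemma \ref{standard families exist} to get a standard family, which is built on top of the same construction and still yields the same three $\mathcal M$-level conclusions.

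The only real content is upgrading the $\mathcal M$-genericity of $\hat t$ in $T$ over $A$ to genericity in the sense of $\mathcal K$. By Lemma \ref{coherent iff generic}, this reduces to verifying that $\hat t$ is coherent over $A$. Since $c$ and $\hat t$ are $\mathcal M$-interalgebraic over $A$ we have $\hat t\in\operatorname{acl}_{\mathcal M}(Ac)$; combined with the hypothesis that $c$ is coherent over $A$, Lemma \ref{coherent preservation}(2) immediately yields that $\hat t$ is coherent over $A$. Applying Lemma \ref{coherent iff generic} in the other direction then gives that $\hat t$ is generic in $T$ over $A$ in the full sense of $\mathcal K$, which is precisely the strengthening required in (1).

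There is no serious obstacle here; the lemma should be read as a convenient packaging of earlier results, whose point is to record that when the input canonical base $c$ is coherent, coherence (and hence full $\mathcal K$-genericity of the chosen parameter $\hat t$) is transferred automatically by the $\mathcal M$-interalgebraicity of $c$ and $\hat t$. The only mild subtlety is remembering that one cannot simultaneously insist on almost-equality (a) and the standard-family normalisation (b), which is precisely why the two options are listed as alternatives; this is inherited from the analogous split in Lemma \ref{almost faithful families exist}.
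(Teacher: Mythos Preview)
Your proposal is correct and follows essentially the same approach as the paper: apply Lemmas \ref{almost faithful families exist} and \ref{standard families exist} to get everything except full $\mathcal K$-genericity of $\hat t$, then use the $\mathcal M$-interalgebraicity of $c$ and $\hat t$ together with Lemma \ref{coherent preservation} to transfer coherence from $c$ to $\hat t$, and conclude via Lemma \ref{coherent iff generic}. The paper's proof is slightly terser but identical in substance.
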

\begin{proof} Directly applying Lemmas \ref{almost faithful families exist} and \ref{standard families exist} yields $\{X_t:t\in T\}$ and $\hat t\in T$ satisfying almost all of the requirements -- the only difference being that $\hat t$ is only guaranteed to be $\mathcal M$-\textit{generic} in $T$ over $A$, not fully generic. So our only task is to verify the genericity of $\hat t$. But since $c$ is coherent over $A$, it follows from (3) and Lemma \ref{coherent preservation} that $\hat t$ is also coherent over $A$. Then since $\hat t$ is $\mathcal M$-generic in $T$ over $A$, Lemma \ref{coherent iff generic} gives that $t$ is indeed generic in $T$ over $A$.
\end{proof}

	\section{Geometric Preliminaries}
	
	In this section we give a few facts that we will need from algebraic and complex geometry. The main goal is Proposition \ref{smooth fiber product}, which we will use repeatedly. The average reader would not be too hindered by taking the statement of this proposition as a black box and, modulo basic definitions, skipping the rest of the section. 
	
	Note that the structure $\mathcal M$ will only appear in side remarks in this section, as the facts we discuss naturally pertain to all complex constructible sets. Thus (with the sole exception of Remark \ref{tangent space over cb}) one should interpret parameter sets as sets of tuples from $\mathcal K$. Equivalently, one could think of parameters as just coming from $\mathbb C$, and all facts stated would still be true; we stick to $\mathcal K$ only so as to not introduce even more notions of genericity, etc.
	
	\subsection{Local Smoothness and Tangent Spaces}
	
	Here we develop the basic notion of smoothness in constructible sets, and the associated tangent space at a smooth point. There should be nothing surprising here -- we just need to clarify that what we will do later is sensical.
	
	\begin{definition}\label{smooth point} Let $X\subset\mathbb C^n$ be a constructible set, and let $x\in X$. We say that $x$ is \textit{smooth in} $X$, or $X$ is \textit{smooth near} $x$, if there is a relatively Zariski open set $X'\subset X$ which contains $x$ and is a smooth variety of dimension $\dim X$.
		\end{definition}
	
	Equivalently, one forms the set of smooth points of $X$ by taking the smooth locus (as a variety) of the Zariski interior of $X$ in its Zariski closure. In particular, by this reformulation we have the following:
	
	\begin{lemma}\label{smooth locus definable} Let $X$ be a non-empty constructible set, and let $X^S$ be its set of smooth points.
	\begin{enumerate} 
		\item $X^S$ is $\mathcal K$-definable over the same parameters that define $X$.
		\item $X^S$ is large in $X$.
		\item If $x$ is generic in $X$ over any set of parameters defining $X$ in $\mathcal K$, then $x\in X^S$. 
	\end{enumerate}
	\end{lemma}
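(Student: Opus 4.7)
The plan is to reduce each assertion to standard algebro-geometric facts about complex constructible sets — principally that smooth loci of varieties are $\mathcal K$-definable (via the Jacobian criterion), and that the frontier of a constructible set has strictly smaller dimension than the set itself.

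For (1), I would first note that the Zariski closure $\overline X$ is $\mathcal K$-definable over the same parameters as $X$ (this is a standard elimination in algebraically closed fields). From $\overline X$ one can then $\mathcal K$-define, over those same parameters, both the union $Z$ of its top-dimensional irreducible components (via the locally constant function measuring local dimension) and the smooth locus $Z^{sm}$ of $Z$ (via the Jacobian criterion). The condition $x \in X^S$ is then equivalent to the conjunction: $x \in Z^{sm}$ and some Zariski open neighborhood of $x$ in $\overline X$ is contained in $X$, the latter being expressible as $x \notin \overline{\overline X \setminus X}$. Indeed, if these two conditions hold then a small enough Zariski open neighborhood $U$ of $x$ in $\overline X$ is contained in $X$ and is a smooth variety of dimension $\dim X$, so one may take $X' = U$; conversely, any $X'$ witnessing $x \in X^S$ must be a Zariski open neighborhood of $x$ in $\overline X$ (since it is $\dim X$-dimensional and Zariski open in $X$) contained in $X$ and smooth of dimension $\dim X$, forcing membership in each of the two conditions.

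For (2), the essential input is the inequality $\dim \operatorname{Fr}(X) < \dim X$ for any constructible $X$; this reduces via a locally closed decomposition $X = \bigsqcup_i (U_i \cap F_i)$ to the standard case of a dense Zariski open subset of an irreducible variety, after noting that $\operatorname{Fr}(X) \subseteq \bigcup_i \operatorname{Fr}(U_i \cap F_i)$ and that $\dim X$ is the maximum of $\dim(U_i \cap F_i)$. Setting $U := \overline X \setminus \overline{\operatorname{Fr}(X)}$, the set $U$ is Zariski open in $\overline X$, contained in $X$, and $\dim(\overline X \setminus U) < \dim X$. Now take $V$ to be the top-dimensional smooth locus of $U$ viewed as a variety: $V$ is Zariski open in $\overline X$, still contained in $X$, and is a smooth variety of dimension $\dim X$, so $V \subset X^S$. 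Since $\overline X \setminus V$ has dimension less than $\dim X$ (as the complement of a dense open subset of the top-dimensional part of $\overline X$), so does $X \setminus X^S \subset X \setminus V$, establishing largeness.

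Part (3) is then immediate from (1) and (2): if $x$ is generic in $X$ over a set $A$ of parameters defining $X$, then $x$ cannot lie in any $\mathcal K$-definable-over-$A$ subset of $X$ of strictly smaller dimension, and $X \setminus X^S$ is exactly such a subset. The main delicate point in the whole argument is the frontier inequality used in (2); everything else is either a direct unpacking of Definition \ref{smooth point} or a standard invocation of the Jacobian criterion and Chevalley-type reasoning. I would expect the write-up to be short, with the reader invited to skip the frontier-dimension lemma as classical.
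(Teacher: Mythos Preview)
Your arguments for (2) and (3) are correct and match the paper's in spirit. For (1), however, your explicit characterization of $X^S$ has a gap: the claimed equivalence fails when $x$ lies on a lower-dimensional irreducible component of $\overline X$. Take $X=\overline X$ to be the union of the plane $\{z=0\}$ and the line $\{x=y=0\}$ in $\mathbb C^3$. Then $Z$ is the plane, the origin lies in $Z^{sm}$, and $\overline X\setminus X=\emptyset$, so your criterion places the origin in $X^S$; but every relatively Zariski open neighborhood of the origin in $X$ contains a nontrivial piece of the line and hence is not a smooth $2$-dimensional variety, so in fact the origin is not in $X^S$. The easy fix is to add the requirement that $x$ not lie on any lower-dimensional component of $\overline X$ --- equivalently, that $\overline X$ itself (not just $Z$) be smooth of dimension $\dim X$ at $x$. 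With that correction your argument goes through.

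The paper takes a different and shorter route to (1): it simply observes that $X^S$ is constructible (hence $\mathcal K$-definable over \emph{some} parameters) and is invariant under all field automorphisms fixing the parameters defining $X$; since $\mathcal K$ is saturated, this invariance forces definability over those same parameters. This sidesteps the need to pin down an exact formula, though your corrected approach has the merit of actually exhibiting one.
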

	\begin{proof} Note by the previous paragraph that $X^S$ is Zariski open in its Zariski closure; thus it is constructible, and so $\mathcal K$-definable. Moreover, $X^S$ is clearly invariant under field automorphisms fixing the parameters that define $X$; this implies that $X^S$ is defined over the same parameters, which shows (1). For (2) it suffices to note that $X$ is large in its closure, and the smooth locus of a variety is dense (thus large) in each top-dimensional irreducible component. Finally, (3) is a restatement of (2).
	\end{proof}

	We also have:
	
	\begin{lemma}\label{smooth product} If $X$ and $Y$ are constructible, $x$ is smooth in $X$, and $y$ is smooth in $Y$, then $(x,y)$ is smooth in $X\times Y$.
	\end{lemma}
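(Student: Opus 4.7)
The proof is a direct unpacking of Definition \ref{smooth point}, so there is no serious obstacle; the plan is just to verify that smoothness can be certified on a product of witnessing open subsets.

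First, I would apply the definition of smoothness to $x \in X$ and $y \in Y$ separately: there exist relatively Zariski open subsets $X' \subset X$ and $Y' \subset Y$ containing $x$ and $y$ respectively, such that $X'$ is a smooth variety of dimension $\dim X$ and $Y'$ is a smooth variety of dimension $\dim Y$. The goal is then to show that $X' \times Y'$ serves as a witness for smoothness of $(x,y)$ in $X \times Y$.

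Next, I would verify the three required properties of $X' \times Y'$. For relative Zariski openness: if $X'$ and $Y'$ are the intersections of $X$ and $Y$ with Zariski open sets $U \subset \overline{X}$ and $V \subset \overline{Y}$, then $X' \times Y' = (X \times Y) \cap (U \times V)$, and $U \times V$ is Zariski open in $\overline{X} \times \overline{Y}$ (which contains the closure of $X \times Y$), so $X' \times Y'$ is indeed relatively open in $X \times Y$. For smoothness as a variety: the product of two smooth complex varieties is smooth, which is standard. For dimension: we need $\dim(X' \times Y') = \dim(X \times Y)$, i.e., $\dim X + \dim Y = \dim(X \times Y)$; this equality holds for constructible sets because dimension is additive on products (as already noted after Convention \ref{dim rk conventions} and in Fact \ref{rank exists}(1) via Lemma \ref{dim rk equality}, or directly from the classical dimension theory of varieties applied to Zariski closures).

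Combining these three observations, $X' \times Y'$ is a relatively Zariski open subset of $X \times Y$ containing $(x,y)$ that is a smooth variety of dimension $\dim(X \times Y)$, which by Definition \ref{smooth point} means that $(x,y)$ is smooth in $X \times Y$. The entire argument is essentially formal, and I would expect it to fit in a few lines of the paper.
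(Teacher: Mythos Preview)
Your proposal is correct and takes essentially the same approach as the paper: the paper's entire proof is the one-liner ``Smooth varieties are closed under products,'' and your argument is exactly the routine unpacking of Definition~\ref{smooth point} that this one-liner summarizes.
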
 
	\begin{proof} Smooth varieties are closed under products.
	\end{proof}

	Let us now continue to discuss our main use of smooth points -- namely the availability of a well-behaved tangent space:
	
	\begin{definition}\label{tangent space} Let $X\subset\mathbb C^n$ be a constructible set, and let $x\in X$ be a smooth point. Let $X'$ be as in Definition \ref{smooth point}. Then by the \textit{tangent space} $T_x(X)$ to $X$ at $x$, we mean the Zariski tangent space $T_x(X')$ to the variety $X'$, viewed as an affine linear subspace of $\mathbb C^n$ which contains $x$.
	\end{definition}

	Note that in the situation of Definition \ref{tangent space} the tangent space $T_x(X)$ is well-defined; in particular, any two smooth relative neighborhoods of $x$, each as in Definition \ref{smooth point}, will have the same Zariski tangent space at $x$.
	
	In what follows, we will use the typical structure of $T_x(X)$ as a $\mathbb C$-vector space. We have identified the tangent space as an affine linear set in Definition \ref{tangent space} in order to see it as a definable object (see Lemma \ref{tangent space definable} below).

	The restriction to smooth points in Definition \ref{tangent space} is not necessary in general for a tangent space to make sense, but it is convenient for our purposes. In particular, by Lemmas \ref{smooth locus definable} and \ref{smooth product}, we have the following: if $X_1,...,X_k$ are constructible, and $x_i$ is generic in $X_i$ for each $i$ over some set of parameters, then $(x_1,...,x_k)$ is smooth in $X_1\times...\times X_k$, and thus Definition \ref{tangent space} makes sense. Indeed, it will turn out that points of this form are all that we really use.
	
	We should point out that all of the usual functorial properties of tangent spaces work in our context, because we are just taking a special case of Zariski tangent spaces. Indeed, because we restrict to smooth points, we even have $\dim(T_x(X))=\dim X$ at all points we consider. We also point out the following, similarly to Lemma \ref{smooth locus definable}:
	
	\begin{lemma}\label{tangent space definable} Let $X$ be a constructible set, and let $x\in X$ be smooth. Then the tangent space $T_x(X)$, and its associated vector space structure, are $\mathcal K$-definable over the union of $x$ with the parameters defining $X$.
	\end{lemma}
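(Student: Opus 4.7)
The plan is to exhibit $T_x(X)$ as the tangent space to a suitable $A$-definable algebraic variety, and then pass from the variety-level description to definability over $A\cup\{x\}$ via automorphism invariance.

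First I would reduce to the variety case. By Definition \ref{smooth point}, there is a relatively Zariski-open $X'\subset X$ containing $x$ which is a smooth affine variety of dimension $d=\dim X$, and by Lemma \ref{smooth locus definable} the smooth locus $X^S$ is $\mathcal K$-definable over $A$; one can arrange $X'\subset X^S$. Let $V = \overline{X^S}$, the Zariski closure. Then $V$ is a variety $\mathcal K$-definable over $A$, smooth at $x$, and Definition \ref{tangent space} gives $T_x(X) = T_x(V)$.

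Next I would show $T_x(V)$ is $\mathcal K$-definable over $A\cup\{x\}$. Concretely, if $f_1,\ldots,f_m$ generate the ideal $I(V)\subset\mathbb{C}[y_1,\ldots,y_n]$, then
\[
T_x(V) = \bigl\{v\in\mathbb{C}^n : \nabla f_i(x)\cdot(v-x) = 0 \text{ for } i=1,\ldots,m\bigr\},
\]
which is a Boolean combination of polynomial equations in $v$ with parameters drawn from $x$ and the coefficients of the $f_i$. The subtlety is that the $f_i$ may have coefficients in $\operatorname{acl}(A)$ rather than in $A$ itself. To extract definability over $A\cup\{x\}$, I would invoke automorphism invariance: every $\sigma\in\operatorname{Aut}(\mathbb{C}/A\cup\{x\})$ fixes $V$ setwise (since $V$ is $A$-definable) and fixes $x$ pointwise, so it sends $T_x(V)$ to $T_{\sigma(x)}(\sigma(V)) = T_x(V)$. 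Since $T_x(V)$ is a linear affine subspace of $\mathbb{C}^n$, this setwise Galois-invariance forces it to be defined over $\operatorname{dcl}(A\cup\{x\})$, by standard linear-algebraic descent — equivalently, by elimination of imaginaries in $\operatorname{ACF}_0$ applied to the Pl\"ucker coordinates of the direction of $T_x(V)$.

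Finally, the vector space structure on $T_x(V)$ — addition relative to the origin $x$, and scalar multiplication by $\mathbb{C}$ — is given by polynomial expressions in the ambient coordinates and $x$, and is manifestly preserved by the same automorphisms, so it is $\mathcal K$-definable over $A\cup\{x\}$ by the same descent argument. The only step that is more than bookkeeping is the passage from Galois-invariance to definability in the preceding paragraph; once one accepts the standard fact that a linear affine subspace of $\mathbb{C}^n$ is Galois-invariant over a field $K\subset\mathbb{C}$ if and only if it is defined over $K$, the lemma follows.
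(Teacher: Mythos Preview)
Your proof is correct and follows essentially the same approach as the paper's (one-line) argument: the tangent space is the zero set of the Jacobian of the defining polynomials, evaluated at $x$. Your automorphism-invariance step to descend from $\operatorname{acl}(A)$ to $A\cup\{x\}$ is valid but heavier than needed---since $V=\overline{X^S}$ is $A$-definable, its ideal is already generated by polynomials with coefficients in $\operatorname{dcl}(A)$, so the Jacobian description directly yields definability over $A\cup\{x\}$ without the detour.
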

	\begin{proof} One needs only note that the structure of $T_x(X)$ is computed directly from the polynomials defining $X$, expanded around the point $x$.
	\end{proof}

	\begin{remark}\label{tangent space over cb} We end this subsection by clarifying a subtle issue related to the distinction between definable sets and types, and their relationship to smoothness. Namely, suppose that $
	C$ is a strongly minimal plane curve in $\mathcal M$, with canonical base $c$. Then the assertion `$x$ is generic in $C$ over $c$' is well-defined, even if $C$ is not defined over $c$. Indeed, in this case $c$ still codes the generic $\mathcal M$-type of $C$, and the genericity of $x$ simply says that $x$ both realizes this $\mathcal M$-type over $c$ and is coherent over $c$.
	
	Furthermore, if $x$ is generic in $C$ over $c$ in the sense described above, then $x$ is automatically smooth in $C$, and the tangent space $T_x(C)$ is determined entirely by $c$. Indeed, since $c=\operatorname{Cb}(C)$, by Fact \ref{cb facts} there is some plane curve $C'$ which is $\mathcal M$-definable over $c$ and differs from $C$ by finitely many points. So $x$ is generic in $C'$ over $c$, and is therefore smooth in $C'$. Then, since $C\Delta C'$ is finite, the two sets agree in some Zariski neighborhood of $x$; thus $x$ is also smooth in $C$, and the tangent spaces $T_x(C)$ and $T_x(C')$ are the same. In subsequent sections, we will freely exploit smoothness and tangent spaces at generic points of curves which are given over canonical bases. 
	\end{remark}

	\subsection{Non-Injective Points}
	
	In this short subsection we discuss the natural generalization of ramification to maps of arbitrary constructible sets -- namely the notion of \textit{non-injective points}. These points have played an important role in many similar papers before now (\cite{HZ},\cite{HK},\cite{EHP}). Indeed, understanding non-injective points of certain maps is one of the keys to detecting tangencies of plane curves.
	
	\begin{definition} Let $X\subset\mathbb C^m$ and $Y\subset\mathbb C^n$ be constructible sets, and $f:X\rightarrow Y$ a constructible function. Let $x\in X$. We say that $x$ is an \textit{injective point} of $f$ if there is an analytic neighborhood $U\ni x$ such that the restriction of $f$ to $U$ is injective.	Otherwise we say that $x$ is a \textit{non-injective point} of $f$.
	\end{definition}
	
	We have the following immediate consequences:
	\begin{lemma}\label{non-injective characterization} 
		Let $X\subset\mathbb C^m$ and $Y\subset\mathbb C^n$ be constructible sets, and $f:X\rightarrow Y$ a constructible function. Let $x\in X$. 
		\begin{enumerate}
			\item $x$ is a non-injective point of $f$ if and only if $(x,x)$ belongs to the frontier of the set $$X\times_YX-\Delta=\{(x_1,x_2)\in X^2:x_1\neq x_2\wedge f(x_1)=f(x_2)\}.$$
			\item If $X$ and $Y$ are varieties and $f$ is a morphism, then $x$ is a non-injective point of $f$ if and only if $f$ ramifies at $x$ (that it, the induced map on tangent spaces is non-injective at $x$).
		\end{enumerate}
	\end{lemma}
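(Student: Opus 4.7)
For part (1), the plan is a direct translation of definitions, mediated by Fact \ref{zariski and analytic closures agree}. Let $S := X \times_Y X - \Delta$; this is cut out of $X \times X \subset \mathbb{C}^{2m}$ by the conjunction $f(x_1) = f(x_2) \wedge x_1 \neq x_2$, hence is constructible, and so its Zariski and analytic closures coincide. Since $(x,x)$ lies on the diagonal it is never in $S$, so membership in the frontier $\overline{S} - S$ is the same as membership in $\overline{S}$. Taking as a basis of neighborhoods of $(x,x)$ in $X^2$ the products $U \times U$ (for $U$ an analytic neighborhood of $x$ in $X$), the condition $(x,x) \in \overline{S}$ is equivalent to the assertion that every such $U$ contains distinct points $x_1, x_2$ with $f(x_1) = f(x_2)$, which is exactly the failure of $f|_U$ to be injective for every $U$, i.e., the definition of $x$ being a non-injective point of $f$.

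For part (2), the plan is to identify both sides with the standard algebro-geometric notion of unramification. By EGA IV, Corollaire 17.4.2, a morphism of locally Noetherian schemes $f: X \to Y$ is unramified at $x$ if and only if the diagonal $\Delta : X \to X \times_Y X$ is an open immersion at $x$; in our setting of complex varieties this says that $(x,x)$ admits a Zariski neighborhood in $X \times_Y X$ on which $\Delta$ exhausts $X \times_Y X$, equivalently that $(x,x) \notin \overline{S}$. By part (1) this is precisely the condition that $x$ is an injective point of $f$. On the other hand, the scheme-theoretic definition of unramifiedness at $x$ asks that the relative cotangent module $\Omega^1_{X/Y,x} \otimes \kappa(x)$ vanish (the residue field extension over $\mathbb{C}$ being trivially separable); dualizing, this says exactly that the induced map $df_x : T_x(X) \to T_{f(x)}(Y)$ of Zariski tangent spaces is injective. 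Combining these two equivalences gives the statement.

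The only real substance is in part (2), where one must carefully cite the EGA characterization and confirm that the cotangent/tangent criterion matches the stated one even in the possibly singular case; part (1) is purely topological once the constructibility of $S$ is noted, and the bridge between Zariski and analytic perspectives is supplied throughout by Fact \ref{zariski and analytic closures agree}.
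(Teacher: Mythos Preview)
Your proposal is correct and follows essentially the same approach as the paper: the paper dismisses (1) as ``an easy topology exercise, using only that the analytic topology on $X\times X$ is the product topology,'' and deduces (2) from (1) together with the same EGA IV reference (Corollaire 17.4.2), which is exactly what you have done with the details filled in. Your extra care in noting that $(x,x)\notin S$ (so frontier and closure membership coincide) and in unwinding the cotangent criterion for unramifiedness is a faithful elaboration of the sketch the paper gives.
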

	\begin{proof}
		(1) is an easy topology exercise, using only that the analytic topology on $X\times X$ is the product topology. (2) is then immediate, since it is a standard fact that ramification is equivalent to (1) (for example this follows from \cite{EGA4}, Corollaire 17.4.2).
	\end{proof}

	\subsection{Analytically Open Maps}
One of the main uses of characteristic zero in this paper is a particular application of maps which are open in the analytic topology. In this subsection we give the basic results that we need.

\begin{definition} Let $f:X\rightarrow Y$ be a continuous map of topological spaces, and let $x\in X$.
	\begin{enumerate}
		\item We say that $f$ is \textit{open near} $x$ if there is an open neighborhood $U$ of $x$ such that the restriction of $f$ to $U$ is an open map.
		\item If $X\subset\mathbb C^m$ and $Y\subset\mathbb C^n$ are constructible sets, and $f$ is a continuous constructible map, we say that $f$ is \textit{analytically open near} $x$ if $f$ is open near $x$ in the analytic topologies on $X$ and $Y$.
	\end{enumerate} 
\end{definition}

Later on, we will need to know that a certain map is analytically open near several specific points. Our goal now is to give the condition that will imply openness at these points. We first recall the following result about complex spaces (\cite{GraRem}, p.107):

\begin{fact}\label{complex spaces fact} Let $f:X\rightarrow Y$ be a holomorphic map between pure dimensional complex spaces of the same dimension. Assume that $Y$ is locally irreducible, and all fibers of $f$ are discrete. Then $f$ is an open map.
\end{fact}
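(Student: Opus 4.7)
The plan is to reduce openness to a purely local assertion at a fixed point and then exploit Remmert's proper mapping theorem together with the local irreducibility hypothesis on $Y$. The underlying geometric idea is that discreteness of the fiber through $x$, together with the equality $\dim X = \dim Y$, forces the $f$-image of a small neighborhood of $x$ to be an analytic subset of $Y$ of full dimension, which then must fill out a locally irreducible neighborhood of $f(x)$.

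To start, fix $x \in X$ and set $y = f(x)$. Since $f^{-1}(y)$ is discrete, one can perform a standard squeezing: choose a relatively compact open neighborhood $U$ of $x$ with $\overline{U}\cap f^{-1}(y) = \{x\}$, then choose an open neighborhood $V$ of $y$ such that the restriction $f|_{\overline{U}}\colon \overline{U}\to V$ is proper. After further shrinking $V$, one may assume that $V$ is irreducible at $y$, using the local irreducibility hypothesis on $Y$. The point of this setup is that $\partial U$ is disjoint from $f^{-1}(y)$, so $y\notin f(\partial U)$ by the properness (hence closedness) of $f|_{\overline{U}}$.

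The core of the argument is to identify $f(\overline{U})$ as a full-dimensional analytic subset of $V$. By Remmert's proper mapping theorem, $f(\overline{U})$ is analytic in $V$. By the fiber dimension theorem applied to a proper holomorphic map with zero-dimensional fibers, $\dim f(\overline{U}) = \dim \overline{U} = \dim X = \dim Y = \dim V$. Since $V$ is irreducible, the only analytic subset of $V$ of full dimension is $V$ itself, so $f(\overline{U}) = V$. Openness at $x$ then follows: $f(\partial U)$ is a closed proper analytic subset of $V$ missing $y$, so $V\setminus f(\partial U)$ is an open neighborhood of $y$ contained in $f(U)$. Since $x$ was arbitrary, $f$ is open.

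The main obstacle is managing the local geometry of $X$ along $\partial U$ carefully enough that the dimension computation goes through. In particular, one must verify that $f|_{\overline{U}}$ remains a proper map between pure-dimensional analytic sets of the same dimension, and that the fiber dimension theorem applies uniformly on $\overline{U}$ rather than only generically on $X$. Both points are delicate because a priori $X$ could have many local irreducible components at $x$ and $f$ could behave differently on each; however, pure dimensionality of $X$ and discreteness of \emph{all} fibers (not just the one over $y$) suffice to rule out pathological behavior. These subtleties are precisely what Grauert--Remmert handle in detail, which is why it is cleanest to cite the result.
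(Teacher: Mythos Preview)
The paper does not prove this fact; it simply cites it from Grauert--Remmert (p.~107) and moves on. Your proposal is essentially a sketch of the standard argument underlying that reference, so in that sense you are not deviating from the paper but rather filling in what it leaves as a black box.

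One technical point worth tightening: Remmert's proper mapping theorem applies to proper holomorphic maps between complex spaces, and $\overline{U}$ is not itself a complex space. The clean way to run your argument is to set $U' = U \cap f^{-1}(V)$; your choice of $U$ and $V$ (with $f(\partial U) \cap V = \emptyset$) ensures that $f|_{U'} \colon U' \to V$ is proper as a map of complex spaces. Then $f(U')$ is analytic in $V$ of dimension $\dim V$, so by irreducibility $f(U') = V$ outright, and the separate $\partial U$ step in your final paragraph becomes unnecessary. You already flag exactly this kind of issue in your closing remarks, so this is a matter of presentation rather than a gap.
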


Now our desired statement is:

\begin{lemma}\label{conditions for openness} Let $X$ and $Y$ be smooth varieties of the same dimension, and let $f:X\rightarrow Y$ be a morphism. Let $x\in X$, and set $y=f(x)$. If the fiber $f^{-1}(y)$ is finite, then $f$ is analytically open near $x$. 
\end{lemma}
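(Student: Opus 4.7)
The plan is to reduce the statement to Fact \ref{complex spaces fact} by passing to a suitable Zariski-open neighborhood of $x$ on which every fiber of $f$ is discrete. The hypothesis that $f^{-1}(y)$ is finite only gives discreteness for a single fiber; to apply the fact, I need to spread this out to nearby fibers, which is exactly what upper semi-continuity of fiber dimension provides.

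First, I would observe that since $f^{-1}(y)$ is a finite set of points, its local dimension at $x$ (as a subvariety of $X$) is $0$. Next, I would invoke Chevalley's upper semi-continuity theorem for morphisms of varieties: the function $x' \mapsto \dim_{x'} f^{-1}(f(x'))$ is upper semi-continuous in the Zariski topology on $X$. Consequently, the set $\{x' \in X : \dim_{x'} f^{-1}(f(x')) \geq 1\}$ is Zariski closed and does not contain $x$, so it has a Zariski open complement $U \subset X$ containing $x$. By construction, for every $x' \in U$ the point $x'$ is isolated in its fiber $f^{-1}(f(x'))$.

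Now I would verify the hypotheses of Fact \ref{complex spaces fact} applied to the restriction $f|_U : U \to Y$. Since $X$ and $Y$ are smooth varieties of the same dimension, $U$ and $Y$ are complex manifolds of the same (pure) dimension, and in particular $Y$ is locally irreducible. For any $v \in Y$, the fiber $(f|_U)^{-1}(v) = f^{-1}(v) \cap U$ consists of points each of which is isolated in $f^{-1}(v)$, hence this intersection is discrete. Fact \ref{complex spaces fact} then yields that $f|_U$ is open in the analytic topology. Since $U$ is Zariski open in $X$, it is also analytically open, so $f$ is analytically open near $x$.

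I do not anticipate any serious obstacle: the entire content is translating the algebro-geometric hypothesis (finite fiber) into the complex-analytic hypothesis (discrete nearby fibers) via Chevalley, and then quoting the cited fact. The only minor subtlety to get right is distinguishing the dimension of the fiber $f^{-1}(y)$ globally from the local fiber dimension at $x$, but these coincide in our setting because $f^{-1}(y)$ is finite.
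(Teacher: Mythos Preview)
Your proposal is correct and follows essentially the same route as the paper: both arguments pass to a Zariski-open neighborhood of $x$ on which $f$ is quasi-finite and then invoke Fact~\ref{complex spaces fact}. The only cosmetic difference is that the paper cites directly the openness of the quasi-finite locus, whereas you derive it from Chevalley's upper semi-continuity of fiber dimension; these are equivalent formulations of the same fact.
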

\begin{proof}
	By assumption, $f$ is quasi-finite at $x$. Since the quasi-finite locus of a morphism is open, there is a relatively Zariski open set $X'\subset X$ such that the restriction of $f$ to $X'$ is quasi-finite. Note that by smoothness we have $\dim X'=\dim X=\dim Y$. Thus Fact \ref{complex spaces fact} applies restricted to $X'$; we conclude that $f$ is analytically open on $X'$, which is enough to prove the lemma.
\end{proof}

Before moving on, we note the following easy corollary:

\begin{corollary}\label{generic openness} Let $X$ and $Y$ be $\mathcal K$-definable sets of the same dimension, and let $f:X\rightarrow Y$ be a function which is given by a coordinate projection. Assume that $f$ is almost finite-to-one. Let $x\in X$ be generic over any set of parameters defining $X$, $Y$, and $f$. Then $f$ is analytically open near $x$.
\end{corollary}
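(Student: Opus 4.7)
The plan is to reduce to Lemma~\ref{conditions for openness} by replacing $X$ and $Y$ with Zariski open smooth neighborhoods of $x$ and $y := f(x)$ respectively, and verifying that the remaining fiber is finite.

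First I would show that $y$ is generic in $Y$ over the given parameter set $A$. Because $f$ is a coordinate projection, it is $\mathcal K$-definable over $A$, so $y \in \operatorname{acl}(A,x)$ in $\mathcal K$ and hence $y$ is generic in $f(X)$ over $A$. Now $f$ is almost finite-to-one, which (working in the strongly minimal structure $\mathcal K$) forces $\dim f(X) = \dim X$ by the $\mathcal K$-version of Lemma~\ref{generic dominance facts}(1)--(3). Combined with the hypothesis $\dim X = \dim Y$, this says $f(X)$ is generic in $Y$, so $y$ is generic in $Y$ over $A$.

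Next I would pass to smooth varieties. By Lemma~\ref{smooth locus definable}(3), $x$ is smooth in $X$ and $y$ is smooth in $Y$, so by Definition~\ref{smooth point} there exist Zariski open subsets $X_0 \subset X$ containing $x$ and $Y_0 \subset Y$ containing $y$ which are smooth varieties of dimensions $\dim X$ and $\dim Y$ respectively. Set $X_1 := X_0 \cap f^{-1}(Y_0)$, a Zariski open neighborhood of $x$ in $X_0$, hence itself a smooth variety of dimension $\dim X = \dim Y$. The restriction $g := f|_{X_1} : X_1 \to Y_0$ is then a morphism of smooth varieties of the same dimension.

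The remaining hypothesis of Lemma~\ref{conditions for openness} is that $g^{-1}(y)$ is finite. For this I would invoke almost finite-to-oneness: the set $X_{\mathrm{fin}} := \{x' \in X : f^{-1}(f(x')) \text{ is finite}\}$ has $\dim(X - X_{\mathrm{fin}}) < \dim X$, so the generic point $x$ lies in $X_{\mathrm{fin}}$, meaning $f^{-1}(y)$ is finite and therefore so is $g^{-1}(y) \subset f^{-1}(y)$. Lemma~\ref{conditions for openness} then yields an analytic open neighborhood $U$ of $x$ in $X_1$ on which $g$ is an open map to $Y_0$. Since $X_1$ is Zariski open (hence analytically open) in $X$, the set $U$ is also an analytic open neighborhood of $x$ in $X$; and since $Y_0$ is analytically open in $Y$, open subsets of $Y_0$ are open in $Y$. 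Thus $f|_U$ is open as a map into $Y$, completing the proof. The only mildly subtle point is the genericity transfer for $y$ in the first paragraph, which relies on combining the dimension equality with the definability of $f$ over $A$; the rest is bookkeeping around the smooth locus.
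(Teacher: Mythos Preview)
Your approach is essentially the paper's: pass to the smooth loci of $X$ and $Y$, use almost finite-to-oneness to get a finite fiber over $y$, and invoke Lemma~\ref{conditions for openness}. One small slip: the inference ``$y\in\operatorname{acl}(A,x)$, hence $y$ is generic in $f(X)$'' is not valid on its own (for instance a constant map satisfies the premise but not the conclusion); what you actually need here is the almost finite-to-one hypothesis, which gives $x\in\operatorname{acl}(A,y)$ and hence $\dim(y/A)=\dim(x/A)=\dim X=\dim Y$ directly --- this is exactly how the paper argues it, bypassing the detour through $f(X)$.
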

\begin{proof}
	Since $f$ is almost finite-to-one, $f(x)=y$ is generic in $Y$ over the relevant parameters, and therefore has a finite fiber under $f$. Now since $x$ and $y$ are generic in $X$ and $Y$, respectively, we can restrict to smooth varieties $X'\subset X$ and $Y'\subset Y$ containing the respective points. Shrinking if necessary, we may assume that $f(X')\subset Y'$. Thus, since it is given by a projection, $f$ defines a morphism from $X'$ to $Y'$. Note that $\dim X'=\dim Y'$, since $\dim X=\dim Y$. Thus Lemma \ref{conditions for openness} applies.
\end{proof}

	\subsection{Behavior Under Fiber Products}
	
	We now give the main result of this section -- namely Proposition \ref{smooth fiber product} below -- whose purpose is to summarize the behavior of the notions we have been discussing under fiber products. In fact this result encompasses much of the geometric content of the paper. In essence, it says that the `ideal' behavior of smoothness, non-injectivity, and analytic openness, when restricted to nice enough points in a fiber product, can be determined `one set at a time'.
	
	\begin{notation} In what follows, given a set $S\subset A\times B$ and an element $b\in B$, we denote the fiber $\{a\in A:(a,b)\in S\}$ by $S_b$.
	\end{notation}
	
	\begin{proposition}\label{smooth fiber product}
		Let $X$, $Y_1$, $Y_2$, $Z_1\subset X\times Y_1$, and $Z_2\subset X\times Y_2$ be constructible sets. Assume for $i=1,2$ that $X$, $Y_i$, and $Z_i$ are $\mathcal K$-definable over a set $A_i$. Assume further that all four of the projections $Z_i\rightarrow X$ and $Z_i\rightarrow Y_i$ are generically dominant. Let $W=\{(x,y_1,y_2)\in X\times Y_1\times Y_2:(x,y_1)\in Z_1\wedge(x,y_2)\in Z_2\}$. Let $(x,y_1,y_2)\in W$ such that each $(x,y_i)$ is generic in $Z_i$ over $A_i$. Then:
		\begin{enumerate}
			\item There is a relatively Zariski open subset $W'\subset W$ containing $(x,y_1,y_2)$ which is a smooth variety of dimension $\dim Z_1+\dim Z_2-\dim X$. 
			\item If $\dim W=\dim Z_1+\dim Z_2-\dim X$, then $(x,y_1,y_2)$ is smooth in $W$.
			\item For each $i$, $x$ is smooth in $(Z_i)_{y_i}$. Moreover, the intersection $T_x((Z_1)_{y_1})\cap T_x((Z_2)_{y_2})$ is non-trivial if and only if $(x,y_1,y_2)$ is a non-injective point of the projection $W\rightarrow Y_1\times Y_2$.
			\item If $\dim Z_1+\dim Z_2=\dim Y_1+\dim Y_2+\dim X$ and $(Z_1)_{y_1}\cap(Z_2)_{y_2}$ is finite, then $W\rightarrow Y_1\times Y_2$ is analytically open near $(x,y_1,y_2)$.
		\end{enumerate}
	\end{proposition}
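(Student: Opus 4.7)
The overall strategy is to use generic smoothness in characteristic zero to replace each of $X$, $Y_i$, and $Z_i$ by smooth Zariski open subsets around the distinguished points, reducing to a statement about fiber products of smooth morphisms between smooth varieties. The four parts then become standard algebraic-geometric consequences, combined with the lemmas already in this section.

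For parts (1) and (2), I would begin by noting that since the projections $Z_i\to X$ and $Z_i\to Y_i$ are generically dominant and $(x,y_i)$ is generic in $Z_i$ over $A_i$, the points $x\in X$ and $y_i\in Y_i$ are each generic over $A_i$, so Lemma \ref{smooth locus definable} places all four points in the respective smooth loci. Shrink each set to a smooth Zariski neighborhood of its point. Generic smoothness in characteristic zero applied to the dominant maps $Z_i\to X$ lets me shrink further so that both $Z_1\to X$ and $Z_2\to X$ are smooth morphisms on these neighborhoods. Since smoothness is preserved under base change, the fiber product $Z_1\times_X Z_2$ (which locally agrees with $W$ near $(x,y_1,y_2)$) is smooth of dimension $\dim Z_1+\dim Z_2-\dim X$. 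This proves (1). For (2), the set $W'$ produced in (1) is a relatively Zariski open subset of $W$ containing $(x,y_1,y_2)$; if $\dim W$ equals the expected dimension then $W'$ has full dimension in $W$, so $(x,y_1,y_2)$ is smooth in $W$ by Definition \ref{smooth point}.

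For part (3), the smoothness of $x$ in $(Z_i)_{y_i}$ follows by an additivity-of-dimension argument: generic dominance plus the genericity of $(x,y_i)$ in $Z_i$ makes $y_i$ generic in $Y_i$ and then $x$ generic in $(Z_i)_{y_i}$ over $A_iy_i$, so Lemma \ref{smooth locus definable} applies. For the tangent-space characterization, I would work in the smooth setting established above, where $T_{(x,y_1,y_2)}W=T_{(x,y_1)}Z_1\times_{T_xX}T_{(x,y_2)}Z_2$. The differential of the projection $W\to Y_1\times Y_2$ has kernel consisting of those pairs of tangent vectors whose $y_i$-components vanish and whose $x$-components agree; using the short exact sequence coming from the smoothness of $Z_i\to X$, such vectors correspond exactly to elements of $T_x((Z_1)_{y_1})\cap T_x((Z_2)_{y_2})$ regarded inside $T_xX$. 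Thus the differential is injective iff this intersection is trivial, and Lemma \ref{non-injective characterization}(2) translates non-injectivity of the differential into the non-injectivity of the projection near $(x,y_1,y_2)$.

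For part (4), parts (1) and the dimension hypothesis give that $W$ is smooth of dimension $\dim Y_1+\dim Y_2=\dim(Y_1\times Y_2)$ near $(x,y_1,y_2)$. The fiber of $W\to Y_1\times Y_2$ over $(y_1,y_2)$ is $(Z_1)_{y_1}\cap(Z_2)_{y_2}$, finite by assumption. Lemma \ref{conditions for openness} then delivers analytic openness near $(x,y_1,y_2)$. The step I expect to be the main obstacle is the bookkeeping in (3): one must set up the tangent-space fiber product precisely, verify that the identification of the kernel with the intersection of fiber tangent spaces genuinely exploits the smoothness of $Z_i\to X$ obtained in (1), and reconcile non-injectivity of the ambient coordinate projection on $W$ with the intrinsic notion via Lemma \ref{non-injective characterization}(2).
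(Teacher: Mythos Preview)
Your approach is essentially identical to the paper's: shrink to smooth loci, apply generic smoothness to the dominant projections, then deduce (1)--(4) from standard properties of smooth fiber products together with Lemmas~\ref{non-injective characterization} and~\ref{conditions for openness}. One small correction in part~(3): the short exact sequence you need to identify the kernel of $T_{(x,y_i)}(Z_i)\to T_{y_i}(Y_i)$ with $T_x((Z_i)_{y_i})$ comes from the smoothness of $Z_i\to Y_i$, not $Z_i\to X$; the paper explicitly makes \emph{both} projections smooth via generic smoothness, and you should too, since surjectivity of $T_{(x,y_i)}(Z_i)\to T_{y_i}(Y_i)$ is what pins down the kernel dimension.
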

	\begin{remark} Before proceeding with the proof, we make two quick comments:
		\begin{enumerate}
			\item Note that if $A_1=A_2=A$ and $y_1$ and $y_2$ are independent over $Ax$, the proposition is essentially trivial. The point is that we don't specify any such relationship between the $A_i$ and $y_i$.
			\item Note also that the hypotheses are satisfied by any pair of standard families of plane curves in $\mathcal M$; that is, let $A_1=A_2=\emptyset$, let $X=M^2$, and let each $Z_i$ be the graph of a standard family $\mathcal Z_i$. Then it follows by Lemma \ref{family ranks} that all of the conditions of the proposition are met (including the added dimension conditions of (2) and (4)). In this case $W$ is the graph of the family of intersections $\mathcal I_{\mathcal Z_1,\mathcal Z_2}$. 
		\end{enumerate}
	\end{remark}
	\begin{proof} 		
	\begin{enumerate}		
		\item First note that (1) is a local statement, so in proving it we may shrink each set as much as we want. 
		
		Now since the four given projections are all generically dominant, it follows that $x$ is generic in $X$ over each $A_i$, and each $y_i$ is generic in $Y_i$ over $A_i$. We conclude:
	
	\begin{claim} $x$ is smooth in $X$, each $y_i$ is smooth in $Y_i$, and each $(x,y_i)$ is smooth in $Z_i$.
	\end{claim}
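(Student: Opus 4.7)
The plan is to reduce the claim to three applications of Lemma \ref{smooth locus definable}(3), which guarantees that any point generic (in the sense of $\mathcal K$) over the defining parameters of a constructible set lies in its smooth locus. The hypothesis already hands us the genericity of $(x,y_i)$ in $Z_i$ over $A_i$ for each $i$, so the smoothness of $(x,y_i)$ in $Z_i$ is immediate. The remaining work is just to push this genericity forward along the two coordinate projections.

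For the smoothness of $y_i$ in $Y_i$, I would invoke Lemma \ref{generic point characterization of function properties}(4) (justified for $\mathcal K$-definable sets via Lemma \ref{generic dominance no prefix}): since $Z_i \to Y_i$ is generically dominant and $(x,y_i)$ is generic in $Z_i$ over $A_i$, the image $y_i$ is generic in $Y_i$ over $A_i$. Then Lemma \ref{smooth locus definable}(3) gives $y_i \in Y_i^S$. The same argument applied to the projection $Z_1 \to X$ shows that $x$ is generic in $X$ over $A_1$, and hence smooth in $X$ (and by symmetry over $A_2$ as well, though $A_1$ alone suffices for smoothness, as smoothness does not depend on the chosen parameter set).

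There is no real obstacle here; the proof is a bookkeeping exercise in applying the previously developed geometric preliminaries, and the only minor subtlety is making sure that the two different parameter sets $A_1$ and $A_2$ do not cause trouble. They do not, because smoothness in a constructible set is an absolute property of the point and the set, independent of which parameter set one uses to realize the set as $\mathcal K$-definable: once we know $x$ is generic in $X$ over some $A_i$ that defines $X$, Lemma \ref{smooth locus definable}(3) delivers the conclusion. So the claim drops out directly, and I would record it in three short lines without further ado before proceeding to the genuine content of Proposition \ref{smooth fiber product}(1), which will presumably use this claim together with some fiber product / transversality computation to exhibit the desired smooth variety $W'$.
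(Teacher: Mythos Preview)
Your proposal is correct and follows essentially the same route as the paper: the paper records just before the claim that generic dominance of the projections forces $x$ generic in $X$ over each $A_i$ and $y_i$ generic in $Y_i$ over $A_i$, then cites Lemma \ref{smooth locus definable} to conclude smoothness. One small point: your parenthetical appeal to Lemma \ref{generic dominance no prefix} is unnecessary (and slightly off, since that lemma concerns $\mathcal M$-interpretable maps), because Lemma \ref{generic point characterization of function properties}(4) is a general fact about strongly minimal structures and so applies directly to $\mathcal K$.
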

	\begin{proof} By Lemma \ref{smooth locus definable} and the fact that each point is generic in the relevant set.
	\end{proof}

	By the Claim, we may replace each of $X$, $Y_1$, $Y_2$, $Z_1$, and $Z_2$ with its smooth locus, and thereby assume that each is a smooth variety -- in particular we can still assume after shrinking that each $Z_i\subset X\times Y_i$. In fact, shrinking further, we may assume that each of these five sets is irreducible. So we may view the projections $Z_i\rightarrow X$ and $Z_i\rightarrow Y_i$ as morphisms of smooth irreducible varieties. 
		
	Note, then, that each of these four morphisms is dominant, by the genericity of $x$ and $y_i$. So by generic smoothness (which holds in characteristic zero, see e.g. Theorem 25.3.1 of \cite{Vak}), we may assume after further shrinking that each $Z_i\rightarrow X$ and $Z_i\rightarrow Y_i$ is smooth. But $W$ is just the fiber product of the $Z_i\rightarrow X$; thus $W\rightarrow X$ is also smooth, which by the smoothness of $X$ implies that $W$ is smooth.
	
	Moreover, by smoothness, each fiber of $Z_i\rightarrow X$ is either empty or of dimension $\dim Z_i-\dim X$. Thus each fiber of $W\rightarrow X$ is either empty or of dimension $\dim Z_1+\dim Z_2-2\dim X$, which implies that $$\dim W\leq\dim X+(\dim Z_1+\dim Z_2-2\dim X)=\dim Z_1+\dim Z_2-\dim X.$$ On the other hand, the fiber $W_x$ is non-empty, as witnessed by $(x,y_1,y_2)$. Moreover, smooth morphisms are open -- so since $X$ is irreducible, the set of non-empty fibers is dense in $X$. This shows that in fact $\dim W=\dim Z_1+\dim Z_2-\dim X$, which is enough to prove (1).
	
	\item Immediate from (1).
	
	\item First let us retain the various shrinkings employed in proving (1); note that this is not a problem, since the statement of (3) is local. In particular, in what follows we assume that each of $X$, $Y_1$, $Y_2$, $Z_1$, and $Z_2$ is a smooth irreducible variety, and that each projection $Z_i\rightarrow X$ and $Z_i\rightarrow Y_i$ is smooth. By the proof of (1), we may further deduce that $W$ and $W\rightarrow X$ are smooth. 
	
	Throughout this proof, we view all tangent spaces as sub-vector spaces of the space $$T_{(x,y_1,y_2)}(X\times Y_1\times Y_2)=T_x(X)\times T_{y_1}(Y_1)\times T_{y_2}(Y_2).$$ Let $0_x$, $0_{y_1}$, and $0_{y_2}$ denote the zero vectors in these three factors, respectively. In reality $0_x$ is just $x$, and so on -- but of course we use different notation when viewing as a vector space.
	
	We now continue with the proof. The next two claims show that $x$ is smooth in each $(Z_i)_{y_i}$:
	
	\begin{claim}\label{smooth fiber dimension} For each $i$, $\dim((Z_i)_{y_i})=\dim Z_i-\dim Y_i$.
	\end{claim}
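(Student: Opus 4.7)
The plan is to directly leverage the setup constructed in the proof of part (1) of the proposition. Recall that in proving (1), each of $X$, $Y_i$, and $Z_i$ was shrunk to a relatively Zariski-open subset which is a smooth irreducible variety, and generic smoothness in characteristic zero was then invoked to further shrink so that the projections $Z_i \to X$ and $Z_i \to Y_i$ are all smooth morphisms. Since the present claim is a dimension statement whose only subsequent use (in part (3)) is local at $x$, it suffices to compute $\dim((Z_i)_{y_i})$ after performing these same shrinkings.

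With that reduction, the computation is immediate. Every non-empty fiber of a smooth morphism of smooth varieties has dimension equal to the relative dimension, so every non-empty fiber of the (shrunken) projection $Z_i \to Y_i$ has dimension exactly $\dim Z_i - \dim Y_i$. It then remains only to verify that $(Z_i)_{y_i}$ is non-empty after the shrinkings. This holds because $(x, y_i)$ is generic in the original $Z_i$ over $A_i$, while every shrinking performed in (1) removed only a proper $A_i$-definable subset (namely, passing to a smooth locus, shrinking to an irreducible component, and cutting down to the smooth locus of a morphism, each being invariant under automorphisms fixing $A_i$); hence $(x, y_i)$ survives into each shrunken copy of $Z_i$, and $x$ witnesses the non-emptiness of $(Z_i)_{y_i}$.

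I expect no serious obstacle here; the claim is essentially a corollary of the preparations in part (1) combined with the standard equi-dimensionality of smooth morphisms. The only care needed is the bookkeeping to confirm that the distinguished point $(x, y_1, y_2)$ persists through each shrinking, which is routine given the genericity of each $(x, y_i)$ in $Z_i$ over $A_i$.
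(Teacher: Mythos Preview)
Your proposal is correct and takes essentially the same approach as the paper: the paper's proof simply invokes the smoothness of $Z_i\rightarrow Y_i$ (established in part (1)) together with the non-emptiness of $(Z_i)_{y_i}$ witnessed by $x$. Your additional bookkeeping about why $(x,y_i)$ survives the shrinkings is sound but already handled by the paper's blanket remark at the start of part (3) that it retains those shrinkings throughout.
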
 
	\begin{proof} This follows from the smoothness of $Z_i\rightarrow Y_i$ and the fact that $(Z_i)_{y_i}$ is non-empty (witnessed by $x$). 
	\end{proof}
	\begin{claim}\label{generic in family implies generic in fiber} For each $i$, $x$ is generic in $(Z_i)_{y_i}$ over $A_iy_i$. In particular, $x$ is smooth in $(Z_i)_{y_i}$.
	\end{claim}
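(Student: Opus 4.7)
The claim breaks into two parts: first that $x$ is generic in the fiber $(Z_i)_{y_i}$ over $A_iy_i$, and second that this genericity forces $x$ to be a smooth point of the fiber. The second part is immediate from Lemma \ref{smooth locus definable}(3) applied to the constructible set $(Z_i)_{y_i}$, which is $\mathcal{K}$-definable over $A_iy_i$; so the real content is the first part.

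For the first part, the plan is simply to compute $\dim(x/A_iy_i)$ and compare it to $\dim((Z_i)_{y_i})$, which has already been computed in the preceding Claim \ref{smooth fiber dimension} to be $\dim Z_i - \dim Y_i$. First, I would use the genericity of $(x,y_i)$ in $Z_i$ over $A_i$ to conclude $\dim(x,y_i/A_i) = \dim Z_i$. Next, since the projection $Z_i \to Y_i$ is generically dominant and $(x,y_i)$ is generic in $Z_i$ over $A_i$, the image $y_i$ is generic in $Y_i$ over $A_i$, yielding $\dim(y_i/A_i) = \dim Y_i$. Additivity of dimension in $\mathcal{K}$ then gives
\[
\dim(x/A_iy_i) \;=\; \dim(x,y_i/A_i) - \dim(y_i/A_i) \;=\; \dim Z_i - \dim Y_i.
\]
Since $(Z_i)_{y_i}$ is a constructible set $\mathcal{K}$-definable over $A_iy_i$ of dimension exactly $\dim Z_i - \dim Y_i$ (by Claim \ref{smooth fiber dimension}) that contains $x$, the displayed equation says precisely that $x$ is generic in $(Z_i)_{y_i}$ over $A_iy_i$.

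There is really no obstacle here; the proof is a two-line dimension computation followed by an invocation of Lemma \ref{smooth locus definable}(3). The only mild subtlety is remembering to cite generic dominance of $Z_i \to Y_i$ to transfer genericity of $(x,y_i)$ in $Z_i$ to genericity of $y_i$ in $Y_i$, so that additivity yields the right value. The heavy lifting, namely the smoothness arguments that underlie Claim \ref{smooth fiber dimension} and the present claim, has already been done in part (1) of the proposition via generic smoothness of dominant morphisms in characteristic zero.
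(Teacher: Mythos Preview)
Your proof is correct and essentially identical to the paper's: both compute $\dim(x/A_iy_i)=\dim Z_i-\dim Y_i$ by additivity from $\dim(xy_i/A_i)=\dim Z_i$ and $\dim(y_i/A_i)=\dim Y_i$, then invoke Claim~\ref{smooth fiber dimension}. The only cosmetic difference is that the paper already established $y_i$ generic in $Y_i$ over $A_i$ during the proof of part~(1) and simply cites ``genericity,'' whereas you re-derive it from generic dominance of $Z_i\to Y_i$.
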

	\begin{proof} By genericity, $\dim(y_i/A_i)=\dim Y_i$ and $\dim(xy_i/A_i)=\dim Z_i$. So by additivity, $\dim(x/A_iy_i)=\dim Z_i-\dim Y_i$. By Claim \ref{smooth fiber dimension}, this is enough.
	\end{proof}
	
	We now move toward the second clause of (3).
	
	\begin{lemma}\label{tangent space of fiber product} The tangent space $T_{(x,y_1,y_2)}(W)$ is precisely the fiber product (as vector spaces) of $T_{(x,y_1)}(Z_1)$ and $T_{(x,y_2)}(Z_2)$ over $T_x(X)$.
	\end{lemma}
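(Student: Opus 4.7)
The plan is to prove the lemma by a two-step containment-plus-dimension argument, leveraging the reductions already made in the proof of (1). Recall that at this point in the argument, we have shrunk all sets so that $X$, $Y_i$, $Z_i$, and $W$ are smooth irreducible varieties, and each projection $Z_i\to X$, $Z_i\to Y_i$ is a smooth morphism. In particular, $W = Z_1\times_X Z_2$ as schemes, and both tangent maps $d\pi_i:T_{(x,y_i)}(Z_i)\to T_x(X)$ are surjective (a smooth morphism is submersive on tangent spaces).

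First I would establish the containment $T_{(x,y_1,y_2)}(W)\subseteq T_{(x,y_1)}(Z_1)\times_{T_x(X)}T_{(x,y_2)}(Z_2)$ by pure functoriality. Namely, the two projections $p_i:W\to Z_i$ are morphisms, so any tangent vector $v\in T_{(x,y_1,y_2)}(W)$ yields tangent vectors $dp_i(v)\in T_{(x,y_i)}(Z_i)$; and since the two composites $W\to Z_i\to X$ agree, the images of $dp_1(v)$ and $dp_2(v)$ in $T_x(X)$ coincide. Writing the tangent spaces as subspaces of $T_x(X)\times T_{y_1}(Y_1)\times T_{y_2}(Y_2)$ via the natural identification, this precisely says $v$ lies in the claimed fiber product.

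Next I would close the gap by a dimension count. By smoothness at $(x,y_1,y_2)$, the left-hand side has dimension $\dim W=\dim Z_1+\dim Z_2-\dim X$ (using part (1)). For the right-hand side, the short exact sequence
\[
0\longrightarrow\ker(d\pi_1)\longrightarrow T_{(x,y_1)}(Z_1)\times_{T_x(X)}T_{(x,y_2)}(Z_2)\longrightarrow T_{(x,y_2)}(Z_2)\longrightarrow 0,
\]
coming from the surjectivity of $d\pi_1$, shows that the fiber product has dimension $(\dim Z_1-\dim X)+\dim Z_2$. The two dimensions match, so the inclusion of vector subspaces of $T_x(X)\times T_{y_1}(Y_1)\times T_{y_2}(Y_2)$ must be an equality.

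I do not anticipate a genuine obstacle here: the only substantive ingredient is the surjectivity of the tangent maps to $T_x(X)$, which is exactly what the generic smoothness reductions from part (1) buy us. The lemma is essentially the standard fact that the tangent space of a smooth fiber product is the fiber product of tangent spaces, and the main care required is just to record the inclusion correctly and to invoke the right dimension equality.
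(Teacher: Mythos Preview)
Your proof is correct and follows essentially the same approach as the paper: both establish the inclusion $T_{(x,y_1,y_2)}(W)\hookrightarrow T_{(x,y_1)}(Z_1)\times_{T_x(X)}T_{(x,y_2)}(Z_2)$ by functoriality of the projections, then use surjectivity of the tangent maps $T_{(x,y_i)}(Z_i)\to T_x(X)$ (from smoothness of $Z_i\to X$) to compute that both sides have dimension $\dim Z_1+\dim Z_2-\dim X$, forcing equality. Your short exact sequence is just a slightly more explicit way of carrying out the dimension count that the paper leaves as ``one then easily computes.''
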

	\begin{proof} For convenience let us denote the specified fiber product of tangent spaces as $V$. Now the maps $W\rightarrow Z_i$ induce an embedding $T_{(x,y_1,y_2)}(W)\rightarrow V$, via the inclusion map. On the other hand, since the maps $Z_i\rightarrow X$ are smooth, it follows that the maps $T_{(x,y_i)}(Z_i)\rightarrow T_x(X)$ are surjective. On then easily computes that $$\dim V=\dim Z_1+\dim Z_2-\dim X=\dim(T_{(x,y_1,y_2)}(W)),$$ which shows that the embedding above is actually an isomorphism.
	\end{proof} 

	Now by Lemma \ref{non-injective characterization}, the assertion that $(x,y_1,y_2)$ is a non-injective point of the projection $W\rightarrow Y_1\times Y_2$ is equivalent to the existence of a non-zero vector $v\in T_x(X)$ such that $(v,0_{y_1},0_{y_2})\in T_{(x,y_1,y_2)}(W)$. By Lemma \ref{tangent space of fiber product}, the assertion that $(v,0_{y_1},0_{y_2})\in T_{(x,y_1,y_2)}(W)$ is equivalent to the assertion that each $(v,0_{y_i})\in T_{(x,y_i)}(Z_i)$ -- or in other words, that each $(v,0_{y_i})$ belongs to the kernel of the map $T_{(x,y_i)}(Z_i)\rightarrow T_{y_i}(Y_i)$.  Finally, since $Z_i\rightarrow Y_i$ is smooth, an easy dimension comparison yields that the kernel of $T_{(x,y_i)}(Z_i)\rightarrow T_{y_i}(Y_i)$ is just $T_x((Z_i)_{y_i})\times\{0_{y_i}\}$. The second clause of (3) now follows.
	
	\item  Again, we retain the various shrinkings from (1). This is again not a problem, because (a) in the proof of (1) we did not alter the dimensions of $X$, $Y_i$, or $Z_i$; (b) shrinking clearly preserves the finiteness of $(Z_1)_{y_1}\cap(Z_2)_{y_2}$, and (c); the conclusion of (4) is a local statement. 
	
	Now after shrinking, by (1) we have $\dim W=\dim Z_1+\dim Z_2-\dim X$. Combined with the added assumption on the dimensions of $X$, $Y_i$, and $Z_i$, it follows that $\dim W=\dim(Y_1\times Y_2)$. Then since $(Z_1)_{y_1}\cap(Z_2)_{y_2}$ is finite, Lemma \ref{conditions for openness} applies to $W\rightarrow Y_1\times Y_2$ at $(x,y_1,y_2)$. 
	\end{enumerate}
	\end{proof}

	\section{Generic Non-transversalities and Ramification}
	
	In this section we introduce the notion of $\mathcal M$ `detecting generic non-transversalities,' and develop some associated machinery. This notion will serve as an organizing principle for the rest of the paper.
	
	\subsection{Generic Non-transversalities}
	
	We start with the basic definitions:
	
	\begin{definition}\label{generic non-transversality} By a \textit{generic non-transversality} in $\mathcal M$, we mean a five-tuple $(x,C_1,C_2,c_1,c_2)$ satisfying the following:
		\begin{enumerate}
			\item Each $C_i$ is a strongly minimal plane curve with canonical base $c_i$.
			\item Each $c_i$ is coherent over $\emptyset$.
			\item $x$ is generic in $M^2$ over $\emptyset$, and generic in each $C_i$ over $c_i$.
			\item The intersection of tangent spaces $T_x(C_1)\cap T_x(C_2)$ is non-trivial.
		\end{enumerate}
	\end{definition}

	\begin{definition}\label{detect generic non-transversality} We say that $\mathcal M$ \textit{detects generic non-transversalities} if for any generic non-transversality $(x,C_1,C_2,c_1,c_2)$, the canonical bases $c_1$ and $c_2$ are $\mathcal M$-dependent over $\emptyset$.
	\end{definition}

	Definition \ref{detect generic non-transversality} is our attempt to give the weakest possible relevant meaning to the notion of `defining tangency'. Critically, note that we only consider tangency at points which are generic in each curve, and also generic over $\emptyset$. This restriction will make the necessary differential geometric reasoning much easier. Note also that we only consider plane curves with coherent canonical bases; this is because we only want to deal with generic curves in families.
	
	\subsection{Equivalent Conditions}
	
	It is convenient to work with canonical bases in the definition of generic non-transversalities. However, in proving that $\mathcal M$-detects generic non-transversalities it will be easier to work with standard families of plane curves. In this section we point out that these two approaches are equivalent. As a consequence, we deduce that generic non-transversalities correspond to ramification points (or rather, non-injective points), of certain maps. Later on, this observation will be key to detecting generic non-transversalities in $\mathcal M$.
	
	\begin{definition}\label{FG tangency} Let $\mathcal C=\{C_t:t\in T\}$ and $\mathcal D=\{D_u:u\in U\}$ be standard families of plane curves, and let $I\subset M^2\times T\times U$ be the graph of the family of intersections $\mathcal I_{\mathcal C,\mathcal D}$. Let $t$ be generic in $T$, and let $u$ be generic in $U$. Let $x$ be generic in each of $M^2$ over $\emptyset$, $C_t$ over $t$, and $D_u$ over $u$. 
		\begin{enumerate} 
			\item We say that $(x,t,u)$ is a \textit{generic} $(\mathcal C,\mathcal D)$-\textit{non-transversality} if the intersection of tangent spaces $T_x(C_t)\cap T_x(D_u)$ is non-trivial. 
			\item We say that $(x,t,u)$ is a \textit{generic} $(\mathcal C,\mathcal D)$-\textit{non-injectivity} if $(x,t,u)$ is non-injective in the projection $I\rightarrow T\times U$.
	\end{enumerate}
	\end{definition}

	\begin{remark}\label{generic family tangency iff ramification}
		It follows from Proposition \ref{smooth fiber product}(3) that $(x,t,u)$ is a generic $(\mathcal C,\mathcal D)$-non-transversality if and only if it is a generic $(\mathcal C,\mathcal D)$-non-injectivity. We distinguish the two equivalent notions for ease of presentation later on.
	\end{remark}

	Before proceeding, we point out the following easy lemma:
	
	\begin{lemma}\label{tangent space of component} Let $C$ be a plane curve which is $\mathcal M$-definable over a set $A$, and let $x\in C$ be generic over $A$. Then there is a unique stationary component $S$ of $C$, with canonical base $s$, such that $x$ is generic in $S$ over $As$. Moreover, for this $S$ we have $T_x(S)=T_x(C)$.
	\end{lemma}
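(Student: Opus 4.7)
The plan is to identify $S$ via the standard disjoint decomposition of $C$ into stationary components, and then to handle the tangent space equality via Zariski closures. By Fact \ref{rank exists}(4) (together with the remark following it on parameters), write $C = S_1 \sqcup \cdots \sqcup S_k$ as a disjoint union of stationary rank-$1$ sets, each $\mathcal M$-definable over $\operatorname{acl}_{\mathcal M}(A)$. Let $s_i = \operatorname{Cb}(S_i)$; by Fact \ref{cb facts}(3), $s_i \in \operatorname{acl}_{\mathcal M}(A)$, so $\rk(s_i/A) = 0$, and Corollary \ref{dim rk inequality} then yields $\dim(s_i/A) = 0$, i.e., $s_i \in \operatorname{acl}(A)$. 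This is the crucial observation, as it allows genericity statements over $A$ to be transferred freely to genericity statements over $A s_i$.

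Existence and uniqueness of $S$ are immediate from the disjointness of the decomposition: $x$ lies in exactly one component, which we call $S$, with canonical base $s$. To verify that $x$ is generic in $S$ over $As$, note that by Lemma \ref{dim rk equality} we have $\dim S = \dim C = \dim M$, while $s \in \operatorname{acl}(A)$ gives $\dim(x/As) = \dim(x/A) = \dim C = \dim M$, as required.

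For the tangent space equality, combine the genericity conditions just established with Lemma \ref{smooth locus definable}(3): $x$ is smooth in $C$ (which is $\mathcal K$-definable over $A$) and smooth in $S$ (which is $\mathcal K$-definable over $\operatorname{acl}(A)$, being one of the finitely many components singled out by its canonical base). Hence both $T_x(S)$ and $T_x(C)$ are defined, each of dimension $\dim M$. Since $S \subset C$ gives $\overline{S} \subset \overline{C}$, and tangent spaces at smooth points coincide with Zariski tangent spaces of the closures, we obtain $T_x(S) \subseteq T_x(C)$ as vector subspaces. Matching dimensions then forces equality.

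The main obstacle, if any, lies in cleanly justifying that the inclusion $S \subset C$ descends to an inclusion $T_x(S) \subseteq T_x(C)$ at the smooth point $x$. This should reduce to standard functoriality of Zariski tangent spaces under closed immersions of subvarieties, so I expect it to be routine. The rest of the argument is a short unwinding of the definitions of rank, dimension, canonical bases, and smoothness.
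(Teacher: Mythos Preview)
Your proof is correct and follows essentially the same line as the paper's. The only cosmetic difference is that where the paper invokes the coherence machinery (Lemmas \ref{coherent iff generic} and \ref{coherent preservation}) to pass from $\mathcal M$-genericity of $x$ in $S$ over $As$ to full genericity, you argue directly via $s\in\operatorname{acl}(A)$ and $\dim(x/As)=\dim(x/A)$; this is just unpacking those lemmas in place. For the tangent space equality, both arguments use the inclusion $T_x(S)\subseteq T_x(C)$ together with the dimension match, and the paper's ``clearly $T_x(S)\subset T_x(C)$'' is exactly what you justify through closures.
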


	\begin{proof} Since $x$ is generic in $C$ over $A$, it is also $\mathcal M$-generic in $C$ over $A$. So $x$ realizes one of the finitely many stationary $\mathcal M$-generic types of $C$ over $\operatorname{acl}_{\mathcal M}(A)$. In other words, there is a unique stationary component $S$ of $C$, with canonical base $s$, such that $x$ is $\mathcal M$-generic in $S$ over $As$. On the other hand, by Lemma \ref{coherent iff generic} $x$ is coherent over $A$, and by Fact \ref{cb facts} $s$ is $\mathcal M$-algebraic over $A$; so by Lemma \ref{coherent preservation}, $x$ is coherent over $As$. Then $x$ is in fact generic in $S$ over $As$, as desired.
		
	Finally, note by genericity that the tangent spaces $T_x(S)$ and $T_x(C)$ are both $n$-dimensional vector subspaces of $T_x(M^2)$. But clearly $T_x(S)\subset T_x(C)$, so these spaces are equal.
	\end{proof}

	We now deduce the following, which is the main goal of this subsection.
	
	\begin{lemma}\label{detecting tangency equivalence}
		The following are equivalent:
		\begin{enumerate}
			\item $\mathcal M$ detects generic non-transversalities.
			\item Whenever $\mathcal C=\{C_t:t\in T\}$ and $\mathcal D=\{D_u:u\in U\}$ are standard families of plane curves, and $(x,t,u)$ is a generic $(\mathcal C,\mathcal D)$-non-transversality, the parameters $t$ and $u$ are $\mathcal M$-dependent over $\emptyset$.
			\item Whenever $\mathcal C=\{C_t:t\in T\}$ and $\mathcal D=\{D_u:u\in U\}$ are standard families of plane curves, and $(x,t,u)$ is a generic $(\mathcal C,\mathcal D)$-non-injectivity, the parameters $t$ and $u$ are $\mathcal M$-dependent over $\emptyset$. 
		\end{enumerate}
	\end{lemma}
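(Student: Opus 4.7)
The plan is to first observe that (2)$\iff$(3) is essentially immediate from Remark~\ref{generic family tangency iff ramification} (which itself comes from Proposition~\ref{smooth fiber product}(3) applied to the graph $I$ of the family of intersections with $X=M^2$), so the real content is the equivalence (1)$\iff$(2).

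For (1)$\Rightarrow$(2), suppose $(x,t,u)$ is a generic $(\mathcal C,\mathcal D)$-non-transversality. I first apply Lemma~\ref{tangent space of component} to $C_t$ with parameter set $\{t\}$ and to $D_u$ with parameter set $\{u\}$, obtaining strongly minimal stationary components $C_1\subset C_t$ and $C_2\subset D_u$ with canonical bases $c_1\in\operatorname{acl}_{\mathcal M}(t)$ and $c_2\in\operatorname{acl}_{\mathcal M}(u)$, such that $x$ is generic in $C_i$ over $c_i$ and $T_x(C_1)=T_x(C_t)$, $T_x(C_2)=T_x(D_u)$. Since $t,u$ are generic in $T,U$ they are coherent over $\emptyset$, and Lemma~\ref{coherent preservation}(2) then promotes coherence to $c_1,c_2$. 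Thus $(x,C_1,C_2,c_1,c_2)$ is a generic non-transversality in the sense of Definition~\ref{generic non-transversality}, so hypothesis (1) yields that $c_1,c_2$ are $\mathcal M$-dependent over $\emptyset$. Because $\mathcal M$-independence is preserved under $\operatorname{acl}_{\mathcal M}$, the same dependence holds for $t,u$.

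For (2)$\Rightarrow$(1), start from a generic non-transversality $(x,C_1,C_2,c_1,c_2)$. If some $c_i\in\operatorname{acl}_{\mathcal M}(\emptyset)$ the conclusion is automatic, so assume $\rk(c_i)\geq 1$. Applying Lemma~\ref{coherent version of almost faithful family existence} in its standard-family form (option (b)) to each $C_i$ over $A=\emptyset$ produces standard families $\mathcal C=\{C_t:t\in T\}$ and $\mathcal D=\{D_u:u\in U\}$ over $\emptyset$ together with generic $\hat t\in T$, $\hat u\in U$ for which $c_1,\hat t$ and $c_2,\hat u$ are $\mathcal M$-interalgebraic and $C_1,C_2$ are almost contained in $C_{\hat t},D_{\hat u}$. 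A short check then shows $(x,\hat t,\hat u)$ is a generic $(\mathcal C,\mathcal D)$-non-transversality: genericity of $\hat t,\hat u$ is by construction; by coherence and interalgebraicity, $\hat t\in\operatorname{acl}(c_1)$, so $x$ remains generic in $C_1$ over $\hat t$, lands inside the large subset $C_1\cap C_{\hat t}$ of $C_1$, and is therefore generic in $C_{\hat t}$ over $\hat t$. Applying Lemma~\ref{tangent space of component} to $C_{\hat t}$ at $\hat t$ identifies $C_1$ (up to almost-equality) with the unique stationary component of $C_{\hat t}$ containing $x$ generically, whence $T_x(C_1)=T_x(C_{\hat t})$, and similarly on the $D$-side. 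The nontriviality of $T_x(C_1)\cap T_x(C_2)$ therefore transfers to $T_x(C_{\hat t})\cap T_x(D_{\hat u})$. By (2), $\hat t$ and $\hat u$ are $\mathcal M$-dependent over $\emptyset$, and $\mathcal M$-interalgebraicity passes this back to $c_1,c_2$.

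The main obstacle is the bookkeeping around tangent spaces under almost-equality: one must be careful that passing between a strongly minimal curve, a (possibly non-stationary) family member containing it, and the unique stationary component of that member through a distinguished generic point, genuinely preserves the tangent space at the point. Lemma~\ref{tangent space of component} is tailored exactly to this purpose, and once one applies it symmetrically on both sides of the equivalence -- extracting components from family members to go from (2) to (1), and matching a given strongly minimal curve with a component of the standard family in which it is embedded to go from (1) to (2) -- the rest is routine transfer of coherence and $\mathcal M$-dependence through $\operatorname{acl}_{\mathcal M}$.
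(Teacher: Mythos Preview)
Your proof is correct and follows essentially the same approach as the paper's: extract stationary components via Lemma~\ref{tangent space of component} for (1)$\Rightarrow$(2), and embed into standard families via Lemma~\ref{coherent version of almost faithful family existence} for (2)$\Rightarrow$(1), transferring coherence and $\mathcal M$-dependence through $\mathcal M$-interalgebraicity in each direction.

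One small correction: in the (2)$\Rightarrow$(1) direction, the case $c_i\in\operatorname{acl}_{\mathcal M}(\emptyset)$ is not one where ``the conclusion is automatic'' --- if $\rk(c_1)=0$ then $\rk(c_1c_2)=\rk(c_2)=\rk(c_1)+\rk(c_2)$, so $c_1$ and $c_2$ are in fact $\mathcal M$-\emph{independent}. Rather, this case is vacuous: a generic non-transversality requires $x$ to be generic in $M^2$ over $\emptyset$, which fails if $x$ lies on a strongly minimal curve whose canonical base has rank $0$ (since then $\rk(x)\leq 1$). The paper handles this by explicitly observing $\rk(c_i)\geq 1$ for precisely this reason; your reduction to $\rk(c_i)\geq 1$ is therefore justified, just not for the reason you gave.
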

	\begin{proof} As in Remark \ref{generic family tangency iff ramification}, the equivalence of (2) and (3) follows from Proposition \ref{smooth fiber product}(3). We will prove the equivalence of (1) and (2).
		
	First assume (1). Let $\mathcal C$, $\mathcal D$, and $(x,t,u)$ be as in (2). Let $S_1$ and $s_1$ be as in Lemma \ref{tangent space of component} for $x$ and $C_t$ over $t$, and similarly let $S_2$ and $s_2$ be as in Lemma \ref{tangent space of component} for $x$ and $D_u$ over $u$. Note by Corollary \ref{coherent family ranks} that $x$ is generic in $M^2$. Moreover, note by Lemma \ref{almost faithful facts} that $s_1$ and $s_2$ are $\mathcal M$-interalgebraic with $t$ and $u$, respectively; then by Lemmas \ref{coherent iff generic} and \ref{coherent preservation}, and the genericity of $t$ and $u$ in $T$ and $U$, it follows that each $s_i$ is coherent over $\emptyset$. So by Lemma \ref{tangent space of component}, we conclude that $(x,S_1,S_2,s_1,s_2)$ is a generic non-transversality. By assumption it follows that $s_1$ and $s_2$ are $\mathcal M$-dependent over $\emptyset$. But then by $\mathcal M$-interalgebraicity, so are $t$ and $u$. Thus we have proven (2).
		
	Now assume (2), and let $(x,S_1,S_2,s_1,s_2)$ be a generic non-transversality. Note that each $s_i$ has rank at least 1; indeed otherwise $x$ would not be generic in $M^2$ over $\emptyset$, as witnessed by $x\in S_i$. Now let $\mathcal C=\{C_t:t\in T\}$ and $t$ be as in Lemma \ref{coherent version of almost faithful family existence} for $S_1$ and $s_1$ over $\emptyset$, and let $\mathcal D=\{D_u:u\in U\}$ and $u$ be as in Lemma \ref{coherent version of almost faithful family existence} for $S_2$ and $s_2$ over $\emptyset$. Then since $t$ and $d_1$ are $\mathcal M$-interalgebraic, we have $\dim(x/s_1t)=\dim(x/s_1)=\dim M$; thus $x$ is generic in $S_1$ over $s_1t$, and so $x$ is also generic in $C_t$ over $s_1t$. By similar reasoning, $x$ is also generic in $D_u$ over $s_2u$. It now follows easily that $(x,t,u)$ is a generic $(\mathcal C,\mathcal D)$-non-transversality, so that by assumption $t$ and $u$ are $\mathcal M$-dependent over $\emptyset$. Then by $\mathcal M$-interalgebraicity again, so are $s_1$ and $s_2$.
	\end{proof}

	\section{Reducing to Curves when $\mathcal M$ Detects Generic Non-transversalities}
	
	In this section we prove the first main result of the paper -- that assuming $\mathcal M$ detects generic non-transversalities we have $\dim M=1$. Our main tool will be the \textit{purity of the ramification locus} (\cite{Zar}). Among other formulations, the following will be sufficient:
	
	\begin{fact}[Purity of the Ramification Locus]\label{purity} Let $V$ and $W$ be smooth equidimensional varieties over an algebraically closed field. Let $f:V\rightarrow W$ be a morphism, and let $R\subset V$ be the set of points at which $f$ is ramified. Then $R$ is a relatively closed subvariety of $V$, and each irreducible component of $R$ has codimension at most 1 in $V$.
	\end{fact}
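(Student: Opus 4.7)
The plan is to reduce the statement to a local computation and invoke Krull's Hauptidealsatz (the principal ideal theorem). First I would remark that the assertion is local on $V$, so one may replace $V$ and $W$ by suitable affine open subsets around any point. Moreover, on any irreducible component of $V$ whose image in $W$ has strictly smaller dimension, the differential $df_x$ is automatically non-injective for dimension reasons, so $R$ contains that entire component — already codimension $0$, which trivially satisfies the desired bound. The substantive case is therefore $\dim V = \dim W = n$ with $f$ generically finite, and I would treat this case next.

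In that setting, I would pick étale local coordinates $y_1,\dots,y_n$ on $W$ near a point $f(x)$ and $x_1,\dots,x_n$ on $V$ near $x$. Then $df_x$ is represented by the Jacobian matrix $J = \bigl(\partial(f^{*}y_i)/\partial x_j\bigr)$, and its failure to be injective is exactly the condition $\det(J) = 0$. Hence, locally, $R$ is the vanishing locus of the single regular function $\det(J)$, which immediately shows that $R$ is closed in $V$.

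For the purity bound: since $V$ is smooth, its local rings are regular. By Krull's Hauptidealsatz, the minimal primes over a nonzero principal ideal in a regular local ring all have height $1$; geometrically, the irreducible components of the vanishing locus of $\det(J)$ have codimension exactly $1$. If instead $\det(J)$ vanishes identically on some irreducible component of $V$, then that component contributes a piece of $R$ of codimension $0$, which is still $\leq 1$. Patching over an affine cover and matching local components with global ones gives the desired conclusion.

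The main point requiring care — and the likely obstacle to a fully rigorous write-up — is the bookkeeping in passing from étale local coordinates to Zariski local data, together with verifying that local irreducible components of the vanishing locus of $\det(J)$ glue correctly to the global irreducible components of $R$. This is handled by standard facts: codimension is preserved under étale base change, and closedness is a Zariski local property, so the local-to-global step introduces no essential new difficulty.
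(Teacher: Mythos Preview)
The paper does not prove this statement; it is quoted as a classical fact with a citation to Zariski. Your sketch via the Jacobian determinant and Krull's Hauptidealsatz is the standard argument and is correct in the case $\dim V = \dim W$, which is the only case the paper actually invokes (the application in Section~6 is to a morphism between smooth varieties of the same dimension $4m$).

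One remark on your case analysis: you reduce to $\dim V = \dim W$ by disposing of components whose \emph{image} has smaller dimension, but this does not address the possibility $\dim V < \dim W$. In that regime the ramification locus is cut out by the maximal minors of a non-square Jacobian rather than by a single determinant, and the codimension-$1$ bound can genuinely fail --- for instance $f\colon\mathbb A^2\to\mathbb A^3$, $(x,y)\mapsto(x^2,xy,y^2)$ has $R=\{(0,0)\}$, of codimension $2$. So either the Fact tacitly assumes $\dim V=\dim W$ (consistent with Zariski's original purity theorem and with the paper's usage), or the phrase ``smooth equidimensional varieties'' is being read as ``smooth varieties of equal dimension.'' Either way, your Hauptidealsatz argument covers exactly the case that matters here.
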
 

	Roughly speaking, our strategy will be to use Fact \ref{purity} to show that generic non-transversalities, when they exist, occur in codimension 1. The result will then follow from Corollary \ref{dim rk inequality}. The main difficulty we encounter will be ensuring that the non-transversalities we find come from generic points of families. Thus we will need a quick argument using the Compactness Theorem. Otherwise, the proof will be quite straightforward.
	
	\subsection{Producing Tangencies}
	
	Let us proceed with the argument. Throughout this section, we fix an excellent family $\mathcal C=\{C_t:t\in T\}$ of plane curves, as provided by Assumption \ref{M and K}. We let $C\subset M^2\times T$ be the graph of $\mathcal C$, and $I\subset M^2\times T^2$ the graph of the family of intersections $\mathcal I_{\mathcal C,\mathcal C}$; thus by Lemma \ref{family ranks} we have $\rk(C)=3$ and $\rk(I)=4$. Now our goal in this first subsection is roughly to produce generic non-transversalities in codimension 1 in $I$. To do this, we first show:
	
	\begin{lemma}\label{M' noninjective} Let $C'\subset C$ be any $\mathcal K$-definable set which is large in $C$. Then there is an element $\hat w=(\hat x,\hat t,\hat{t'})\in I$ such that:
		\begin{enumerate}
			\item Each of $(\hat x,\hat t)$ and $(\hat x,\hat{t'})$ belongs to $C'$. 
			\item $\dim(\hat w/\emptyset)\geq 4\cdot\dim M-1$.
			\item $\hat w$ is non-injective in the projection $I\rightarrow T^2$.
		\end{enumerate}
	\end{lemma}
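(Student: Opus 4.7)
The plan is to apply purity of the ramification locus to the projection $p\colon I\to T^2$, locate a ramification component of the correct dimension via the diagonal of $I$, and then pick $\hat w$ generically in that component while avoiding the ``bad set'' cut out by $C'$.

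Since $\rk I = \rk T^2 = 4$, both sides of $p$ have dimension $4\dim M$, and Lemma \ref{family ranks}(3) makes $p$ generically finite. Passing to dense open smooth subvarieties $V\subset I$ and $W\subset T^2$ (using generic smoothness and that $I$ is a fibered product), $p$ restricts to a morphism of smooth equidimensional varieties of dimension $4\dim M$. Generic smoothness in characteristic zero ensures $p$ is generically \'etale, so the ramification locus $R\subset V$ has codimension at least $1$; combined with Fact \ref{purity} this forces $R$ to be pure of dimension $4\dim M - 1$. To exhibit $R$ concretely, I would consider the diagonal $\Delta_I = \{(x,t,t) : (x,t)\in C\}$: for $(x_0,t_0)$ generic in $C$, Proposition \ref{smooth fiber product}(1) puts $(x_0,t_0,t_0)$ in the smooth locus of $I$, and Proposition \ref{smooth fiber product}(3) (applied with both fiber-product factors equal to $C$) shows it is non-injective for $p$, since $T_{x_0}(C_{t_0}) \cap T_{x_0}(C_{t_0}) = T_{x_0}(C_{t_0}) \neq 0$. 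So a dense open subset of $\Delta_I$ lies in $R$. I would then let $\Delta_0$ be a top-dimensional irreducible component of $\Delta_I$ and let $R_0$ be an irreducible component of $R$ containing the generic point of $\Delta_0$; by purity, $\dim R_0 = 4\dim M - 1$, while $\dim \Delta_0 = \dim C = 3\dim M$.

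To finish, I would set $B = \{(x,t,t')\in V : (x,t)\notin C'\text{ or }(x,t')\notin C'\}$. Since $C'$ is large in $C$, $\dim(C-C')\le 3\dim M -1$; and since the excellent family $\mathcal C$ has no common points, every nonempty fiber of $I\to C$ has dimension exactly $\dim M$. Together these yield $\dim B \le 4\dim M -1$. Crucially, on the diagonal $t=t'$, the bad set $B\cap\Delta_0$ is parametrized by $C-C'$, so has dimension $\le 3\dim M -1 < \dim\Delta_0$. Since $\Delta_0$ is contained in $\overline{R_0}$ and $R_0$ is irreducible of dimension $4\dim M-1$, the $\mathcal K$-definable set $R_0\cap B$ is a proper subset of $R_0$, and hence of strictly smaller dimension. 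Taking $\hat w$ to be any point generic in $R_0$ and lying outside $B$ then produces, simultaneously, $\dim(\hat w/\emptyset) = \dim R_0 \ge 4\dim M -1$, the non-injectivity of $\hat w$ for $p$ (via Lemma \ref{non-injective characterization}(2)), and $(\hat x,\hat t), (\hat x,\hat t')\in C'$.

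The main obstacle I expect is precisely the step $R_0\not\subset B$: a priori both $R_0$ and $B$ have the same top dimension $4\dim M-1$, so no naive genericity argument separates them. The resolution is to exploit the $3\dim M$-dimensional subset $\Delta_0\subset R_0$, on which $B$ is controlled by the genuinely smaller set $C-C'$. This uses two features of the excellent family in an essential way: the absence of common points (which keeps the fiber dimensions of $I\to C$ uniform in the dimension estimate for $B$), and the fact that $\mathcal C$ has rank exactly $2$ (so $\dim I = \dim T^2$ and purity produces codimension $1$ rather than something larger or degenerate).
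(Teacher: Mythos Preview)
Your overall strategy is sound and close to the paper's: exhibit a ramification point on the diagonal, invoke purity to get a locus of dimension at least $4\dim M-1$, and pick $\hat w$ there. But there is a genuine gap in your treatment of the bad set $B$. You conclude that $R_0\cap B$ is a proper subset of the irreducible $R_0$ ``and hence of strictly smaller dimension.'' That inference fails for constructible (as opposed to closed) $B$: a proper constructible subset of an irreducible variety can be dense. Your diagonal argument only produces points of $R_0\setminus B$ of dimension $\dim\Delta_0=3\dim M$, which is strictly less than $4\dim M-1$ whenever $\dim M>1$ --- exactly the case of interest. So as written, nothing prevents a generic point of $R_0$ from lying in $B$.

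The fix is easy and is what the paper does at the outset: first shrink $C'$ to be relatively \emph{open} in $C$ (its Zariski interior in $C$ is still large). Then $C\setminus C'$ is closed, so $B$ is closed in $V$, so $R_0\setminus B$ is open in $R_0$; being nonempty (via your diagonal argument) and $R_0$ irreducible, it is dense, and now a generic point of $R_0$ really does avoid $B$. In fact, once $B$ is closed you may as well replace $V$ by the open set $V\setminus B$ \emph{before} applying purity --- the generic diagonal point still lies there and still ramifies --- and then there is no bad set to avoid at all. This is precisely the paper's route: shrink $I$ to an open $I'$ whose two projections to $C$ land in $C'$, apply purity to $I'\to (T')^2$, and take $\hat w$ generic in the ramification locus. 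Your computation $\dim B\le 4\dim M-1$ is correct but ultimately plays no role; the whole post-hoc avoidance argument becomes unnecessary once you shrink first.
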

	\begin{proof} Let us denote $m=\dim M$. First, shrinking $C'$ if necessary, we may assume it is relatively open in $C$ and is a smooth variety of dimension $3m$. Now let $(x,t)\in C'$ be generic over any set of parameters defining $C'$ in $\mathcal K$. Then by Lemmas \ref{family ranks} and \ref{smooth locus definable}, $t$ is smooth in $T$; and by Proposition \ref{smooth fiber product}(1), $(x,t,t)$ is smooth in $I$. Let $T'\subset T$ and $I'\subset I$ witness these two instances of smoothness, as in Definition \ref{smooth point}. Shrinking if necessary, we may assume that the projection of $C'$ to $T$ is contained in $T'$, and moreover that each of the projections $I'\rightarrow C$ is contained in $C'$.
		
	Thus the projection $I\rightarrow T^2$ restricts to a morphism $I'\rightarrow(T')^2$ of smooth $4m$-dimensional varieties. Note that $(x,t,t)$ is clearly a generic $(\mathcal C,\mathcal C)$-non-transversality, since $T_x(C_t)\cap T_x(C_t)=T_x(C_t)$ is non-trivial. So by Proposition \ref{smooth fiber product}(3) and Lemma \ref{non-injective characterization}, $I'\rightarrow(C')^2$ ramifies at $(x,t,t)$. Let $R$ be the ramification locus of $I'\rightarrow(C')^2$; so we have observed that $R$ is non-empty. By Fact \ref{purity}, the purity of the ramification locus (\cite{Zar}), it then follows that $\dim R\geq 4m-1$ (either because $\dim R=4m$ or because $I'\rightarrow(C')^2$ is generically unramified but not unramified). 
	
	Now let $\hat w=(\hat x,\hat t,\hat{t'})$ be generic in $R$ over all relevant parameters. By Lemma \ref{non-injective characterization}, $\hat w$ is non-injective. By the preceding remarks about $R$, $\dim(\hat w/\emptyset)\geq 4m-1$. And finally, by construction of $I'$ we have $(\hat x,\hat t),(\hat x,\hat{t'})\in C'$. 
	\end{proof}

	We next conclude the following stronger statement, which is the main goal of this subsection:
	
	\begin{corollary}\label{non-injective points exist} There is a generic $(\mathcal C,\mathcal C)$-non-injectivity $\hat w=(\hat x,\hat t,\hat{t'})$ satisfying $\dim(\hat w)\geq 4\cdot\dim M-1$.
	\end{corollary}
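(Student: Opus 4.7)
The goal is to upgrade Lemma \ref{M' noninjective} to produce a point $\hat w = (\hat x, \hat t, \hat{t'})$ which additionally satisfies the genericity clauses of Definition \ref{FG tangency}. By Corollary \ref{coherent family ranks}, what we need is that each of $(\hat x, \hat t)$ and $(\hat x, \hat{t'})$ be generic in $C$ over $\emptyset$, that $\hat x$ be generic in $M^2$ over $\emptyset$, and that $\hat w$ lie in the non-injective locus $R \subset I$ of the projection $I \to T^2$. The dimension bound $\dim(\hat w/\emptyset) \geq 4\dim M - 1$ should come along as a bonus. Since the genericity conditions are \emph{partial types} over $\emptyset$ in $\mathcal K$ rather than single formulas, the natural tool is a compactness argument built directly on top of Lemma \ref{M' noninjective}.

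Precisely, let $\pi(w)$ be the partial type over $\emptyset$ in $\mathcal K$ consisting of: (i) $w \in R$; (ii) for each $\mathcal K$-definable $E \subsetneq C$ over $\emptyset$ with $\dim E < 3\dim M$, the formulas $(\pi_1 w, \pi_2 w) \notin E$ and $(\pi_1 w, \pi_3 w) \notin E$; (iii) for each $\mathcal K$-definable $F \subsetneq M^2$ over $\emptyset$ with $\dim F < 2\dim M$, the formula $\pi_1 w \notin F$; and (iv) for each $\mathcal K$-definable $D \subset I$ over $\emptyset$ with $\dim D < 4\dim M - 1$, the formula $w \notin D$. Any realization of $\pi$ is a generic $(\mathcal C,\mathcal C)$-non-injectivity with $\dim(\hat w/\emptyset) \geq 4\dim M - 1$, so by the saturation built into Assumption \ref{M and K}, it suffices to establish finite satisfiability of $\pi$.

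To this end, given any finite collection of exceptional sets $E_1, \ldots, E_k$, $F_1, \ldots, F_\ell$, $D_1, \ldots, D_p$ drawn from $\pi$, set
$$C' = C \setminus \bigcup_{i} E_i \setminus \Bigl(\bigcup_{j} F_j \times T\Bigr).$$
This is $\mathcal K$-definable over $\emptyset$, and each excised piece has dimension strictly less than $3\dim M = \dim C$, so $C'$ is large in $C$. Applying Lemma \ref{M' noninjective} to $C'$ produces a point $\hat w \in R$ which by construction of $C'$ avoids every $E_i$ and $F_j$, and which by the dimension conclusion $\dim(\hat w/\emptyset) \geq 4\dim M - 1$ of the lemma automatically avoids each $D_k$. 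Thus every finite sub-collection of $\pi$ is realized, completing the argument.

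There is no real obstacle here — the work is genuinely a routine compactness upgrade. The only subtlety worth flagging is that $R$, $C$, and $I$ must all be $\emptyset$-definable in $\mathcal K$ for the partial type $\pi$ to be over $\emptyset$; this holds because the excellent family $\mathcal C$ is $\emptyset$-definable in $\mathcal M$ (and hence in $\mathcal K$) by Assumption \ref{M and K}.
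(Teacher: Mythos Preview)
Your proof is correct and follows essentially the same compactness argument as the paper: express the required genericity and dimension conditions as a partial type over $\emptyset$, then realize any finite fragment by excising the relevant small sets from $C$ and invoking Lemma \ref{M' noninjective}. Your clause (iii) is actually redundant (genericity of $(\hat x,\hat t)$ in $C$ already forces $\hat x$ generic in $M^2$ by Corollary \ref{coherent family ranks}), and the claim that the $F_j\times T$ piece has small dimension should really be stated for its intersection with $C$ --- but neither point affects the argument.
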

	\begin{proof}
		Equivalently, we want to find an element $\hat w=(\hat x,\hat t,\hat{t'})\in I$ satisfying each of the following:
		\begin{enumerate}
			\item $(\hat x,\hat t)$ and $(\hat x,\hat{t'})$ are both generic in $C$.
			\item $\dim(\hat w)\geq 4\cdot\dim M-1$.
			\item $\hat w$ is non-injective in the projection $I\rightarrow T^2$.
		\end{enumerate}
		Now the main observation to make is that conditions (1) through (3) can be expressed by the conjunction of infinitely many formulas over $\emptyset$ in the language of $\mathcal K$. Let us begin by justifying this assertion. First note that (3) is expressible with a single formula, using (1) of Lemma \ref{non-injective characterization}. We thus focus on (1) and (2). But these two clauses can be separated into the assertions $\dim(\hat w/\emptyset)\geq 4\cdot\dim M-1$, $\dim(\hat x\hat t/\emptyset)\geq 3\cdot\dim M$, and $\dim(\hat x\hat{t'}/\emptyset)\geq 3\cdot\dim M$. So it suffices to consider assertions of the form $\dim(z)\geq k$ for a tuple $z$ and an integer $k$. But this is equivalent to negating each formula over $\emptyset$ in the arity of $z$ which has dimension $<k$ -- so we just take the set of negations of formulas of this type.
		
		Now let $p(w)$ be the set of infinitely many formulas given in the previous paragraph. Since $\mathcal K$ is saturated, the assertion that $p$ is realized in $\mathcal K$ is equivalent to the assertion that $p$ is consistent. By the Compactness Theorem, this is further equivalent to the assertion that $p$ is finitely satisfiable.
		
		So, it suffices to realize any finite subset of $p$. In particular, we need not insist that each $(\hat x,\hat t)$ and $(\hat x,\hat{t'})$ be generic in $C$, as for any fixed finite subset $q\subset p$ there is a single non-generic subset $Z\subset C$ such that $q$ only insists that $(\hat x,\hat t)$ and $(\hat x,\hat{t'})$ avoid $Z$. It then suffices to choose $\hat w$ satisfying Proposition \ref{M' noninjective} with $C'=C-Z$.
	\end{proof}
	
	\subsection{Reducing to Curves}
	
	We end this section by proving the first main result of the paper:
	
	\begin{theorem}\label{n=1} If $\mathcal M$ detects generic non-transversalities, then $\dim M=1$.
	\end{theorem}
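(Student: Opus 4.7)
The plan is to feed the generic non-injectivity produced in Corollary \ref{non-injective points exist} into the detection hypothesis via Lemma \ref{detecting tangency equivalence}, thereby capping the $\mathcal M$-rank of the tuple, and then to convert this $\mathcal M$-rank cap into a $\mathcal K$-dimension cap via Corollary \ref{dim rk inequality}. Since Corollary \ref{non-injective points exist} already guarantees a $\mathcal K$-dimension lower bound of $4\dim M - 1$, a ceiling of $3\dim M$ will force $\dim M \leq 1$.

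First I would set $m = \dim M$ and take $\hat w = (\hat x, \hat t, \hat{t'})$ to be the generic $(\mathcal C, \mathcal C)$-non-injectivity supplied by Corollary \ref{non-injective points exist}, so that $\dim(\hat w/\emptyset) \geq 4m - 1$. Because $\mathcal C$ is excellent it is in particular a standard family of plane curves over $\emptyset$ (goodness already gives no common points and $T$ a generic subset of $M^2$), so clause (3) of Lemma \ref{detecting tangency equivalence} is applicable and the detection hypothesis forces $\hat t$ and $\hat{t'}$ to be $\mathcal M$-dependent over $\emptyset$. Since each of $\hat t, \hat{t'}$ is generic in the rank $2$ parameter space $T$, this gives $\rk(\hat t \hat{t'}) \leq 3$.

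Next I would bound $\rk(\hat w)$ by additivity: $\rk(\hat w) = \rk(\hat t \hat{t'}) + \rk(\hat x / \hat t \hat{t'})$. Split on whether $\hat{t'} \in \operatorname{acl}_{\mathcal M}(\hat t)$. In the $\mathcal M$-algebraic case, $\rk(\hat t \hat{t'}) = 2$ and the obvious bound $\rk(\hat x / \hat t \hat{t'}) \leq \rk(\hat x/\hat t) = 1$ gives $\rk(\hat w) \leq 3$. Otherwise almost faithfulness (Lemma \ref{almost faithful facts}(1)) yields $\rk(C_{\hat t} \cap C_{\hat{t'}}) = 0$, so $\rk(\hat x / \hat t \hat{t'}) = 0$ and again $\rk(\hat w) \leq \rk(\hat t \hat{t'}) \leq 3$. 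Invoking Corollary \ref{dim rk inequality} gives $\dim(\hat w) \leq 3m$, and comparing with $\dim(\hat w) \geq 4m - 1$ yields $m \leq 1$; since $M$ is infinite, $m = 1$.

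The argument is short because the real work — using purity of the ramification locus to produce non-injective points of near-maximal $\mathcal K$-dimension — has already been absorbed into Corollary \ref{non-injective points exist}, and the equivalence between non-transversalities and non-injective points has been absorbed into Lemma \ref{detecting tangency equivalence}. Thus the only step requiring care is checking that $\mathcal C$ satisfies the hypotheses of the family version of detection (so that the canonical-base formulation of the hypothesis translates to one about indices in $\mathcal C$); once this is in hand, the dimension count is immediate. There is no serious obstacle.
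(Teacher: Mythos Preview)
Your proposal is correct and follows essentially the same approach as the paper: invoke Corollary \ref{non-injective points exist}, apply Lemma \ref{detecting tangency equivalence}(3) to get $\mathcal M$-dependence of $\hat t$ and $\hat{t'}$, bound $\rk(\hat w)\leq 3$ by a short case split, and conclude via Corollary \ref{dim rk inequality}. The only cosmetic difference is that the paper splits on whether $\rk(\hat x/\hat t\hat{t'})=0$ (and deduces $\mathcal M$-interalgebraicity of $\hat t,\hat{t'}$ from almost faithfulness when it is not), whereas you split on $\mathcal M$-interalgebraicity first; these are contrapositives of one another.
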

	\begin{proof} Let $\hat w=(\hat x,\hat t,\hat{t'})$ be as in Corollary \ref{non-injective points exist}. By Lemma \ref{detecting tangency equivalence} and the assumption that $\mathcal M$ detects generic non-transversalities, we conclude that $\hat t$ and $\hat{t'}$ are $\mathcal M$-dependent over $\emptyset$. Then we note:
	\begin{claim} $\rk(\hat w)\leq 3$.
	\end{claim}
	\begin{proof} From above we have $\rk(\hat t\hat{t'})\leq 3$. In particular, if $\rk(\hat x/\hat t\hat{t'})=0$ then we are done. So, we assume that $\rk(\hat x/\hat t\hat{t'})\geq 1$. But in this case $C_{\hat t}\cap C_{\hat{t'}}$ is infinite, so by almost faithfulness $\hat t$ and $\hat{t'}$ are $\mathcal M$-interalgebraic. So we get that $\rk(\hat w)=\rk(\hat x\hat t)=3$, as desired.
	\end{proof}
	Now as in Lemma \ref{M' noninjective}, let us denote $m=\dim M$. Then by the claim and Corollary \ref{dim rk inequality}, we have $\dim(\hat w)\leq 3m$. On the other hand, by construction we also have $\dim(\hat w)\geq 4m-1$. Thus $4m-1\leq 3m$, which when rearranged gives $m\leq 1$. That $m=1$ now follows since $M$ is infinite.
	\end{proof}
	
	\section{Interpreting a Field when $\mathcal M$ Detects Generic Non-transversalities}
	
	In this section we prove the second main result of the paper -- that if $\mathcal M$ detects generic non-transversalities (and thus $\dim M=1$ by Theorem \ref{n=1}), $\mathcal M$ interprets an algebraically closed field. Our strategy is similar in nature to past papers on trichotomy problems, in that we (roughly) use the detection of tangency to interpret the set of `slopes' of plane curves through a point, and recover addition and multiplication of slopes via abstract sum and composition operations on curves. The main differences are that (1) we work with slopes at generic points rather than diagonal points, making the recovery of the field operations less straightfoward, and (2) we interpret the field `in one step' (this is also the case in \cite{Rab}), rather than first interpreting a group. 
	
	Before giving the argument, let us give a more precise summary of the main steps:
	
	\begin{enumerate}
		\item In the first subsection we introduce the slope of a plane curve $C$ at a generic point $(x,y)$, relative to a pair of identifications of $T_x(M)$ and $T_y(M)$ with $\mathbb C$.
		\item We then introduce the operations of composition and formal sum on plane curves, and show that they induce multiplication and addition of slopes at generic points.
		\item Next we generate and define the `standard representatives' of a generic slope at a generic point -- certain curves with nice properties that attain the given slope at the given point. These will essentially come from a fixed excellent family $\mathcal C$.
		\item Assuming that $\mathcal M$ detects generic non-transversalities, we then use (2) to show that $\mathcal M$ detects `generic multiplication and addition' -- namely, given standard representatives of generic independent slopes $\alpha_1$ and $\alpha_2$ through certain pairs of distinct generic points, one can recover the standard representatives of $\alpha_1\alpha_2$ or $\alpha_1+\alpha_2$ through a certain third point. Here it is important that we use three distinct points, in order to only use generic non-transversalities.
		\item We next prove a strengthening of (4), in which all relevant slopes occur at a \textit{single} generic point $(x_0,y_0)$. The trick is to break the desired operations at $(x_0,y_0)$ into a sequence of iterations of (4) where we move to different points, making sure that in the end we return to the original point. 
		\item Finally, using (5) we show that the action on $\mathbb C$ of the group $\textrm{AGL}_1(\mathbb C)$ can roughly be `encoded' into $\mathcal M$, using standard representatives at $(x_0,y_0)$ in place of field elements. By a result of Hrushovski, it is then straightforward to show that $\mathcal M$ interprets an algebraically closed field.
	\end{enumerate}
	
	\subsection{Slopes}
	
	\begin{assumption}
		Throughout this section, we assume that $\mathcal M$ detects generic non-transversalities. By Theorem \ref{n=1}, we moreover have $\dim M=1$.
	\end{assumption}

	\begin{remark} In this section, we will at times use parameters from $\mathbb C$ (i.e. not strictly from $\mathcal M^{\textrm{eq}}$ -- see for example Remark \ref{fixed 5 points remark}(4)). We point out that we will only do this while working in the structure $(\mathbb C,+,\cdot)$, and will always restrict to parameters from $\mathcal M^{\textrm{eq}}$ when working with $\mathcal M$.
	\end{remark}
	
	Before introducing slopes of plane curves, we need to develop a couple related notions:
	
	\begin{definition}\label{infinitesimal coordinate system}
		Let $x\in M$ be generic over $\emptyset$. By an \textit{infinitesimal coordinate system} at $x$, we mean a $\mathbb C$-linear isomorphism $f:T_x(M)\rightarrow\mathbb C$.
	\end{definition}

	Note that infinitesimal coordinate systems always exist at generic points; moreover, each one is $\mathcal K$-definable (possibly over extra parameters). We will often work relative to fixed infinitesimal coordinate systems at finitely many specified points. 
	
	\begin{definition}\label{non-vertical} Let $C\subset M^2$ be a plane curve. We say that $C$ is \textit{non-vertical} if the projection $C\rightarrow M$ to the left coordinate is finite-to-one.
	\end{definition}

	Non-vertical plane curves provide a natural setting for slopes. The reason is the following:
	
	\begin{lemma}\label{slope finite} Let $C$ be a non-vertical plane curve, $\mathcal M$-definable over a set $A$. Let $(x,y)$ be generic in $C$ over $A$. Then $T_x(M)$ is well-defined, and the differential map $T_{(x,y)}(C)\rightarrow T_x(M)$ is an isomorphism.
	\end{lemma}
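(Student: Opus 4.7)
The plan is to break the statement into three pieces: (a) showing that $x$ is smooth in $M$ (so $T_x(M)$ is defined), (b) noting that $(x,y)$ is smooth in $C$ (so $T_{(x,y)}(C)$ is defined), and (c) showing that the differential of the projection $\pi \colon C \to M$ at $(x,y)$ is a $\mathbb C$-linear isomorphism between two $1$-dimensional vector spaces. Parts (b) and (c) together will be the bulk of the work, and part (c) is where the characteristic zero hypothesis (via generic smoothness) enters.

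For (a), I will first argue that $x$ is generic in $M$ over $A$. Since $(x,y)$ is generic in $C$ over $A$, we have $\dim(xy/A) = \dim C = 1$ (the equality $\dim C = 1$ using Lemma \ref{dim rk equality} together with $\dim M = 1$). Non-verticality of $C$ says that the projection to the first coordinate is finite-to-one, so $y \in \operatorname{acl}(Ax)$; then additivity of dimension forces $\dim(x/A) = 1 = \dim M$, i.e.\ $x$ is generic in $M$ over $A$. Lemma \ref{smooth locus definable} then gives smoothness of $x$ in $M$ and of $(x,y)$ in $C$, so $T_x(M)$ and $T_{(x,y)}(C)$ both exist and, again by Lemma \ref{dim rk equality}, are one-dimensional $\mathbb C$-vector spaces.

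For (c), the differential $d\pi_{(x,y)} \colon T_{(x,y)}(C) \to T_x(M)$ is a linear map between $1$-dimensional spaces, so it is an isomorphism iff it is nonzero. The plan is to replace $C$ by a Zariski-open smooth neighborhood $C'$ of $(x,y)$ and $M$ by a Zariski-open smooth neighborhood $M'$ of $x$ (shrinking so that $\pi(C') \subseteq M'$, and further to an irreducible component through $(x,y)$). This produces a dominant morphism of smooth irreducible curves $\pi \colon C' \to M'$, and generic smoothness in characteristic zero (as used already in Proposition \ref{smooth fiber product}) implies $\pi$ is \'etale on a Zariski-dense open subset $U \subseteq C'$. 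Since $U$ is $\mathcal K$-definable over the parameters defining $C'$ and $M'$, and $(x,y)$ is generic in $C$ over $A$ (hence generic in $C'$ over those parameters, which can be chosen to lie in $A$), we get $(x,y) \in U$, so $d\pi_{(x,y)}$ is an isomorphism.

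The only mild obstacle is bookkeeping around the shrinking step: one must ensure that the parameters used to define the smooth neighborhoods and the \'etale locus lie in $A$ (or are algebraic over $A$), so that genericity of $(x,y)$ over $A$ implies genericity over the relevant parameter set. This is handled by taking $C'$ to be the smooth locus of $C$ (which by Lemma \ref{smooth locus definable}(1) is $\mathcal K$-definable over $A$), similarly for $M'$, and finally restricting to the unique irreducible component of $C'$ through $(x,y)$; since $(x,y)$ is generic, this component is canonically attached to its generic type and so lies over $\operatorname{acl}(A)$, which does not disturb genericity.
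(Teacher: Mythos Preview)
Your proof is correct and follows essentially the same route as the paper's: show $x$ is generic in $M$ via the finite-to-one left projection, restrict to smooth relative neighborhoods to obtain a dominant morphism of smooth curves, and then invoke generic \'etaleness in characteristic zero together with the genericity of $(x,y)$. You are more explicit than the paper about the parameter bookkeeping and the passage to an irreducible component, but the underlying argument is identical.
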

	\begin{proof} Using that the projection $C\rightarrow M$ to the left coordinate is finite-to-one, it is easy to see that $x$ is generic in $M$ over $A$, which shows that $T_x(M)$ is well-defined. Moreover, for the same reason it is easy to see that $C\rightarrow M$ is, after restricting to relative neighborhoods of $(x,y)$ and $x$, a dominant quasi-finite morphism of smooth varieties. Since we are in characteristic zero, such maps are generically \'{e}tale; in particular, since $(x,y)$ is generic in $C$, it follows that $C\rightarrow M$ is \'{e}tale at $(x,y)$, which is enough.
	\end{proof}

	We can now discuss slopes. Suppose that $C\subset M^2$ is a non-vertical plane curve, $\mathcal M$-definable over a set $A$. Let $(x,y)\in C$ be generic over $A$, and assume that each of $x$ and $y$ is generic in $M$ over $\emptyset$. Then we can view $T_{(x,y)}(C)$ as a sub-vector space of $T_x(M)\times T_y(M)$. By Lemma \ref{slope finite}, $T_{(x,y)}(C)\rightarrow T_x(M)$ is an isomorphism; thus $T_{(x,y)}(C)$ is in fact the graph of a $\mathbb C$-linear map $T_x(M)\rightarrow T_y(M)$. Now if we have fixed infinitesimal coordinate systems $f:T_x(M)\rightarrow\mathbb C$ and $g:T_y(M)\rightarrow\mathbb C$, then the image of $T_{(x,y)}(C)$ in $\mathbb C^2$ (via $f$ and $g$) is the graph of a $\mathbb C$-linear endomorphism of $\mathbb C$. In particular, this endomorphism is given by $z\mapsto \alpha z$ for some unique $\alpha\in\mathbb C$.
	
	\begin{definition}\label{slope} Suppose that $C\subset M^2$ is a non-vertical plane curve, $\mathcal M$-definable over a set $A$, and let $(x,y)\in C$ be generic over $A$. Assume that each of $x$ and $y$ is generic in $M$ over $\emptyset$, and fix infinitesimal coordinate systems $f:T_x(M)\rightarrow\mathbb C$ and $g:T_y(M)\rightarrow\mathbb C$. Then by the \textit{slope} of $C$ at $(x,y)$ relative to $f$ and $g$, we mean the unique $\alpha\in\mathbb C$ such that the image of $T_{(x,y)}(C)$ in $\mathbb C^2$, via $f$ and $g$, is the graph of scaling by $\alpha$.
	\end{definition}

	\begin{notation} If the infinitesimal coordinate systems $f$ and $g$ are clear from context, we will abuse notation and simply call $\alpha$ from Definition \ref{slope} the \textit{slope} of $C$ at $(x,y)$, denoted $\tau_{(x,y)}(C)$.
	\end{notation}
	
	\begin{remark}\label{slope remark} Before moving on, let us make some comments:
	\begin{enumerate}
		\item Suppose that $C$ is non-trivial, and $(x,y)\in C$ is generic. Then the slope of $C$ at $(x,y)$ (with respect to any infitesimal coordinate systems) is nonzero. This is because the argument of Lemma \ref{slope finite} applies to the second projection $C\rightarrow M$ as well, which shows that $T_{(x,y)}(C)$ is in fact the graph of an isomorphism $T_x(M)\rightarrow T_y(M)$.
		\item Note that slopes are relatively $\mathcal K$-definable in families, by Lemma \ref{tangent space definable} (the term `relatively' is because we are restricting to generic points). In particular, the slope of $C$ at $(x,y)$ relative to $f$ and $g$ is $\mathcal K$-definable over any set of parameters defining $C$, $(x,y)$, $f$, and $g$.
		\item Note also that we only assume $(x,y)$ is generic over the parameters defining $C$ (not $f$ and $g$). Thus we can change the infinitesimal coordinate systems at $x$ and $y$ without losing our genericity assumptions.
		\item Finally, let us point out that slopes can also be interpreted analytically. That is, suppose that $C$, $A$, $(x,y)$, $f$, and $g$ are as in Definition \ref{slope}. Then by the Implicit Function Theorem, there are analytic neighborhoods $U$ and $V$ of $x$ and $y$ such that the restriction of $C$ to $U\times V$ is the graph of a holomorphic function $h:U\rightarrow V$. Then the map $T_x(M)\rightarrow T_y(M)$ induced by $T_{(x,y)}(C)$ is simply the differential of $h$. In particular, the slope is the unique $\alpha$ such that the image of the differential map of $h$ in $\mathbb C^2$ is given by scaling by $\alpha$. 
		\end{enumerate}
	\end{remark}

	\subsection{Operations on Curves and Slopes}
	
	In this section we introduce the composition and formal sum operations on plane curves, and show that they induce multiplication and addition of slopes. We start with composition because it is easier.
	
	\begin{definition} Let $C_1$ and $C_2$ be non-trivial plane curves. Then by $C_2\circ C_1$ we mean the set of all $(x,z)\in M^2$ such that for some $y\in M$ we have $(x,y)\in C_1$ and $(y,z)\in C_2$.
	\end{definition}
	
	It is easy to see that the composition of two non-trivial plane curves is again a non-trivial plane curve (while this may not hold for non-vertical plane curves). Moreover, the composition is $\mathcal M$-definable over any set which defines each of the two curves. Now we check:
	
	\begin{lemma}\label{slope multiplication} Suppose that $C_1$ and $C_2$ are non-trivial plane curves, each $\mathcal M$-definable over a set $A$. Let $x$, $y$, and $z$ be generic in $M$ over $\emptyset$, each with a fixed infinitesimal coordinate system. Assume further that $(x,y)$ is generic in $C_1$ over $A$, and $(y,z)$ is generic in $C_2$ over $A$. Then $(x,z)$ is generic in $C_2\circ C_1$ over $A$, and $\tau_{(x,z)}(C_2\circ C_1)=\tau_{(x,y)}(C_1)\cdot\tau_{(y,z)}(C_2)$.
	\end{lemma}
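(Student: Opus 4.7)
The plan is to verify the statement in two steps: first establish the genericity claim, and then compute the slope analytically via the chain rule.

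For genericity, I would first observe that $C_2 \circ C_1$ is itself a non-trivial plane curve. Since each $C_i$ is non-trivial, both projections of each $C_i$ to $M$ are finite-to-one; hence the fibers of the first projection of $C_2 \circ C_1$ are finite (compose finitely many $y$'s with finitely many $z$'s through the intermediate variable). On the other hand, the first projection of $C_2 \circ C_1$ contains that of $C_1$, which is cofinite in $M$, so $\dim(C_2 \circ C_1) = 1$ and it is non-vertical and non-trivial. Next, since $C_1$ is non-trivial and $(x,y) \in C_1$ is generic over $A$, I would use that the first projection $C_1 \to M$ is generically finite to conclude $x$ is generic in $M$ over $A$. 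Then $1 = \dim(x/A) \leq \dim((x,z)/A) \leq \dim(C_2 \circ C_1) = 1$, giving that $(x,z)$ is generic in $C_2 \circ C_1$ over $A$.

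For the slope identity, I would pass to the analytic picture described in Remark \ref{slope remark}(4). The Implicit Function Theorem produces analytic neighborhoods and holomorphic functions $h_1 : U_x \to V_y$ and $h_2 : U_y \to V_z$ whose graphs agree with $C_1$ on $U_x \times V_y$ and with $C_2$ on $U_y \times V_z$. Shrinking $U_x$ so that $h_1(U_x) \subseteq U_y$, the composite $h_2 \circ h_1$ is holomorphic and its graph is contained in $C_2 \circ C_1$. By genericity, $(x,z)$ is a smooth point of $C_2 \circ C_1$, which is therefore locally near $(x,z)$ an irreducible smooth analytic curve; by non-verticality it is locally the graph of a unique holomorphic function $h : U'_x \to V'_z$. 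Since the graph of $h_2 \circ h_1$ is a smooth irreducible analytic curve through $(x,z)$ contained in $C_2 \circ C_1$, the two must agree near $x$, so $h = h_2 \circ h_1$ on some neighborhood. Differentiating via the chain rule and translating via the fixed infinitesimal coordinate systems $f, g, \ell$ at $x, y, z$, the composition of $\mathbb{C}$-linear endomorphisms of $\mathbb{C}$ becomes multiplication of scalars, yielding $\tau_{(x,z)}(C_2 \circ C_1) = \tau_{(x,y)}(C_1) \cdot \tau_{(y,z)}(C_2)$.

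The main subtle step is the identification of $C_2 \circ C_1$ near $(x,z)$ with the graph of $h_2 \circ h_1$, as a priori the set-theoretic composition might pick up extra branches coming from other intermediate values $y' \neq y$ with $(x, y') \in C_1$ and $(y', z) \in C_2$. This is handled cleanly by the genericity of $(x,z)$, which forces local smoothness (hence local irreducibility) of $C_2 \circ C_1$ at $(x,z)$, so any smooth analytic curve sitting in $C_2 \circ C_1$ and passing through $(x,z)$ must coincide locally with $C_2 \circ C_1$ itself.
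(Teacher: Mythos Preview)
Your proof is correct and follows essentially the same approach as the paper: first establish genericity of $(x,z)$ in $C_2\circ C_1$ via a dimension count using non-triviality of the $C_i$, then pass to local holomorphic parametrizations $h_1,h_2$, argue that $h_2\circ h_1$ must agree with the local parametrization of $C_2\circ C_1$ near $(x,z)$ by smoothness/irreducibility at a generic point, and conclude via the chain rule. Your treatment of the ``subtle step'' (ruling out extra branches from other intermediate values $y'$) is slightly more explicit than the paper's, which dispatches it in a single sentence, but the underlying reasoning is identical.
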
 
	\begin{proof} First note by definition that $(x,z)\in C_2\circ C_1$, and that $C_2\circ C_1$ is $\mathcal M$-definable over $A$. Now it follows from the non-triviality of each $C_i$ that $x$, $y$, and $z$ are pairwise $\mathcal M$-interalgebraic over $A$. Thus $\dim(x,y,z/A)=\dim(x,y/A)=1$, which shows that $(x,z)$ is generic in $C_2\circ C_1$ over $A$.
		
	Now let $h_1$ and $h_2$ be holomorphic functions as in Remark \ref{slope remark}(4), giving the points of $C_1$ and $C_2$ near $(x,y)$ and $(y,z)$, respectively. We may assume after shrinking that $\operatorname{Im}(h_1)\subset\operatorname{dom}(h_2)$. Then $h_2\circ h_1$ is a holomorphic function giving points of $C_2\circ C_1$ in a neighborhood of $(x,z)$. Since $(x,z)$ is generic in $C_2\circ C_1$, there is only one such function up to agreement on a neighborhood of $x$. So, as in Remark \ref{slope remark}(4), it follows that $T_{(x,z)}(C_2\circ C_1)$ is the graph of the differential of $h_2\circ h_1$ at $x$. By the chain rule, this is the composition of the differentials of $h_2$ and $h_1$ at $(y,z)$ and $(x,y)$. So, after applying our infinitesimal coordinate systems, we get that scaling by $\tau_{(x,z)}(C_2\circ C_1)$ is the composition of the scalings by $\tau_{(y,z)}(C_2)$ and $\tau_{(x,y)}(C_1)$. This implies the second clause of the lemma.
	\end{proof}	

	We now turn toward formal sums and addition. The following fact is not necessary, but makes the argument run more smoothly:
	
	\begin{fact}\label{gamma exists} There is a set $\Gamma\subset M^3$ which is $\mathcal M$-definable over $\emptyset$ and of rank 2, such that each projection $\Gamma\rightarrow M^2$ is finite-to-one and surjective.
	\end{fact}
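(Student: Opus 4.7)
The plan is to extract $\Gamma$ from the excellent family $\mathcal{C}=\{C_t:t\in T\}$ supplied by Assumption \ref{M and K}, by restricting the parameter $t=(t_1,t_2)$ to lie along a carefully chosen $\emptyset$-definable non-trivial plane curve. Concretely, once we have an $\emptyset$-definable non-trivial plane curve $D\subset M^2$ with the additional property that the set $\{(s,u)\in D\cap T:C_{(s,u)}\text{ is non-trivial}\}$ is large in $D\cap T$, we will put
\[\Gamma_0 := \{(x,y,s)\in M^3 : \exists u\in M,\ (s,u)\in D\cap T\text{ and }(x,y)\in C_{(s,u)}\},\]
and take $\Gamma$ to be a large $\emptyset$-definable subset of $\Gamma_0$ on which the three coordinate projections to $M^2$ are genuinely finite-to-one and surjective.

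The production of $D$ is the main obstacle, and is where the hypotheses of Assumption \ref{M and K} are unpacked. By Lemma \ref{generic non-triviality} the set of trivial parameters in $T$ has $\mathcal{M}$-rank at most $1$, so the condition on $\hat t\in T$ that $C_{\hat t}$ is non-trivial \emph{and} that $\{(s,u)\in C_{\hat t}\cap T:C_{(s,u)}\text{ is non-trivial}\}$ is cofinite in $C_{\hat t}$ is $\emptyset$-definable and $\mathcal{M}$-generic on $T$. Under Assumption \ref{M and K} the set $\operatorname{acl}_{\mathcal{M}}(\emptyset)\cap M$ is infinite, and by a standard Tarski--Vaught argument in strongly minimal theories it forms an elementary substructure of $\mathcal{M}$; hence the above infinite $\emptyset$-definable condition is realized by some $\hat t\in\operatorname{acl}_{\mathcal{M}}(\emptyset)^2\cap T$. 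Taking $D$ to be the union of $C_{\hat t'}$ for $\hat t'$ in the finite $\operatorname{Aut}(\mathcal{M}/\emptyset)$-orbit of $\hat t$ then yields an $\emptyset$-definable non-trivial plane curve with the required largeness along $D\cap T$.

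Once $D$ is fixed, the rank and fiber computations are routine. The auxiliary set $\Gamma'_0:=\{(x,y,s,u):(s,u)\in D\cap T,\ (x,y)\in C_{(s,u)}\}$ is a rank-$1$ family over $D\cap T$ of rank-$1$ plane curves, so $\rk\Gamma'_0=2$; non-triviality of $D$ makes the forgetful projection $\Gamma'_0\to\Gamma_0$ almost finite-to-one, giving $\rk\Gamma_0=2$. For the projection $(x,y,s)\mapsto(x,y)$, the fiber is the first-coordinate image of $D\cap{_{(x,y)}C}$; the almost faithfulness of the dual family $\mathcal{C}^\vee$ together with Lemma \ref{almost faithful facts}(1) ensures that the fixed rank-$1$ set $D$ meets only finitely many members ${_{(x,y)}C}$ of $\mathcal{C}^\vee$ in rank $1$, so for generic $(x,y)$ this fiber is finite. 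For the projections $(x,y,s)\mapsto(x,s)$ and $(x,y,s)\mapsto(y,s)$, non-triviality of $D$ forces finitely many $u$ per generic $s$, and the largeness of the non-trivial locus along $D\cap T$ then yields finitely many $y$ or $x$ via non-triviality of the generic $C_{(s,u)}$. Almost surjectivity is automatic by rank comparison, and the passage from ``almost'' to the genuine finite-to-one and surjective statement is by removing an $\emptyset$-definable rank-$\le 1$ exceptional subset of $\Gamma_0$.
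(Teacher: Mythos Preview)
Your overall strategy---extracting a rank-1 subfamily of the excellent family $\mathcal C$ and using its graph in $M^3$---is the same as the paper's, but your execution is needlessly elaborate. The paper simply observes that, since Assumption~\ref{M and K} supplies $\emptyset$-definable constants, one may fix one coordinate of $t=(t_1,t_2)\in T$ to a constant $c$ and take the graph of the rank-1 family $\{C_{(c,t_2)}:(c,t_2)\in T\}$ directly in $M^2\times M=M^3$. Your detour through an auxiliary curve $D=\bigcup C_{\hat t'}$ followed by the projection $(s,u)\mapsto s$ accomplishes the same thing with an extra layer. Note also that your care about non-triviality along $D\cap T$ is redundant: by the definition of excellence, \emph{every} $C_t$ for $t\in T$ is already non-trivial, so the condition you impose on $\hat t$ beyond ``$C_{\hat t}\cap T$ is infinite'' is vacuous.

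There is a genuine gap in your final sentence. You claim that ``the passage from `almost' to the genuine finite-to-one and surjective statement is by removing an $\emptyset$-definable rank-$\le 1$ exceptional subset of $\Gamma_0$.'' Removing points can certainly achieve finite-to-one (delete the union of the infinite fibers of each projection, a set of rank $\le 1$), but removal can only \emph{shrink} each image, so it cannot promote almost-surjectivity to surjectivity. The paper is explicit about this: after pruning one must then \emph{add} an $\emptyset$-definable set to cover the rank-$\le 1$ complement of each image, taking care that the added set still has all three projections finite-to-one. This step is routine but it is the opposite operation from the one you describe, and as written your argument does not produce a $\Gamma$ with surjective projections.
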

	\begin{proof} We only sketch the proof, as it is straightforward. The idea is that a rank 1 standard family of non-trivial plane curves satisfies almost all of these properties -- the only exception being that the projections to $M^2$ are \textit{almost} surjective. One then needs to cover any remaining points in each projection without changing that each projection is finite-to-one. It is easy to see that this is possible. 
		
	The only thing we have not addressed is that such families exist, or in other words that $\Gamma$ can be taken to be $\mathcal M$-definable over $\emptyset$. In fact we could just assume this by adding constants to the language of $\mathcal M$, and the proof would be unhindered. If the reader wants to be picky, however, it is still possible to construct such a $\Gamma$ using our preset assumption (i.e. Assumption \ref{M and K}) that the language of $\mathcal M$ has infinitely many distinct constants. Indeed, one can start with an excellent family $\mathcal C$ which is $\mathcal M$-definable over $\emptyset$, and then choose a rank 1 subfamily of $\mathcal C$ which is defined over an appropriate constant.
	\end{proof}

	For the rest of this section, we fix a set $\Gamma$ as in Fact \ref{gamma exists}. The reader should think of $\Gamma$ as playing the abstract role of a group operation.

	\begin{definition}\label{formal sum} Let $C_1$ and $C_2$ be non-vertical plane curves. Then by the \textit{formal sum} of $C_1$ and $C_2$, denoted $C_1\oplus C_2$, we mean the set of all $(x,z)\in M^2$ such that for some $u,v\in M$ we have $(x,u)\in C_1$, $(x,v)\in C_2$, and $(u,v,z)\in\Gamma$.
	\end{definition}
	\begin{remark} For intuition, we give the following example. Suppose that each $C_i$ is the graph of a function $f_i:M\rightarrow M$, and $\Gamma$ is the graph of a group operation $\oplus$ on $M$. Then $C_1\oplus C_2$ is the graph of the point-wise sum of the $f_i$, i.e. $x\mapsto f_1(x)\oplus f_2(x)$.
	\end{remark}

	Using that each projection $\Gamma\rightarrow M^2$ is finite-to-one and surjective, it is easy to see that the formal sum of two non-vertical plane curves is again a non-vertical plane curve (while this may not hold for non-trivial plane curves). Moreover, the formal sum is $\mathcal M$-definable over any set which defines the two curves. 
	
	Now we would like to prove the literal analog of Lemma \ref{slope multiplication} for formal sums and addition of slopes; but in this case the calculus is more complicated, and we need to be a bit careful. Namely, because we are working with $\Gamma\subset M^3$, we need to first discuss \textit{partial slopes}.
	
	\begin{notation}\label{gamma fibers} Let $u_0,v_0\in M$.
		\begin{enumerate}
			\item We denote the set $\{(v,z):(u_0,v,z)\in\Gamma\}$ by $\Gamma_1(u_0)$.
			\item We denote the set $\{(u,z):(u,v_0,z)\in\Gamma\}$ by $\Gamma_2(v_0)$.
		\end{enumerate}
	\end{notation}

	\begin{remark}\label{gamma remark} The following are all immediate from the choice of $\Gamma$:
		\begin{enumerate}
			\item Each $\Gamma_1(u_0)$ and $\Gamma_2(v_0)$ is a non-trivial plane curve.
			\item If $(u_0,v_0,z_0)\in\Gamma$ is generic over a set $A$ then each of $u_0$, $v_0$, and $z_0$ is generic in $M$ over $A$.
			\item If $(u_0,v_0,z_0)\in\Gamma$ is generic over a set $A$ then $(v_0,z_0)$ is generic in $\Gamma_1(u_0)$ over $Au_0$, and $(u_0,z_0)$ is generic in $\Gamma_2(v_0)$ over $Av_0$.
		\end{enumerate}
	\end{remark}

	By Remark \ref{gamma remark}, the following makes sense:
	
	\begin{definition}\label{partial slopes} Let $(u_0,v_0,z_0)\in\Gamma$ be generic over $\emptyset$, and assume we have fixed infinitesimal coordinate systems at each of $u_0$, $v_0$, and $z_0$. Then we define the \textit{partial slopes} of $\Gamma$ at $(u_0,v_0,z_0)$ (relative to our infinitesimal coordinate systems) to be $\Gamma_u(u_0,v_0,z_0)=\tau_{(u_0,z_0)}(\Gamma_2(v_0))$ and $\Gamma_v(u_0,v_0,z_0)=\tau_{(v_0,z_0)}(\Gamma_1(u_0))$.
	\end{definition}
	
	\begin{remark}\label{partial slope holomorphic} As in Remark \ref{slope remark}(4), we can interpret partial slopes analytically. Namely, suppose $(u_0,v_0,z_0)\in\Gamma$ is generic. Then as in Lemma \ref{slope finite}, the projection $\Gamma\rightarrow M^2$ to the left two coordinates is an \'{e}tale morphism in a neighborhood of $(u_0,v_0,z_0)$; so by the Implicit Function Theorem, we can find an analytic neighborhood $U\times V\times Z$ of $(u_0,v_0,z_0)$ such that the restriction of $\Gamma$ to $U\times V\times Z$ is the graph of a holomorphic function $h:U\times V\rightarrow Z$. Then the partial slopes of $\Gamma$ at $(u_0,v_0,z_0)$ are the images in $\mathbb C^2$ of the partial differentials of $h$.
	\end{remark}

	We need one more concept before discussing addition:
	
	\begin{definition}\label{normal system} Let $(u_0,v_0,z_0)\in\Gamma$ be generic, and fix infinitesimal coordinate systems $f_1,f_2,f_3$ at $u_0$, $v_0$, and $z_0$. We say that $(u_0,v_0,z_0)$ is \textit{normal} with respect to the $f_i$, or that the $f_i$ \textit{normalize} $(u_0,v_0,z_0)$, if both partial slopes of $\Gamma$ at $(u_0,v_0,z_0)$ are 1.
	\end{definition}

	We note:
	
	\begin{lemma}\label{normalization exists} Let $(u_0,v_0,z_0)\in\Gamma$ be generic. Then there are infinitesimal coordinate systems at $u_0$, $v_0$, and $z_0$ which normalize $(u_0,v_0,z_0)$.
	\end{lemma}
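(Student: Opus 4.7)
The plan is to start with any infinitesimal coordinate systems at the three points and then rescale them appropriately. Since each of $u_0$, $v_0$, and $z_0$ is generic in $M$ over $\emptyset$ by Remark \ref{gamma remark}(2), we may pick arbitrary $\mathbb{C}$-linear isomorphisms $g_1:T_{u_0}(M)\to\mathbb{C}$, $g_2:T_{v_0}(M)\to\mathbb{C}$, $g_3:T_{z_0}(M)\to\mathbb{C}$. Let $\alpha$ and $\beta$ denote the resulting partial slopes $\Gamma_u(u_0,v_0,z_0)$ and $\Gamma_v(u_0,v_0,z_0)$ with respect to these choices.

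The first key observation is that $\alpha$ and $\beta$ are both nonzero. By Remark \ref{gamma remark}(1), the plane curves $\Gamma_1(u_0)$ and $\Gamma_2(v_0)$ are non-trivial, and by Remark \ref{gamma remark}(3), the points $(v_0,z_0)$ and $(u_0,z_0)$ are generic on these curves respectively. So Remark \ref{slope remark}(1) applies, giving $\alpha,\beta\in\mathbb{C}^\times$.

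The second observation is a short computation describing how slopes transform under rescaling of the infinitesimal coordinate systems. If we replace each $g_i$ by $f_i=c_ig_i$ for some nonzero $c_i\in\mathbb{C}$, then by unwinding Definition \ref{slope} the new slope of $\Gamma_2(v_0)$ at $(u_0,z_0)$ relative to $(f_1,f_3)$ becomes $(c_3/c_1)\alpha$, and similarly the new slope of $\Gamma_1(u_0)$ at $(v_0,z_0)$ relative to $(f_2,f_3)$ becomes $(c_3/c_2)\beta$.

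It then suffices to choose $c_1,c_2,c_3\in\mathbb{C}^\times$ so that both quantities equal $1$: pick any nonzero $c_3$, and set $c_1=c_3\alpha$ and $c_2=c_3\beta$. The resulting $f_1,f_2,f_3$ normalize $(u_0,v_0,z_0)$ in the sense of Definition \ref{normal system}. There is no real obstacle here; the whole argument is essentially the two-parameter freedom in rescaling the source and target coordinates of a linear map between one-dimensional spaces, combined with the non-vanishing of the initial partial slopes.
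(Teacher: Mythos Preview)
Your proof is correct and follows essentially the same approach as the paper: fix arbitrary infinitesimal coordinate systems, observe via Remarks \ref{gamma remark} and \ref{slope remark}(1) that the initial partial slopes are nonzero, and rescale to make both equal to $1$. The paper is slightly terser (it keeps the coordinate system at $z_0$ fixed and rescales only at $u_0$ and $v_0$, which amounts to your choice $c_3=1$), but the content is identical.
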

	\begin{proof} First fix any infinitesimal coordinate systems at each point, say $f_1,f_2,f_3$. Now as in Remark \ref{gamma remark}, $\Gamma_1(u_0)$ and $\Gamma_2(v_0)$ are non-trivial; so by Remark \ref{slope remark}(1), the partial slopes with respect to the $f_i$ are nonzero. Now we can rescale $f_1$ and $f_2$ independently until each partial slope is 1.
	\end{proof}
	
	We are now ready to give our analog of Lemma \ref{slope multiplication}:
	
	\begin{lemma}\label{slope addition} Suppose that $C_1$ and $C_2$ are non-vertical plane curves, each $\mathcal M$-definable over a set $A$. Let $x_0\in M$ and $(u_0,v_0,z_0)\in\Gamma$ each be generic over $A$. Fix infinitesimal coordinate systems at each of $x_0,u_0,v_0,z_0$, and assume that they normalize $(u_0,v_0,z_0)\in\Gamma$. Assume further that $(x_0,u_0)$ is generic in $C_1$ over $A$, and $(x_0,v_0)$ is generic in $C_2$ over $A$. Then $(x_0,z_0)$ is generic in $C_1\oplus C_2$ over $A$, and $\tau_{(x_0,z_0)}(C_1\oplus C_2)=\tau_{(x_0,u_0)}(C_1)+\tau_{(x_0,v_0)}(C_2)$.
	\end{lemma}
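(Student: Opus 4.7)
The plan is to realize $C_1\oplus C_2$ locally near $(x_0,z_0)$ as the graph of a single holomorphic function built by composing local graph representatives of $C_1$, $C_2$, and $\Gamma$, and then read off the slope via the chain rule, with normalization killing the partial-slope factors. Genericity of $(x_0,z_0)$ in $C_1\oplus C_2$ over $A$ is essentially immediate: since $C_1$ is non-vertical and $(x_0,u_0)$ is generic in $C_1$ over $A$, the coordinate $x_0$ is generic in $M$ over $A$, so $\dim(x_0/A)=1$; but $(x_0,z_0)\in C_1\oplus C_2$, which is a plane curve of dimension $1$ by Lemma~\ref{dim rk equality} (using $\dim M=1$), so $\dim(x_0,z_0/A)=1$.

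Next I would construct the local holomorphic model. By the proof of Lemma~\ref{slope finite} (generic \'etaleness of finite-to-one morphisms of smooth varieties in characteristic zero), the projections $C_1\to M$ and $C_2\to M$ to the first coordinate are \'etale near $(x_0,u_0)$ and $(x_0,v_0)$, so the Implicit Function Theorem provides holomorphic $h_1,h_2:U\to M$ on a common analytic neighborhood $U$ of $x_0$, with $h_1(x_0)=u_0$, $h_2(x_0)=v_0$, and whose graphs locally coincide with $C_1,C_2$. By Remark~\ref{partial slope holomorphic}, there is a holomorphic $g:U_1\times V_1\to M$ with $g(u_0,v_0)=z_0$ whose graph locally coincides with $\Gamma$. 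After shrinking $U$ so that $h_1(U)\subset U_1$ and $h_2(U)\subset V_1$, the holomorphic function $\phi(x):=g(h_1(x),h_2(x))$ satisfies $\phi(x_0)=z_0$, and its graph is contained in $C_1\oplus C_2$ by the very definition of formal sum.

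For the slope, $(x_0,z_0)$ is generic in the non-vertical plane curve $C_1\oplus C_2$, so by Lemma~\ref{slope finite} the tangent space $T_{(x_0,z_0)}(C_1\oplus C_2)$ is the graph of an isomorphism $T_{x_0}(M)\to T_{z_0}(M)$; since the graph of $\phi$ is a smooth $1$-dimensional subset of $C_1\oplus C_2$ through $(x_0,z_0)$ projecting \'etalely to $M$, this tangent space must equal the graph of $d\phi(x_0)$. The chain rule yields
\[d\phi(x_0) = \tfrac{\partial g}{\partial u}(u_0,v_0)\circ dh_1(x_0) + \tfrac{\partial g}{\partial v}(u_0,v_0)\circ dh_2(x_0),\]
and passing to scalars through the fixed infinitesimal coordinate systems via Remarks~\ref{slope remark}(4) and \ref{partial slope holomorphic} translates this to
\[\tau_{(x_0,z_0)}(C_1\oplus C_2) = \Gamma_u(u_0,v_0,z_0)\cdot\tau_{(x_0,u_0)}(C_1) + \Gamma_v(u_0,v_0,z_0)\cdot\tau_{(x_0,v_0)}(C_2).\]
The normalization hypothesis collapses both partial-slope factors to $1$, giving the desired identity. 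The one subtlety to be careful about is the identification of $T_{(x_0,z_0)}(C_1\oplus C_2)$ with the tangent to the graph of $\phi$: a priori other local branches of $C_i$ or $\Gamma$ could contribute extra points to $C_1\oplus C_2$ accumulating at $(x_0,z_0)$, but genericity of $(x_0,z_0)$ in the plane curve $C_1\oplus C_2$ forces the tangent space there to be $1$-dimensional, ruling this out.
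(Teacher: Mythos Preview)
Your proposal is correct and follows essentially the same route as the paper: establish genericity of $(x_0,z_0)$ in $C_1\oplus C_2$, locally express $C_1$, $C_2$, and $\Gamma$ as graphs of holomorphic functions, compose to get $\phi(x)=g(h_1(x),h_2(x))$, and apply the multivariable chain rule together with normalization. Your extra remark justifying that $T_{(x_0,z_0)}(C_1\oplus C_2)$ coincides with the tangent to the graph of $\phi$ (via genericity forcing a $1$-dimensional tangent space) is a point the paper leaves implicit by analogy with Lemma~\ref{slope multiplication}.
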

	\begin{proof}
		The proof is very similar to that of Lemma \ref{slope multiplication}. First note by definition that $(x_0,z_0)\in C_1\oplus C_2$, and that $C_1\oplus C_2$ is $\mathcal M$-definable over $A$. Moreover, since the projections $C_2\rightarrow M$ and $\Gamma\rightarrow M^2$ to the leftmost coordinates are finite-to-one, it follows that $(x_0,u_0,v_0,z_0)$ is $\mathcal M$-algebraic over $Ax_0u_0$. Thus $\dim(x_0,u_0,v_0,z_0/A)=\dim(x_0,u_0/A)=1$, which shows that $(x_0,z_0)$ is generic in $C_1\oplus C_2$ over $A$.

		Now let $h_1$ and $h_2$ be holomorphic functions as in Remark \ref{slope remark}(4), giving the points of $C_1$ and $C_2$ near $(x_0,u_0)$ and $(x_0,v_0)$, respectively. Moreover, let $h$ be a two-variable holomorphic function as in Remark \ref{partial slope holomorphic}, giving the points of $\Gamma$ near $(u_0,v_0,z_0)$. Then arguing as in Lemma \ref{slope multiplication}, the points of $C_1\oplus C_2$ near $(x_0,z_0)$ are given by the function $h(h_1(x),h_2(x))$. Continuing to argue as in Lemma \ref{slope multiplication} but using the higher dimensional chain rule on $h(h_1(x),h_2(x))$, one then calculates that $$\tau_{(x_0,z_0)}(C_1\oplus C_2)=\tau_{(x_0,u_0)}(C_1)\cdot\Gamma_u(u_0,v_0,z_0)+\tau_{(x_0,v_0)}(C_2)\cdot\Gamma_v(u_0,v_0,z_0).$$ Since our infinitesimal coordinate systems normalize $(u_0,v_0,z_0)$, this equation simplifies to the desired result.
	\end{proof}

	\subsection{Standard Representatives of Slopes}
	
	In this subsection we prove, roughly, that one can always find curves through a given generic point with a given generic slope. As a result we define the notion of a `standard representative' of a point and slope. Now the main result is:
	
	\begin{lemma}\label{slopes exist} Let $(x,y)\in M^2$ be generic over $\emptyset$. Fix infinitesimal coordinate systems at $x$ and $y$, and assume they are $\mathcal K$-definable over a set $A$. Let $\alpha\in\mathbb C$ be generic over $Axy$. Then one can find a stationary non-trivial plane curve $C$, with canonical base $c$, satisfying the following properties:
		\begin{enumerate}
			\item $C$ is $\mathcal M$-definable over $c$.
			\item $c$ is coherent of rank 2 over $\emptyset$, and coherent of rank 1 over $xy$.
			\item $(x,y)$ is a generic element of $C$ over $c$.
			\item With respect to the given infinitesimal coordinate systems, $\tau_{(x,y)}(C)=\alpha$.
			\item $c$ is interalgebraic (in the sense of $\mathcal K$) with $\alpha$ over $Axy$.
		\end{enumerate}
	\end{lemma}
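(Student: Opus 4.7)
The plan is to use the excellent family $\mathcal C = \{C_t : t \in T\}$ fixed in Assumption \ref{M and K} as a uniform source of plane curves through $(x,y)$, and to identify the resulting curves by their slope at $(x,y)$. Concretely, I first look at the fiber $S := \{t \in T : (x,y) \in C_t\}$, which by Lemma \ref{dual is good} is a rank-$1$ plane curve inside $T$, $\mathcal M$-definable over $xy$. A short additivity computation using the rank of $T$, almost faithfulness of $\mathcal C$, and Corollary \ref{dim rk inequality} shows that any $\hat t$ taken $\mathcal K$-generically in $S$ over $Axy$ is in fact generic in $T$ over $\emptyset$ (in both senses, by Lemma \ref{coherent iff generic}), and that $(x,y)$ is correspondingly generic in $C_{\hat t}$ over $\hat t$. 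In particular $C_{\hat t}$ is non-trivial (Lemma \ref{generic non-triviality}) and its slope $\tau_{(x,y)}(C_{\hat t})$ is well-defined.

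I then define the slope map $\sigma : S \to \mathbb C$ on generic points by $\sigma(t) := \tau_{(x,y)}(C_t)$; by Remark \ref{slope remark}(2), $\sigma$ is $\mathcal K$-definable over $Axy$. Since both source and target have dimension $1$, the image of $\sigma$ is cofinite in $\mathbb C$ if and only if $\sigma$ is generically finite-to-one; and in that case, for $\alpha$ generic over $Axy$, the fiber $\sigma^{-1}(\alpha)$ is non-empty and finite, so any $\hat t$ in it is $\mathcal K$-interalgebraic with $\alpha$ over $Axy$. Everything else reduces to establishing the generic finite-to-one-ness of $\sigma$, and this is the main obstacle.

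For this step I argue by contradiction. If $\sigma$ has infinite generic fiber, I can find $\hat t, \hat{t'} \in S$ that are $\mathcal K$-independent over $Axy$ with $\sigma(\hat t) = \sigma(\hat{t'})$. Equality of slopes forces $T_{(x,y)}(C_{\hat t}) = T_{(x,y)}(C_{\hat{t'}})$, so the intersection of tangent spaces is non-trivial; combined with the genericity facts from the first paragraph, this exhibits $(x,y,\hat t, \hat{t'})$ as a generic $(\mathcal C,\mathcal C)$-non-transversality in the sense of Definition \ref{FG tangency}. Lemma \ref{detecting tangency equivalence} together with the hypothesis that $\mathcal M$ detects generic non-transversalities then forces $\hat t$ and $\hat{t'}$ to be $\mathcal M$-dependent over $\emptyset$, so $\rk(\hat{t'}/\hat t) \leq 1$. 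The case $\rk(\hat{t'}/\hat t) = 0$ contradicts $\dim(\hat{t'}/\hat t) \geq 1$ via Corollary \ref{dim rk inequality}; and the case $\rk(\hat{t'}/\hat t) = 1$ leads, by an additivity calculation with $(x,y) \in C_{\hat t} \cap C_{\hat{t'}}$, to $\rk(C_{\hat t} \cap C_{\hat{t'}}) = 1$, which by almost faithfulness of $\mathcal C$ places $\hat{t'}$ in a finite $\mathcal M$-definable set over $\hat t$ — again contradicting $\rk(\hat{t'}/\hat t) = 1$.

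Once $\sigma$ is known to be generically finite-to-one, I fix $\hat t$ with $\sigma(\hat t) = \alpha$ and apply Lemma \ref{tangent space of component} to $C_{\hat t}$ at $(x,y)$: this selects the unique stationary component in which $(x,y)$ remains generic over the canonical base $c$ of that component, and whose tangent space at $(x,y)$ coincides with that of $C_{\hat t}$, giving $\tau_{(x,y)}(C) = \alpha$. By Fact \ref{cb facts}(2) I replace this component, up to almost equality, by a set $C$ that is genuinely $\mathcal M$-definable over $c$, yielding (1) and (3); non-triviality of $C$ is automatic since $\rk(c) = 2$ excludes horizontal and vertical components. Finally, Lemma \ref{almost faithful facts}(3) gives $\mathcal M$-interalgebraicity of $c$ with $\hat t$ over $\emptyset$ (hence also $\mathcal K$-interalgebraicity, since $\operatorname{acl}_{\mathcal M} \subseteq \operatorname{acl}_{\mathcal K}$), and combining this with the already-established $\mathcal K$-interalgebraicity of $\hat t$ and $\alpha$ over $Axy$ gives (5), while transferring the rank and dimension data from $\hat t$ to $c$ gives (2).
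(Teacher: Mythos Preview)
Your proof is correct and follows essentially the same approach as the paper's: both use the excellent family to produce curves through $(x,y)$, and both invoke detection of generic non-transversalities to show that the slope at $(x,y)$ determines the curve parameter up to $\mathcal K$-algebraicity over $Axy$. The only organizational difference is that the paper first picks a generic curve, computes its slope $\alpha'$, uses an independent-realization argument to show $c'$ is algebraic over $As\alpha'$, and then transports to the given $\alpha$ by an automorphism of $\mathcal K$; you instead phrase the key step as showing the slope map $\sigma$ is generically finite-to-one and then directly pre-image the given $\alpha$, which is a slightly more streamlined packaging of the same idea.
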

	\begin{proof} Throughout we write $s$ for $(x,y)$. Let $\mathcal D=\{D_u:u\in U\}$ be an excellent family of plane curves which is $\mathcal M$-definable over $\emptyset$, as provided by Assumption \ref{M and K}. Let $D\subset M^2\times U$ be the graph of $\mathcal D$.
		
	First let $u'$ be generic in ${_sD}$ over $As$. Then $(s,u')\in D$ is generic over $\emptyset$, so $s$ is generic in $D_{u'}$ over $u'$. Using Lemma \ref{tangent space of component}, let $C'$ be a stationary component of $D_{u'}$, and $c'$ its canonical base, so that $s$ is a generic element of $C'$ over $c'$. After editing finitely many points (and potentially sacrificing that $C'\subset D_{u'})$, we may assume that $C'$ is $\mathcal M$-definable over $c'$. Let $\alpha'=\tau_s(C')$. Then by Remark \ref{slope remark}(2), $\alpha'$ is $\mathcal K$-definable over $Asc'$.
		
	Let $c''$ be an independent realization of $tp_{\mathcal K}(c'/As\alpha')$ over $Asc'\alpha'$. Then $c''$ is the canonical base of a plane curve $C''$ which also contains $s$ as a generic point over $c''$, and also has slope $\alpha'$ at $s$.
	
	Now by Corollary \ref{coherent family ranks}, and since $(s,u')$ is generic in $D$, we conclude that $u'$ is coherent of rank 2 over $\emptyset$, and coherent of rank 1 over $s$. Moreover, by Fact \ref{almost faithful facts}, $u'$ and $c'$ are $\mathcal M$-interalgebraic over $\emptyset$ -- so $c'$ is also coherent of rank 2 over $\emptyset$, and coherent of rank 1 over $s$. Finally, since $c''$ realizes the same $\mathcal K$-type over $s$ as $c'$, it is also coherent of rank 2 over $\emptyset$, and coherent of rank 1 over $s$. 
		
	Note that we have now established that $(s,C',C'',c',c'')$ is a generic non-transversality. So since $\mathcal M$ detects generic non-transversalities, we conclude that $c'$ and $c''$ are $\mathcal M$-dependent over $\emptyset$. 
	
	\begin{claim} $\rk(c''/sc')=0$.
	\end{claim}
	\begin{proof} Suppose not. Then $\rk(c'c''s)\geq 4$. But clearly $\rk(s/c'c'')\leq 1$, so $\rk(c'c'')\geq 3$, and thus $\rk(c''/c')\geq 1$. In particular $c'\neq c''$, which shows that $\rk(s/c'c'')=0$. But then $\rk(c'c'')\geq 4$, which contradicts that $c'$ and $c''$ are $\mathcal M$-dependent.
	\end{proof} 
	
	Now by the claim, and since $c''$ was chosen $\mathcal K$-independently over $Asc'\alpha'$, it follows that $\dim(c'/As\alpha')=0$. Since $\alpha'$ is $\mathcal K$-definable over $Asc'$, we have in fact that $c'$ and $\alpha'$ are interalgebraic over $As$. In particular $\dim(\alpha'/As)=1$, which shows that $\alpha'$ is generic in $\mathbb C$ over $As$. Then since $\alpha$ and $\alpha'$ are both generics in $\mathbb C$ over $As$, and since $\mathcal K$ is saturated, there is an automorphism $\sigma$ of $\mathcal K$ fixing $As$ and sending $\alpha'$ to $\alpha$. Let $c=\sigma(c')$. Then by the automorphism, $c$ is the canonical base of a non-trivial plane curve $C$ which is $\mathcal M$-definable over $c$, contains $s$ as a generic point over $c$, and has slope $\alpha$ at $s$. Moreover, all dimension-theoretic properties of $(s,c,\alpha)$ over $\emptyset$ and $A$ are identical to those of $(s,c',\alpha')$. This is enough to show that $C$ and $c$ are the desired objects satisfying (1)-(5) of the lemma.
	\end{proof}

	We now define:
	
	\begin{definition}\label{standard representative} Let $(x,y)$, $A$, and $\alpha$ be as in the statement of Lemma \ref{slopes exist} -- so we have fixed infinitesimal coordinate systems at $x$ and $y$ (say $f$ and $g$) which are $\mathcal K$-definable over $A$. We will call any pair $(C,c)$ satisfying (1)-(5) of Lemma \ref{slopes exist} a \textit{standard representative} of $\alpha$ at $(x,y)$, relative to $f$, $g$, and $A$. When $f$, $g$, and $A$ are clear from context, we simply call $(C,c)$ a \textit{standard representative} of $\alpha$ at $(x,y)$.
	\end{definition}
	
	\subsection{Detecting Generic Multiplication and Addition}
	
	We next show that $\mathcal M$ can detect the field operations on slopes `generically,' i.e. in the situation of Lemmas \ref{slope multiplication} and \ref{slope addition}. Modulo some computations, the statement will essentially follow from Lemmas \ref{slope multiplication} and \ref{slope addition} and the assumption that $\mathcal M$ detects generic non-transversalities.
	
	Before stating the result, we should recall that in any stable group, the product of two independent generics is again a generic. We should also point out that if $x\in M$ and $(u,v,z)\in\Gamma$ are independent generics, then each of $(x,u)$, $(x,v)$, and $(x,z)$ is generic in $M^2$ (because of the independence from $x$, combined with the fact that each of $u$, $v$, and $z$ is by itself generic in $M$). These two facts are indeed necessary for the statement to make sense. Now we show:
	
	\begin{lemma}\label{detecting products} $\mathcal M$ detects generic multiplication and addition of slopes, in the following senses:
		\begin{enumerate} 
			\item Let $x,y,z$ be independent generics in $M$ over $\emptyset$, equipped with fixed infinitesimal coordinate systems that are $\mathcal K$-definable over a set $A$. Let $\alpha_1$ and $\alpha_2$ be independent generics in $\mathbb C$ over $Axyz$, and let $\alpha_3=\alpha_1\alpha_2$. Let $(C_1,c_1)$, $(C_2,c_2)$, and $(C_3,c_3)$ be standard representatives of $\alpha_1$, $\alpha_2$, and $\alpha_3$ at $(x,y)$, $(y,z)$, and $(x,z)$, respectively. Then $c_3\in\operatorname{acl}_{\mathcal M}(c_1c_2xyz)$.
			\item Let $x\in M$ and $(u,v,z)\in\Gamma$ be independent generics over $\emptyset$, equipped with fixed infinitesimal coordinate systems (at $x,u,v,z$) which are $\mathcal K$-definable over a set $A$ and which normalize $(u,v,z)$. Let $\alpha_1$ and $\alpha_2$ be independent generics in $\mathbb C$ over $Axuvz$, and let $\alpha_3=\alpha_1+\alpha_2$. Let $(C_1,c_1)$, $(C_2,c_2)$, and $(C_3,c_3)$ be standard representatives of $\alpha_1$, $\alpha_2$, and $\alpha_3$ at $(x,u)$, $(x,v)$, and $(x,z)$, respectively. Then $c_3\in\operatorname{acl}_{\mathcal M}(c_1c_2xuvz)$.
		\end{enumerate}
	\end{lemma}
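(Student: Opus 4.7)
The plan for part (1) is to realize $\alpha_3 = \alpha_1\alpha_2$ as the slope at $(x,z)$ of the composed curve $C^\times := C_2 \circ C_1$, and then to use the detection of generic non-transversalities to tie $C^\times$ back to $C_3$. First, I would verify the hypotheses of Lemma \ref{slope multiplication} relative to the enlarged base $A' := A \cup \{c_1,c_2\}$. What needs checking is that $(x,y)$ is generic in $C_1$ over $A'$ and $(y,z)$ is generic in $C_2$ over $A'$; using property (5) of a standard representative to replace $c_i$ by $\alpha_i$ over $Axy$ respectively $Ayz$, this reduces to the $\mathcal K$-independence of $\alpha_1$ and $\alpha_2$ over $Axyz$, which is given. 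Lemma \ref{slope multiplication} then delivers that $(x,z)$ is generic in $C^\times$ over $A'$ and that $\tau_{(x,z)}(C^\times) = \alpha_1\alpha_2 = \alpha_3 = \tau_{(x,z)}(C_3)$.

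Next I would extract from $C^\times$ a stationary component $C^*$ containing $(x,z)$ generically via Lemma \ref{tangent space of component}, together with its canonical base $c^*$; by Fact \ref{cb facts}(3) this gives $c^* \in \operatorname{acl}_{\mathcal M}(c_1c_2)$, and Lemma \ref{tangent space of component} yields $T_{(x,z)}(C^*) = T_{(x,z)}(C^\times)$, so $C^*$ also has slope $\alpha_3$ at $(x,z)$. The next task is to verify that $(x, z, C^*, C_3, c^*, c_3)$ is a generic non-transversality in the sense of Definition \ref{generic non-transversality}: the tangent-space intersection is non-trivial since both curves share the slope $\alpha_3$; genericity of $(x,z)$ in $M^2$ over $\emptyset$ follows from the independent-generics hypothesis on $x,y,z$; genericity of $(x,z)$ in $C_3$ over $c_3$ is standard-representative data; genericity of $(x,z)$ in $C^*$ over $c^*$ is inherited from genericity in $C^\times$ over $A'$ together with Lemma \ref{tangent space of component}; coherence of $c_3$ over $\emptyset$ is from property (2) of the standard representative; and coherence of $c^*$ over $\emptyset$ follows from Lemma \ref{coherent preservation}(2) once $c_1c_2$ is shown coherent over $\emptyset$, which in turn comes from Lemma \ref{coherent preservation}(3) after deducing $\mathcal K$-independence of $c_1,c_2$ from the independence of $\alpha_1,\alpha_2$ combined with the interalgebraicities.

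By the assumption that $\mathcal M$ detects generic non-transversalities, $c^*$ and $c_3$ are then $\mathcal M$-dependent over $\emptyset$. To conclude I would split into cases. If $c_3 \in \operatorname{acl}_{\mathcal M}(c^*)$ then, since $c^* \in \operatorname{acl}_{\mathcal M}(c_1c_2)$, we immediately get $c_3 \in \operatorname{acl}_{\mathcal M}(c_1c_2xyz)$. Otherwise $c_3 \notin \operatorname{acl}_{\mathcal M}(c^*)$, and $\mathcal M$-dependence forces $\rk(c_3/c^*) = 1$ and hence $\rk(c^*c_3) = \rk(c^*)+1$; moreover, non-algebraicity of $c_3$ over $c^*$ precludes $C^*$ and $C_3$ from being almost equal, so by strong minimality $C^* \cap C_3$ is finite and $\rk(xz/c^*c_3) = 0$. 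Combining the two identities
\[
\rk(c^*c_3xz) = \rk(c^*c_3) + \rk(xz/c^*c_3) = \rk(c^*) + 1
\]
and
\[
\rk(c^*c_3xz) = \rk(xz) + \rk(c^*/xz) + \rk(c_3/c^*xz) = 2 + (\rk(c^*)-1) + \rk(c_3/c^*xz),
\]
where $\rk(c^*/xz) = \rk(c^*) - 1$ comes from the genericity of $(x,z)$ in $C^*$ over $c^*$, forces $\rk(c_3/c^*xz) = 0$. Hence $c_3 \in \operatorname{acl}_{\mathcal M}(c^*xz) \subseteq \operatorname{acl}_{\mathcal M}(c_1c_2xyz)$, as required.

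Part (2) is handled by exactly the same outline, with $C_2\circ C_1$ replaced by the formal sum $C_1 \oplus C_2$ and Lemma \ref{slope multiplication} replaced by Lemma \ref{slope addition}; the normalization assumption on the infinitesimal coordinate systems at $(u,v,z) \in \Gamma$ is precisely what Lemma \ref{slope addition} needs to conclude $\tau_{(x,z)}(C_1 \oplus C_2) = \alpha_1+\alpha_2 = \alpha_3$, after which one extracts a stationary component through $(x,z)$ and runs the identical detection-plus-rank argument. The hardest step in both parts will be the coherence and independence bookkeeping around the auxiliary canonical base $c^*$: one must carefully propagate coherence from $c_1,c_2$ through $\operatorname{acl}_{\mathcal M}$ to $c^*$ and verify that $(x,z)$ is genuinely generic, not merely $\mathcal M$-generic, in $C^*$ over $c^*$, so that the framework of Definition \ref{generic non-transversality} and Definition \ref{detect generic non-transversality} really applies.
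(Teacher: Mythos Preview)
Your proposal is correct and follows essentially the same route as the paper: apply Lemma~\ref{slope multiplication} (resp.\ Lemma~\ref{slope addition}) to get the right slope on $C_2\circ C_1$ (resp.\ $C_1\oplus C_2$), pass to a stationary component via Lemma~\ref{tangent space of component}, recognize a generic non-transversality with $C_3$, and finish by a rank computation. Two small differences are worth noting. First, the paper applies Lemma~\ref{slope multiplication} over the base $c_1c_2$ rather than your $A'=A\cup\{c_1,c_2\}$; this matters because the lemma statement places no constraint on how $A$ relates to $x,y,z$, so $(x,y)$ need not be generic in $C_1$ over $Ac_1c_2$ (e.g.\ if $x,y\in A$). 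The paper's verification that $(x,y)$ is generic in $C_1$ over $c_1c_2$ proceeds by showing $\dim(c_1c_2w)=5$ (with $w=(x,y,z)$), bounding $\dim(c_1c_2)\le 4$, and then using that the coordinates of $w$ are pairwise $\mathcal M$-interalgebraic over $c_1c_2$; your reduction ``replace $c_i$ by $\alpha_i$'' is morally right but would need this rerouting to be airtight. Second, your endgame case split (on whether $c_3\in\operatorname{acl}_{\mathcal M}(c^*)$, then a direct rank count using only $(x,z)$) is a slight reorganization of the paper's split on $\rk(w/\bar c)$; both work, and yours actually yields the marginally stronger $c_3\in\operatorname{acl}_{\mathcal M}(c_1c_2xz)$ in the non-algebraic branch.
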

	
	\begin{proof} The proofs of (1) and (2) are almost identical, so we will prove them simultaneously. Let $w$ be $(x,y,z)$ in (1) and $(x,u,v,z)$ in (2). Let $s_1$ be $(x,y)$ in (1) and $(x,u)$ in (2). Let $s_2$ be $(y,z)$ in (1) and $(x,v)$ in (2). Let $s_3$ be $(x,z)$ in each of (1) and (2). So in each case, for each $i$ we have $s_i\in C_i$. Finally, let $\bar c=(c_1,c_2,c_3)$.
		
	Note that in either case, $w$ is coherent of rank 3 over $\emptyset$. Moreover, using that each $c_i$ is interalgebraic with $\alpha_i$ over $Aw$, it follows that $c_1c_2$ is coherent of rank 2 over $w$. In particular $c_1c_2w$ is coherent of rank 5 over $\emptyset$. Since each $\dim(c_i)=2$, it follows that $\dim(w/c_1c_2)\geq 1$. On the other hand, using that each projection $C_i\rightarrow M$ and $\Gamma\rightarrow M^2$ is finite-to-one, it is easy to see that the coordinates of $w$ are pairwise $\mathcal M$-interalgebraic over $c_1c_2$. In particular we get that $\dim(s_1/c_1c_2)=\dim(s_2/c_1c_2)\geq 1$, which shows that $s_1$ and $s_2$ are generic in $C_1$ and $C_2$, respectively, over $c_1c_2$.
	
	Let $D$ be $C_2\circ C_1$ in (1) and $C_1\oplus C_2$ in (2). Then by either Lemma \ref{slope multiplication} or Lemma \ref{slope addition}, it follows that $s_3$ is generic in $D$ over $c_1c_2$, and $\tau_{s_3}(D)=\alpha_3$. We now apply Lemma \ref{tangent space of component}: the result is a stationary component $C$ of $D$, with canonical base $c$, such that $s_3$ is generic in $C$ over $c_1c_2c$ and $\tau_{s_3}(C)=\alpha_3$. Note that $c$ is $\mathcal M$-algebraic over $c_1c_2$, since $D$ is $\mathcal M$-definable over $c_1c_2$. Thus $c$ is also $\mathcal M$-algebraic over $c_1c_2w$ -- so since $c_1c_2w$ is coherent, so is $c$. 
	
	Note that we have now verified that $(s_3,C_3,C,c_3,c)$ is a generic non-transversality. So since $\mathcal M$ detects generic non-transversalities, we conclude that $c_3$ and $c$ are $\mathcal M$-dependent over $\emptyset$. In particular we get that $\rk(c_3/c)\leq 1$. But since $c$ is $\mathcal M$-algebraic over $c_1c_2$, this implies that $\rk(c_3/c_1c_2)\leq 1$. Thus $\rk(\bar c)\leq 5$. We now proceed in two cases:

	\begin{itemize}
		\item First suppose that $\rk(w/\bar c)=0$. Then $\rk(\bar cw)\leq 5$. But since $\rk(c_1c_2w)=5$, this is only possible if $\rk(c_3/c_1c_2w)=0$, as desired.
		\item Now suppose that $\rk(w/\bar c)\geq 1$. But as stated above, all coordinates of $w$ are pairwise $\mathcal M$-interalgebraic over $\bar c$. So in fact $\rk(s_3/\bar c)\geq 1$. In particular this implies that $C_3\cap D$ is infinite, and thus $C_3$ is a stationary component of $D$. Since $D$ is $\mathcal M$-definable over $c_1c_2$, we conclude that $c_3$ is $\mathcal M$-algebraic over $c_1c_2$, which implies the desired result.
	\end{itemize}
	\end{proof}
	
	\subsection{Detecting Multiplication and Addition at a Single Point}
	
	In this subsection we prove a version of Lemma \ref{detecting products} in which all standard representatives are chosen at a single point $(x_0,y_0)\in M^2$. This is important for the interpretation of a field, roughly because it will give us a single underlying set on which to interpret the field operations.
	
	The result of this subsection will need to take place in the presence of additional parameters -- so we begin by fixing several pieces of notation.
	
	\begin{convention}\label{fixed 5 points} For the rest of this section, we adopt the following:
	\begin{enumerate}
		\item We fix independent generics $x_0\in M$, $y_0\in M$, and $(u_0,v_0,z_0)\in\Gamma$, and denote the 5-tuple $(x_0,y_0,u_0,v_0,z_0)$ by $w_0$. 
		\item We also fix infinitesimal coordinate systems at each of the five coordinates of $w_0$, and assume that they normalize $(u_0,v_0,z_0)$ and are $\mathcal K$-definable over a set $B$. 
		\item Next we fix a generic element $\beta\in\mathbb C$ over $Bw_0$.
		\item We then fix standard representatives $(D_1,d_1)$ of $\beta$ at $(y_0,u_0)$, $(D_2,d_2)$ of $\beta$ at $(y_0,v_0)$, $(D_3,d_3)$ of $\beta$ at $(y_0,z_0)$, and $(D_4,d_4)$ of $\frac{1}{\beta}$ at $(z_0,x_0)$.
		\item Finally, we let $A$ be the parameter set consisting of $w_0$, and $d_i$ for $1\leq i\leq 4$.
	\end{enumerate}
	\end{convention}

	\begin{remark}\label{fixed 5 points remark} Let us make some comments about Convention \ref{fixed 5 points}:
	\begin{enumerate}
		\item Note that any two coordinates of $w_0$ are independent generics in $M$ over $\emptyset$ (this is necessary for (4) to make sense). Moreover, the same almost holds for any three coordinates of $w_0$, the sole exception being the triple $(u_0,v_0,z_0)$.
		\item That (2) is possible is given by Lemma \ref{normalization exists}.
		\item It follows by the genericity of $\beta$ over $Bw_0$ that $\beta\neq 0$, and moreover that $\frac{1}{\beta}$ is also generic over $Bw_0$, since it is interalgebraic with $\beta$. This is implicitly used in (4).
		\item Finally, we note that $A$ lives entirely within $\mathcal M^{\textrm{eq}}$, while $B$ involves parameters from $\mathcal K$. We will typically use $AB$ as a parameter set in the presence of the $\dim$ function, making sure to restrict to $A$ when using $\rk$. Note that $\beta$ is indirectly included in $AB$ (more precisely, its definable closure), for example because it is the slope of $D_1$ at $(y_0,u_0)$ with respect to infinitesimal coordinate systems defined over $A$.
	\end{enumerate}
	\end{remark}

	The proof of the main lemma of this subsection will be rather quick and potentially confusing. However it is based on a simple picture, so before giving the lemma we will give a brief informal account of the strategy. Indeed this is arguably the most important result of the section, since it is the reason that a field can be found using only generic non-transversalities.
	
	Now intuitively we will show that, given the ability to operate on slopes while moving points around generically (as in Lemma \ref{detecting products}), it is actually possible to operate on slopes \textit{without} moving around. To do this, the idea is to break a desired multiplication or addition of slopes into a longer sequence of steps, each of which moves to a new point as in Lemma \ref{detecting products}; we then apply Lemma \ref{detecting products} at each step along the way. The trick is that Lemma \ref{detecting products} only requires each step by itself to involve a generic configuration of points, while it does not require any independence between the points used in different steps. So if we design our path carefully, we can end up at the point where we started. This is the reason for the extra coordinates $u_0$, $v_0$, and $z_0$ -- they will make up the `intermediate steps' on our way \textit{back} to $(x_0,y_0)$. 
	
	Now in order to apply Lemma \ref{detecting products} along the way, we need to perform a generic slope operation in each step. This is why we need $\beta$ and $\frac{1}{\beta}$ -- indeed, we will ensure that each `extra' step in the sequence is just a scaling by $\beta$ or $\frac{1}{\beta}$; and again, designing the path carefully, these extra scalings will cancel each other out in the end. Thus, as the reader might now be able to guess, all of the extra steps will be given by the $D_i$'s from Convention \ref{fixed 5 points}.
	
	We now give the result:
	
	\begin{lemma}\label{detecting products at a point} $\mathcal M$ detects generic multiplication and addition of slopes at $(x_0,y_0)$, in the following sense: Let $\alpha_1$ and $\alpha_2$ be independent generics in $\mathbb C$ over $AB$, and let $\alpha_3$ be eitehr $\alpha_1\alpha_2$ or $\alpha_1+\alpha_2$. For each $i$ let $(C_i,c_i)$ be a standard representative of $\alpha_i$ at $(x_0,y_0)$. Then $c_3\in\operatorname{acl}_{\mathcal M}(Ac_1c_2)$.
	\end{lemma}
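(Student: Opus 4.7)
The plan is to derive the lemma from Lemma \ref{detecting products} by performing each operation at a configuration of three distinct generic points and then pulling the result back to $(x_0,y_0)$ via the fixed auxiliary curves $D_1,\dots,D_4$. The key arithmetic identity is that the factors of $\beta$ and $1/\beta$ introduced by composing with the $D_i$ cancel in the final product or sum, so the ``output'' curve in the new configuration is (or determines) a standard representative of $\alpha_3$ at the appropriate point, while all slopes used along the way are still $\mathcal{K}$-generic over $AB$ as required by Lemma \ref{detecting products}.

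For the multiplication case ($\alpha_3=\alpha_1\alpha_2$), I would apply Lemma \ref{detecting products}(1) at the independent generics $x=x_0$, $y=z_0$, $z=y_0$, using slopes $\mu_1=\alpha_1\beta$ at $(x_0,z_0)$ and $\mu_2=\alpha_2/\beta$ at $(z_0,y_0)$, so that $\mu_1\mu_2=\alpha_1\alpha_2$ at $(x_0,y_0)$. Since $\beta$ is $\mathcal{K}$-definable over $AB$ and nonzero, $\mu_1,\mu_2$ are independent generics in $\mathbb{C}$ over $AB$. By Lemma \ref{slope multiplication}, the composition $D_3\circ C_1$ is a plane curve through $(x_0,z_0)$ of slope $\mu_1$, and $C_2\circ D_4$ is a plane curve through $(z_0,y_0)$ of slope $\mu_2$; the canonical bases of their stationary components through the respective points lie in $\operatorname{acl}_{\mathcal{M}}(Ac_1)$ and $\operatorname{acl}_{\mathcal{M}}(Ac_2)$. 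Taking $C_3$ itself as the standard representative of $\alpha_1\alpha_2$ at $(x_0,y_0)$, Lemma \ref{detecting products}(1) then delivers $c_3\in\operatorname{acl}_{\mathcal{M}}(Ac_1c_2)$.

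For the addition case ($\alpha_3=\alpha_1+\alpha_2$), I would apply Lemma \ref{detecting products}(2) with $x=x_0$ and $(u,v,z)=(u_0,v_0,z_0)$, which is normalized by the coordinate systems fixed in Convention \ref{fixed 5 points}. Use slopes $\mu_1=\alpha_1\beta$ at $(x_0,u_0)$ and $\mu_2=\alpha_2\beta$ at $(x_0,v_0)$, realized as the stationary components through those points of $D_1\circ C_1$ and $D_2\circ C_2$; their sum $\mu_1+\mu_2=(\alpha_1+\alpha_2)\beta$ at $(x_0,z_0)$ is realized by the stationary component of $D_3\circ C_3$ through $(x_0,z_0)$. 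Lemma \ref{detecting products}(2) then places the canonical base of this last curve, call it $h_3$, inside $\operatorname{acl}_{\mathcal{M}}(Ac_1c_2)$. To recover $c_3$, I would compose back with the swap of $D_3$ (a plane curve from $z_0$ to $y_0$ of slope $1/\beta$, $\mathcal{M}$-definable over $d_3\in A$), obtaining a plane curve through $(x_0,y_0)$ of slope $\alpha_1+\alpha_2$ whose stationary component through $(x_0,y_0)$ has canonical base in $\operatorname{acl}_{\mathcal{M}}(Ah_3)$ and is $\mathcal{M}$-interalgebraic with $c_3$ over $A$.

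The main technical obstacle is verifying that the stationary components of the composed curves genuinely serve as standard representatives in the sense of Definition \ref{standard representative} — that is, that their canonical bases are coherent of $\mathcal{M}$-rank $2$ over $\emptyset$ and $\mathcal{M}$-rank $1$ over the relevant pair of points, and that they are $\mathcal{K}$-interalgebraic with the target slope over $AB$ (and hence, since all relevant tuples live in $\mathcal{M}^{\textrm{eq}}$, $\mathcal{M}$-interalgebraic with $c_i$ over $A$). This requires careful dimension and rank bookkeeping through the composition operation, using the independence of each $\alpha_i$ from $AB$ together with the coherence properties packaged into each $c_i$ by Lemma \ref{slopes exist}; a short case analysis along the lines of the final step of the proof of Lemma \ref{detecting products} may also be needed to upgrade $\mathcal{M}$-dependence of canonical bases (delivered by detection of generic non-transversalities) into the full $\mathcal{M}$-interalgebraicity used in the transfers between composed and standard forms.
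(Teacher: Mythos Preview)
Your overall strategy matches the paper's: route the operation through auxiliary points via the fixed curves $D_i$, arranging the factors of $\beta$ and $1/\beta$ to cancel. The difference is in execution. Rather than taking compositions like $D_3\circ C_1$ and then verifying that their stationary components satisfy Definition \ref{standard representative} --- which is precisely the ``main technical obstacle'' you flag --- the paper simply invokes Lemma \ref{slopes exist} to produce \emph{fresh} standard representatives $(C_4,c_4),(C_5,c_5),\ldots$ of the intermediate slopes at the intermediate points, and then chains several applications of Lemma \ref{detecting products} (three for multiplication, four for addition). In each application the three curves involved are standard representatives by construction --- either the given $C_i$, the fixed $D_j$ from Convention \ref{fixed 5 points}(4), or the freshly produced $C_4,C_5,\ldots$ --- so the hypotheses of Lemma \ref{detecting products} hold immediately, with no coherence or rank bookkeeping about canonical bases of compositions.

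Your addition case has an additional gap at the final step. You want to recover $c_3$ from $h_3$, the canonical base of a stationary component $E$ of $D_3\circ C_3$. Composing $E$ with the swap of $D_3$ does give a curve through $(x_0,y_0)$ of slope $\alpha_3$ whose stationary component there has canonical base in $\operatorname{acl}_{\mathcal M}(Ah_3)$; but there is no direct reason this component is (almost equal to) $C_3$. Sharing a tangent line at $(x_0,y_0)$ is a non-transversality, not an inclusion, and the containment $C_3\subset D_3^{-1}\circ D_3\circ C_3$ does not force $C_3\subset D_3^{-1}\circ E$ when $E$ is only one component of $D_3\circ C_3$. To conclude $c_3\in\operatorname{acl}_{\mathcal M}(Ah_3)$ you would need another invocation of detection of generic non-transversalities --- which is exactly what the paper's fourth application of Lemma \ref{detecting products} supplies, using a fresh standard representative $(C_6,c_6)$ of $(\alpha_1+\alpha_2)\beta$ at $(x_0,z_0)$ in place of your $E$.
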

	\begin{proof} We need different arguments for multiplication and addition:
		\begin{itemize}
			\item First suppose that $\alpha_3=\alpha_1\alpha_2$. Let $(C_4,c_4)$ be a standard representative of $\alpha_1\beta$ at $(x_0,z_0)$, and let $(C_5,c_5)$ be a standard representative of $\frac{\alpha_2}{\beta}$ at $(z_0,y_0)$. Then the following are all consequences of Lemma \ref{detecting products}:
			\begin{enumerate}
				\item $c_4\in\operatorname{acl}_{\mathcal M}(Ac_1)$, by considering the curves $(C_1,D_3,C_4)$, points $(x_0,y_0,z_0)$, and slopes $(\alpha_1)\cdot(\beta)=\alpha_1\beta$.
				\item $c_5\in\operatorname{acl}_{\mathcal M}(Ac_2)$, by considering the curves $(D_4,C_2,C_5)$, points $(z_0,x_0,y_0)$, and slopes $(\frac{1}{\beta})\cdot(\alpha_2)=\frac{\alpha_2}{\beta}$.
				\item $c_3\in\operatorname{acl}_{\mathcal M}(Ac_4c_5)$, by considering the curves $(C_4,C_5,C_3)$, points $(x_0,z_0,y_0)$, and slopes $(\alpha_1\beta)\cdot(\frac{\alpha_2}{\beta})=\alpha_1\alpha_2$.
			\end{enumerate}
		In particular, combining (1), (2), and (3) gives the desired result.		
			\item Now suppose that $\alpha_3=\alpha_1+\alpha_2$. Let $(C_4,c_4)$ be a standard representative of $\alpha_1\beta$ at $(x_0,u_0)$, let $(C_5,c_5)$ be a standard representative of $\alpha_2\beta$ at $(x_0,v_0)$, and let $(C_6,c_6)$ be a standard representative of $(\alpha_1+\alpha_2)\beta$ at $(x_0,z_0)$. Then the following are all consequences of Lemma \ref{detecting products}:
			\begin{enumerate}
				\item $c_4\in\operatorname{acl}_{\mathcal M}(Ac_1)$, by considering the curves $(C_1,D_1,C_4)$, points $(x_0,y_0,u_0)$, and slopes $(\alpha_1)\cdot(\beta)=\alpha_1\beta$.
				\item $c_5\in\operatorname{acl}_{\mathcal M}(Ac_2)$, by considering the curves $(C_2,D_2,C_5)$, points $(x_0,y_0,v_0)$, and slopes $(\alpha_2)\cdot(\beta)=\alpha_2\beta$.
				\item $c_6\in\operatorname{acl}_{\mathcal M}(Ac_4c_5)$, by considering the curves $(C_4,C_5,C_6)$, points $(x_0,u_0,v_0,z_0)$, and slopes $(\alpha_1\beta)+(\alpha_2\beta)=(\alpha_1+\alpha_2)\beta$.
				\item $c_3\in\operatorname{acl}_{\mathcal M}(Ac_6)$, by considering the curves $(C_6,D_4,C_3)$, points $(x_0,z_0,y_0)$, and slopes $((\alpha_1+\alpha_2)\beta)\cdot(\frac{1}{\beta})=\alpha_1+\alpha_2$.
			\end{enumerate}
			In particular, combining (1), (2), (3), and (4) gives the desired result.
		\end{itemize}
	\end{proof}

	\begin{remark} To be precise, one should check that each instance of Lemmas \ref{slopes exist} and \ref{detecting products} in the above proof is valid. This amounts to Remark \ref{fixed 5 points remark}(1), the fact that all slopes in question are generic over $Bw_0$, and the fact that any two slopes used in the same application of Lemma \ref{detecting products} are independent over $Bw_0$. But the required statements about slopes follow easily from the fact that $\alpha_1$, $\alpha_2$, and $\beta$ are independent generics in $\mathbb C$ over $Bw_0$; we leave the details to the reader.
	\end{remark}

	\subsection{Field Configurations and the Proof of the Theorem}
	
	In the final subsection we prove the interpretability of an algebraically closed field, still assuming that $\mathcal M$ detects generic non-transversalities. We do this by building a \textit{field configuration}:
	
	\begin{definition}\label{field con def} Let $\mathcal X$ be a strongly minimal structure, and let $\rk$ denote Morley rank in $\mathcal X$. Let $a,b,c,x,y,z$ be tuples in $\mathcal X^{\textrm{eq}}$, and let $E$ be a set of parameters in $\mathcal X^{\textrm{eq}}$. We call $(a,b,c,x,y,z)$ a \textit{field configuration} in $\mathcal X$ over $E$ if the following hold:
		\begin{enumerate}
			\item $\rk(x/E)=\rk(y/E)=\rk(z/E)=1$.
			\item $\rk(a/E)=\rk(b/E)=\rk(c/E)=k$ for some $k>1$.
			\item Any two of the six points are independent over $E$, and moreover $a$, $b$, and $x$ are independent over $E$.
			\item $c\in\operatorname{acl}(Eab)$, $y\in\operatorname{acl}(Eax)$, and $z\in\operatorname{acl}(Eby)\cap\operatorname{acl}(Ecx)$.
		\end{enumerate}
	\end{definition}
	
	\begin{remark}\label{field con ex}
			The reader unfamiliar with this definition will find guidance in the following example: Suppose that $\mathcal X=(F,+,\cdot)$ is itself an algebraically closed field, and let $G=\operatorname{AGL}_1(F)$ denote the group of invertible affine linear maps from $F$ to $F$ -- so $G$ acts definably on $F$. Fix any set $E$, and let $a\in G$, $b\in G$, and $x\in F$ be independent generics over $E$. Then letting $c=a\cdot b$, $y=a\cdot x$, and $z=b\cdot y=c\cdot x$, it is easy to check that $(a,b,c,x,y,z)$ forms a field configuration over $E$, with $k=2$. We will call this a \textit{standard group configuration} in $F$.
	\end{remark}
	
	By Remark \ref{field con ex}, any algebraically closed field admits a field configuration over any set of parameters. A remarkable theorem of Hrushovski (see \cite{Bou}) gives a converse:	
	
	\begin{fact}\label{field con fact} Let $\mathcal X$ be strongly minimal, and suppose that there is a field configuration in $\mathcal X$ over some parameter set. Then $\mathcal X$ interprets an algebraically closed field. 
	\end{fact}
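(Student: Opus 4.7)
The plan is to apply Hrushovski's group configuration theorem to extract an interpretable strongly minimal group acting on a strongly minimal set, and then to use the extra ingredients in a field configuration to force this action to be sharply $2$-transitive, from which a field can be recovered via a classification theorem.

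First I would observe that conditions (1)--(3) of Definition \ref{field con def} together with the first three clauses of (4) --- namely $c \in \operatorname{acl}(Eab)$, $y \in \operatorname{acl}(Eax)$, and $z \in \operatorname{acl}(Eby)$ --- are exactly the hypotheses of the group configuration theorem. Applying that theorem then produces an interpretable group $(G,\cdot)$ of rank $k$ acting generically and faithfully on an interpretable strongly minimal set $S$, in such a way that $a,b,c$ correspond to generic elements of $G$, the tuples $x,y,z$ to generic elements of $S$, and the algebraic-closure relations encode the identities $ax=y$, $by=z$, $ba=c$ up to finite ambiguity.

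Second I would exploit the remaining clause $z \in \operatorname{acl}(Ecx)$, together with the rank condition $k > 1$, to show that the action of $G$ on $S$ is generically sharply $2$-transitive. Transitivity on generic pairs is automatic: given independent generic $x,y \in S$ over $E$, the tuple $a$ from the configuration moves $x$ to $y$. Sharpness follows because $z \in \operatorname{acl}(Ecx)$ combined with faithful action and additivity of Morley rank forces the pointwise stabilizer of any two independent generic elements of $S$ to be finite; meanwhile $k>1$ guarantees that the stabilizer of a single generic element of $S$ has rank $k-1 \geq 1$, ruling out the degenerate regular case.

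Third I would invoke the strongly minimal analogue of Zassenhaus's classification of sharply $2$-transitive groups (the heart of Hrushovski's original argument, as presented in \cite{Bou}): any such action in a strongly minimal context yields an interpretable algebraically closed field $F$ with $S$ in definable bijection with $F$ and $G$ in definable isomorphism with $\operatorname{AGL}_1(F)$. Addition is recovered from the point stabilizer of $S$ acting regularly on the complement, multiplication from the stabilizer of a second chosen point, and the distributive law from the sharp $2$-transitivity. The main obstacle is really this last step: extracting genuine field axioms from a sharply $2$-transitive group action, in an interpretable and strongly minimal manner, is technically delicate and is precisely the content that makes the field configuration theorem a nontrivial theorem rather than a formal corollary of the group configuration theorem --- which is why in practice (as here) it is usually quoted rather than reproved.
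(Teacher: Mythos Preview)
The paper does not prove this statement; it is stated as a \emph{Fact} and attributed to Hrushovski with a pointer to \cite{Bou}, so there is no in-paper argument to compare your proposal against. Your outline follows the standard route (group configuration theorem $\Rightarrow$ interpretable group acting on a strongly minimal set $\Rightarrow$ field via the rank condition), and your final paragraph correctly identifies why the result is quoted rather than reproved here.

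One correction to your decomposition, however: all four algebraicity clauses in Definition~\ref{field con def}(4), including $z \in \operatorname{acl}(Ecx)$, are already part of the standard group configuration --- they are precisely the four ``lines'' of the quadrangle connecting the rank-$1$ and rank-$k$ vertices. Without $z \in \operatorname{acl}(Ecx)$ you do not have a group configuration at all, so it cannot be the ``extra'' ingredient that upgrades a group to a field. What distinguishes a field configuration from a group configuration is solely the requirement $k > 1$ in clause~(2). Thus in your second step the real input is $k>1$: once the group configuration theorem has produced a connected rank-$k$ group $G$ acting faithfully and transitively on a strongly minimal set $S$, the condition $k>1$ forces generic point stabilizers to be infinite, and Hrushovski's analysis of such actions (essentially the $\omega$-stable analogue of the Zassenhaus argument you describe) then yields an interpretable algebraically closed field with $G$ isomorphic to a subgroup of $\operatorname{AGL}_1$. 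Your use of $z \in \operatorname{acl}(Ecx)$ in step two is therefore misplaced --- that clause has already been consumed in producing the group action in step one.
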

	
	\begin{remark} The full statement of Fact \ref{field con fact} actually classifies field configurations up to point-wise interalgebraicity. All we need is the interpretability of the field, however.
	\end{remark} 

	Now let us continue with the proof. We retain all of the notation from Convention \ref{fixed 5 points}. From now on we will only use standard representatives taken at $(x_0,y_0)$. Thus the following will ease the presentation:
	
	\begin{notation}\label{fixed representatives} For each $\alpha\in\mathbb C$ which is generic over $AB$, we fix one standard representative of $\alpha$ at $(x_0,y_0)$, and denote its canonical base by $s(\alpha)$. If $a=(\alpha_1,...,\alpha_k)$ is a tuple of such elements of $\mathbb C$, then we let $s(a)=(s(\alpha_1),...,s(\alpha_k))$.
	\end{notation}

	Let $G=\operatorname{AGL}_1(\mathbb C)$. We view $G$ in the usual way as a semidirect product of the additive and multiplicative groups of $\mathbb C$. In particular, a generic element $a\in G$ is identified with an ordered pair $(\alpha_1,\alpha_2)$, where $\alpha_1$ and $\alpha_2$ are independent generics in $\mathbb C$. Thus Notation \ref{fixed representatives} applies to $a$, so $s(a)$ is well-defined. Note that, as in Remark \ref{field con ex}, we will write the action of $G$ on $\mathbb C$ multiplicatively.
	
	Now let us begin by restating Lemma \ref{detecting products at a point} in this language:
	
	\begin{lemma}\label{sum and product} If $\alpha_1$ and $\alpha_2$ are independent generics in $\mathbb C$ over $AB$, then $s(\alpha_1\cdot\alpha_2),s(\alpha_1+\alpha_2)\in\operatorname{acl}_{\mathcal M}(As(\alpha_1)s(\alpha_2))$.
	\end{lemma}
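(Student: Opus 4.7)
The statement is essentially a direct reformulation of Lemma \ref{detecting products at a point} under the new Notation \ref{fixed representatives}, so my plan is to reduce to that lemma with only a short verification that the chosen standard representatives are valid inputs.

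First I would unpack the notation. By Notation \ref{fixed representatives}, for each generic-over-$AB$ element $\alpha \in \mathbb{C}$ the symbol $s(\alpha)$ denotes the canonical base of one fixed standard representative of $\alpha$ at $(x_0,y_0)$. Thus $s(\alpha_1), s(\alpha_2), s(\alpha_1\cdot\alpha_2), s(\alpha_1+\alpha_2)$ are canonical bases of standard representatives of $\alpha_1, \alpha_2, \alpha_1\cdot\alpha_2, \alpha_1+\alpha_2$ respectively, at the common point $(x_0,y_0)$.

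Next I would check that $\alpha_1\cdot\alpha_2$ and $\alpha_1+\alpha_2$ are themselves generic in $\mathbb{C}$ over $AB$, so that $s$ is defined on them. Since $\alpha_1,\alpha_2$ are independent generics over $AB$, we have $\dim(\alpha_2/AB\alpha_1)=1$ and $\alpha_1\neq 0$, so $\alpha_1\alpha_2$ is interalgebraic with $\alpha_2$ over $AB\alpha_1$; hence $\dim(\alpha_1\alpha_2/AB)\geq 1$, i.e.\ $\alpha_1\alpha_2$ is generic over $AB$. The additive case is identical.

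Finally I would apply Lemma \ref{detecting products at a point} directly with $c_1 = s(\alpha_1)$, $c_2 = s(\alpha_2)$, and $c_3 = s(\alpha_3)$, where $\alpha_3 \in \{\alpha_1\alpha_2,\,\alpha_1+\alpha_2\}$. The hypotheses of that lemma (independence of $\alpha_1,\alpha_2$ over $AB$, and that each $(C_i,c_i)$ is a standard representative at $(x_0,y_0)$) are satisfied by construction, so the conclusion $c_3 \in \operatorname{acl}_{\mathcal M}(Ac_1c_2)$ becomes exactly $s(\alpha_3) \in \operatorname{acl}_{\mathcal M}(A\,s(\alpha_1)\,s(\alpha_2))$. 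There is no real obstacle here; the only point worth noting is that Lemma \ref{detecting products at a point} is stated for \emph{any} choice of standard representatives of the three slopes, so the fact that our fixed $s(\alpha_3)$ is in general a different representative than the one that might naturally arise in the course of the earlier proof is irrelevant.
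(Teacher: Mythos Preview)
Your proposal is correct and matches the paper's approach exactly: the paper explicitly introduces this lemma as ``restating Lemma \ref{detecting products at a point} in this language'' and gives no further proof. Your additional verification that $\alpha_1\alpha_2$ and $\alpha_1+\alpha_2$ are generic over $AB$ (so that $s$ is defined on them) is a small detail the paper leaves implicit, but it is correct and worth including.
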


	We thus conclude:
	
	\begin{lemma}\label{coding agl} Let $a\in G$, $b\in G$, and $x\in\mathbb C$ be independent generics over $AB$. Then:
		\begin{enumerate}
			\item $s(a\cdot x)\in\operatorname{acl}_{\mathcal M}(As(a)s(x))$.
			\item $s(a\cdot b)\in\operatorname{acl}_{\mathcal M}(As(a)s(b))$.
		\end{enumerate}
	\end{lemma}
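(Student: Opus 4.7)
The plan is to reduce both statements to iterated applications of Lemma \ref{sum and product}, using the explicit formulas for the group operation and the action of $G=\operatorname{AGL}_1(\mathbb{C})$ on $\mathbb{C}$. Write $a=(\alpha_1,\alpha_2)$ and $b=(\beta_1,\beta_2)$, so that $a\cdot x=\alpha_1 x+\alpha_2$ and $a\cdot b=(\alpha_1\beta_1,\,\alpha_1\beta_2+\alpha_2)$. Since $a,b,x$ are independent generics over $AB$ of dimensions $2,2,1$, the five coordinates $\alpha_1,\alpha_2,\beta_1,\beta_2,x$ are themselves independent generics of $\mathbb{C}$ over $AB$, and by Notation \ref{fixed representatives} we have $s(a)=(s(\alpha_1),s(\alpha_2))$ and $s(b)=(s(\beta_1),s(\beta_2))$. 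This will be our working setup.

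For part (1), the plan is a two-step computation. First apply Lemma \ref{sum and product} to the pair $(\alpha_1,x)$, which are independent generics in $\mathbb{C}$ over $AB$, to get $s(\alpha_1 x)\in\operatorname{acl}_{\mathcal{M}}(As(\alpha_1)s(x))$. Then apply Lemma \ref{sum and product} to the pair $(\alpha_1 x,\alpha_2)$ to get $s(\alpha_1 x+\alpha_2)\in\operatorname{acl}_{\mathcal{M}}(As(\alpha_1 x)s(\alpha_2))$. Concatenating, we obtain $s(a\cdot x)\in\operatorname{acl}_{\mathcal{M}}(As(a)s(x))$, as required. The only thing to check carefully is that $\alpha_1 x$ and $\alpha_2$ are themselves independent generics of $\mathbb{C}$ over $AB$; this follows because $\dim(\alpha_1,\alpha_2,x/AB)=3$ forces $\dim(\alpha_1 x,\alpha_2/AB)=2$, while $\alpha_1 x$ is not in $\operatorname{acl}(AB\alpha_2)$.

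For part (2), the plan is analogous but slightly longer. The first coordinate $\alpha_1\beta_1$ of $a\cdot b$ is handled by a single invocation of Lemma \ref{sum and product} applied to the independent generics $(\alpha_1,\beta_1)$. The second coordinate $\alpha_1\beta_2+\alpha_2$ is handled by applying Lemma \ref{sum and product} first to $(\alpha_1,\beta_2)$ to capture $s(\alpha_1\beta_2)$, and then to $(\alpha_1\beta_2,\alpha_2)$ to capture $s(\alpha_1\beta_2+\alpha_2)$. All independence conditions again follow by a short dimension count from the fact that $\alpha_1,\alpha_2,\beta_1,\beta_2$ are four independent generics over $AB$. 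Assembling the two coordinates gives $s(a\cdot b)\in\operatorname{acl}_{\mathcal{M}}(As(a)s(b))$.

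The main (and really only) obstacle is verifying at each step that the two elements we feed into Lemma \ref{sum and product} are independent generics of $\mathbb{C}$ over $AB$, since this is the hypothesis required for the standard representatives $s(\cdot)$ to be defined and for the lemma to apply. This is pure bookkeeping with $\dim$, and all the needed independence facts follow from the initial independence of $a,b,x$ over $AB$ together with the observation that a pair like $(\alpha_1 x,\alpha_2)$ or $(\alpha_1\beta_2,\alpha_2)$ never collapses in dimension since none of its entries is algebraic over $AB$ together with the other. No further geometric input beyond Lemma \ref{sum and product} is needed.
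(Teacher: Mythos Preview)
Your proof is correct and follows essentially the same approach as the paper. The only cosmetic difference is in part (2): the paper observes that the second coordinate $\alpha_1\beta_2+\alpha_2$ equals $a\cdot\beta_2$ and invokes part (1) directly, whereas you unpack this into two explicit applications of Lemma~\ref{sum and product}; the underlying computation is identical.
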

	\begin{proof} Let $a=(\alpha_1,\alpha_2)$ and $b=(\beta_1,\beta_2)$. So $a\cdot x=\alpha_1x+\beta_1$, and $a\cdot b=(\alpha_1\beta_1,\alpha_1\beta_2+\alpha_2)$.
		\begin{enumerate}
			\item Let $x'=\alpha_1x$. By Lemma \ref{sum and product}, $s(x')\in\operatorname{acl}_{\mathcal M}(As(\alpha_1)s(x))$. Again by Lemma \ref{sum and product}, $s(a\cdot x)\in\operatorname{acl}_{\mathcal M}(As(x')s(\beta_1))$. Combining these two assertions gives the desired result.
			\item By Lemma \ref{sum and product} applied to $\alpha_1\beta_1$ and (1) applied to $a\cdot\beta_2$.
		\end{enumerate}
	\end{proof}

	We now show:
	
	\begin{theorem}\label{field interpretation} Under the assumptions that $\dim M=1$ and $\mathcal M$ detects generic non-transversalities, $\mathcal M$ interprets an algebraically closed field.
	\end{theorem}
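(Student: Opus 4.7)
The plan is to apply Fact \ref{field con fact}: exhibit a field configuration in $\mathcal M$ over the parameter set $A$ of Convention \ref{fixed 5 points}, obtained by transporting via the map $s$ of Notation \ref{fixed representatives} the standard group configuration of $G = \operatorname{AGL}_1(\mathbb C)$ acting on $\mathbb C$ described in Remark \ref{field con ex}. Concretely, I would choose $a, b \in G$ and $x \in \mathbb C$ to be independent generics over $AB$, and set $c = a \cdot b$, $y = a \cdot x$, $z = c \cdot x = b \cdot y$. Viewing each of $a, b, c$ as a pair of generic elements of $\mathbb C$ over $AB$ and each of $x, y, z$ as a single such element, $s$ applies coordinate-wise to yield a tuple $(s(a), s(b), s(c), s(x), s(y), s(z))$ in $\mathcal M^{\textrm{eq}}$; my claim is that this is a field configuration over $A$ with $k = 2$.

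The algebraicity clause (4) of Definition \ref{field con def} falls out of Lemma \ref{coding agl}: applying part (2) to $(a, b)$ gives $s(c) \in \operatorname{acl}_{\mathcal M}(A s(a) s(b))$, and applying part (1) to $(a, x)$, $(b, y)$, and $(c, x)$ in turn gives $s(y) \in \operatorname{acl}_{\mathcal M}(A s(a) s(x))$ together with $s(z) \in \operatorname{acl}_{\mathcal M}(A s(b) s(y)) \cap \operatorname{acl}_{\mathcal M}(A s(c) s(x))$. For the rank and independence clauses (1)--(3), the key input is Lemma \ref{slopes exist}: for each generic $\alpha \in \mathbb C$ over $AB$, the element $s(\alpha)$ is $\mathcal K$-interalgebraic with $\alpha$ over $A$ (since $x_0, y_0 \in A$) and has $\rk_{\mathcal M}(s(\alpha)/A) = 1$; since $\dim M = 1$, this forces $s(\alpha)$ to be coherent over $A$ in the sense of Definition \ref{coherent}.

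From here, Lemma \ref{coherent preservation}(3) is the workhorse. Iterating it on coordinates, the full tuple $(s(a), s(b), s(c), s(x), s(y), s(z))$ is coherent over $A$, and any $\mathcal K$-independence over $A$ among its sub-tuples transfers to $\mathcal M$-independence. The required $\mathcal K$-independence statements --- pairwise independence of the six points over $A$, and full independence of $(a, b, x)$ over $A$ --- are exactly those of the standard $\operatorname{AGL}_1$-configuration, which reduce to routine dimension counts in $G$ and $\mathbb C$ and then transfer through the coordinate-wise interalgebraicity of $s(\xi)$ with $\xi$ over $A$. Additivity of $\mathcal M$-rank then yields $\rk_{\mathcal M}(s(\xi)/A) = 2$ for $\xi \in \{a, b, c\}$ and $\rk_{\mathcal M}(s(\xi)/A) = 1$ for $\xi \in \{x, y, z\}$, verifying Definition \ref{field con def} and allowing Fact \ref{field con fact} to produce an algebraically closed field interpretable in $\mathcal M$. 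The only real obstacle is bookkeeping: the parameter set $B$ (over which infinitesimal coordinate systems, and hence $s$ itself, are defined) lives in $\mathcal K$ but not in $\mathcal M^{\textrm{eq}}$, so the transition from $\mathcal K$-independence over $AB$ to $\mathcal M$-independence over $A$ must be executed carefully --- which is exactly what the coherence machinery of Section 3 is designed to do.
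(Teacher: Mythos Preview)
Your proposal is correct and matches the paper's proof: both transport the standard $\operatorname{AGL}_1(\mathbb C)$ configuration through $s$, derive clause (4) from Lemma \ref{coding agl}, and handle clauses (1)--(3) by the same dimension-versus-rank sandwich (the paper packages this as a single claim that $\rk(s(w)/A) = \dim(w/AB)$ for any subtuple $w$ whose entries are independent over $AB$, rather than routing through the coherence lemmas explicitly). One small correction: the interalgebraicity of $s(\alpha)$ with $\alpha$ from Lemma \ref{slopes exist}(5) is over $Bx_0y_0 \subset AB$, not over $A$ alone --- but as you note at the end, this is exactly the bookkeeping that the sandwich argument (lower bound from $\dim(s(w)/A) \geq \dim(s(w)/AB)$, upper bound from $\rk(s(w)/A) \leq j$) absorbs.
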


	\begin{proof} Let $(a,b,c,x,y,z)$ be a standard field configuration in $\mathbb C$ over $AB$. We will show that $(s(a),s(b),s(c),s(x),s(y),s(z))$ forms a field configuration in $\mathcal M$ over $A$. To do this, first note that each clause in (4) of Definition \ref{field con def} follows from Lemma \ref{coding agl}. So it remains to show (1), (2), and (3). But each of these is implied by the following claim:
	\begin{claim} Let $w$ be a subtuple of $(a,b,c,x,y,z)$ whose elements are independent over $AB$. Then $\rk(s(w)/A)=\dim(w/AB)$.
	\end{claim} 
	\begin{proof}
		Let $\dim(w/AB)=j$. Note by independence and the construction of the tuple $(a,b,c,x,y,z)$ that $w$ is a tuple of $j$ independent generics in $\mathbb C$ over $AB$. Now by Lemma \ref{slopes exist}, $s(w)$ and $w$ are interalgebraic over $AB$ -- so we have $\dim(s(w)/AB)=j$, thus $\dim(s(w)/A)\geq j$, and thus by Corollary \ref{dim rk inequality} $\rk(s(w)/A)\geq j$. On the other hand, $s(w)$ is a tuple of $j$ canonical bases of standard representatives at $(x_0,y_0)$, each of which has rank $\leq 1$ over $A$ by Lemma \ref{slopes exist}. So in fact $\rk(s(w)/A)=j$, as desired.
	\end{proof}
	So we have indeed found a field configuration in $\mathcal M$ over $A$. Then we are done by Fact \ref{field con fact}.
	\end{proof}
		
	\section{On Closures of $\mathcal M$-Definable Sets}
	We now drop all of the assumptions gathered over the course of the previous section, reverting instead to only Assumption \ref{M and K}. So we allow the case $\dim M>1$, and we do not assume detection of generic non-transversalities. Our task in the final two sections will be to show without any such assumptions that $\mathcal M$ \textit{does} detect generic non-transversalities, and subsequently deduce the main theorem. The present section contains the proof of the main technical tool -- our bound on closure points of $\mathcal M$-definable sets. Once this is done, the proof that $\mathcal M$ detects generic non-transversalities will be quite short.
	
	In order to motivate our work in this section, let us briefly explain a strategy that did not work for detecting generic non-transversalities. Our general goal is to bound the frontiers of $\mathcal M$-definable sets, by showing that $\mathcal M$ can `detect' frontier points. Among the most ideal precise statements of this form, for example, we might single out the following:
	
	\begin{definition}\label{detecting frontier} Say that $\mathcal M$ \textit{detects frontiers} if whenever $X$ is $\mathcal M$-definable over a set $A$, and $x\in\operatorname{Fr}(X)$, then $\rk(x/A)<\rk(X)$.
		\end{definition}
	
	We remark that results of this type have appeared in similar settings when $\mathcal M$ expands a group and $X$ is a plane curve -- see for example \cite{PetStaACF}, \cite{HK}, and \cite{EHP}. Now the following lemma serves as the motivation for the entire rest of the paper:
	
	\begin{lemma}\label{ideal scenario} If $\mathcal M$ detects frontiers, then $\mathcal M$ detects generic non-transversalities.
	\end{lemma}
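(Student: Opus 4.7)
The plan is to apply Lemma \ref{detecting tangency equivalence} to reduce to its condition (3): given standard families $\mathcal C = \{C_t : t \in T\}$ and $\mathcal D = \{D_u : u \in U\}$ of ranks $r_\mathcal C$ and $r_\mathcal D$, and a generic $(\mathcal C, \mathcal D)$-non-injectivity $(x, t, u)$, I need to show that $t$ and $u$ are $\mathcal M$-dependent over $\emptyset$. Suppose for contradiction they are $\mathcal M$-independent, so $\rk(t, u) = r_\mathcal C + r_\mathcal D$. The natural object on which to test the frontier-detection hypothesis is the $\mathcal M$-definable set
\[ Y = \{(x_1, x_2, t', u') \in M^4 \times T \times U : x_1 \neq x_2, \; x_1, x_2 \in C_{t'} \cap D_{u'}\}, \]
since by Lemma \ref{non-injective characterization}(1) the non-injectivity hypothesis translates to $(x, x, t, u) \in \operatorname{Fr}(Y)$.

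Unfortunately $\rk(Y)$ can exceed $r_\mathcal C + r_\mathcal D$, because of components sitting over pairs $(t', u')$ where $C_{t'} \cap D_{u'}$ is infinite, so I first replace $Y$ by a suitable subset. Let $R_1 = \{(t', u') \in T \times U : \rk(C_{t'} \cap D_{u'}) = 1\}$, which is $\mathcal M$-definable by Fact \ref{rank exists}(3). Lemma \ref{almost faithful facts}(1) applied to each of $\mathcal C$ and $\mathcal D$ shows that both projections $R_1 \to T$ and $R_1 \to U$ have finite fibers, giving $\rk(R_1) \leq \min(r_\mathcal C, r_\mathcal D) < r_\mathcal C + r_\mathcal D$. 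The independence assumption rules out $(t, u) \in R_1$, and a second application of the frontier-detection hypothesis, this time to $R_1$ itself, rules out $(t, u) \in \operatorname{Fr}(R_1)$; hence $(t, u) \notin \overline{R_1}$, and some relatively Zariski-open neighborhood $V \subset T \times U$ of $(t, u)$ is disjoint from $R_1$.

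Now let $Y' = Y \cap \{(x_1, x_2, t', u') : (t', u') \notin R_1\}$, which is $\mathcal M$-definable, and each fiber of $Y' \to T \times U$ is finite, so $\rk(Y') \leq r_\mathcal C + r_\mathcal D$. Any analytic sequence in $Y$ approaching $(x, x, t, u)$ eventually has its $(t', u')$-coordinate in $V$, so it lies in $Y'$ from some point on, giving $(x, x, t, u) \in \operatorname{Fr}(Y')$. Applying frontier detection to $Y'$ then yields $\rk(x, t, u) < r_\mathcal C + r_\mathcal D$, which contradicts $\rk(x, t, u) \geq \rk(t, u) = r_\mathcal C + r_\mathcal D$. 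The delicate step is the passage from $\operatorname{Fr}(Y)$ to $\operatorname{Fr}(Y')$: the direct rank bound on $Y$ is genuinely too weak in general, and the required Zariski-open avoidance of $R_1$ near $(t, u)$ is exactly what the secondary use of the frontier-detection hypothesis on $R_1$ provides.
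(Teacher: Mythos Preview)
Your proof is correct and follows essentially the same approach as the paper. The paper's argument is organized as a direct case split on whether $(\hat t,\hat u)\in\overline Z$ (your $R_1$), while you argue by contradiction from the assumed independence of $t$ and $u$; but the two are logically equivalent reorganizations of the same idea: use frontier detection once on $Z$ to dispose of the case $(\hat t,\hat u)\in\overline Z$, then restrict to the finite-fiber locus (your $Y'$, the paper's $P$) and apply frontier detection a second time to the diagonal frontier point $(\hat x,\hat x,\hat t,\hat u)$.
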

	\begin{proof} We use (3) of Lemma \ref{detecting tangency equivalence}. So, let $\mathcal C=\{C_t:t\in T\}$ and $\mathcal D=\{D_u:u\in U\}$ be standard families of plane curves, and let $(\hat x,\hat t,\hat u)$ be a generic $(\mathcal C,\mathcal D)$-non-injectivity. By Lemma \ref{detecting tangency equivalence}, it suffices to show that $\hat t$ and $\hat u$ are $\mathcal M$-dependent over $\emptyset$ -- or equivalently that $\rk(\hat t,\hat u)<\rk(T\times U)$. Let $I\subset M^2\times T\times U$ be the graph of the family of intersections $\mathcal I_{\mathcal C,\mathcal D}$, so that by assumption $(\hat x,\hat t,\hat u)$ is non-injective in the projection $I\rightarrow T\times U$. Also let $Z\subset T\times U$ be the set of $(t,u)$ such that $C_t\cap D_u$ is infinite, and note by almost faithfulness that the projection $Z\rightarrow T$ is finite-to-one. Now we consider two cases:
		\begin{itemize}
			\item First suppose $(\hat t,\hat u)\in\overline Z$. Then assuming $\mathcal M$ detects frontiers (and since either $(\hat t,\hat u)\in Z$ or $(\hat t,\hat u)\in\operatorname{Fr}(Z)$), we get that $\rk(\hat t,\hat u)\leq\rk(Z)$. Moreover, since the projection $Z\rightarrow T$ is finite-to-one we have $\rk(Z)\leq\rk(T)$, and since $\mathcal D$ is a standard family (i.e. of rank $\geq 1$) we have $\rk(T)<\rk(T\times U)$. Thus, combined, we get $$\rk(\hat t,\hat u)\leq\rk(Z)\leq\rk(T)<\rk(T\times U),$$ as desired.
			\item Now assume $\rk(\hat t,\hat u)\notin\overline Z$. Let $I'$ be the set of $(x,t,u)\in I$ such that $(t,u)\notin Z$. Then by assumption it follows that $(\hat x,\hat t,\hat u)$ is still non-injective in the projection $I'\rightarrow T\times U$. Let $P$ be the set of $(x,x',t,u)$ such that $x\neq x'$ and $(x,t,u),(x',t,u)\in I'$. So by non-injectivity we have $(\hat x,\hat x,\hat t,\hat u)\in\operatorname{Fr}(P)$, which by assumption gives that $\rk(\hat x,\hat x,\hat t,\hat u)<\rk(P)$. On the other hand, by the choice of $I'$ the projection $P\rightarrow T\times U$ is finite-to-one, which shows that $\rk(P)\leq\rk(T\times U)$, and thus $\rk(\hat x,\hat x,\hat t,\hat u)<\rk(T\times U)$. In particular $\rk(\hat t,\hat u)<\rk(T\times U)$, again as desired.		
		\end{itemize}
	\end{proof}

	So, a reasonable strategy would be to try to adapt and generalize the results of \cite{PetStaACF}, \cite{HK}, and \cite{EHP} to arbitrary $\mathcal M$-definable sets, and apply Lemma \ref{ideal scenario}. Unfortunately, though, the full statement of Definition \ref{detecting frontier} seems too far out of reach -- and indeed doesn't quite make sense in complete generality, for reasons we will explain below. 
	
	The ultimately successful strategy, then, is the identification of a weaker form of Definition \ref{detecting frontier}, which both can be proven in full generality and is strong enough to admit an adaptation of Lemma \ref{ideal scenario}. We turn now toward stating and discussing this result. We need the following two definitions:

	\begin{definition}\label{coordinate-wise generic} Let $x=(x_1,...,x_n)\in M^n$ be a tuple. We say that $x$ is \textit{coordinate-wise generic} if each $x_i$ is generic in $M$ over $\emptyset$.
	\end{definition}
	So coordinate-wise genericity is weaker than genericity, because we do not require the coordinates to be independent. Importantly, coordinate-wise genericity should always be interpreted over $\emptyset$, even if there is a natural ambient parameter set around.
	
	\begin{definition}\label{independent projections} Let $X\subset M^n$ be non-empty and $\mathcal M$-definable over $A$, and let $\pi_i,\pi_j:M^n\rightarrow M$ be projections. Then $\pi_i$ and $\pi_j$ are \textit{independent on} $X$ if for all $\mathcal M$-generic $x\in X$ over $A$, the elements $\pi_i(x)$ and $\pi_j(x)$ are $\mathcal M$-independent over $A$.
	\end{definition}
	\begin{remark} It is easy to see that Definition \ref{independent projections} does not depend on the parameter set $A$. It is also easy to see that, if we made different versions of the definition using genericity and independence in the sense of $\mathcal K$ and $\mathcal M$, the resulting notions would be equivalent. Thus we do not use clarifying prefixes to indicate the choice of structure and parameter set.
	\end{remark}
	Now our result is:
		\begin{proposition}\label{closure prop} Let $X\subset M^n$ be $\mathcal M$-definable over $A$ and of rank $r\geq 0$, and let $x=(x_1,...,x_n)\in\overline X$ be a coordinate-wise generic point. Then $\operatorname{rk}(x/A)\leq r$. Moreover, one of the following holds:
		\begin{enumerate}
			\item $\operatorname{rk}(x/A)<r$.
			\item For all $i\neq j$ such that the projections $\pi_i,\pi_j:M^n\rightarrow M$ are independent on $X$, the elements $x_i$ and $x_j$ are $\mathcal M$-independent over $A$.
		\end{enumerate}
		\end{proposition}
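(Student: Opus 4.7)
The plan is induction on the rank $r$ of $X$. The base case $r=0$ is trivial: $\overline{X}=X$ is then finite, so $x\in X$ and $\rk(x/A)=0=r$, while condition (2) holds vacuously since no two projections can be independent on a finite set.

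For the inductive step, the bound $\rk(x/A)\le r$ is not automatic, since $\overline X$ itself need not be $\mathcal M$-definable. My approach would be to exhibit an $\mathcal M$-definable set over $A$ of rank at most $r$ containing $x$ by combining a finite-to-one $\mathcal M$-definable coordinate projection $X\to M^r$ (Fact \ref{rank exists}(5)) with the analytic openness at generic points of $X$ given by Corollary \ref{generic openness}, leveraging the coordinate-wise genericity of $x$ to lift information back from the image. I would not be surprised if this part requires its own small induction or a separate saturation/compactness argument.

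The heart of the proof is the dichotomy. Assume $\rk(x/A)=r$ and suppose for contradiction there exist $i\neq j$ with $\pi_i,\pi_j$ independent on $X$ while $(x_i,x_j)$ are $\mathcal M$-dependent over $A$; without loss of generality $(i,j)=(1,2)$. Pick an $\mathcal M$-definable set $D\subset M^2$ over $A$ of rank at most $1$ containing $(x_1,x_2)$, and set $X'=X\cap\pi_{12}^{-1}(D)$. Since $\pi_{12}|_X$ is almost surjective onto $M^2$ (equivalently, generic fibres have rank $r-2$ by the independence of $\pi_1,\pi_2$), a careful rank count --- splitting $X$ into the locus of expected fibre rank and its complement --- yields $\rk(X')\le r-1$. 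If I can show $x\in\overline{X'}$, then applying the inductive hypothesis to $X'$ gives $\rk(x/A)\le r-1$, contradicting our assumption $\rk(x/A)=r$.

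To establish $x\in\overline{X'}$ I would argue analytically: take a sequence $(x^{(k)})\subset X$ with $x^{(k)}\to x$, and approximate $(x_1,x_2)\in D$ by a sequence $(y^{(k)},z^{(k)})\in D$ converging to $(x_1,x_2)$. Provided $\pi_{12}|_X$ is analytically open at sufficiently many $x^{(k)}$ --- which Proposition \ref{smooth fiber product}(4) and Corollary \ref{generic openness} should deliver at $\mathcal K$-generic points of $X$, dense near $x$ by coordinate-wise genericity --- one lifts each $(y^{(k)},z^{(k)})$ to a nearby $\tilde x^{(k)}\in X$ with $\pi_{12}(\tilde x^{(k)})\in D$, yielding a sequence in $X'$ converging to $x$. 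The principal obstacle, explicitly flagged in the introduction, is that semi-indistinguishable points (Definition \ref{indistinguishable}) can obstruct the lifting by causing many points of $X$ to share the same $\pi_{12}$-projection degenerately; in a group context one translates such degeneracies away, but without a group considerable care is required, which I expect forms the bulk of the technical work (handled via the separate Propositions \ref{frontier proof} and \ref{generic non-trivial hypersurfaces} mentioned in the introduction).
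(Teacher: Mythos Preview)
Your inductive framework and base case match the paper, but your approach to the inductive step diverges substantially, and the core analytic claim has a genuine gap.

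For the dichotomy, you propose intersecting $X$ with $\pi_{12}^{-1}(D)$ for a rank-one set $D\ni(x_1,x_2)$ and showing $x\in\overline{X'}$ via analytic openness of $\pi_{12}|_X$ at points $x^{(k)}\to x$. The problem is that the radius of openness at $x^{(k)}$ may shrink to zero as $x^{(k)}\to x$: you need $(x_1,x_2)$ (or nearby $D$-points) to lie in the image of a small neighborhood of $x^{(k)}$, but as $x^{(k)}\to x\in\operatorname{Fr}(X)$ nothing controls this. Moreover, the locus where $\pi_{12}|_X$ is a smooth submersion is only $\mathcal K$-definable, not $\mathcal M$-definable, so you cannot peel off its complement with the inductive hypothesis. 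You attribute the difficulty to semi-indistinguishable points, but that is a secondary obstruction; the primary one is this lack of uniform openness near the frontier, and Propositions \ref{frontier proof} and \ref{generic non-trivial hypersurfaces} do not address it within your strategy---they implement a different strategy altogether.

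The paper's route is structurally different. After reducing to stationary non-trivial $r$-hypersurfaces (Propositions \ref{m>r general}, \ref{m>r}, \ref{stationary hypersurfaces}), it uses a ``translation'' by generic plane curves (Proposition \ref{stationary non-trivial hypersurfaces}) to arrange that $\hat x$ is genuinely generic in $M^{r+1}$ and $X=H_{\hat t}$ is a generic member of a very large family. The core argument (Proposition \ref{frontier proof}) then intersects $H_{\hat t}$ not with a preimage from $M^2$ but with rank-one curves $D_u\subset M^{r+1}$ built from an excellent family, obtaining \emph{finite} sets $H_{\hat t}\cap D_u$. A delicate rank computation (this is where semi-indistinguishability actually enters) shows every intersection point $w\in H_{\hat t}\cap D_{\hat u}$ is itself generic, so $I\to T\times U_{\hat p}$ is analytically open at each $(w,\hat t,\hat u)$ by Proposition \ref{smooth fiber product}(4). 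Combined with openness at the genuinely generic $(\hat x,\hat u)$, this forces $\hat u$ into the closure of an $\mathcal M$-definable ``wrong intersection count'' set $Z\subset U_{\hat p}$ of rank $<r$, to which the inductive hypothesis applies. All openness is invoked only at generic points, sidestepping the shrinking-radius problem entirely.

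Your sketch for the bound $\rk(x/A)\le r$ is also too vague: a finite-to-one projection $\pi:X\to M^r$ does not bound $\rk(x/A)$ directly, since $x\notin X$ and the finite-to-one property says nothing about frontier points. In the paper this bound is not proved separately but emerges from the same hypersurface analysis via projections to $M^{r+1}$ (Lemma \ref{rk x in projection}).
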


	That is, we only consider coordinate-wise generic points; we include the possibility that $x\in X$; we in general replace the strict inequality of Definition \ref{detecting frontier} with a weak inequality; and most importantly, the inequality can be made strict under certain conditions, relating to which pairs of coordinates in the specified point are $\mathcal M$-independent. For intuition, the reader could think of this independence condition as saying that any full-rank frontier points of $X$ (i.e. those where $\rk(x/A)=\rk(X)$) must `resemble' generic elements of $X$ in a precise sense -- namely, $x$ must look identical to a generic point of $X$ at the level of pairwise independence of its coordinates.
	
	As it turns out, the dichotomy of (1) and (2) in the statement of Proposition \ref{closure prop} will be perfect for adapting Lemma \ref{ideal scenario}. Indeed, the frontier point $(\hat x,\hat x,\hat a,\hat b)\in\operatorname{Fr}(P)$ considered in the proof of the lemma has two pairs of equal (thus $\mathcal M$-dependent) coordinates, which after a small amount of work will let us rule out (2), and thereby reduce to the ideal scenario of (1). We will then be able to conclude that $\mathcal M$ detects generic non-transversalities exactly as in Lemma \ref{ideal scenario}.
	
	Before proceeding, let us also explain the necessity of restricting to coordinate-wise generic points. The issue is that any constructible set can be given many different Zariski topologies, based on different embeddings into projective space. For some sets (say an irreducible variety, for example) there is a canonical `correct' topology, but in general there is no clear way to choose which topology we want. Moreover, it seems that any reasonable treatment of frontier points would not prioritize one such topology over another, at least for arbitrary constructible sets. Thus, in a sense, the frontier we are after is not even well-defined; this is why we stated above that Definition \ref{detecting frontier} does not truly make sense in full generality. Instead, we have chosen to work exclusively with the affine topology (which is often not the canonical one), but only considering coordinate-wise generic points. Indeed, even though there may be several topologies on a single constructible set, any two such topologies will agree upon restricting to a large constructible subset. Thus, if we work only with points built from generic coordinates in $M$, the topology is well-defined on all powers of $M$.
	
	\subsection{Overview and Inductive Assumption}
	Let us now outline the proof of Proposition \ref{closure prop}. On a global scale, the proof will be done by induction on $r$. It will be trivial to verify the case $r=0$; the vast majority of the work, then, will really be an inductive step. Due to its length, we have chosen to organize the inductive step into several distinct propositions, with a blanket inductive hypothesis throughout. So, for now, we do the following:
	
	\begin{assumption}\label{inductive assumption}
		Until stated otherwise, we assume $r\geq 1$ and the statement of Proposition \ref{closure prop} holds for all sets of rank $<r$.
	\end{assumption} 
	
	We now outline the inductive step:
	
	\begin{enumerate}
		\item Since $X\subset M^n$, we have $n\geq r$. It will be trivial to verify the case $n=r$, so we assume $n\geq r+1$.
		\item First we assume each of the following:
		\begin{enumerate}
			\item $X$ is a stationary `hypersurface' -- that is, $X$ is stationary and $n=r+1$.
			\item $X$ is a generic member of a very large almost faithful family.
			\item Each projection $X\rightarrow M^r$ is almost finite-to-one (in our terminology, $X$ is \textit{non-trivial}).
			\item The point $x$ is generic in $M^{r+1}$ over $\emptyset$ (not just coordinate-wise).
		\end{enumerate} 
		In this case, we are able to adapt the geometric argument of \cite{PetStaACF}, \cite{HK}, and \cite{EHP} for plane curves in strongly minimal groups. The idea is to study the intersections of $X$ with the members of a carefully chosen family of `curves' (rank one sets) in $M^{r+1}$. We will see that, under certain assumptions, those curves containing $x$ have fewer intersections with $X$ than do other curves -- a phenomenon enabling the $\mathcal M$-definable recognition of $x$.
		\item Next, again following previous authors, we extend (2) to the statement of Proposition \ref{closure prop} in the general case that $X$ is a non-trivial stationary hypersurface (in the sense described above). This step involves using an abstract version of `translation' to transform the setup into (2) above.
		\item We then consider `trivial' stationary hypersurfaces $X$ -- i.e. those with a projection $X\rightarrow M^r$ that is not almost finite-to-one. Such a projection allows us to reduce to the case of $r-1$-hypersurfaces, and subsequently apply the inductive hypothesis for $r-1$. Note that this step, in combination with (3), will cover all stationary $r$-hypersurfaces.
		\item We next extend to stationary sets in the case $n\geq r+2$. Here the argument proceeds by studying all projections of $X$ to $M^{r+1}$, and applying (4).
		\item Finally, we extend to sets which might not be stationary, by breaking each one into finitely many stationary components and applying (5).
	\end{enumerate}
	
	We now proceed with the inductive step.

	\subsection{Families of Hypersurfaces}
		The purpose of this subsection is to introduce the setting in which we will be able to adapt the more classical geometric argument of \cite{PetStaACF}, \cite{HK}, and \cite{EHP}; Namely, we develop a brief theory of families of `hypersurfaces'. We will then first show Proposition \ref{closure prop} for generic members of very large such families.
		
		\begin{definition} An $r$-hypersurface is an $\mathcal M$-definable subset of $M^{r+1}$ of rank $r$.
		\end{definition}
	
		\begin{convention} It will make our notation a bit more natural and elegant to express the coordinates of a point $x\in M^{r+1}$ using the indices 0 through $r$, rather than 1 through $r+1$. We will begin doing this now.
		\end{convention}
		
		So, for example, a plane curve is precisely a 1-hypersurface. Our next goal is to find an analog of non-triviality for hypersurfaces:.
		
		\begin{lemma}\label{non-trivial hypersurface fact} Let $X\subset M^{r+1}$ be an $r$-hypersurface, $\mathcal M$-definable over a set $A$. Then the following are equivalent: 
			\begin{enumerate}
				\item Each of the $r+1$ projections $X\rightarrow M^r$ is almost finite-to-one.
				\item For all $\mathcal M$-generic $x=(x_0,...,x_r)\in X$ over $A$, any $r$ of the coordinates $\{x_0,...,x_r\}$ have rank $r$ over $A$. 
			\end{enumerate}
		\end{lemma}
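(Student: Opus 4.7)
The plan is to translate both conditions into statements about the ranks of individual coordinates of an $\mathcal M$-generic point, and then observe that each condition reduces to the same identity via additivity of rank.

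First I would introduce the convenient notation $y_i = (x_0,\ldots,\widehat{x_i},\ldots,x_r)$ for the tuple obtained from $x$ by omitting the $i$-th coordinate, so that $\pi_i(x) = y_i$, where $\pi_i \colon M^{r+1} \to M^r$ is the projection dropping the $i$-th coordinate. Fix an $\mathcal M$-generic $x \in X$ over $A$; by strong minimality and the fact that $\rk(X) = r$ we have $\rk(x/A) = r$. Applying Fact \ref{ranks of tuples facts} (additivity of rank) gives
\[
r \;=\; \rk(x/A) \;=\; \rk(y_i / A) \;+\; \rk(x_i / Ay_i).
\]
Since $y_i$ is an $r$-tuple of elements of $M$, we always have $\rk(y_i/A) \leq r$, and both summands are nonnegative integers; hence $\rk(y_i/A) = r$ if and only if $\rk(x_i/Ay_i) = 0$, i.e.\ if and only if $x_i \in \operatorname{acl}_{\mathcal M}(A y_i)$.

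Next I would use Lemma \ref{generic point characterization of function properties}(3) to recognize that the latter condition is precisely the statement that $\pi_i \colon X \to M^r$ is almost finite-to-one: indeed, $\pi_i$ is almost finite-to-one exactly when every $\mathcal M$-generic $x \in X$ over $A$ satisfies $x \in \operatorname{acl}_{\mathcal M}(A\pi_i(x)) = \operatorname{acl}_{\mathcal M}(Ay_i)$, which since the other coordinates are trivially in $\operatorname{acl}_{\mathcal M}(Ay_i)$ reduces to $x_i \in \operatorname{acl}_{\mathcal M}(Ay_i)$. Combining this with the display above, the statement ``$\pi_i$ is almost finite-to-one'' is equivalent, for each fixed $i$, to ``$\rk(y_i/A) = r$ for every $\mathcal M$-generic $x$.'' Taking the conjunction over all $i \in \{0,\ldots,r\}$ yields the equivalence of (1) and (2).

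There isn't really a main obstacle here; the lemma is essentially an unwinding of definitions once one has additivity of rank and the generic-point characterization of almost finite-to-one-ness in hand. The only small subtlety is verifying that both (1) and (2) are genuinely independent of the choice of parameter set $A$ over which $X$ is $\mathcal M$-definable, so that the two formulations are comparable: both notions are preserved under enlarging $A$ (almost finite-to-one-ness is intrinsic to the function, and $\mathcal M$-genericity together with $\operatorname{acl}_{\mathcal M}$ are monotone in the parameters), so the equivalence at any one choice of $A$ suffices.
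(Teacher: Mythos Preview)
Your proof is correct and is essentially the same approach as the paper's, which simply says ``Easy by the definition of almost finite-to-one.'' You have supplied the details: the equivalence unwinds via additivity of rank and the generic-point characterization of almost finite-to-one maps (Lemma \ref{generic point characterization of function properties}(3)), exactly as the paper implicitly intends. The final paragraph on independence of the parameter set $A$ is unnecessary, since the statement is for a fixed $A$, but this does not affect correctness.
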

		\begin{proof} Easy by the definition of almost finite-to-one.
			\end{proof}
	
		\begin{definition}\label{non-trivial hypersurace} We call $r$-hypersurfaces satisfying (1) and (2) of Lemma \ref{non-trivial hypersurface fact} \textit{non-trivial}.
		\end{definition}
		
		Note that non-triviality is $\mathcal M$-definable in families, by (1) in the definition. Thus it makes sense to consider families of non-trivial $r$-hypersurfaces. In particular, if $X$ is a stationary non-trivial hypersurface then Lemmas \ref{almost faithful families exist} and \ref{coherent version of almost faithful family existence} apply to $X$, and we can assume that each hypersurface in the resulting family is non-trivial.
		
		Finally, we discuss common points in families of hypersurfaces:
		
		\begin{notation} Let $\mathcal H=\{H_t:t\in T\}$ be an $\mathcal M$-definable family of subsets of $M^{r+1}$. For $x\in M^{r+1}$, we set ${_xH}=\{t\in T:x\in H_t\}$.
		\end{notation}
		
		\begin{definition}\label{hypersurface common point} Let $\mathcal H=\{H_t:t\in T\}$ be an $\mathcal M$-definable almost faithful family of $r$-hypersurfaces. We say that $x\in M^{r+1}$ is $\mathcal H$-common if $\rk({_xH})=\rk(T)$.
		\end{definition}
		
		Note that the set of $\mathcal H$-common points is $\mathcal M$-definable over the same parameters that define $\mathcal H$. Now the main point we need is the following (note that (1) below is a generalization of Lemma \ref{finitely many common points}(1)):
		\begin{lemma}\label{hypersurfaces finitely many common points} Let $\mathcal H=\{H_t:t\in T\}$ be an $\mathcal M$-definable almost faithful family of $r$-hypersurfaces, where $\rk(T)\geq 1$.
			\begin{enumerate}
				\item The set of $\mathcal H$-common points has rank at most $r-1$.
				\item Letting $H\subset M^{r+1}\times T$ be the graph of $\mathcal H$, the projection $H\rightarrow M^{r+1}$ is generically dominant.
			\end{enumerate}
		\end{lemma}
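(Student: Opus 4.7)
The plan is to prove (1) by a rank computation leveraging almost faithfulness, then deduce (2) as a short corollary. As a preliminary I would note that $\rk(H) = r + \rk(T)$: for $t$ $\mathcal M$-generic in $T$ over a defining set $A$ and $x$ $\mathcal M$-generic in $H_t$ over $At$, additivity gives $\rk(x,t/A) = r + \rk(T)$, while any $(x,t) \in H$ is bounded above by the same value since $\rk(t/A) \leq \rk(T)$ and $\rk(x/At) \leq \rk(H_t) = r$.

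For (1), I would argue by contradiction: suppose the set $C$ of $\mathcal H$-common points has $\rk(C) \geq r$. Pick $x$ $\mathcal M$-generic in $C$ over $A$; by the definition of common, $\rk(_xH) = \rk(T)$, so I can pick $t$ $\mathcal M$-generic in $_xH$ over $Ax$ and $t'$ $\mathcal M$-generic in $_xH$ over $Axt$, giving $\rk(x,t,t'/A) \geq r + 2\rk(T)$. I then split on $\rk(H_t \cap H_{t'})$. If this rank is at most $r-1$, then $x \in H_t \cap H_{t'}$ forces $\rk(x/Att') \leq r-1$, which by additivity yields $\rk(t,t'/A) \geq 2\rk(T)+1$, contradicting $\rk(t,t'/A) \leq \rk(T^2) = 2\rk(T)$. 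If instead $\rk(H_t \cap H_{t'}) = r$, almost faithfulness of $\mathcal H$ places $t'$ in $\operatorname{acl}_{\mathcal M}(At)$, forcing $\rk(t'/Axt) = 0$ and contradicting $\rk(t'/Axt) = \rk(T) \geq 1$.

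For (2), I take $(x,t) \in H$ to be $\mathcal M$-generic over $A$, so $\rk(x,t/A) = r+\rk(T)$ by the preliminary. If $\rk(x/A) \leq r$, additivity gives $\rk(t/Ax) \geq \rk(T)$; together with $\rk(t/Ax) \leq \rk(_xH) \leq \rk(T)$ this forces $\rk(_xH) = \rk(T)$, i.e.\ $x$ is $\mathcal H$-common, so by (1) $\rk(x/A) \leq r-1$, whence $\rk(t/Ax) \geq \rk(T)+1$, absurd. Therefore $\rk(x/A) = r+1$, so $x$ is $\mathcal M$-generic in $M^{r+1}$ over $A$; by Lemmas \ref{generic point characterization of function properties}(4) and \ref{generic dominance no prefix}, this is exactly generic dominance of $H \to M^{r+1}$.

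The only delicate point is the case split in (1), but both branches are routine rank bookkeeping: the small-intersection branch saturates $\rk(T^2)$, while the large-intersection branch uses almost faithfulness directly to contradict the $\mathcal M$-genericity of $t'$ in $_xH$ over $Axt$. No geometric input beyond additivity of rank and the definition of almost faithfulness is needed.
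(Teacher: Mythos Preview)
Your proof is correct and follows essentially the same approach as the paper. The paper's version is marginally more streamlined: in (1) it argues directly (not by contradiction) and avoids the explicit case split by observing that since $t,t'$ are $\mathcal M$-independent generics in $T$, almost faithfulness immediately forces $\rk(H_t\cap H_{t'})\leq r-1$; and in (2) it first notes $\rk(x/A)\geq\rk(x/At)=r$ to rule out commonness of $x$, then concludes $\rk(t/Ax)\leq\rk(T)-1$ and hence $\rk(x/A)\geq r+1$ directly, rather than deriving a contradiction from $\rk(x/A)\leq r$.
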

		\begin{proof} For ease of notation we assume that $\mathcal H$ is $\mathcal M$-definable over $\emptyset$.
			\begin{enumerate}
				\item Let $x$ be $\mathcal H$-common. It will suffice to show that $\rk(x)\leq r-1$. Now let $t$ and $u$ be $\mathcal M$-independent $\mathcal M$-generics in ${_xH}$ over $x$. Then by commonness, $\rk(tu/x)=2\cdot\rk(T)$. Thus $t$ and $u$ are also $\mathcal M$-independent $\mathcal M$-generics in $T$ over $\emptyset$, which shows that $x$ and $tu$ are $\mathcal M$-independent over $\emptyset$. In particular, it will suffice to show that $\rk(x/tu)\leq r-1$. But by the aforementioned independence of $t$ and $u$ we have $\rk(u/t)=\rk(T)\geq 1$ -- so by almost faithfulness we have $\rk(H_t\cap H_u)\leq r-1$, thus $\rk(x/tu)\leq r-1$.
				\item Let $(x,t)\in H$ be $\mathcal M$-generic. Clearly this implies that $t$ is $\mathcal M$-generic in $T$ and $x$ is $\mathcal M$-generic in $H_t$ over $t$. In particular we have $\rk(x)\geq\rk(x/t)=r$, which by (1) shows that $x$ is not $\mathcal H$-common. Thus $\rk(t/x)\leq\rk(T)-1$. On the other hand we have $\rk(xt)=\rk(T)+r$ by definition, so by additivity we get $\rk(x)\geq r+1$. Thus $x$ is $\mathcal M$-generic in $M^{r+1}$, which proves (2).
			\end{enumerate}
		\end{proof}		
	\subsection{The Main Geometric Argument}
	
	We are now ready to begin the proof of Proposition \ref{closure prop}. The main content of the proposition is Proposition \ref{frontier proof} below; the proof will be quite long and complicated, so we will try to explain things as we go. 
	
	\begin{proposition}\label{frontier proof} Let $\mathcal H=\{H_t:t\in T\}$ be an almost faithful family of non-trivial $r$-hypersurfaces of rank $k>(r+1)\cdot\dim M$, and let $\mathcal C=\{C_s:s\in S\}$ be an excellent family of plane curves. Assume that each of $\mathcal H$ and $\mathcal C$ is $\mathcal M$-definable over a set $A$. Let $\hat t\in T$ and $\hat x=(\hat x_0,...\hat x_r)\in M^{r+t}$ each be generic over $A$, and assume that $\hat x\in\overline{H_{\hat t}}$. Then at least one of the following three things happens:
		\begin{enumerate}
			\item $\rk(\hat x/A\hat t)<r$.
			\item There is some $x\in H_{\hat t}$ such that for each $i=1,...,r$ the pairs $(\hat x_0,\hat x_i)$ and $(x_0,x_i)$ are semi-indistinguishable in $\mathcal C$.
			\item There is some $i\geq 2$ such that $\hat x_i\in\operatorname{acl}_{\mathcal M}(A\hat t\hat x_0...\hat x_{i-1})$.
		\end{enumerate}
	\end{proposition}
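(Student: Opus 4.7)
The plan is to adapt the ``counting intersections near $\hat x$'' strategy of \cite{PetStaACF}, \cite{HK}, \cite{EHP} by using the family $\mathcal C$ to cut $H_{\hat t}$ down to finitely many points. Specifically, for each $r$-tuple $\bar s=(s_1,\dots,s_r)\in S^r$, I would define
\[
K_{\bar s}=\{(x_0,x_1,\dots,x_r)\in M^{r+1}:(x_0,x_i)\in C_{s_i}\text{ for each }1\leq i\leq r\}.
\]
Since each $C_{s_i}$ is non-trivial, $K_{\bar s}$ is generically a rank $1$ subset of $M^{r+1}$. I would then choose $\hat s_i\in{_{(\hat x_0,\hat x_i)}C}$ generic over $A\hat t\hat x$, so that the curve $K_{\hat{\bar s}}$ passes through $\hat x$, and form the intersection family
\[
J=\{(x,t,\bar s)\in M^{r+1}\times T\times S^r:x\in H_t\cap K_{\bar s}\}.
\]
A rank count using the non-triviality of $\mathcal H$, the excellence of $\mathcal C$, and the hypothesis $k>(r+1)\dim M$ should show that the projection $J\to T\times S^r$ is almost finite-to-one, so by Corollary \ref{generic openness} it is analytically open near any generic $(x,t,\bar s)\in J$.

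With this machinery in place, the strategy is to exploit the fact that $(\hat x,\hat t,\hat{\bar s})$ lies in $\overline J$ but possibly not $J$. Using Proposition \ref{smooth fiber product} and the openness above, I would argue that if $\rk(\hat x/A\hat t)=r$ (so case (1) fails), then a path in $T\times S^r$ approaching $(\hat t,\hat{\bar s})$ must be matched by a sequence of honest intersection points in $J$ converging to $(\hat x,\hat t,\hat{\bar s})$. This forces a \emph{second} intersection point $x\in H_{\hat t}\cap K_{\hat{\bar s}}$ which collides with $\hat x$ in the sense that the fibers ${_{(\hat x_0,\hat x_i)}C}$ and ${_{(x_0,x_i)}C}$ share a generic component (through $\hat s_i$); by Definition \ref{indistinguishable} this is precisely the semi-indistinguishability of $(\hat x_0,\hat x_i)$ and $(x_0,x_i)$ in $\mathcal C$, giving case (2).

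The extraction of case (3) comes from the choice of the $\hat s_i$: if some ${_{(\hat x_0,\hat x_i)}C}$ has the wrong rank, one cannot choose $\hat s_i$ generically without constraining $\hat x_i$. Concretely, if $\hat x_i$ is $\mathcal M$-independent from $\hat x_0,\dots,\hat x_{i-1}$ over $A\hat t$, then $(\hat x_0,\hat x_i)$ is a normal point of $\mathcal C$ over the relevant parameters and ${_{(\hat x_0,\hat x_i)}C}$ has full expected rank; failure of this, for some $i\geq 2$, yields case (3). I would sequence the choice of $\hat s_1,\dots,\hat s_r$ in order so that at the first failure one obtains the algebraicity relation $\hat x_i\in\operatorname{acl}_{\mathcal M}(A\hat t\hat x_0\cdots \hat x_{i-1})$.

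The main obstacle will be the careful treatment of semi-indistinguishable points in case (2). Without a group operation one cannot ``translate'' a putative second intersection $x$ back to a canonical location, so the analysis must track, coordinate pair by coordinate pair, whether the fiber ${_{(x_0,x_i)}C}$ genuinely overlaps ${_{(\hat x_0,\hat x_i)}C}$ in the component containing $\hat s_i$. Another subtlety is that $K_{\hat{\bar s}}$ need not be smooth along all of $H_{\hat t}\cap K_{\hat{\bar s}}$; I plan to shrink to a relatively open locus where Proposition \ref{smooth fiber product}(1)(3) applies uniformly, and to use the inductive hypothesis of Proposition \ref{closure prop} (Assumption \ref{inductive assumption}) on the auxiliary definable sets produced along the way, so that the only closure points we must handle by hand are the coordinate-wise generic ones at full rank.
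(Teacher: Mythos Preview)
Your setup (the curves $K_{\bar s}$ and the intersection family $J$) matches the paper's, but the logic has gaps in three places.

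First, your mechanism for producing case (2) does not work. If $x\in H_{\hat t}\cap K_{\hat{\bar s}}$, all you know is that $(x_0,x_i)$ and $(\hat x_0,\hat x_i)$ lie on the \emph{single} curve $C_{\hat s_i}$; semi-indistinguishability requires them to lie on a rank-$1$ family of common curves. In the paper the logic runs in reverse: one assumes all three cases fail and shows, for each actual intersection point $x\in H_{\hat t}\cap K_{\hat{\bar s}}$, that \emph{no} coordinate pair $(x_0,x_i)$ is semi-indistinguishable from $(\hat x_0,\hat x_i)$. The failure of (2) only rules out all $r$ pairs being simultaneously semi-indistinguishable; ruling out a proper nonempty subset $J\subsetneq\{1,\dots,r\}$ requires a separate rank computation using the non-triviality of $H_{\hat t}$. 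This ``$j=0$'' step is what forces $\hat{\bar s}\in\operatorname{acl}_{\mathcal M}(\hat x x)$, which in turn is needed to show each such $x$ is generic.

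Second, you work over the full parameter space $S^r$ of rank $2r$, but the inductive hypothesis only covers sets of rank $<r$. The paper handles this by first fixing a generic $\hat p\in M^r$ and restricting to the rank-$r$ slice $U_{\hat p}\subset S^r$ in which the left coordinate of each $s_i$ is $\hat p_i$. The contradiction then runs as follows: using the failure of (1) and (3), applied coordinate by coordinate via the dual family $\mathcal C^\vee$, one shows that $\hat{\bar s}$ is $\mathcal M$-generic in $U_{\hat p}$ over $\hat t\hat p$; but openness of $J\to T\times U_{\hat p}$ near each $(x,\hat t,\hat{\bar s})$ implies that perturbations of $\hat{\bar s}$ inside $U_{\hat p}$ give curves meeting $H_{\hat t}$ in strictly \emph{more} points than $K_{\hat{\bar s}}$ does, placing $\hat{\bar s}$ in the closure of a set $Z\subset U_{\hat p}$. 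Either $Z$ is large in $U_{\hat p}$ (so $\hat{\bar s}$ lies in the small complement) or $\rk Z<r$ (so the inductive hypothesis applies to $\hat{\bar s}\in\overline Z$); either way $\hat{\bar s}$ is not $\mathcal M$-generic over $\hat t\hat p$, a contradiction. Your sketch neither isolates this ``strictly more intersections'' step nor identifies the set $Z$ to which induction is applied.

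Third, you misplace the role of $k>(r+1)\dim M$. Almost-finite-to-oneness of $J\to T\times S^r$ follows from the rank arithmetic alone and does not need $k$ large. The hypothesis on $k$ enters at exactly one point: showing that every intersection point $x\in H_{\hat t}\cap K_{\hat{\bar s}}$ is generic in $M^{r+1}$ over $\hat x$. The argument takes $k$ independent realizations of $\operatorname{tp}(x/\hat t\hat x)$, uses almost faithfulness to see they determine $\hat t$, and then compares codimensions; the inequality $k>(r+1)\dim M$ is exactly what makes this comparison go through. Without this you cannot verify the genericity hypotheses of Proposition~\ref{smooth fiber product}(4) at each intersection point, and hence cannot establish the openness you need.
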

	\begin{remark} Before giving the proof, we pause briefly to explain how (2) and (3) in the proposition statement are relevant toward achieving our goal. In the broadest sense, we are roughly trying to say that closure points of $X=H_{\hat t}$ resemble actual points of $X$. Now if (2) holds, we have found a strong correlation between the closure point $\hat x$ and the actual point $x$ of $X$. Meanwhile if (3) holds, we have shown that there exists a dependence among the coordinates of $\hat x$, and moreover that it happens \textit{after} the first two coordinates; that is, we have some control over specifically \textit{which} coordinates are dependent. This is the genesis of how we will control the pairwise independence structure among closure points in general.
	\end{remark}
	\begin{proof}
		For ease of notation, we will suppress the parameter set $A$ by adding it to the language. Thus we assume that $A=\emptyset$. 
		
		We will argue by contradiction. So, suppose that each of (1), (2), and (3) fails. Our strategy will be to study the pairwise intersections of $H_{\hat t}$ with a family of curves in $M^{r+1}$. Roughly speaking, we will choose a generic curve containing $\hat x$, and then show that perturbing this curve slightly results in a curve meeting $H_{\hat t}$ in strictly more points than the original. This will lead to an $\mathcal M$-formula relating $\hat t$ and the parameter defining our curve, which will give the necessary contradiction.
		
		So, we begin by defining our family of curves. The definition will involve the given excellent family $\mathcal C$.
		
		\begin{definition}
			Let $U=S^r$. Then we define the family $\mathcal D=\{D_u\}_{u\in U}$ as follows: for $u=(s_1,...,s_r)\in U$, we set $$D_u=\{(x_0,...,x_r)\in M^{r+1}:(x_0,x_i)\in C_{s_i}\textrm{for each }i=1,...,r\}.$$
		\end{definition}
		We thus immediately deduce:
		\begin{lemma} For each $u=(s_1,...,s_r)\in U$ we have $\rk(D_u)=1$.
		\end{lemma}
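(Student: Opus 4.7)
The plan is to unpack the definitions and use that an excellent family consists entirely of non-trivial plane curves. Specifically, by Definition \ref{excellent family}, every plane curve in $\mathcal C$ is non-trivial, so by Lemma \ref{non-trivial characterization} each projection $C_{s_i}\to M$ is finite-to-one. This is the crucial fact that will drive the entire argument.

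First I would consider the projection $\pi:D_u\to M$ sending $(x_0,\ldots,x_r)\mapsto x_0$. Given any $x_0\in M$, the fiber $\pi^{-1}(x_0)$ consists of tuples $(x_0,x_1,\ldots,x_r)$ where, for each $i=1,\ldots,r$, the element $x_i$ lies in the set $\{y\in M:(x_0,y)\in C_{s_i}\}$. Since each $C_{s_i}$ is non-trivial, the projection $C_{s_i}\to M$ to the first coordinate is finite-to-one, so each of these sets is finite. Hence every fiber of $\pi$ is finite (a product of $r$ finite sets), so $\pi$ is finite-to-one, which gives $\rk(D_u)\leq\rk(M)=1$.

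For the reverse inequality, I would check that $\pi$ is almost surjective. For each $i$, non-triviality of $C_{s_i}$ gives that the first projection $C_{s_i}\to M$ is finite-to-one, hence by Lemma \ref{generic dominance facts} (or a direct rank computation) its image is cofinite in $M$. Taking the intersection over the finitely many $i$, we see that $\pi(D_u)$ contains a cofinite subset of $M$, so is generic in $M$ of rank 1. Combining with the previous paragraph yields $\rk(D_u)=1$.

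There is no real obstacle here, as the lemma is essentially a direct consequence of the definition of excellence; the only point requiring any care is remembering that excellence (rather than the weaker notion of goodness) ensures non-triviality of \emph{every} curve $C_{s_i}$, not merely the generic ones, so the argument works for arbitrary $u=(s_1,\ldots,s_r)\in U$.
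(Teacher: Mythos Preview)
Your proposal is correct and follows essentially the same approach as the paper: both arguments use non-triviality of every $C_{s_i}$ (guaranteed by excellence) to show that the leftmost projection $D_u\to M$ is finite-to-one (giving $\rk(D_u)\leq 1$) and almost surjective (giving $\rk(D_u)\geq 1$). Your phrasing of almost surjectivity via cofiniteness of the images and their intersection is equivalent to the paper's ``generic $x_0$ extends'' argument.
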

		\begin{proof}
			Since each $C_s$ is non-trivial, for each $i$ and $x_0$ there are only finitely many extensions of $x_0$ to an element of $C_{s_i}$. Thus the leftmost projection $D_u\rightarrow M$ is finite-to-one, which shows that $\rk(D_u)\leq 1$. On the other hand, also by non-triviality, if $x_0$ is generic over $u$ then it extends to an element of $C_{s_i}$ for each $i$. Thus the same projection $D_u\rightarrow M$ is almost surjective, and so $\rk(D_u)=1$.
		\end{proof}
		Since $S$ is generic in $M^2$, we have $\rk(U)=r\cdot\rk(S)=2r$. Thus, if $D\subset M^{r+1}\times U$ is the graph of $\mathcal D$, then $\rk(D)=2r+1$.
		
		Let $N$ be the set of normal points of $\mathcal C$, as in Definition \ref{common points}. So each fiber ${_aC}$ for $a\in N$ is a non-trivial plane curve. Now we can also show:
		
		\begin{lemma}\label{dual G fibers} For each $x=(x_0,...x_r)\in M^{r+1}$, the fiber ${_xD}=\{u\in U:x\in D_u\}$ has rank at most $r$. If each $(x_0,x_i)\in N$, for $i=1,...,n$, then $rk({_xD})=r$. In particular, for all generic $x\in M^{r+1}$ we have $\rk({_xD})=r$.
			\end{lemma}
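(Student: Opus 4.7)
The plan is to observe that the fiber ${_xD}$ factors as a product across coordinates, from which all three claims follow almost formally. By the very definition of $D_u$, a tuple $u=(s_1,\ldots,s_r)\in S^r$ lies in ${_xD}$ exactly when $(x_0,x_i)\in C_{s_i}$ for each $i$, i.e.\ when $s_i\in{_{(x_0,x_i)}C}$. Hence
\[
{_xD}\;=\;\prod_{i=1}^{r}{_{(x_0,x_i)}C}
\]
as a subset of $U=S^r$, and Fact~\ref{rank exists}(1) gives $\rk({_xD})=\sum_{i=1}^{r}\rk({_{(x_0,x_i)}C})$.

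All three clauses now read off this formula together with the structural properties of $\mathcal C$. Since $\mathcal C$ is excellent it is in particular good, so by Definition~\ref{good family} it has no common points; this means $\rk({_aC})\leq\rk(\mathcal C)-1=1$ for every $a\in M^2$. Summing across $i$ yields the desired bound $\rk({_xD})\leq r$. If in addition every pair $(x_0,x_i)$ lies in $N$, then by definition each summand equals exactly $1$, and one obtains $\rk({_xD})=r$.

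For the last clause, if $x\in M^{r+1}$ is generic (over whatever parameters define $\mathcal C$), then the coordinates $x_0,\ldots,x_r$ are independent generics in $M$, so each pair $(x_0,x_i)$ is generic in $M^2$ over the same parameters. Lemma~\ref{finitely many common points}(2) tells us that $N$ is large in $M^2$, i.e.\ its complement has rank at most $1$; hence the generic pair $(x_0,x_i)$ must lie in $N$ for every $i$, and the middle clause applies to give $\rk({_xD})=r$. There is no serious obstacle to this lemma; the only step worth a brief sanity check is the additivity of rank on the product $\prod_i{_{(x_0,x_i)}C}\subset S^r$, which is immediate from Fact~\ref{rank exists}(1) since each factor is a definable subset of $S$ and the product embeds as a definable subset of $S^r$.
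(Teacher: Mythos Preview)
Your proof is correct and follows essentially the same approach as the paper: both factor ${_xD}$ as the product $\prod_{i=1}^{r}{_{(x_0,x_i)}C}$, bound each factor's rank by $1$ using the absence of common points in $\mathcal C$, and observe that the bound is attained when each $(x_0,x_i)\in N$, which holds for generic $x$. The only cosmetic difference is that the paper notes each ${_{(x_0,x_i)}C}$ is either of rank exactly $1$ or empty (using that $\mathcal C$, being good, also has no rare points), whereas you argue only $\rk\leq 1$; your rank-sum formula should strictly be read with the convention that an empty factor makes the product empty, but the conclusion is unaffected.
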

		\begin{proof} Let $x=(x_0,...,x_r)\in M^{r+1}$. Then, by definition, ${_xD}$ consists of those $(s_1,...,s_r)$ such that each $s_i\in{_{(x_0,x_i)}C}$. Equivalently, ${_xD}$ is the product of the sets ${_{(x_0,x_i)}C}$ for $i=1,...,r$. Now ${_{(x_0,x_i)}C}$ is either of rank 1 or empty, depending on whether $(x_0,x_i)\in N$. This immediately gives that $\rk({_xD})\leq r$, with equality if each $(x_0,x_i)\in N$. But $N$ is both generic in $M^2$ and $\mathcal M$-definable over $\emptyset$. Thus, for generic $x\in M^{r+1}$, each $(x_0,x_i)\in N$, and so $\rk{(_xD)}=r$.
		\end{proof}
		
		Our goal will be to work with a particular curve $D_{\hat u}$ containing $\hat x$, and study intersections of `nearby' $D_u$ with $H_{\hat t}$. However, in doing this we want to use the inductive hypothesis on $\hat u$ (and some set of `nearby $u$'s'). Now since $\rk(U)=2r$, while the inductive hypothesis only covers sets of rank $<r$, we are not well-equipped to handle arbitrary subsets of $U$ in the way we want. In order to circumvent this, we will simultaneously work with both $\mathcal D$ and a fixed rank $r$ subfamily $\mathcal Y$ of $\mathcal D$:
		
		\begin{notation}
			We define the following:
			\begin{enumerate}
				\item Let $\hat p=(\hat p_1,...,\hat p_r)$ be a fixed generic element of $M^r$ over $\hat t\hat x$.
				\item Let $U_{\hat p}$ be the set of all $u=((p_1,q_1),...,(p_r,q_r))\in B$ such that $p_i=\hat p_i$ for each $i=1,...,r$.
				\item Let $\mathcal Y$ be the subfamily of $\mathcal D$ indexed by $U_{\hat p}$: that is, the family of curves with graph $Y\subset M^{r+1}\times U_{\hat p}$ given by restricting $D$ to $M^{r+1}\times U_{\hat p}$.
				\end{enumerate}
		\end{notation}
	
	That is, $U_{\hat p}$ is obtained by choosing a fixed generic left coordinate for each $s_i$ in a tuple $u\in U$. Since $U$ is generic in $M^{2r}$, and $\hat p$ is generic, it follows that we can identify $U_{\hat p}$ with a generic subset of $M^r$ (by identifying a tuple $u=(s_1,...,s_r)$ with the sequence of right coordinates of the $s_i$'s). It follows immediately that $\rk(U_{\hat p})=r$, and thus $\rk(Y)=r+1$. We conclude:
	
	\begin{lemma}\label{H finite-to-one} The projection $Y\rightarrow M^{r+1}$ is finite-to-one and almost surjective.
	\end{lemma}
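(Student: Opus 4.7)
The plan is to analyze the fiber of $Y \to M^{r+1}$ over an arbitrary point $x = (x_0, \ldots, x_r) \in M^{r+1}$ and show it is always finite; almost surjectivity will then follow from a rank comparison using that $\rk(Y) = r+1$.

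First I would unpack what it means for $u = ((\hat p_1, q_1), \ldots, (\hat p_r, q_r)) \in U_{\hat p}$ to lie in the fiber. By definition of $D_u$, this is equivalent to $(x_0, x_i) \in C_{(\hat p_i, q_i)}$ for each $i$, i.e. $(\hat p_i, q_i) \in {_{(x_0, x_i)}C}$, for each $i = 1, \ldots, r$. Thus the fiber of $Y \to M^{r+1}$ over $x$ is in bijection with the product, over $i = 1, \ldots, r$, of the sets $F_i = \{q \in M : (\hat p_i, q) \in {_{(x_0, x_i)}C}\}$. So it suffices to show that each $F_i$ is finite.

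Now the key point is that $\mathcal C$ is excellent, so in particular good: it has no common or rare points of higher rank, so ${_{(x_0, x_i)}C}$ has rank at most $1$. If $(x_0, x_i)$ is absent or rare in $\mathcal C$, then ${_{(x_0, x_i)}C}$ is finite (empty or rank $0$), hence so is $F_i$. If $(x_0, x_i)$ is normal, then ${_{(x_0, x_i)}C}$ coincides (up to finitely many points) with a member of the dual family $\mathcal C^\vee$, which by excellence consists of non-trivial plane curves; non-triviality of this plane curve (applied to its projection to the left coordinate) then forces $F_i$ to be finite. Since $\mathcal C$ has no common points, no other case arises. Hence every fiber of $Y \to M^{r+1}$ is finite, i.e. the projection is finite-to-one.

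For almost surjectivity, since the projection is finite-to-one we have $\rk(\operatorname{im}(Y \to M^{r+1})) = \rk(Y) = r+1 = \rk(M^{r+1})$, so the image is generic in $M^{r+1}$; as $M^{r+1}$ is stationary, it is in fact large, giving almost surjectivity. The main point to watch, and the reason this is not entirely formal, is that one truly needs the excellence of $\mathcal C$ (not merely that the generic curves are non-trivial) to control \emph{all} fibers rather than just a generic fiber of $Y \to M^{r+1}$; in particular, non-triviality of the curves in $\mathcal C^\vee$ is what prevents a pathological $x$ from producing an infinite $F_i$.
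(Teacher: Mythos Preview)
Your proof is correct and follows essentially the same approach as the paper's: identify the fiber over $x$ as the product of the sets $F_i = \{q : (\hat p_i, q) \in {_{(x_0,x_i)}C}\}$, use non-triviality of the dual curves ${_aC}$ (from excellence) to see each $F_i$ is finite, and deduce almost surjectivity by a rank comparison. The only difference is that you add a case analysis on whether $(x_0,x_i)$ is normal, rare, or absent, whereas the paper simply writes ``since each ${_aC}$ is non-trivial''; note that since $\mathcal C$ is good it has no rare points, so that case is vacuous and your analysis could be shortened accordingly.
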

	\begin{proof}
		Let $x=(x_0,...,x_r)\in M^{r+1}$. By definition, the fiber ${_xY}={_xD}\cap U_{\hat p}$ consists of all points $((\hat p_1,q_1),...,(\hat p_r,q_r))$ with each $(\hat p_i,q_i)\in{_{(x_0,x_i)}C}$. Since each ${_aC}$ is non-trivial, each $\hat p_i$ has only finitely many extensions to a pair $(\hat p_i,q_i)\in{_{(x_0,x_i)}C}$. This shows that $Y\rightarrow M^{r+1}$ is finite-to-one. Almost surjectivity then follows by rank considerations, since $\rk(Y)=\rk(M^{r+1})$ and $M^{r+1}$ is stationary.
	\end{proof}

	Since $\hat x$ is generic in $M^{r+1}$, and $\hat p$ is generic in $M^r$ over $\hat x$, it follows by symmetry that $\hat x$ is generic in $M^{r+1}$ over $\hat p$. In particular, by Lemma \ref{H finite-to-one} the fiber ${_{\hat x}Y}$ is non-empty. The following is now justified:
	
	\begin{notation}
		We fix $\hat u=(\hat s_1,...,\hat s_r)$ to be any element of the fiber ${_{\hat x}Y}$. For each $i$, we denote $\hat s_i=(\hat p_i,\hat q_i)$.
	\end{notation}

We immediately get the following two lemmas:

	\begin{lemma}\label{b gen} $\hat u$ is generic in $U_{\hat p}$ over $\hat p$, and is moreover generic in $U$ over $\emptyset$.
	\end{lemma}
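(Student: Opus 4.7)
The plan is to compute $\rk(\hat u / \hat p)$ via rank additivity, using the finite-to-one projection $Y \to M^{r+1}$ from Lemma \ref{H finite-to-one}, and then to upgrade $\mathcal M$-genericity to full genericity by transferring coherence.

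First I would observe that $\hat u \in \operatorname{acl}_{\mathcal M}(\hat p \hat x)$. Writing $\hat s_i = (\hat p_i, \hat q_i)$, the condition $\hat x \in D_{\hat u}$ translates to $\hat s_i \in {_{(\hat x_0, \hat x_i)}C}$ for each $i$. Since $\hat x$ is generic in $M^{r+1}$, each pair $(\hat x_0, \hat x_i)$ is generic in $M^2$ and hence normal in $\mathcal C$; therefore ${_{(\hat x_0, \hat x_i)}C}$ is a plane curve in $\mathcal C^\vee$, and non-trivial by excellence of $\mathcal C$. So fixing $\hat p_i$ leaves only finitely many possibilities for $\hat q_i$, yielding $\hat q_i \in \operatorname{acl}_{\mathcal M}(\hat p_i \hat x_0 \hat x_i)$ and in particular $\rk(\hat u / \hat p \hat x) = 0$.

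Next, since $\hat p$ was chosen generic over $\hat t \hat x$, symmetry of rank gives that $\hat x$ is generic in $M^{r+1}$ over $\hat p$, so $\rk(\hat x / \hat p) = r+1$ and additivity produces $\rk(\hat x \hat u / \hat p) = r+1$. The membership $\hat x \in D_{\hat u}$ with $\rk(D_{\hat u}) = 1$ bounds $\rk(\hat x / \hat p \hat u) \leq 1$; if this were $0$, additivity would force $\rk(\hat x \hat u / \hat p) = \rk(\hat u / \hat p) \leq \rk(U_{\hat p}) = r$, contradicting the previous line. Hence $\rk(\hat x / \hat p \hat u) = 1$, and additivity again delivers $\rk(\hat u / \hat p) = r$, i.e.\ $\mathcal M$-genericity in $U_{\hat p}$ over $\hat p$.

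To promote this to full genericity, I would invoke Lemma \ref{coherent iff generic}: $\hat x$ is coherent over $\hat p$ by its full genericity in $M^{r+1}$ over $\hat p$, so Lemma \ref{coherent preservation}(2) gives coherence of $\hat u$ over $\hat p$ along $\hat u \in \operatorname{acl}_{\mathcal M}(\hat p \hat x)$, and Lemma \ref{coherent iff generic} then upgrades $\mathcal M$-genericity to genericity of $\hat u$ in $U_{\hat p}$ over $\hat p$. For the second clause, the tuple $\hat p$ is recovered by reading off left coordinates of the entries of $\hat u$, so $\hat p \in \operatorname{dcl}(\hat u)$ and $\rk(\hat u) = \rk(\hat u \hat p) = \rk(\hat u / \hat p) + \rk(\hat p) = r + r = 2r = \rk(U)$; coherence of $\hat u$ over $\emptyset$ follows by Lemma \ref{coherent preservation}(1) from coherence of $\hat p$ over $\emptyset$ combined with coherence of $\hat u$ over $\hat p$. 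A last application of Lemma \ref{coherent iff generic} gives genericity of $\hat u$ in $U$ over $\emptyset$. The only mildly delicate point is ruling out $\rk(\hat x / \hat p \hat u) = 0$; everything else is routine bookkeeping with additivity and coherence.
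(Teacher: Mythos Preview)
Your proof is correct and follows essentially the same logic as the paper's, though the implementation differs slightly. The paper works directly with dimensions: from $\hat x$ generic in $M^{r+1}$ over $\hat p$ and the finite-to-one projection $Y\to M^{r+1}$ (Lemma \ref{H finite-to-one}), it concludes $(\hat x,\hat u)$ is generic in $Y$ over $\hat p$, then invokes generic dominance of $Y\to U_{\hat p}$ (Lemma \ref{generic dominance facts}(4)) to push genericity down to $\hat u$; the second clause is a two-line dimension count using $\dim(\hat p/\hat u)=0$. You instead compute ranks and then pass through the coherence machinery (Lemmas \ref{coherent iff generic} and \ref{coherent preservation}) to upgrade to full genericity. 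Your explicit verification that $\hat u\in\operatorname{acl}_{\mathcal M}(\hat p\hat x)$ via non-triviality of $\mathcal C^\vee$ is precisely the content of Lemma \ref{H finite-to-one}, and your case-split ruling out $\rk(\hat x/\hat p\hat u)=0$ is really unpacking the generic dominance of $Y\to U_{\hat p}$. So the two arguments are the same computation written in different coordinates: the paper's is shorter because it stays in dimension throughout, while yours takes the rank-plus-coherence route that the paper advertises as its default method in Section 3.
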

	\begin{proof} As stated above, $\hat x$ is generic in $M^{r+1}$ over $\hat p$. Thus $(\hat x,\hat u)$ is generic in $Y$ over $\hat p$, by Lemma \ref{H finite-to-one} and dimension considerations. In then follows by Lemma \ref{generic dominance facts}(4) that the projection $Y\rightarrow U_{\hat p}$ is generically dominant, which gives that $\hat u$ is generic in $U_{\hat p}$ over $\hat p$. 
		
		So we have shown the first clause of the lemma. Now this in particular gives that $\dim(\hat u/\hat p)=r\cdot\dim M$. But by definition $\dim(\hat p/\hat u)=0$ and $\dim(\hat p)=r\cdot\dim M$, so in fact $$\dim(\hat u)=\dim(\hat u\hat p)=2r\cdot\dim M=\dim U,$$ which shows the second clause. 
		\end{proof}

	\begin{lemma}\label{b gen over x}
		$\hat u$ is generic in ${_{\hat x}D}$ over $\hat t\hat x$. In particular, $\rk(\hat u/\hat t\hat x)=r$ and $\hat u$ is coherent over $\hat t\hat x$.
	\end{lemma}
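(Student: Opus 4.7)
The plan is to establish $\dim(\hat u/\hat t\hat x) = r \cdot \dim M$ and then compare to $\dim({_{\hat x}D})$, deducing genericity; the ``in particular'' clauses will then fall out of Lemma \ref{coherent iff generic}. Two small observations drive everything: (i) since $\hat u = ((\hat p_1,\hat q_1),\ldots,(\hat p_r,\hat q_r))$, the tuple $\hat p$ is literally a subtuple of $\hat u$ and therefore $\hat p \in \operatorname{dcl}(\hat u)$; and (ii) by Lemma \ref{H finite-to-one}, the projection $Y \to M^{r+1}$ is finite-to-one, so the fiber ${_{\hat x}Y}$ is finite and hence $\hat u \in \operatorname{acl}_{\mathcal M}(\hat x \hat p)$.

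First I would compute $\dim(\hat u/\hat t\hat x)$ by additivity. Using (i),
\[
\dim(\hat u/\hat t\hat x) = \dim(\hat u\hat p/\hat t\hat x) = \dim(\hat u/\hat t\hat x\hat p) + \dim(\hat p/\hat t\hat x).
\]
By (ii) we have $\rk(\hat u/\hat t\hat x\hat p) = 0$, hence $\dim(\hat u/\hat t\hat x\hat p) = 0$ by Corollary \ref{dim rk inequality}. Since $\hat p$ was chosen generic in $M^r$ over $\hat t\hat x$, we have $\dim(\hat p/\hat t\hat x) = r \cdot \dim M$. Combining gives $\dim(\hat u/\hat t\hat x) = r \cdot \dim M$.

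Next I would compute $\dim({_{\hat x}D})$. Because $\hat x$ is generic in $M^{r+1}$ over $\emptyset$, each pair $(\hat x_0,\hat x_i)$ belongs to the set $N$ of normal points of $\mathcal C$ (which is generic in $M^2$ and $\mathcal M$-definable over $\emptyset$). By Lemma \ref{dual G fibers}, $\rk({_{\hat x}D}) = r$, and then Lemma \ref{dim rk equality} gives $\dim({_{\hat x}D}) = r \cdot \dim M$. Since $\hat u \in {_{\hat x}D}$ already attains this dimension over the larger parameter set $\hat t\hat x$, $\hat u$ is generic in ${_{\hat x}D}$ over $\hat t\hat x$. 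Finally, Lemma \ref{coherent iff generic} converts this into the two ``in particular'' assertions: $\mathcal M$-genericity yields $\rk(\hat u/\hat t\hat x) = r$, and coherence over $\hat t\hat x$ is part of the same equivalence.

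I do not expect a genuine obstacle here: the whole lemma is a direct additivity computation that rests on (i), (ii), and the standing dictionary between rank, dimension, and coherence developed in Section 3. The one point that deserves care in the write-up is verifying that $\hat x$'s genericity over $\emptyset$ (rather than over $\hat t$) is what licenses the input to Lemma \ref{dual G fibers}; this is immediate since normality of $(\hat x_0,\hat x_i)$ is an $\emptyset$-definable condition.
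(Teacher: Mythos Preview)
Your proof is correct and follows essentially the same approach as the paper. The paper's version is terser: it observes directly that non-triviality of each dual curve ${_aC}$ makes $\hat u$ and $\hat p$ $\mathcal M$-interalgebraic over $\hat t\hat x$, then invokes Lemma~\ref{coherent preservation}(2) to transfer coherence and rank from $\hat p$ to $\hat u$; your version routes the same interalgebraicity through Lemma~\ref{H finite-to-one} and spells out the dimension additivity explicitly, but the content is identical.
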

	\begin{proof} Since each ${_aC}$ is non-trivial, it follows that $\hat u$ is $\mathcal M$-interalgebraic with $\hat p$ over $\hat t\hat x$. So the lemma follows since by definition $\hat p$ is coherent and of rank $r$ over $\hat t\hat x$.
		\end{proof}
	
	Now that we have $D_{\hat u}$, our rough goal is to study the intersection points with $H_{\hat t}$ that one obtains by perturbing $\hat u$. Our conclusion will eventually be that $\hat u$ is not $\mathcal M$-generic in $U_{\hat p}$ over $\hat t\hat p$, because it belongs to the closure of the $u\in U_{\hat p}$ such that $D_u$ intersects $H_{\hat t}$ in more than the generic number of points (that is, such that $|H_{\hat t}\cap D_u|>|H_{\hat t}\cap D_{u'}|$ for generic $u'\in U_{\hat p}$ over $\hat t\hat p$). Indeed, this observation will ultimately lead to a contradiction that proves Proposition \ref{frontier proof}. The reason is the following:
	
	\begin{lemma}\label{b gen over a} $\hat u$ is both $\mathcal M$-generic in $U$ over $\hat t$, and $\mathcal M$-generic in $U_{\hat p}$ over $\hat t\hat p$.
	\end{lemma}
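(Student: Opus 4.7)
The plan is to reduce the lemma to showing $\rk(\hat x/\hat t\hat u)=0$, and to establish this by combining the negations of clauses (1) and (3) of the proposition with the almost faithfulness of $\mathcal C$. For the reduction, note first that since $\hat p\in\operatorname{dcl}_{\mathcal M}(\hat u)$ (its coordinates are among those of $\hat u$) and $\hat p$ is $\mathcal M$-generic in $M^r$ over $\hat t$ (being generic over $\hat t\hat x$), additivity gives $\rk(\hat u/\hat t)=\rk(\hat u\hat p/\hat t)=r+\rk(\hat u/\hat t\hat p)$, so the two clauses of the lemma are equivalent. Next, Lemma \ref{b gen over x} gives $\rk(\hat u/\hat t\hat x)=r$, and the failure of (1) combined with the ``moreover'' clause of Proposition \ref{closure prop} forces $\rk(\hat x/\hat t)=r$. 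Computing $\rk(\hat u\hat x/\hat t)$ in both orders via additivity yields $\rk(\hat u/\hat t)=2r-\rk(\hat x/\hat t\hat u)$, so the lemma reduces to showing $\hat x\in\operatorname{acl}_{\mathcal M}(\hat t\hat u)$.

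Since each curve $C_{\hat s_i}$ is non-trivial, the projection $D_{\hat u}\to M$ to the leftmost coordinate is finite-to-one; hence $\hat x\in\operatorname{acl}_{\mathcal M}(\hat u\hat x_0)$, and it suffices to show $\hat x_0\in\operatorname{acl}_{\mathcal M}(\hat t\hat u)$. This is where the failure of (3) enters: it yields $\rk(\hat x_i/\hat t\hat x_0\ldots\hat x_{i-1})=1$ for each $i\geq 2$, which combined with $\rk(\hat x/\hat t)=r$ via additivity forces the extra dependency $\rk(\hat x_0\hat x_1/\hat t)=1$. By Fact \ref{rank exists}(5) there is an $\mathcal M$-definable set $X\subset M^2$ of rank $1$ over $\hat t$ containing $(\hat x_0,\hat x_1)$, that is, a plane curve over $\hat t$ through this pair.

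The finishing step is a direct application of almost faithfulness. By Lemma \ref{almost faithful facts}(1) the set $F=\{s\in S:\rk(C_s\cap X)=1\}$ is finite and $\mathcal M$-definable over $\hat t$, hence $F\subset\operatorname{acl}_{\mathcal M}(\hat t)$. On the other hand, $\hat s_1=(\hat p_1,\hat q_1)$ with $\hat p_1$ a coordinate of $\hat p$, so $\rk(\hat s_1/\hat t)\geq\rk(\hat p_1/\hat t)=1$ and $\hat s_1\notin F$. Thus $\rk(C_{\hat s_1}\cap X)=0$; since $(\hat x_0,\hat x_1)\in C_{\hat s_1}\cap X$ and this intersection is $\mathcal M$-definable over $\hat t\hat s_1\subset\operatorname{acl}_{\mathcal M}(\hat t\hat u)$, we conclude $\hat x_0\in\operatorname{acl}_{\mathcal M}(\hat t\hat u)$. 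Combined with $\hat x\in\operatorname{acl}_{\mathcal M}(\hat u\hat x_0)$ this gives $\hat x\in\operatorname{acl}_{\mathcal M}(\hat t\hat u)$, completing the proof.

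The substantive content is really in the middle paragraph: the failure of (3) is used to isolate a genuine low-rank relation specifically between $\hat x_0$ and $\hat x_1$ over $\hat t$, which is then ``detected'' by the faithfulness of $\mathcal C$ through the generic first coordinate $\hat p_1$ of $\hat s_1$. The main obstacle worth flagging is that one really needs both negations of (1) and (3) simultaneously, in exactly this combination, to produce the plane curve $X$ on which faithfulness can be invoked; without the failure of (3), the pair $(\hat x_0,\hat x_1)$ would only be known to have rank $2$ over $\hat t$ and no such $X$ would be available.
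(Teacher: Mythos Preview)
Your argument has one genuine gap: the appeal to Proposition~\ref{closure prop} to obtain $\rk(\hat x/\hat t)\leq r$ is circular. We are inside Proposition~\ref{frontier proof}, which is part of the inductive step proving Proposition~\ref{closure prop} for rank $r$; the inductive hypothesis only covers ranks $<r$, so you cannot invoke the rank-$r$ conclusion here. This matters because your extraction of the plane curve $X$ through $(\hat x_0,\hat x_1)$ requires $\rk(\hat x_0\hat x_1/\hat t)\leq 1$, which in turn needs $\rk(\hat x/\hat t)\leq r$.

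The gap is easy to patch: split on whether $\rk(\hat x/\hat t)=r+1$ or $\rk(\hat x/\hat t)=r$. In the first case, since $\hat x\in D_{\hat u}$ and $\rk(D_{\hat u})=1$, one has $\rk(\hat x/\hat t\hat u)\leq 1$; then additivity gives $\rk(\hat u/\hat t)\geq (r+1)+r-1=2r$, and the lemma follows directly. In the second case your argument goes through unchanged.

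With that fix, your proof is correct and genuinely different from the paper's. The paper builds up $\rk(\hat u/\hat t)=2r$ coordinate by coordinate: for each $i$ it shows $\rk(\hat s_i/\hat t,\hat s_{<i})=2$ by arguing that $\hat a_i=(\hat x_0,\hat x_i)$ indexes a generic curve in a positive-rank subfamily of the dual family $\mathcal C^\vee$, and $\hat s_i$ is generic in that curve. Your route is more economical: you reduce everything to the single algebraicity $\hat x_0\in\operatorname{acl}_{\mathcal M}(\hat t\hat u)$, extracted from one application of almost faithfulness to the plane curve witnessing $\rk(\hat x_0\hat x_1/\hat t)=1$. The paper's argument has the advantage of not needing the case split and of treating all coordinates symmetrically; yours is shorter and isolates more sharply where the failure of (3) is used.
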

	\begin{proof} For each $i=1,...,r$, let $\hat a_i=(\hat x_0,\hat x_i)$. Since $\hat x$ is generic in $M^{r+1}$, each $\hat a_i\in N$. We will also use the notation $\hat p_{<i}$ for $(\hat p_1,...,\hat p_{i-1})$, and similarly for $\hat a_{<i}$ and $\hat t_{<i}$. Now the main observation is:
		\begin{claim}\label{s curve is generic} For each $i=1,...,r$ we have $\rk(\hat a_i/\hat t,\hat p_{<i},\hat a_{<i},\hat s_{<i})\geq 1$.
	\end{claim}
	\begin{proof}
		Since each $\hat s_j=(\hat p_j,\hat q_j)$ belongs to the non-trivial plane curve ${_{\hat a_j}C}$, it follows that $\hat s_j$ and $\hat p_j$ are $\mathcal M$-interalgebraic over $\hat a_j$. Thus it is equivalent to show that $\rk(\hat a_i/\hat t,\hat p_{<i},\hat a_{<i})\geq 1$. By definition $\hat p$ is $\mathcal M$-independent from $(\hat t,\hat x)$, so it is moreover equivalent to show that $\rk(\hat a_i/\hat t,\hat a_{<i})\geq 1$. 
		
		Now suppose otherwise: that is, suppose that $\hat a_i\in\operatorname{acl}_{\mathcal M}(\hat t\hat a_1...\hat a_{i-1})$. We will show that one of (1) and (3) from the proposition statement happens, contradicting our assumption. We consider two cases:
		
		\begin{itemize}
			\item First suppose that $i=1$. Then $\hat a_1=(\hat x_0,\hat x_1)\in\operatorname{acl}_{\mathcal M}(\hat t)$. Since there are only $r-1$ remaining coordinates in the tuple $\hat x$, it follows that $\rk(\hat x/\hat t)\leq r-1$, so we get (1).
		
			\item Now suppose that $i\geq 2$. In this case note that $(\hat a_1,...,\hat a_{i-1})$ is $\mathcal M$-interdefinable with $(\hat x_0,...,\hat x_{i-1})$. It then follows that $\hat a_i=(\hat x_0,\hat x_i)\in\operatorname{acl}_{\mathcal M}(\hat t\hat x_0...\hat x_{i-1})$, so in particular we get (3).
		\end{itemize}
	\end{proof}
		We also show:
		\begin{claim}\label{t generic in s} For each $i=1,...,r$ we have $\rk(\hat s_i/\hat t,\hat a_i,\hat p_{<i},\hat a_{<i},\hat s_{<i})=1$.
		\end{claim}
		\begin{proof} As in Claim \ref{s curve is generic}, each $\hat s_j$ is $\mathcal M$-interalgebraic with $\hat p_j$ over $\hat a_j$. Thus it is equivalent to show that $\rk(\hat p_i/\hat t,\hat a_i,\hat p_{<i},\hat a_{<i})\geq 1$ (note that we can switch $\hat s_i$ for $\hat p_i$ because this time $\hat a_i$ is in the base). But this is automatic, since the tuple $\hat p$ is generic in $M^r$ over $\hat t\hat x$.
		\end{proof}
		Using the previous two claims, we conclude:
		\begin{claim}\label{each t gen} For each $i=1,...,r$ we have $\rk(\hat s_i/\hat t,\hat p_{<i},\hat a_{<i},\hat s_{<i})=2$. 
		\end{claim}
		\begin{proof} By Claim \ref{s curve is generic}, there is a set $N_i$ which is $\mathcal M$-definable over $(\hat t,\hat p_{<i},\hat a_{<i},\hat s_{<i})$, has rank at least 1, and contains $\hat a_i$ as an $\mathcal M$-generic element (over the same parameters). Since $\hat a_i\in N$, we may assume without loss of generality that $N_i\subset N$. We then view $N_i$ as a parameter space for a positive rank subfamily $\mathcal C_i$ of the dual family $\mathcal C^{\vee}=\{{_aC}\}_{a\in N}$. So $\mathcal C_i$ is almost faithful, because $\mathcal C^{\vee}$ is. 
			
		So, over the base tuple $(\hat t,\hat p_{<i},\hat a_{<i},\hat s_{<i})$, the curve ${_{\hat a_i}C}$ is an $\mathcal M$-generic member of the positive rank almost faithful family $\mathcal C_i$; and by Claim \ref{t generic in s}, the point $\hat s_i$ is an $\mathcal M$-generic element of this curve over $\hat a_i$ (and the same base parameters). By for example the remarks after Lemma \ref{family ranks}, this implies that $\hat s_i$ is $\mathcal M$-generic in $M^2$ over $(\hat t,\hat p_{<i},\hat a_{<i},\hat s_{<i})$, which is enough. 
	\end{proof}
	We now finish the proof of Lemma \ref{b gen over a}. By Claim \ref{each t gen}, we have in particular that $\rk(\hat s_i/\hat t,\hat s_{<i})=2$ for each $i$. It follows that $\rk(\hat u/\hat t)=2r$, which shows that $\hat u$ is $\mathcal M$-generic in $U$ over $\hat t$. Now since by definition $\rk(\hat p/\hat u)=0$ and $\rk(\hat p/\hat t)=r$, it follows easily that $\rk(\hat u/\hat t\hat p)=r$. Thus $\hat u$ is also $\mathcal M$-generic in $U_{\hat p}$ over $\hat t\hat p$.
	
\end{proof}

	Before proceeding, let us note the following easy consequence of Lemma \ref{b gen over a}:
	
	\begin{lemma}\label{ab finite intersection} $H_{\hat t}\cap D_{\hat u}$ is finite.
		\end{lemma}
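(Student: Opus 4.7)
The plan is a rank calculation argument by contradiction. Since $\rk(D_{\hat u}) = 1$, $\rk(H_{\hat t}) = r$, and both sit inside $M^{r+1}$ (of rank $r+1$), the ``expected'' intersection has rank $0$; we want to show this expectation is met. Suppose not, and let $\hat y = (\hat y_0,\ldots,\hat y_r)$ be $\mathcal M$-generic in $H_{\hat t}\cap D_{\hat u}$ over $\hat t\hat u$, so $\rk(\hat y/\hat t\hat u)\geq 1$. Because $\hat y\in D_{\hat u}$ and $\rk(D_{\hat u})=1$, in fact $\rk(\hat y/\hat u)=\rk(\hat y/\hat t\hat u)=1$, i.e. $\hat y$ is $\mathcal M$-independent from $\hat t$ over $\hat u$.

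Combining this with Lemma \ref{b gen over a} (which gives $\rk(\hat u/\hat t)=2r$ and $\rk(\hat u)=2r$), a standard additivity computation yields $\rk(\hat t/\hat u\hat y)=k$, and in particular $\rk(\hat t/\hat y)\geq k$. Since $\hat t\in{_{\hat y}H}$, this forces $\rk({_{\hat y}H})\geq k$, i.e. $\hat y$ is $\mathcal H$-common. Lemma \ref{hypersurfaces finitely many common points}(1) then gives the upper bound $\rk(\hat y)\leq r-1$.

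The contradiction will come from the other side, using $\mathcal D$ instead of $\mathcal H$. The first step is to verify that each pair $(\hat y_0,\hat y_i)$ lies in the set $N$ of normal points of $\mathcal C$: indeed $\hat y_0$ is $\mathcal M$-generic in $M$ over $\hat u$ (hence over $\emptyset$), and since $(\hat y_0,\hat y_i)\in C_{\hat s_i}$ with $\hat s_i$ generic in $S$ over $\emptyset$, one easily rules out $(\hat y_0,\hat y_i)$ being common (finitely many common points) or rare (would give $\hat s_i\in\operatorname{acl}_{\mathcal M}(\hat y_0,\hat y_i)$, inconsistent with ranks). Then Lemma \ref{dual G fibers} gives $\rk({_{\hat y}D})=r$, so $\rk(\hat u/\hat y)\leq r$.

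Finally, additivity applied the other way gives $\rk(\hat u/\hat y)=\rk(\hat u)+\rk(\hat y/\hat u)-\rk(\hat y)=2r+1-\rk(\hat y)\geq 2r+1-(r-1)=r+2$, contradicting $\rk(\hat u/\hat y)\leq r$. Hence $H_{\hat t}\cap D_{\hat u}$ is finite. The only step requiring any care is the verification that each $(\hat y_0,\hat y_i)$ is a normal point of $\mathcal C$, which uses that both $\hat y_0$ and $\hat s_i$ are $\mathcal M$-generic and independent; all other steps are direct bookkeeping with additivity of rank.
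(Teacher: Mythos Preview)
Your argument is correct, but it is considerably more elaborate than necessary. The paper's proof is a three-line direct rank computation: for any $x\in H_{\hat t}\cap D_{\hat u}$, one has $\rk(x/\hat t)\leq r$ (since $x\in H_{\hat t}$) and $\rk(\hat u/\hat t x)\leq\rk(\hat u/x)\leq r$ (by the unconditional upper bound in Lemma~\ref{dual G fibers}), so $\rk(x\hat u/\hat t)\leq 2r$; combined with $\rk(\hat u/\hat t)=2r$ from Lemma~\ref{b gen over a}, additivity forces $\rk(x/\hat t\hat u)=0$. No contradiction, no $\mathcal H$-commonness, no normality check.

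Your route, by contrast, passes through Lemma~\ref{hypersurfaces finitely many common points}(1) to bound $\rk(\hat y)$, which is an extra ingredient the paper does not use here. It works, but it is a detour. Also note that your normality verification is superfluous on two counts: first, $\mathcal C$ is excellent (hence good), so it has no common or rare points by definition, making the check immediate; second, and more to the point, you only need the inequality $\rk(\hat u/\hat y)\leq r$, and Lemma~\ref{dual G fibers} gives $\rk({_xD})\leq r$ for \emph{every} $x\in M^{r+1}$, not just those with normal coordinate pairs. So the entire paragraph about normal points can be deleted without harm.
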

	\begin{proof} Let $x\in H_{\hat t}\cap D_{\hat u}$. It will suffice to show that $\rk(x/\hat t\hat u)=0$. Now we have $\rk(x/\hat t)\leq r$, since $x\in H_{\hat t}$. Moreover, by Lemma \ref{dual G fibers} we have $\rk(\hat u/x)\leq r$. It follows that $\rk(x\hat u/\hat t)\leq 2r$. On the other hand, Lemma \ref{b gen over a} gives that $\rk(\hat u/\hat t)=2r$. Thus we are forced to conclude that $\rk(x/\hat t\hat u)=0$.  
		\end{proof}
	
	The remainder of the proof of Proposition \ref{frontier proof} will be devoted to showing that Lemma \ref{b gen over a} is false. That is, we will obtain a contradiction by showing that $\hat u$ is in fact not $\mathcal M$-generic in $U_{\hat p}$ over $\hat t\hat u$. 
	
	To begin, we define the set $$I=\{(x,t,u)\in M^{r+1}\times T\times U_{\hat p}:x\in H_t\cap D_u\}$$ -- that is, the graph of the `family of intersections' of $\mathcal H$ and $\mathcal Y$.
	
	The main step in the rest of the proof will be showing that, for each $x\in H_{\hat t}\cap D_{\hat u}$, the projection $I\rightarrow T\times U_{\hat p}$ is analytically open near $(x,\hat t,\hat u)$. For now, let us show why this is enough to reach a contradiction:
	
	\begin{lemma}\label{open is enough} Assume that for each $x\in H_{\hat t}\cap D_{\hat u}$ the projection $I\rightarrow T\times U_{\hat p}$ is analytically open near $(x,\hat t,\hat u)$. Then $\hat u$ is not $\mathcal M$-generic in $U_{\hat p}$ over $\hat t\hat p$.
		\end{lemma}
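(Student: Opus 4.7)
The plan is to argue by contradiction: assume $\hat u$ is $\mathcal M$-generic in $U_{\hat p}$ over $\hat t\hat p$. I will construct an $\mathcal M$-definable set $E \subset U_{\hat p}$ of rank strictly less than $r$ with $\hat u \in \overline E$, and then invoke the inductive hypothesis (Assumption \ref{inductive assumption}) to conclude $\rk(\hat u/\hat t\hat p) < r$, contradicting the supposition.

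A preliminary reduction: I may assume $\hat x \notin H_{\hat t}$, since otherwise taking $x := \hat x$ witnesses conclusion (2) of Proposition \ref{frontier proof} (each $(\hat x_0,\hat x_i)$ is normal in $\mathcal C$, so trivially semi-indistinguishable from itself). Let $N := |H_{\hat t} \cap D_{\hat u}|$, finite by Lemma \ref{ab finite intersection}. Under the supposed $\mathcal M$-genericity of $\hat u$, $N$ is the generic value of the $\mathcal M$-definable function $u \mapsto |H_{\hat t} \cap D_u|$ on the stationary component $U^*$ of $U_{\hat p}$ containing $\hat u$, so I set $E := \{u \in U^* : |H_{\hat t} \cap D_u| > N\}$; this is $\mathcal M$-definable over $\operatorname{acl}_{\mathcal M}(\hat t\hat p)$, with $\rk(E) < r$.

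The heart of the argument is to show $\hat u \in \overline E$. Enumerate $H_{\hat t} \cap D_{\hat u} = \{x_1,\dots,x_N\}$ and choose pairwise disjoint analytic neighborhoods $V_1,\dots,V_N,V_*$ of $x_1,\dots,x_N,\hat x$ respectively (possible since $\hat x \notin H_{\hat t}$). By the hypothesis of the lemma, for each $i$ there is a neighborhood $W_i$ of $(\hat t,\hat u)$ in $T \times U_{\hat p}$ such that every $(t,u) \in W_i$ admits some $x \in V_i$ with $(x,t,u) \in I$. Separately, the projection $Y \to M^{r+1}$ is finite-to-one by Lemma \ref{H finite-to-one}, and a short dimension count (using $\dim Y = (r+1)\dim M$ together with the fact that $\hat p$ is generic in $M^r$ over $\hat x$ and $\hat u$ is algebraic over $(\hat x,\hat p)$) shows that $(\hat x,\hat u)$ is $\mathcal K$-generic in $Y$ over $\hat p$; hence Corollary \ref{generic openness} makes this projection analytically open near $(\hat x,\hat u)$. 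Combining this openness with $\hat x \in \overline{H_{\hat t}}$, I produce sequences $y_n \in H_{\hat t}$ converging to $\hat x$ and $u_n \in U_{\hat p}$ converging to $\hat u$ with $y_n \in D_{u_n}$. For $n$ large, $y_n \in V_*$, $u_n \in U^*$ (since $U^*$ is Zariski-open in $U_{\hat p}$ at $\hat u$, hence analytically open there), and $(\hat t,u_n) \in \bigcap_i W_i$; then $H_{\hat t} \cap D_{u_n}$ contains $y_n$ together with one point inside each $V_i$, yielding at least $N+1$ distinct elements, so $u_n \in E$. This witnesses $\hat u \in \overline E$.

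To finish, $\hat u$ is coordinate-wise generic in $M^{2r}$: this follows from Lemma \ref{b gen}, which provides $\mathcal K$-genericity of $\hat u$ in $U$, and hence genericity of each coordinate in $M$. Applying Assumption \ref{inductive assumption} to $E$ and $\hat u$ gives $\rk(\hat u/\operatorname{acl}_{\mathcal M}(\hat t\hat p)) \leq \rk(E) < r$, contradicting the supposed $\mathcal M$-genericity of $\hat u$. The principal technical hurdle is the production of the sequence $(y_n,u_n)$: one must carefully verify the $\mathcal K$-genericity of $(\hat x,\hat u)$ in $Y$ so that Corollary \ref{generic openness} applies, and then use that $U^*$ is Zariski-open in $U_{\hat p}$ at $\hat u$ to ensure the $u_n$ eventually land in $U^*$. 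Everything else is a mechanical combination of the openness hypothesis with standard closure arguments.
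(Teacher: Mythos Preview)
Your proposal is correct and follows essentially the same approach as the paper's proof: both use the openness hypotheses (together with the analytic openness of $Y\rightarrow M^{r+1}$ near $(\hat x,\hat u)$, which the paper established earlier in the proof of Lemma \ref{b gen}) to show that $\hat u$ lies in the closure of the set of $u$ for which $|H_{\hat t}\cap D_u|$ exceeds the value at $\hat u$, and then conclude via the inductive hypothesis.

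Two minor remarks. First, your introduction of a stationary component $U^*$ is unnecessary: $U_{\hat p}$ is identified with a generic subset of $M^r$ and is therefore already stationary, so $U^*=U_{\hat p}$ and your set $E$ is $\mathcal M$-definable over $\hat t\hat p$ directly (this also makes the verification that $u_n\in U^*$ trivial). Second, your contradiction framing actually streamlines the paper's argument slightly: by assuming $\hat u$ is $\mathcal M$-generic you immediately get $\rk(E)<r$, whereas the paper defines $Z=\{u:|H_{\hat t}\cap D_u|\neq l\}$ and must split into the cases $\rk(Z)=r$ and $\rk(Z)<r$.
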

	\begin{proof} Let $w_1,...,w_l$ be the distinct intersection points of $H_{\hat t}$ and $D_{\hat u}$ ($l$ is finite by Lemma \ref{ab finite intersection}). We first make a couple of easy remarks:
		
	Recall first that $(\hat x,\hat u)$ is generic in $Y$ over $\hat p$ (for example, see the proof of Claim \ref{b gen}). So by Lemma \ref{H finite-to-one} and Corollary \ref{generic openness}, the projection $Y\rightarrow M^{r+1}$ is analytically open near $(\hat x,\hat u)$.
		
	Note also that we are assuming $\hat x\notin H_{\hat t}$ -- indeed, otherwise condition (2) of Proposition \ref{frontier proof} would trivially hold with $x=\hat x$ (to be precise, (2) would follow from the fact that each ${_{(\hat x_0,\hat x_i)}C}$ is infinite, which holds by the genericity of $\hat x$). Thus the elements $\hat x,w_1,...,w_l$ are distinct.
	
	Now let $Z$ be the set of all $u\in U_{\hat p}$ such that $H_{\hat t}\cap D_u$ is not of size $l$. So $Z$ is $\mathcal M$-definable over $\hat t\hat p$. Then the main point is the following, which is arguably the most important of the analytic topology in this paper:
	
	\begin{claim}\label{closure of big intersections} $\hat u\in\overline Z$.
	\end{claim}
	\begin{proof} Let $V$ be any analytic neighborhood of $\hat u$ in $U_{\hat p}$. We will find some $u\in V\cap Z$.
		
	First, using the analytic openness of $Y\rightarrow M^{r+1}$ near $(\hat x,\hat u)$ and $I\rightarrow T\times U_{\hat p}$ near each $(w_i,\hat t,\hat u)$, and after applying various shrinkings, we can choose sets $W_0,...,W_l$ such that:
		\begin{enumerate}
			\item $W_0$ is an analytic neighborhood of $\hat x$ in $M^{r+1}$, and for each $i\geq 1$ $W_i$ is an analytic neighborhood of $w_i$ in $M^{r+1}$.
			\item The sets $W_0,...,W_l$ are pairwise disjoint.
			\item For each $x\in W_0$ there is some $u\in V$ such that $(x,u)\in Y$.
			\item For each $u\in V$ and $i\leq l$ there is some $x\in W_i$ with $(x,\hat t,u)\in I$.
		\end{enumerate}
	Now using (1) and the fact that $\hat x\in\overline{H_{\hat t}}$, we can choose $x\in W_0\cap H_{\hat t}$, and then choose $u$ as in (3). Then by (4), the intersection $H_{\hat t}\cap D_u$ contains $x$, in addition to one point in each $W_i$. By (2) all of these points are distinct, so $|H_{\hat t}\cap D_u|\geq l+1$, and thus $u\in V\cap Z$.
	\begin{remark} Note that the proof of Claim \ref{closure of big intersections} critically uses that the analytic topology is Hausdorff, in order to guarantee that the $l+1$ intersection points of $H_{\hat t}$ and $D_u$ are distinct.
	\end{remark}
	
		
		\end{proof}

	Continuing with the proof, we also point out:
	
	\begin{claim}\label{IH applies} $\hat u$ is coordinate-wise generic.
	\end{claim}
	\begin{proof} This follows from Lemma \ref{b gen}. Indeed, we have that $\hat u$ is generic in $U$; but since $U$ is itself a generic subset of $M^{2r}$, it follows immediately that $\hat u$ is moreover generic in $M^{2r}$. In particular, $\hat u$ is certainly coordinate-wise generic.
	\end{proof}
	
	Now by Claims \ref{closure of big intersections} and \ref{IH applies}, Proposition \ref{closure prop} should apply to $\hat u\in\overline Z$, with parameter tuple $\hat t\hat p$. In particular, if $\rk(Z)<r$, then the inductive hypothesis will apply. Using this observation, we can easily finish the proof of Lemma \ref{open is enough} in two cases:
	
	\begin{itemize}
		\item First suppose that $Z$ is generic in $U_{\hat p}$. Recall that $U_{\hat p}$ is naturally identified with a generic subset of $M^r$; in particular, $U_{\hat p}$ is stationary. We thus conclude that $U_{\hat p}-Z$ is non-generic in $U_{\hat p}$. But by definition $\hat u\in U_{\hat p}-Z$, so it follows immediately that $\hat u$ is not $\mathcal M$-generic in $U_{\hat p}$ over $\hat t\hat p$.
		\item Now suppose that $Z$ is non-generic in $U_{\hat p}$. So $\rk(Z)<r$. By the inductive hypotheses, and since $\hat u\in\overline Z$, we conclude that $$\rk(\hat u/\hat t\hat p)\leq\rk(Z)<r,$$ so that again $\hat u$ is not $\mathcal M$-generic in $U_{\hat p}$ over $\hat t\hat p$.
	\end{itemize}
	The proof of Lemma \ref{open is enough} is now complete.
	\end{proof} 
	So it remains to show that the projection $I\rightarrow T\times U_{\hat p}$ is analytically open near each intersection point of $H_{\hat t}$ and $D_{\hat u}$. We thus fix, for the rest of the proof of Proposition \ref{frontier proof}, an element $x=(x_0,...,x_r)\in H_{\hat t}\cap D_{\hat u}$. Our strategy will be to verify each of the hypotheses of Proposition \ref{smooth fiber product}(4), with $X=M^{r+1}$, $Y_1=T$, $Y_2=U_{\hat p}$, $Z_1=H$, $Z_2=Y$, $A_1=\emptyset$, and $A_2=\{\hat p\}$. Thus our main remaining task is to show that $(x,\hat t)$ and $(x,\hat u)$ are generic in $H$ and $Y$ over $\emptyset$ and $\hat p$, respectively -- or equivalently that $x$ is generic in $H_{\hat t}$ over $\hat t$ and in $D_{\hat u}$ over $\hat p\hat u$. We turn now toward this goal. Unfortunately, establishing the required genericities will be quite complicated -- in particular, we will need to undergo a hefty rank computation. The first of many steps is the following:
	\begin{lemma}\label{j=0} For each $i=1,...,r$, the pairs $(\hat x_0,\hat x_i)$ and $(x_0,x_i)$ are not semi-indistinguishable in $\mathcal C$.
	\end{lemma}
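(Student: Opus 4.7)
The plan is to argue by contradiction: suppose that for some $i_0 \in \{1,\dots,r\}$, the pairs $(\hat x_0,\hat x_{i_0})$ and $(x_0,x_{i_0})$ are semi-indistinguishable in $\mathcal C$. I want to derive, from the existence of such an $i_0$, one of the three conditions (1), (2), (3) of Proposition \ref{frontier proof}, thus contradicting the standing assumption that they all fail.

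The first step is purely algebraic. Since $\mathcal C$ is excellent, hence good, it has no common points, so Lemma \ref{indistinguishable interalgebraic}(2) applies: $(x_0,x_{i_0})$ and $(\hat x_0,\hat x_{i_0})$ are $\mathcal M$-interalgebraic over $\emptyset$. Combined with the fact that $x\in D_{\hat u}$ forces $(x_0,x_j)\in C_{\hat s_j}$ for each $j$, together with the non-triviality of each $C_{\hat s_j}$ (part of excellence) which makes the projection to the first coordinate finite-to-one, I get $x_j\in\operatorname{acl}_{\mathcal M}(x_0\hat s_j)$ for every $j$. Chaining these: $x\in\operatorname{acl}_{\mathcal M}(\hat x_0\hat x_{i_0}\hat u)$.

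The next step is a rank count. Using Lemma \ref{b gen over a} I have $\rk(\hat u/\hat t)=2r$; the tuple $\hat x$ is generic in $M^{r+1}$ over $\hat t$ and in particular $(\hat x_0,\hat x_{i_0})$ has rank $2$ over $\hat t$. The algebraicity $x\in\operatorname{acl}_{\mathcal M}(\hat x_0\hat x_{i_0}\hat u)$ then pins $x$ into a small piece relative to $\hat t\hat u$. On the other side, $x\in H_{\hat t}\cap D_{\hat u}$, so Lemma \ref{ab finite intersection} gives $\rk(x/\hat t\hat u)=0$. Tracing the rank of $(\hat t,\hat x_0,\hat x_{i_0},\hat u,x)$ through additivity in two different orders, I compare the slack. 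Either the constraint forces $\rk(\hat x/\hat t)<r$ (contradicting the failure of (1)), or it forces $\hat x_j\in\operatorname{acl}_{\mathcal M}(\hat t\hat x_0\dots\hat x_{j-1})$ for some $j\geq 2$ (contradicting the failure of (3)); these are the only two escape routes a dimension drop can take, given how the coordinates of $\hat x$ are tied together by $\hat u$. A third possibility — that the semi-indistinguishability propagates to every other index $i$ on $x$ itself — would give directly condition (2); but in the one-index regime it must instead propagate backwards through the already-chosen coordinates $\hat x_0,\dots,\hat x_{i_0-1}$.

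The main obstacle will be the bookkeeping in the rank computation, as there are several mildly dependent tuples in play ($\hat t$, $\hat x$, $\hat p$, $\hat u$), and I must avoid double-counting while extracting the strict inequality that produces the contradiction. The cleanest route seems to be to write $\rk(\hat x\hat u x/\hat t)$ two ways — first via $\rk(\hat x/\hat t)+\rk(\hat u/\hat t\hat x)+\rk(x/\hat t\hat u\hat x)$, second via $\rk(\hat x_0\hat x_{i_0}/\hat t)+\rk(\hat u/\hat t\hat x_0\hat x_{i_0})+\rk(x/\hat t\hat x_0\hat x_{i_0}\hat u)+\rk(\hat x/\hat t\hat x_0\hat x_{i_0}\hat u x)$ — and then to exploit the collapse $\rk(x/\hat t\hat x_0\hat x_{i_0}\hat u)=0$ coming from semi-indistinguishability, together with the fact that the $r-1$ remaining coordinates of $\hat x$ must be accounted for among $\hat t,\hat u$, to extract either the drop in $\rk(\hat x/\hat t)$ or an algebraicity among the $\hat x_j$'s.
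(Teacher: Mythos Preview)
Your approach has a genuine gap: the rank computation you sketch does not close, and the crucial ingredient from the paper --- the non-triviality of $H_{\hat t}$ --- is never invoked.

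Concretely, carry out your two-way count of $\rk(\hat x\hat u x/\hat t)$. The first way gives $\rk(\hat x/\hat t)+\rk(\hat u/\hat t\hat x)+\rk(x/\hat t\hat x\hat u)=\rk(\hat x/\hat t)+r+0$, using Lemma~\ref{b gen over x} and Lemma~\ref{ab finite intersection}. For the second way, your key observation $x\in\operatorname{acl}_{\mathcal M}(\hat x_0\hat x_{i_0}\hat u)$ is correct but buys nothing beyond what Lemma~\ref{ab finite intersection} already gives. One computes $\rk(\hat x_0\hat x_{i_0}\hat u/\hat t)\leq 2r+1$ (since $(\hat x_0,\hat x_{i_0})\in C_{\hat s_{i_0}}$), and $\rk(\hat x/\hat x_0\hat u)=0$ (since $\hat x\in D_{\hat u}$), so the second way yields only $\rk(\hat x/\hat t)+r\leq 2r+1$, i.e.\ $\rk(\hat x/\hat t)\leq r+1$. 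This is vacuous. Your claim that ``a dimension drop must occur'' and be channeled into (1) or (3) is not supported by the arithmetic; there is no drop.

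The paper's argument is structurally different. It considers the full set $J$ of semi-indistinguishable indices, with $j=|J|$; the case $j=r$ gives condition (2) immediately, so one may assume $j<r$. The interalgebraicity of $\hat x_0\hat x_J$ with $x_0x_J$ (your step 2, but for all of $J$) combined with failure of (1) gives $\rk(x_0x_J/\hat t)\geq j$. The decisive step is then to use the \emph{non-triviality of $H_{\hat t}$}: if $\rk(x/\hat t\hat x)\geq r-j$, one shows $x$ is $\mathcal M$-generic in $H_{\hat t}$, whence any $\leq r$ coordinates of $x$ are independent over $\hat t$; since $j<r$, this upgrades $\rk(x_0x_J/\hat t)$ from $\geq j$ to $=j+1$, forcing $\rk(x/\hat t)\geq r+1$, absurd. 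So $\rk(x/\hat t\hat x)<r-j$. On the other hand $\hat u\in{_{\hat x}D}\cap{_xD}$, a product of fibers of rank exactly $j$, so $\rk(\hat u/\hat xx)\leq j$. Summing, $\rk(x\hat u/\hat t\hat x)<r$, contradicting $\rk(\hat u/\hat t\hat x)=r$ from Lemma~\ref{b gen over x}. Neither the full index set $J$, nor the non-triviality of $H_{\hat t}$, nor the bound $\rk(\hat u/\hat xx)\leq j$ appears in your sketch; without them the argument cannot be completed.
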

	\begin{proof} Assume the lemma is false. Let $J\subset\{1,...,r\}$ be the (non-empty by assumption) set of $i$ such that these pairs are semi-indistinguishable in $\mathcal C$, and let $j=|J|$. Note that by assumption $J$ cannot be all of $\{1,...,r\}$ -- indeed, otherwise we would have (2) in the statement of Proposition \ref{frontier proof}. We thus have $0<j<r$.
		
	Let $\hat x_J$ be the tuple of $\hat x_i$'s for $i\in J$, and similarly let $x_J$ be the tuple of $x_i$'s for $i\in J$. We note:
	
	\begin{claim}\label{ind locus interalg} The tuples $\hat x_0\hat x_J$ and $x_0x_J$ are $\mathcal M$-interalgebraic over $\emptyset$.
		\end{claim}
	\begin{proof} For each $i\in J$, it follows from Lemma \ref{indistinguishable interalgebraic} that $(\hat x_0,\hat x_i)$ and $(x_0,x_i)$ are $\mathcal M$-interalgebraic over $\emptyset$. The claim follows immediately.
		\end{proof}
	
	We pause to point out a subtle feature of Claim \ref{ind locus interalg} -- namely, the claim would fail if $j=0$. Indeed, in this case there would be no obvious way to connect $\hat x_0$ to $x_0$ without using the $\mathcal M$-interalgebraicity of $(\hat x_0,\hat x_i)$ and $(x_0,x_i)$ for some $i\geq 1$. This small point is quite crucial: indeed, it is the only use we will make of the assumption that $j>0$.
	
	Continuing, we note the following:
	
	\begin{claim}\label{x_J at least j} $\rk(x_0x_J/\hat t)\geq j$.
	\end{claim}
	\begin{proof} By Claim \ref{ind locus interalg}, it is equivalent to show that $\rk(\hat x_0\hat x_J/\hat t)\geq j$. But if this were not true, then we would have a $j+1$-subtuple of $\hat x$ -- namely $\hat x_0\hat x_J$ -- of rank at most $j-1$ over $\hat t$. It would follow that $\hat x$ has corank at least 2 over $\hat t$, or more precisely that $\rk(\hat x/\hat t)\leq(r+1)-2=r-1$. We thus obtain (1) in the statement of Proposition \ref{frontier proof}, contradicting our assumption.
	\end{proof}

	The following consequence of Claim \ref{x_J at least j} is crucial, but quite subtle; it is in fact the only use we will make of the non-triviality of $H_{\hat t}$:

	\begin{claim}\label{the use of non-triviality} $\rk(x/\hat t\hat x)<r-j$.
		\end{claim}
	\begin{proof}
		Toward a contradiction, assume that $\rk(x/\hat t\hat x)\geq r-j$. Since $\hat x_0\hat x_J$ is a subtuple of $\hat x$, it follows that $\rk(x/\hat t\hat x_0\hat x_J)\geq r-j$. Equivalently, by Claim \ref{ind locus interalg}, we get that $\rk(x/\hat tx_0x_J)\geq r-j$. Now combining with Claim \ref{x_J at least j}, we conclude that $\rk(x/\hat t)\geq r$, or in other words that $x$ is $\mathcal M$-generic in $H_{\hat t}$ over $\hat t$.
		
		As hinted at above, we now obtain a contradiction by using the fact that $H_{\hat t}$ is non-trivial. Precisely, since $x$ is $\mathcal M$-generic in $H_{\hat t}$ over $\hat t$, it follows by non-triviality that any $r$ coordinates of $H_{\hat t}$ are $\mathcal M$-generic and $\mathcal M$-independent in $M$ over $\hat t$ -- and thus the same automatically holds for all $r'\leq r$. Now since we are assuming $j<r$, the $j+1$-subtuple $x_0x_J$ of $x$ has length at most $r$. In particular, we obtain $\rk(x_0x_J/\hat t)=j+1$. But again, from the previous paragraph we have $\rk(x/\hat tx_0x_J)\geq r-j$. By additivity, we thus obtain $\rk(x/\hat t)\geq r+1$. This is a contradiction since $x\in H_{\hat t}$.
	\end{proof}

	The following claim is the last ingredient in the proof of Lemma \ref{j=0}:
	\begin{claim}\label{b over x at most ind locus}
		$\rk(\hat u/\hat xx)\leq j$.
	\end{claim}
	\begin{proof} Note that $\hat u$ belongs to the set $L$ of those $u\in U$ such that $D_u$ contains both $\hat x$ and $x$. By definition $L$ is the product, over $i=1,...,r$, of the sets $L_i={_{(\hat x_0,\hat x_i)}C}$. Now by the excellence of the family $\mathcal C$, we have $\rk(L_i)\leq 1$ for each $i$, and by definition equality holds if and only if $i\in J$. Thus there are only $j$ values of $i$ for which $\rk(L_i)=1$, and the rest of the $L_i$ are finite. This gives that $\rk(L)=j$, which implies the claim.
		\end{proof}
	
	We can now finish the proof of Lemma \ref{j=0}. By combining Claims \ref{the use of non-triviality} and \ref{b over x at most ind locus}, we obtain that $\rk(x\hat u/\hat t\hat x)<r$. In particular, $\rk(\hat u/\hat t\hat x)<r$. But this directly contradicts Lemma \ref{b gen over x}.
	\end{proof}
	As an immediate corollary of Lemma \ref{j=0}, we obtain:
	\begin{lemma}\label{b alg over the x's} $\hat u\in\operatorname{acl}_{\mathcal M}(\hat xx)$.
	\end{lemma}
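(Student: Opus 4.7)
The plan is to exploit Lemma \ref{j=0} directly via the defining property of semi-indistinguishability. Unpacking the definitions: since both $\hat x \in D_{\hat u}$ and $x \in D_{\hat u}$, the $i$-th coordinate $\hat s_i$ of $\hat u$ belongs to the intersection ${_{(\hat x_0, \hat x_i)} C} \cap {_{(x_0, x_i)} C}$ for each $i = 1, \ldots, r$. Our excellent family $\mathcal C$ has rank $2$, so the semi-indistinguishability of the pairs $(\hat x_0,\hat x_i)$ and $(x_0,x_i)$ would amount to that intersection having rank at least $r_{\mathcal C}-1 = 1$.

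First I would observe that Lemma \ref{j=0} precisely rules this out: for each $i$, we have $\rk\bigl({_{(\hat x_0, \hat x_i)} C} \cap {_{(x_0, x_i)} C}\bigr) < 1$, so this intersection is finite. Consequently $\hat s_i \in \operatorname{acl}_{\mathcal M}(\hat x_0\hat x_i x_0 x_i) \subseteq \operatorname{acl}_{\mathcal M}(\hat x x)$. Since $\hat u = (\hat s_1, \ldots, \hat s_r)$ is a tuple of coordinates each lying in $\operatorname{acl}_{\mathcal M}(\hat x x)$, we conclude $\hat u \in \operatorname{acl}_{\mathcal M}(\hat x x)$, as desired.

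There is essentially no obstacle here: the content is entirely packed into Lemma \ref{j=0}, and the present corollary is just the translation of "not semi-indistinguishable for all $i$" into the algebraicity of each curve parameter $\hat s_i$ over the two pairs of coordinates that it connects. The only thing to double-check is that the definition of semi-indistinguishability in Definition \ref{indistinguishable} is being applied to the dual family $\mathcal C^\vee$ (since the relevant fibers ${_{(\cdot,\cdot)}C}$ are fibers in $\mathcal C$ viewed point-wise), but this is harmless since $\mathcal C$ excellent means $\mathcal C^\vee$ is also a good rank-$2$ almost faithful family, and the notion of semi-indistinguishability used in Lemma \ref{j=0} is exactly the one governing these fibers.
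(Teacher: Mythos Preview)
Your proof is correct and essentially identical to the paper's: the paper simply says to repeat the argument of Claim~\ref{b over x at most ind locus} (which bounds $\rk(\hat u/\hat xx)\leq j$ via the product of the intersections ${_{(\hat x_0,\hat x_i)}C}\cap{_{(x_0,x_i)}C}$) with $j=0$ as now guaranteed by Lemma~\ref{j=0}, and you have written out exactly that computation. One small remark: your final paragraph about the dual family $\mathcal C^\vee$ is unnecessary and slightly muddled---Definition~\ref{indistinguishable} applied directly to $\mathcal C$ already says that $(\hat x_0,\hat x_i)$ and $(x_0,x_i)$ are semi-indistinguishable in $\mathcal C$ iff ${_{(\hat x_0,\hat x_i)}C}\cap{_{(x_0,x_i)}C}$ has rank $\geq 1$, so no passage to $\mathcal C^\vee$ is needed.
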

	\begin{proof} We repeat the statement and proof of Claim \ref{b over x at most ind locus}. However this time we know that $j=0$ by Lemma \ref{j=0}. The result is that $\rk(\hat u/\hat xx)=0$.
		\end{proof}
	We can now deduce the first of our two desired genericity statements:
	\begin{lemma}\label{x gen over a} $x$ is generic in $H_{\hat t}$ over $\hat t\hat x$, and thus also over $\hat t$.
	\end{lemma}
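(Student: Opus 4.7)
The plan is to compute $\dim(x\hat u/\hat t\hat x)$ in two different ways via additivity of dimension in $\mathcal K$, equate the answers, and read off the required genericity of $x$ in $H_{\hat t}$ over $\hat t\hat x$.

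First I would invoke Lemma \ref{b alg over the x's}, which gives $\hat u\in\operatorname{acl}_{\mathcal M}(\hat x x)$; this forces $\rk(\hat u/\hat t\hat x x)=0$, and Corollary \ref{dim rk inequality} upgrades this to $\dim(\hat u/\hat t\hat x x)=0$. Additivity of $\dim$ then yields
\[\dim(x\hat u/\hat t\hat x)\;=\;\dim(x/\hat t\hat x)+\dim(\hat u/\hat t\hat x x)\;=\;\dim(x/\hat t\hat x).\]

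For the second computation, Lemma \ref{b gen over x} (together with Lemma \ref{coherent iff generic}) supplies $\dim(\hat u/\hat t\hat x)=r\cdot\dim M$, since $\hat u$ has rank $r$ and is coherent over $\hat t\hat x$. Meanwhile, Lemma \ref{ab finite intersection} tells us $H_{\hat t}\cap D_{\hat u}$ is finite, so from $x\in H_{\hat t}\cap D_{\hat u}$ we get $\dim(x/\hat t\hat u)=0$ and hence $\dim(x/\hat t\hat x\hat u)=0$. A second application of additivity then gives
\[\dim(x\hat u/\hat t\hat x)\;=\;\dim(\hat u/\hat t\hat x)+\dim(x/\hat t\hat x\hat u)\;=\;r\cdot\dim M.\]

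Equating the two expressions yields $\dim(x/\hat t\hat x)=r\cdot\dim M=\dim H_{\hat t}$, which (combined with $x\in H_{\hat t}$) is exactly the assertion that $x$ is generic in $H_{\hat t}$ over $\hat t\hat x$. The ``thus also over $\hat t$'' clause is then immediate, since $\dim(x/\hat t)\geq\dim(x/\hat t\hat x)$ while $\dim(x/\hat t)\leq\dim H_{\hat t}$ is automatic from $x\in H_{\hat t}$.

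I don't anticipate any real obstacle: the lemma is essentially a bookkeeping consequence of the three immediately preceding lemmas together with additivity of dimension. The only point requiring care is that ``generic'' is interpreted in the sense of $\mathcal K$ by Convention \ref{dim rk conventions}(5), so the whole argument must be carried out with $\dim$ rather than $\rk$; this is precisely why the coherence half of Lemma \ref{b gen over x} is indispensable, and why the $\mathcal M$-algebraicity of Lemma \ref{b alg over the x's} has to be passed through Corollary \ref{dim rk inequality} before it can enter the additivity calculation.
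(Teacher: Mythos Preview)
Your proof is correct and follows essentially the same approach as the paper. Both arguments combine Lemmas \ref{b alg over the x's}, \ref{ab finite intersection}, and \ref{b gen over x}; the paper phrases this as ``$x$ and $\hat u$ are $\mathcal M$-interalgebraic over $\hat t\hat x$'' and then invokes the coherence machinery (Lemma \ref{coherent preservation}) to transfer coherence of rank $r$ from $\hat u$ to $x$, whereas you unwind that machinery by computing $\dim(x\hat u/\hat t\hat x)$ two ways via additivity---but this is the same argument in slightly different packaging.
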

	\begin{proof}
		Combining Lemmas \ref{b alg over the x's} and \ref{ab finite intersection} gives that $x$ and $\hat u$ are $\mathcal M$-interalgebraic over $\hat t\hat x$. By Lemma \ref{b gen over x}, we conclude that $x$ is coherent of rank $r$ over $\hat t\hat x$. Since $x$ belongs to $H_{\hat t}$, and $\rk(H_{\hat t})=r$, we equivalently get that $x$ is generic in $H_{\hat t}$ over $\hat t\hat x$. Of course this implies that $x$ is generic in $H_{\hat t}$ over $\hat t$ as well.
	\end{proof}
	It will be non-trivial to get from Lemma \ref{x gen over a} to the genericitiy of $x$ in $D_{\hat u}$. The main step is the following, which says that we can transfer the genericity of $x$ over $(\hat t,\hat x)$ to genericity in the ambient space over $\hat x$.
	\begin{lemma}\label{sweeping} $x$ is generic in $M^{r+1}$ over $\hat x$.
	\end{lemma}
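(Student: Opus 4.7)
The plan is to reduce $\dim(x/\hat x)=(r+1)\dim M$ to showing that $x$ is generic in the rank-$1$ curve $D_{\hat u}$ over $\hat u\hat x$, and then establish this via an $\mathcal M$-rank computation backed by a canonical-base argument. Since $\hat u\in\operatorname{acl}_{\mathcal M}(x\hat x)$ by Lemma \ref{b alg over the x's} (giving $\dim(\hat u/x\hat x)=0$ via Corollary \ref{dim rk inequality}), additivity yields
\[ \dim(x/\hat x)=\dim(\hat u/\hat x)+\dim(x/\hat u\hat x). \]
I would then compute $\dim(\hat u/\hat x)=r\dim M$: using $\hat s_i=(\hat p_i,\hat q_i)\in{_{(\hat x_0,\hat x_i)}C}$, the tuple $\hat p$ is generic in $M^r$ over $\hat t\hat x$ (hence over $\hat x$), each $\hat q_i$ is $\mathcal M$-algebraic over $\hat x\hat p_i$ by non-triviality of ${_{(\hat x_0,\hat x_i)}C}$, and coherence propagates through Lemma \ref{coherent preservation}. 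The task becomes $\dim(x/\hat u\hat x)=\dim M$, which by Lemma \ref{coherent iff generic} splits into $\rk(x/\hat u\hat x)=1$ and coherence of $x$ over $\hat u\hat x$.

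For the $\mathcal M$-rank part, I would first show $\rk(x)=r+1$: Lemma \ref{x gen over a} gives $\rk(x/\hat t)=r$ and so $\rk(x,\hat t)=k+r$, while $\rk(x)=r$ would force $\rk(\hat t/x)=k$, making $x$ a common point of $\mathcal H$ in contradiction with Lemma \ref{hypersurfaces finitely many common points}(1); hence $\rk(\hat t/x)=k-1$. Combining Lemma \ref{b gen over x} ($\rk(\hat u/\hat t\hat x)=r$) with $\hat u\in\operatorname{acl}_{\mathcal M}(x\hat x)$, two additivity computations yield $\rk(x/\hat t\hat u\hat x)=0$ and $\rk(\hat t/\hat u\hat x)=\rk(\hat t/\hat x)$, giving
\[ \rk(x/\hat u\hat x)=\rk(\hat t/\hat x)-\rk(\hat t/x\hat u\hat x). \]
Since $\rk(\hat t/\hat x)\in\{k-1,k\}$ and $\rk(\hat t/x\hat u\hat x)\leq\rk(\hat t/x)=k-1$, the case $\rk(\hat t/\hat x)=k$ delivers $\rk(x/\hat u\hat x)\geq 1$, with equality since $x\in D_{\hat u}$.

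The essential subcase $\rk(\hat t/\hat x)=k-1$ requires ruling out $\rk(\hat t/x\hat u\hat x)=k-1$. If this equality held, the stationary $\mathcal M$-type of $\hat t$ over $x\hat u\hat x$ would be a non-forking extension of its restrictions to both $x$ and $\hat x$, so the shared canonical base $c$ would lie in $\operatorname{acl}_{\mathcal M}(x)\cap\operatorname{acl}_{\mathcal M}(\hat x)$. A direct rank count on $\{(y,y',t):y,y'\in H_t\}\subset M^{r+1}\times M^{r+1}\times T$ shows that for generic independent $y\neq y'$ the intersection ${_yH}\cap{_{y'}H}$ has rank $k-2$, making the family $\{{_yH}\}_{y\in M^{r+1}}$ almost faithful at generic $y$. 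Since $\rk(x)=r+1$ makes $x$ generic in $M^{r+1}$, Lemma \ref{almost faithful facts}(3) identifies $c$ (as the canonical base of the stationary $\mathcal M$-type of $\hat t$ over $x$, coming from ${_xH}$) with $x$ up to $\mathcal M$-interalgebraicity, forcing $x\in\operatorname{acl}_{\mathcal M}(\hat x)$ and contradicting $\rk(x/\hat x)=r+\rk(x/\hat u\hat x)\geq r\geq 1$.

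Coherence of $x$ over $\hat u\hat x$ is handled by propagating through Lemma \ref{coherent preservation} starting from the coherence of $x$ over $\hat t\hat x$ (Lemma \ref{x gen over a}), using $\hat u\in\operatorname{acl}_{\mathcal M}(\hat t\hat x x)$ to extend the base to $\hat t\hat u\hat x$, and then using the rank identities derived above together with the already-established coherence of $\hat u\hat x$ over $\emptyset$ to remove $\hat t$ from the base. The main obstacle is the subcase $\rk(\hat t/\hat x)=k-1$ of the rank argument, where the naive bounds only give $\rk(x/\hat u\hat x)\geq 0$ and the contradiction must be extracted via the almost faithfulness of the dual hypersurface family $\{{_yH}\}_y$ combined with canonical-base tracking.
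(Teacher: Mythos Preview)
Your approach has a genuine gap in the ``essential subcase'' $\rk(\hat t/\hat x)=k-1$. The rank count on $\{(y,y',t):y,y'\in H_t\}$ only shows that for a \emph{generic independent pair} $(y,y')$ the intersection ${_yH}\cap{_{y'}H}$ has rank $k-2$; it does \emph{not} show that for generic $y$ there are only finitely many $y'$ with $\rk({_yH}\cap{_{y'}H})=k-1$. Indeed, the bound $\rk\{(y,y',t):y,y'\in H_t\}=k+2r$ only forces the locus $\{(y,y'):\rk({_yH}\cap{_{y'}H})\geq k-1\}$ to have rank at most $2r+1$, so over a generic $y$ the fiber is bounded only by rank $r$, not $0$. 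Without genuine almost faithfulness of the dual family you cannot invoke Lemma~\ref{almost faithful facts}(3), and hence cannot conclude that $c$ and $x$ are $\mathcal M$-interalgebraic.

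A telling structural sign is that your argument never uses the hypothesis $k>(r+1)\dim M$, and the coherence step at the end (removing $\hat t$ from the base) would in effect require knowing $\dim(\hat t/\hat x)$, which is precisely the quantity the hypotheses leave uncontrolled. The paper's proof takes an entirely different route that exploits $k>(r+1)\dim M$ directly: take $w_1,\dots,w_k$ independent realizations of the $\mathcal K$-type of $x$ over $\hat t\hat x$; an almost-faithfulness count on $\mathcal H$ shows $\hat t\in\operatorname{acl}_{\mathcal M}(w_1,\dots,w_k)$, so the $w_i$ are independent generics in $M^{r+1}$. If $x$ were non-generic over $\hat x$ then each $w_i$ would be too, forcing $(\hat x,w_1,\dots,w_k)$ to have codimension at least $k$ in $M^{(k+1)(r+1)}$; but since $(w_1,\dots,w_k)$ is generic in $M^{k(r+1)}$ the codimension is at most $(r+1)\dim M$, contradicting $k>(r+1)\dim M$.
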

	\begin{proof} Recall that $k=\rk\mathcal H>(r+1)\cdot\dim M$ by assumption. We let $w_1,...,w_k$ be independent realizations of the $\mathcal K$-type of $x$ over $\hat t\hat x$ -- so in particular $w_1,...,w_k$ are independent generics in $H_{\hat t}$ over $\hat t\hat x$. Additionally, let $t$ be an independent realization of the $\mathcal K$-type of $\hat t$ over $(w_1,...,w_k)$. We then show:
		\begin{claim}\label{independent hypersurface alg} $\rk(\hat t,w_1,...,w_k)=\rk(\hat t,t,w_1,...,w_k)=k(r+1)$.
		\end{claim}
		\begin{proof} Since $\hat t$ is generic in $T$, and $w_1,...,w_k$ are independent generics in $H_{\hat t}$ over $\hat t$, we obtain $\rk(\hat t,w_1,...,w_k)=k+kr=k(r+1)$. Thus it suffices to show that $\rk(\hat t,t,w_1,...,w_k)\leq k(r+1)$. We have two cases:
		\begin{itemize}
			\item First suppose $\rk(H_{\hat t}\cap H_t)=r$. Then, by the almost faithfulness of $\mathcal H$, $t\in\operatorname{acl}_{\mathcal M}(\hat t)$. It follows that $$\rk(\hat t,t,w_1,...,w_k)=\rk(\hat t,w_1,...,w_k)=k(r+1),$$ as desired.
			\item Now suppose $\rk(H_{\hat t}\cap H_t)\leq r-1$. Then, counting in order, we have $$\rk(\hat t,t,w_1,...,w_k)\leq k+k+k(r-1)=k(r+1),$$ again as desired.
		\end{itemize}
		\end{proof}
		Now by Claim \ref{independent hypersurface alg}, and the definition of $t$, it follows that in fact $\hat t\in\operatorname{acl}_{\mathcal M}(w_1,...,w_k)$, so that $$\rk(w_1,...,w_k)=\rk(\hat t,w_1,...,w_k)=k(r+1).$$ Thus $w_1,...,w_k$ are $\mathcal M$-independent $\mathcal M$-generics in $M^{r+1}$ over $\emptyset$. We moreover conclude:
		\begin{claim}\label{w's are generic} $w_1,...,w_k$ are independent generics (that is, in the sense of $\mathcal K$) in $M^{r+1}$ over $\emptyset$.
		\end{claim}
		\begin{proof} By the preceding remarks and Lemma \ref{coherent iff generic}, it suffices to show that $(w_1,...,w_k)$ is coherent. But by Lemma \ref{coherent iff generic} again, and since $w_1,...,w_k$ are independent generics in $H_{\hat t}$, it is clear that $(w_1,...,w_k)$ is coherent over $\hat t$. Moreover, $\hat t$ is coherent over $\emptyset$ for the same reason, since it is generic in $T$. The coherence of $(w_1,...,w_k)$ then follows by repeated applications of Lemma \ref{coherent preservation}.
			\end{proof}
		Now toward a proof of Lemma \ref{sweeping}, assume $x$ is not generic in $M^{r+1}$ over $\hat x$. By definition of the $w_i$'s, each $w_i$ is therefore non-generic in $M^{r+1}$ over $\hat x$. In particular, the tuple $(\hat x,w_1,...,w_k)$ has codimension at least $k$ in $M^{(k+1)(r+1)}$. But on the other hand, Claim \ref{w's are generic} gives that $(w_1,...,w_k)$ is generic in $M^{k(r+1)}$. Thus the codimension of $(\hat x,w_1,...,w_k)$ in $M^{(k+1)(r+1)}$ is at most $\dim\hat x=(r+1)\cdot\dim M$. We conclude in particular that $k\leq(r+1)\cdot\dim M$, contradicting our assumption on $k$. 
		\end{proof}
	Finally, we conclude:
	\begin{lemma}\label{x gen over b} $x$ is generic in $D_{\hat u}$ over $\hat u\hat x$, and thus also over $\hat p\hat u$.
	\end{lemma}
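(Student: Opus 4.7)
\medskip

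\noindent\textbf{Proof plan.} The plan is to combine Lemmas~\ref{sweeping} and \ref{b alg over the x's} with the bound on $\rk({_{\hat x}D})$ from Lemma~\ref{dual G fibers} in order to force $\rk(x/\hat x\hat u)\geq 1$ by a short rank computation, and then to upgrade this $\mathcal M$-genericity to full genericity via the coherence machinery. Once genericity of $x$ in $D_{\hat u}$ over $\hat u\hat x$ is in hand, the statement over $\hat p\hat u$ will be essentially free, because $\hat p$ is definable from $\hat u$ (each coordinate $\hat s_i$ of $\hat u$ has the form $(\hat p_i,\hat q_i)$).

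\medskip

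\noindent First I would record that by Lemma~\ref{sweeping} the point $x$ is generic in $M^{r+1}$ over $\hat x$; in particular $\rk(x/\hat x)=r+1$ and $x$ is coherent over $\hat x$. Next, by Lemma~\ref{b alg over the x's}, $\hat u\in\operatorname{acl}_{\mathcal M}(\hat xx)$, so $\rk(\hat u/\hat xx)=0$ and additivity gives
\[
\rk(x\hat u/\hat x)=\rk(x/\hat x)+\rk(\hat u/\hat xx)=r+1.
\]
On the other hand, $\hat u\in {_{\hat x}D}$, and Lemma~\ref{dual G fibers} gives $\rk({_{\hat x}D})\leq r$, hence $\rk(\hat u/\hat x)\leq r$. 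Applying additivity in the other order,
\[
r+1=\rk(x\hat u/\hat x)=\rk(\hat u/\hat x)+\rk(x/\hat x\hat u)\leq r+\rk(x/\hat x\hat u),
\]
so $\rk(x/\hat x\hat u)\geq 1$. Since $x\in D_{\hat u}$ and $\rk(D_{\hat u})=1$, equality holds and $x$ is $\mathcal M$-generic in $D_{\hat u}$ over $\hat x\hat u$.

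\medskip

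\noindent To turn this into genericity in the sense of $\mathcal K$, I would invoke Lemma~\ref{coherent preservation}(2) with $a=x$ and $b=\hat u$ over the base $\hat x$: coherence of $x$ over $\hat x$ together with $\hat u\in\operatorname{acl}_{\mathcal M}(\hat xx)$ yields that $x$ is coherent over $\hat x\hat u$. Combining $\mathcal M$-genericity and coherence via Lemma~\ref{coherent iff generic} gives that $x$ is generic in $D_{\hat u}$ over $\hat x\hat u$. Finally, dropping $\hat x$ from the parameters can only increase the rank (and dimension) of $x$, while keeping it capped by $\rk(D_{\hat u})=1$, so $x$ remains generic in $D_{\hat u}$ over $\hat u$ alone, and hence over $\hat p\hat u$ since $\hat p\in\operatorname{dcl}(\hat u)$. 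This finishes the proof. There is no real obstacle here; the content is entirely the two additivity estimates, and the role of Lemma~\ref{sweeping} is precisely to replace the base $\hat t\hat x$ (over which we already had genericity of $x$ in $H_{\hat t}$, cf.\ Lemma~\ref{x gen over a}) by the smaller base $\hat x$, which is what allows the cancellation against the bound $\rk({_{\hat x}D})\leq r$.
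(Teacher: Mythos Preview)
Your proof is correct and follows essentially the same route as the paper: both combine Lemma~\ref{sweeping} and Lemma~\ref{b alg over the x's} with a short additivity computation, then pass to $\hat p\hat u$ using $\hat p\in\operatorname{dcl}(\hat u)$. The only cosmetic difference is that the paper computes directly in $\dim$ (using $\dim(\hat u)=2r\cdot\dim M$ from Lemma~\ref{b gen} and then observing $\dim(\hat x x/\hat u)=2\dim M$ forces $\hat x,x$ to be independent generics in $D_{\hat u}$), whereas you work in $\rk$ (bounding $\rk(\hat u/\hat x)\le r$ via Lemma~\ref{dual G fibers}) and then upgrade via the coherence machinery; these are interchangeable variants of the same argument.
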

	\begin{proof} For ease of notation, let us temporarily denote $m=\dim M$. Now by Lemma \ref{sweeping} we have $\dim(\hat xx)=2m(r+1)$, and thus by Lemma \ref{b alg over the x's} we also have $\dim(\hat u\hat xx)=2m(r+1)$. Now $\hat u$ is generic in $U$ by Lemma \ref{b gen}. So we have $\dim(\hat u)=2mr$, and thus by additivity we get $\dim(\hat xx/\hat u)=2m$. Since $\hat x,x\in D_{\hat u}$ and $\dim(D_{\hat u})=m$, this is only possible if $\hat x$ and $x$ are independent generics in $D_{\hat u}$. In particular, $x$ is generic in $D_{\hat u}$ over $\hat u\hat x$. Then since $\hat p$ is definable over $\hat u$, it follows that $x$ is also generic in $D_{\hat u}$ over $\hat p\hat u$.
		\end{proof}
	
	Rewriting our two genericity statements, we obtain:
	
	\begin{lemma}\label{xa and xb gen} $(x,\hat t)$ is generic in $H$ over $\emptyset$, and $(x,\hat u)$ is generic in $Y$ over $\hat p$.
	\end{lemma}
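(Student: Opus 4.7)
The plan is to derive Lemma \ref{xa and xb gen} as a direct repackaging of the genericity statements already established, using the standard principle that a point $(x,t)$ in the graph of a family is generic if and only if the parameter is generic in the index set and the point is generic in the corresponding fiber.

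For the first clause, I would argue as follows. By the hypothesis of Proposition \ref{frontier proof}, $\hat t$ is generic in $T$ over $\emptyset$. By Lemma \ref{x gen over a}, $x$ is generic in $H_{\hat t}$ over $\hat t$. Combining these via additivity of dimension (i.e.\ $\dim(x,\hat t) = \dim \hat t + \dim(x/\hat t) = \dim T + \dim H_{\hat t} = \dim H$), we conclude that $(x,\hat t)$ is generic in $H$ over $\emptyset$. To make this rigorous one simply invokes the analog of Corollary \ref{coherent family ranks} for families of hypersurfaces, noting that $\mathcal H$ is almost faithful and each $H_t$ has the same rank, so $\rk(H) = \rk(T) + r$, and then Lemma \ref{dim rk equality} together with Lemma \ref{coherent iff generic} transfers this from rank to dimension after verifying coherence (which holds since $x \in \operatorname{acl}_{\mathcal M}(\hat t \hat u)$ by Lemma \ref{ab finite intersection} and $\hat u$ is coherent over $\hat t\hat x$ by Lemma \ref{b gen over x}, while $\hat t$ is coherent over $\emptyset$ by genericity).

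For the second clause, the argument is parallel. By Lemma \ref{b gen}, $\hat u$ is generic in $U_{\hat p}$ over $\hat p$; by Lemma \ref{x gen over b}, $x$ is generic in $D_{\hat u}$ over $\hat p\hat u$. By the same additivity (now with the base $\hat p$), $(x,\hat u)$ is generic in $Y$ over $\hat p$; here $Y$ is the graph of $\mathcal Y = \{D_u : u \in U_{\hat p}\}$ and each $D_u$ has dimension $\dim M$, so $\dim Y = \dim U_{\hat p} + \dim M$, matching $\dim \hat u + \dim(x/\hat p\hat u)$ under the established genericities.

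There is no real obstacle here — the lemma is essentially a bookkeeping step that reformulates Lemmas \ref{x gen over a} and \ref{x gen over b} so that Proposition \ref{smooth fiber product}(4) can be applied (with $X = M^{r+1}$, $Z_1 = H$, $Z_2 = Y$, $A_1 = \emptyset$, $A_2 = \{\hat p\}$) to conclude the analytic openness of $I \to T \times U_{\hat p}$ near $(x,\hat t,\hat u)$, which was the remaining hypothesis needed to invoke Lemma \ref{open is enough}. The only mild subtlety is keeping track of which parameter set is involved in each genericity assertion, but this has been arranged by the choice of $\hat p$ as generic over $\hat t\hat x$ so that $\hat p$-independence comes for free on the relevant factors.
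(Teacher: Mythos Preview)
Your proposal is correct and follows essentially the same approach as the paper: combine the genericity of $\hat t$ in $T$ (given by hypothesis) and of $\hat u$ in $U_{\hat p}$ over $\hat p$ (Lemma \ref{b gen}) with the fiberwise genericity of $x$ from Lemmas \ref{x gen over a} and \ref{x gen over b}, then use additivity. The only remark is that your coherence digression is unnecessary: Lemmas \ref{x gen over a} and \ref{x gen over b} already assert genericity in the full $\mathcal K$-sense, so the dimension equalities $\dim(x,\hat t)=\dim T+\dim H_{\hat t}=\dim H$ and $\dim(x,\hat u/\hat p)=\dim U_{\hat p}+\dim D_{\hat u}=\dim Y$ follow immediately without re-verifying coherence.
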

	\begin{proof} Each of $H$ and $Y$ is a family of sets of the same dimension. Moreover, we already showed in Lemmas \ref{x gen over a} and \ref{x gen over b} that $x$ is generic in its respective set in each family (over the relevant parameters in each case); so it would suffice to know that $\hat t$ and $\hat u$ are generic in $T$ and $U_{\hat p}$ over $\emptyset$ and $\hat p$, respectively. But this follows for $\hat t$ by definition, and for $\hat u$ by Lemma \ref{b gen}.
		\end{proof}
	
	We are almost done -- there are just a couple remaining things to check before we can apply Proposition \ref{smooth fiber product}(4). Let us clarify these now:
	
	\begin{lemma}\label{dim R is as expected} $\dim H+\dim Y=\dim T+\dim U+\dim(M^{r+1})$.
	\end{lemma}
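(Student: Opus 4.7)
The plan is to reduce the identity to a routine rank computation combined with Lemma \ref{dim rk equality}. Write $m = \dim M$. The first step is to compute $\dim H$: by additivity of rank applied to the graph of the family $\mathcal{H}$ (generic fibers of $H \to T$ have rank $r$, and the projection is generically dominant), one has $\rk(H) = \rk(T) + r = k + r$, so Lemma \ref{dim rk equality} yields $\dim H = (k+r)\, m$.

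Next, I would compute $\dim Y$ analogously: the restriction of the graph of $\mathcal{D}$ to $U_{\hat p}$ has generic fibers $D_u$ of rank $1$ over a parameter space $U_{\hat p}$ of rank $r$, so $\rk(Y) = r + 1$ and hence $\dim Y = (r+1)\, m$. (One could instead appeal directly to Lemma \ref{H finite-to-one}, which says that $Y \to M^{r+1}$ is finite-to-one and almost surjective, and conclude $\dim Y = \dim M^{r+1} = (r+1)\, m$ immediately.)

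The remaining terms are handled the same way: $\dim T = k\, m$ since $\rk(T) = k$; the space appearing on the right is $U_{\hat p}$ (matching the definition of $Y$ and the eventual application of Proposition \ref{smooth fiber product}(4)), and $\dim U_{\hat p} = r\, m$ because $U_{\hat p}$ was identified with a generic subset of $M^r$; finally $\dim M^{r+1} = (r+1)\, m$. Adding up, the left side becomes $(k + r)\, m + (r+1)\, m = (k + 2r + 1)\, m$, and the right side becomes $k\, m + r\, m + (r+1)\, m = (k + 2r + 1)\, m$, which establishes the equality.

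There is no real obstacle in this lemma; it is purely a bookkeeping verification that the various ranks translate correctly into dimensions via the universal scaling factor $\dim M$. The only point requiring attention is to read the symbol $U$ in the displayed formula as $U_{\hat p}$, since that is the parameter space paired with $Y$ and the one we will feed into Proposition \ref{smooth fiber product}(4).
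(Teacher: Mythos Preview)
Your proposal is correct and follows essentially the same route as the paper: reduce to a rank identity via Lemma \ref{dim rk equality}, compute each rank, and check both sides equal $k+2r+1$. You also correctly flag that the $U$ in the statement should be read as $U_{\hat p}$ (the paper itself computes $\rk(U)=r$, which is really $\rk(U_{\hat p})$, confirming your reading).
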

		\begin{proof} By Lemma \ref{dim rk equality}, it is enough to show the same equality with rank instead of dimension. One then easily computes that $\rk(H)=k+r$, $\rk(Y)=r+1$, $\rk(T)=k$, $\rk(U)=r$, and $\rk(M^{r+1})=r+1$, and plugging these values shows that the equality is indeed satisfied, with both sides equal to $k+2r+1$.
		\end{proof}

	\begin{lemma}\label{generic dominance for openness} Each of the projections $H\rightarrow M^{r+1}$, $H\rightarrow T$, $Y\rightarrow M^{r+1}$, and $Y\rightarrow U_{\hat p}$ is generically dominant.
	\end{lemma}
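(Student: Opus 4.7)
The plan is to verify each of the four projections separately, using the general facts on generic dominance from Section 2 together with rank properties already established for the families $\mathcal H$ and $\mathcal Y$. In each case the statement is essentially immediate once we identify the right previous lemma to invoke.

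First I would handle $H \to M^{r+1}$: this is exactly the content of Lemma \ref{hypersurfaces finitely many common points}(2), applied to $\mathcal H$, which is an almost faithful family of $r$-hypersurfaces with $\rk(T) = k \geq 1$. Next, $H \to T$ is the projection from the graph of a family to its parameter space, and every fiber $H_t$ has the same rank $r$ (by assumption all members of $\mathcal H$ are $r$-hypersurfaces), so Lemma \ref{generic dominance facts}(4) immediately gives generic dominance.

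For $Y \to M^{r+1}$: by Lemma \ref{H finite-to-one} this projection is finite-to-one and almost surjective. Applying Lemma \ref{generic dominance facts}(3) then yields that the projection is generically dominant. Finally, for $Y \to U_{\hat p}$, observe again that this is the projection from the graph of a family of rank $1$ sets in $M^{r+1}$ (namely $\{D_u : u \in U_{\hat p}\}$, each a rank $1$ subset of $M^{r+1}$) to its parameter space; since each fiber has the same rank, Lemma \ref{generic dominance facts}(4) applies.

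There is no real obstacle: the only thing to double-check is that each of the four invocations matches its hypotheses (in particular that $\mathcal Y$ is indeed a family of pure rank $1$ subsets of $M^{r+1}$, which follows from the fact that each $D_u \subset M^{r+1}$ is a rank $1$ set and $U_{\hat p} \subset U$). The proof should take only a few lines once the right references are assembled.
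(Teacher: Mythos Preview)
Your proposal is correct and matches the paper's proof essentially line for line: the paper invokes Lemma~\ref{generic dominance facts}(4) for $H\to T$ and $Y\to U_{\hat p}$, Lemma~\ref{hypersurfaces finitely many common points}(2) for $H\to M^{r+1}$, and Lemma~\ref{H finite-to-one} for $Y\to M^{r+1}$. The only difference is that you make explicit the passage through Lemma~\ref{generic dominance facts}(3) in the last case, which the paper leaves implicit.
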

	\begin{proof}
		For $H\rightarrow T$ and $Y\rightarrow U_{\hat p}$ this follows from Lemma \ref{generic dominance facts}(4). For $H\rightarrow M^{r+1}$ this is Lemma \ref{hypersurfaces finitely many common points}(2), and for $Y\rightarrow M^{r+1}$ this follows from Lemma \ref{H finite-to-one}.
	\end{proof}

	Finally we can now conclude:

	\begin{lemma}
		The projection $I\rightarrow T\times U_{\hat p}$ is analytically open near $(x,\hat t,\hat u)$.
	\end{lemma}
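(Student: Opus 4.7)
The plan is to apply Proposition \ref{smooth fiber product}(4) directly, since the preceding lemmas have been carefully arranged to verify each of its hypotheses in turn. Specifically, I will take $X = M^{r+1}$, $Y_1 = T$, $Y_2 = U_{\hat p}$, $Z_1 = H$ (the graph of $\mathcal H$), $Z_2 = Y$ (the graph of $\mathcal Y$), $A_1 = \emptyset$, and $A_2 = \{\hat p\}$. Under these identifications the fiber product set $W$ coincides with $I$, and the map $W \to Y_1 \times Y_2$ becomes precisely the projection $I \to T \times U_{\hat p}$ we want to show is analytically open near $(x, \hat t, \hat u)$.

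First I would verify the standing hypotheses of Proposition \ref{smooth fiber product}. The four generic-dominance assumptions on the projections $Z_i \to X$ and $Z_i \to Y_i$ are handed to us by Lemma \ref{generic dominance for openness}. The genericity condition that each $(x, y_i)$ be generic in $Z_i$ over $A_i$ — that is, $(x, \hat t)$ generic in $H$ over $\emptyset$ and $(x, \hat u)$ generic in $Y$ over $\hat p$ — is exactly the content of Lemma \ref{xa and xb gen}.

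Then I would verify the two additional hypotheses of clause (4). The dimension equality $\dim Z_1 + \dim Z_2 = \dim Y_1 + \dim Y_2 + \dim X$ is established in Lemma \ref{dim R is as expected}. The finiteness of the fiber intersection $(Z_1)_{\hat t} \cap (Z_2)_{\hat u} = H_{\hat t} \cap D_{\hat u}$ is Lemma \ref{ab finite intersection}. With all four hypotheses in place, Proposition \ref{smooth fiber product}(4) yields that $I \to T \times U_{\hat p}$ is analytically open near $(x, \hat t, \hat u)$, which is precisely the conclusion.

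There is essentially no obstacle here — all the substantive work has been done in Lemmas \ref{j=0} through \ref{generic dominance for openness}, which is where the hard analysis sits (in particular, the combinatorial argument using semi-indistinguishability in Lemma \ref{j=0} and the sweeping argument using the largeness of the family $\mathcal H$ in Lemma \ref{sweeping}). The present lemma is simply the packaging step that feeds these outputs into the geometric black box of Proposition \ref{smooth fiber product}(4).
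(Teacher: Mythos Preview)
Your proposal is correct and follows essentially the same approach as the paper: both apply Proposition \ref{smooth fiber product}(4) with the identical choices $X=M^{r+1}$, $Y_1=T$, $Y_2=U_{\hat p}$, $Z_1=H$, $Z_2=Y$, $A_1=\emptyset$, $A_2=\{\hat p\}$, citing Lemmas \ref{ab finite intersection}, \ref{xa and xb gen}, \ref{dim R is as expected}, and \ref{generic dominance for openness} to verify the hypotheses. Your observation that this lemma is purely a packaging step is exactly right.
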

	\begin{proof} By Proposition \ref{smooth fiber product}(4) with $X=M^{r+1}$, $Y_1=$T, $Y_2=U_{\hat p}$, $Z_1=H$, $Z_2=Y$, $A_1=\emptyset$, and $A_2=\{\hat p\}$. That the required hypotheses are met is given in Lemmas \ref{ab finite intersection}, \ref{xa and xb gen}, \ref{dim R is as expected}, and \ref{generic dominance for openness}.
	\end{proof}

We are now done considering out special point $x$. To sum up our work we have shown:

\begin{lemma}\label{the projection is open} The projection $I\rightarrow T\times U_{\hat p}$ is analytically open near $(x,\hat t,\hat u)$ for each $x\in H_{\hat t}\cap D_{\hat u}$.
	\end{lemma}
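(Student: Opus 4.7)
The plan is to observe that the preceding argument was set up precisely so that nothing depended on the particular choice of $x$ beyond its membership in $H_{\hat t}\cap D_{\hat u}$. Recall that right after Lemma \ref{open is enough}, the text says ``we thus fix, for the rest of the proof of Proposition \ref{frontier proof}, an element $x=(x_0,\ldots,x_r)\in H_{\hat t}\cap D_{\hat u}$.'' Every subsequent lemma --- Lemma \ref{j=0}, Lemma \ref{b alg over the x's}, Lemma \ref{x gen over a}, Lemma \ref{sweeping}, Lemma \ref{x gen over b}, Lemma \ref{xa and xb gen}, and then the openness statement --- was proven for this fixed but arbitrary $x$. So the present lemma is essentially a restatement collecting the work already done.

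More concretely, my proof would just say: let $x\in H_{\hat t}\cap D_{\hat u}$ be arbitrary. Repeating the preceding argument with this choice of $x$ verifies the four hypotheses of Proposition \ref{smooth fiber product}(4) applied to $X=M^{r+1}$, $Y_1=T$, $Y_2=U_{\hat p}$, $Z_1=H$, $Z_2=Y$, $A_1=\emptyset$, $A_2=\{\hat p\}$: namely Lemma \ref{ab finite intersection} gives the finiteness of $(Z_1)_{\hat t}\cap(Z_2)_{\hat u}=H_{\hat t}\cap D_{\hat u}$; Lemma \ref{xa and xb gen} gives that $(x,\hat t)$ and $(x,\hat u)$ are generic in $Z_1$ and $Z_2$ over the respective parameter sets; Lemma \ref{dim R is as expected} gives the dimension equality $\dim Z_1+\dim Z_2=\dim Y_1+\dim Y_2+\dim X$; and Lemma \ref{generic dominance for openness} gives that all four projections $Z_i\to X$ and $Z_i\to Y_i$ are generically dominant. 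Proposition \ref{smooth fiber product}(4) then delivers the analytic openness near $(x,\hat t,\hat u)$.

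There is no real obstacle here; the only thing to check is that none of the intermediate lemmas secretly used a specific property of $x$ beyond $x\in H_{\hat t}\cap D_{\hat u}$. Scanning the arguments, the only use of $x$ in each step is via the assertion $x\in H_{\hat t}\cap D_{\hat u}$ together with relational data involving the $C$-fibers $(x_0,x_i)$, and all such data was handled by conditioning on which coordinates are semi-indistinguishable from those of $\hat x$ --- a case analysis that worked uniformly. So the ``sum up'' passage from the individual $x$ to all $x\in H_{\hat t}\cap D_{\hat u}$ is purely a matter of noting the uniformity of the preceding proof, and the lemma follows with no additional work.
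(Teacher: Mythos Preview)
Your proposal is correct and matches the paper's approach exactly. The paper presents this lemma with no separate proof, simply prefacing it with ``To sum up our work we have shown,'' since the preceding argument fixed an arbitrary $x\in H_{\hat t}\cap D_{\hat u}$ and established openness at $(x,\hat t,\hat u)$; your write-up just makes this uniformity explicit and re-lists the ingredients feeding into Proposition~\ref{smooth fiber product}(4).
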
 

Finally, we have finished the proof of Proposition \ref{frontier proof}. Indeed,  
the combination of Lemmas \ref{b gen over a}, \ref{open is enough}, and \ref{the projection is open} is contradictory.
\end{proof}
	\subsection{Generic Non-trivial $r$-Hypersurfaces}
	
	In this subsection we give the first complete case of Proposition \ref{closure prop}. Namely, we show that in the context of Proposition \ref{frontier proof}, Proposition \ref{frontier proof} actually implies Proposition \ref{closure prop}:
	
	\begin{proposition}\label{generic non-trivial hypersurfaces}
		Let $\mathcal H=\{H_t:t\in T\}$ be an almost faithful family of non-trivial $r$-hypersurfaces of rank $k>(r+1)\cdot\dim M$, and assume $\mathcal H$ is $\mathcal M$-definable over a set $A$. Let $\hat t\in T$ and $\hat x=(\hat x_0,...,\hat x_r)\in M^{r+1}$ each be generic over $A$, and assume that $\hat x\in\overline{H_{\hat t}}$. Then:
		\begin{enumerate}
			\item $\rk(\hat x/A\hat t)\leq r$.
			\item If $\rk(\hat x/A\hat t)=r\geq 2$ then for all $i\neq j\in\{0,...,r\}$ we have $\rk(\hat x_i\hat x_j/A\hat t)=2$.
		\end{enumerate}
	\end{proposition}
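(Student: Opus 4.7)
The plan is to derive both parts of the proposition from Proposition \ref{frontier proof}: part (1) by a direct application, and part (2) by applying Proposition \ref{frontier proof} to suitable coordinate permutations of $\hat x$ and $\mathcal H$. An excellent family $\mathcal C$, as required by Proposition \ref{frontier proof}, is available by Assumption \ref{M and K}.

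For (1), I apply Proposition \ref{frontier proof} directly. Case (1) of that proposition gives $\rk(\hat x/A\hat t)<r$ immediately. In case (2), Lemma \ref{indistinguishable interalgebraic}(2) converts each of the pairwise semi-indistinguishabilities $(\hat x_0,\hat x_i)\sim(x_0,x_i)$ into $\mathcal M$-interalgebraicity over $\emptyset$, and these combine to show that $\hat x$ and $x$ are $\mathcal M$-interalgebraic as full tuples; since $x\in H_{\hat t}$, this gives $\rk(\hat x/A\hat t)\le r$. In case (3), the $\mathcal M$-algebraicity of some coordinate of $\hat x$ over the remaining ones (together with $A\hat t$) reduces $\hat x$ to a rank equal to that of an $r$-tuple in $M^r$, so again $\rk(\hat x/A\hat t)\le r$.

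For (2), assume $\rk(\hat x/A\hat t)=r\ge 2$ and fix distinct $i,j\in\{0,\ldots,r\}$. Let $\sigma$ be any permutation of $\{0,\ldots,r\}$ with $\sigma(i)=0$ and $\sigma(j)=1$, and apply Proposition \ref{frontier proof} to the permuted tuple $\sigma(\hat x)$ and the permuted family $\{\sigma(H_t):t\in T\}$, which plainly inherits almost faithfulness, non-triviality, and the required rank. Case (1) of this application is ruled out by $\rk(\hat x/A\hat t)=r$. In case (2), Lemma \ref{indistinguishable interalgebraic}(2) applied at position $\ell=1$ gives $(\hat x_i,\hat x_j)$ $\mathcal M$-interalgebraic with $(x_i,x_j)$ for some $x\in H_{\hat t}$; chaining over all $\ell$ shows moreover that $\hat x$ and $x$ are $\mathcal M$-interalgebraic as full tuples, so $x$ is $\mathcal M$-generic in $H_{\hat t}$ over $A\hat t$, and Lemma \ref{non-trivial hypersurface fact}(2) — using $r\ge 2$ — forces any two coordinates of $x$ to be $\mathcal M$-independent $\mathcal M$-generics in $M$ over $A\hat t$, yielding $\rk(x_ix_j/A\hat t)=2$ and hence $\rk(\hat x_i\hat x_j/A\hat t)=2$. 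In case (3), the $\mathcal M$-algebraic coordinate is at a position $\ge 2$ in $\sigma(\hat x)$, hence is some $\hat x_k$ with $k\notin\{i,j\}$; deleting it from $\hat x$ preserves the rank $r$, and the remaining $r$-tuple of elements of $M$, being of rank $r$, is $\mathcal M$-generic in $M^r$ over $A\hat t$, so the subpair $(\hat x_i,\hat x_j)$ has rank $2$.

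The only genuinely delicate point is the interplay between permutations and case (3): cases (2) and (3) of Proposition \ref{frontier proof} are asymmetric in the coordinates of $\hat x$, and what makes the permutation trick work is precisely that case (3) constrains only positions $\ge 2$, so placing $i$ and $j$ at positions $0$ and $1$ guarantees that the algebraic coordinate provided by (3) is distinct from both $\hat x_i$ and $\hat x_j$. Everything else reduces to routine rank bookkeeping; no input beyond Proposition \ref{frontier proof} and the non-triviality criterion is required.
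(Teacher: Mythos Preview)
Your proof is correct and follows essentially the same approach as the paper: both proceed by a direct case analysis on the three outcomes of Proposition \ref{frontier proof}, use the same coordinate-permutation trick to place $i,j$ at positions $0,1$ for part (2), and handle each case via Lemma \ref{indistinguishable interalgebraic} (for case (2)) and the observation that the single algebraic dependence in case (3) lies outside positions $0,1$. The arguments are substantively identical.
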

	\begin{proof} As in the proof of Proposition \ref{frontier proof}, we assume that $A=\emptyset$. Let $\mathcal C=\{C_s:s\in S\}$ be an excellent family of plane curves which is $\mathcal M$-definable over $\emptyset$, as provided by Assumption \ref{M and K}. We will prove both statements in the proposition by cases according to the conclusions of Proposition \ref{frontier proof} with $\mathcal H$, $\mathcal C$, $\hat t$, and $\hat x$. Namely, we show the following two claims:
		\begin{claim} $\rk(\hat x/\hat t)\leq r$.
		\end{claim}
		\begin{proof}
			We consider the three cases of Proposition \ref{frontier proof}:
			\begin{enumerate}
				\item Suppose $\rk(\hat x/\hat t)<r$. Then trivially $\rk(\hat x/\hat t)\leq r$.
				\item Suppose there is some $x=(x_0,...,x_r)\in H_{\hat t}$ such that for all $i=1,...,r$ the points $(\hat x_0,\hat x_i)$ and $(x_0,x_i)$ are semi-indistinguishable in $\mathcal C$. Then, by Lemma \ref{indistinguishable interalgebraic}, each of these pairs of points are $\mathcal M$-interalgebraic over $\emptyset$. In particular, it follows that $\hat x$ and $x$ are $\mathcal M$-interalgebraic over $\emptyset$ (as in the comments after Claim \ref{ind locus interalg}, we are technically using that $r\geq 1$ here). In particular, we obtain $\rk(\hat x/\hat t)=\rk(x/\hat t)$. But since $x\in H_{\hat t}$ we have $\rk(x/\hat t)\leq r$, so that $\rk(\hat x/\hat t)\leq r$.
				\item Suppose there is some $i\geq 2$ such that $x_i\in\operatorname{acl}_{\mathcal M}(\hat t\hat x_0...\hat x_{i-1})$. Then, in particular, it follows that $\hat x$ is not $\mathcal M$-generic in $M^{r+1}$ over $\hat t$. Equivalently, $\rk(\hat x/\hat t)\leq r$.
			\end{enumerate}	
		\end{proof}
		\begin{claim} Assume $\rk(\hat x/\hat t)=r\geq 2$, and let $i\neq j\in\{0,...,r\}$. Then $\rk(\hat x_i\hat x_j/\hat t)=2$.
		\end{claim}
		\begin{proof}
			Before applying Proposition \ref{frontier proof}, we may apply a coordinate permutation uniformly across the family $\mathcal H$. After doing this, it suffices to assume $i=0$ and $j=1$. So we need only show that $\rk(\hat x_0\hat x_1/\hat t)=2$. We do that, again, according to the cases of Proposition \ref{frontier proof}:
			
			\begin{enumerate}
				\item It is not possible to have $\rk(\hat x/\hat t)<r$, because we are assuming $\rk(\hat x/\hat t)=r$. So the first case is true vacuously.
				\item Suppose there is some $x\in H_{\hat t}$ such that for each $i=1,...,r$ the points $(\hat x_0,\hat x_i)$ and $(x_0,x_i)$ are semi-indistinguishable in $\mathcal C$. As in the previous claim, each of these pairs are $\mathcal M$-interalgebraic over $\emptyset$. In particular $(\hat x_0,\hat x_1)$ and $(x_0,x_1)$ are $\mathcal M$-algebraic over $\emptyset$. So we have $\rk(\hat x_0\hat x_1/\hat t)=\rk(x_0x_1/\hat t)$. Thus it will suffice to show that $\rk(x_0x_1/\hat t)=2$.
				
				Now we are assuming $\rk(\hat x/\hat t)=r$. As in the previous claim, our semi-indistinguishability assumption gives that $\hat x$ and $x$ are $\mathcal M$-interalgebraic over $\emptyset$, so that $\rk(x/\hat t)=r$ as well. This means that $x$ is $\mathcal M$-generic in $H_{\hat t}$ over $\hat t$. But $H_{\hat t}$ is non-trivial; so by definition any $r$ coordinates of $x$ are $\mathcal M$-generic and $\mathcal M$-independent over $\hat t$. In particular, since we are assuming $r\geq 2$, this gives $\rk(x_0x_1/\hat t)=2$, as desired.
				
				\item Suppose there is some $i\geq 2$ such that $\hat x_i\in\operatorname{acl}_{\mathcal M}(\hat t\hat x_0...\hat x_{i-1})$. Since $\hat x$ is an $r+1$-tuple and we are assuming $\rk(\hat x/\hat t)=r$, there can only be one $\mathcal M$-dependence among the coordinates of $\hat x$ over $\hat t$. In other words, $\hat x_i$ is the only coordinate of $\hat x$ which can be $\mathcal M$-algebraic over the previous coordinates and $\hat t$. In particular, this means there are no $\mathcal M$-dependencies among $\hat x_0$ and $\hat x_1$ over $\hat t$; so we conclude that $\rk(\hat x_0\hat x_1/\hat t)=2$, as desired.
			\end{enumerate} 
			\end{proof}
			By the two claims, the proof of Proposition \ref{generic non-trivial hypersurfaces} is complete.		
		\end{proof}

	\subsection{Stationary Non-trivial $r$-Hypersurfaces}
	The next step will be to generalize Proposition \ref{generic non-trivial hypersurfaces} by relaxing the genericity requirements on $\hat t$ and $\hat x$. Namely, we will prove Proposition \ref{closure prop} for \textit{any} stationary non-trivial hypersurface, without assuming it is generic in a large family; and we will consider \textit{all} coordinate-wise generic points, not just actually generic ones. This will be the last technically challenging step of the proof of Proposition \ref{closure prop} -- the remaining steps are much easier. Before proceeding with this step, we briefly outline the strategy:
	
	As in Proposition \ref{frontier proof}, we draw inspiration from the treatment in \cite{HK} and \cite{EHP} of frontiers of plane curves in strongly minimal groups. For example, the proof of the relevant result in \cite{EHP} begins with the case of a generic point and a generic curve in a large family, as in Proposition \ref{frontier proof}. Then, given an arbitrary point and curve, they show how to transform the setup (in a frontier preserving way) into an appropriately generic setup. This involves two stages. First, the point is transferred to a generic point using translation by a generic group element. Second, the curve is transferred to a suitably generic curve by `composing' it with a generic curve from a large family. It is important for the first step to precede the second: indeed, one needs to work with a generic (at least coordinate-wise) point to show that composition preserves the frontier.
	
	In our case, we are already presented with a coordinate-wise generic point; and we only need to preserve \textit{closure}, not frontier. These two factors, as it turns out, mean that we can get away with just one step. This step will more closely resemble the initial `translation' stage; however, instead of translating using group elements, we will translate using plane curves.
	
	Let us explain. Using the intuition of functions, one could imagine `evaluating' a non-trivial plane curve $C$ at a point $x\in M$: the result is the finite set $C(x)=\{y:(x,y)\in C\}$. We view the association $x\mapsto C(x)$ as the `translation' of $x$ by $C$. What we will do, then, is transform our set $X$ and closure point $\hat x$ through the `map' $(x_0,...,x_r)\mapsto(C_0(x_0),...,C_r(x_r))$, where $(C_0,...,C_r)$ is a `generic' sequence of plane curves. This is analogous to translating a tuple in $r+1$-space over a group by a generic tuple in $r+1$-space.
	
	Once we perform this `translation,' we will replace our original set and closure point, $\hat x\in\overline X$, with a `more generic' version $\hat y\in\overline Y$. Most of the work, then, is showing that $\hat y$ and $Y$ satisfy the hypotheses of Proposition \ref{generic non-trivial hypersurfaces} in an appropriate sense; after applying Proposition \ref{generic non-trivial hypersurfaces}, we then fairly easily transfer the result back to $\hat x$ and $X$.
	
	Let us proceed with the details. We use the following notion often, so we define it explicitly:
	
	\begin{definition}\label{*} Let $x=(x_0,...,x_r)\in M^{r+1}$ be a tuple, and $A$ a set of parameters. We say that $x$ \textit{satisfies} $*$ \textit{over} $A$, also denoted $*(x,A)$, if the following hold:
	\begin{enumerate}
		\item $\rk(x/A)\leq r$.
		\item If $\rk(x/A)=r\geq 2$ then for all $i\neq j\in\{0,...,r\}$ we have $\rk(x_ix_j/A)=2$.
	\end{enumerate}
	\end{definition}

	So the conclusion of Proposition \ref{generic non-trivial hypersurfaces} equivalently says that $\hat x$ satisfies $*$ over $A\hat t$. The following preservation properties are easy to verify: 
	
	\begin{lemma}\label{* preservation} Let $x$ and $y$ be tuples in $M^{r+1}$, and $A$ and $B$ sets of parameters.
		\begin{enumerate}
			\item If $x$ is $\mathcal M$-independent from $B$ over $A$, then $*(x,A)$ and $*(x,B)$ are equivalent.
			\item If $x$ and $y$ are coordinate-wise $\mathcal M$-interalgebraic over $A$, then $*(x,A)$ and $*(y,A)$ are equivalent.
			\item $*(x,A)$ is equivalent to $*(x,\operatorname{acl}_{\mathcal M}(A))$.
			\item If $*(x,A)$ and $B\supset A$ then $*(x,B)$.
		\end{enumerate}
	\end{lemma}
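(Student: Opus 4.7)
The proof is a routine exercise in rank arithmetic, using additivity ($\rk(ab/A) = \rk(a/A) + \rk(b/Aa)$) and the invariance of rank under $\operatorname{acl}_{\mathcal M}$. The plan is to dispatch the four clauses in the order (3), (1), (2), (4); none of the steps presents a real obstacle.

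Clause (3) is immediate: both conditions defining $\ast(x, A)$ are stated entirely in terms of ranks, and for any tuple $z$ one has $\rk(z/A) = \rk(z/\operatorname{acl}_{\mathcal M}(A))$. Thus $\ast(x, A)$ and $\ast(x, \operatorname{acl}_{\mathcal M}(A))$ are literally the same assertion.

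For (1), the key observation is that $\mathcal M$-independence passes to subtuples: if $x$ is $\mathcal M$-independent from $B$ over $A$, then so is $x_i x_j$ for any pair $\{i,j\}$, which follows from additivity applied to $\rk(x/AB) = \rk(x/A)$. Consequently $\rk(x/A) = \rk(x/B)$ and $\rk(x_i x_j /A) = \rk(x_i x_j / B)$, and both conditions of $\ast$ transfer verbatim between $A$ and $B$. Clause (2) proceeds similarly: coordinate-wise $\mathcal M$-interalgebraicity means that for any set of indices $I$, the subtuples $x_I$ and $y_I$ are $\mathcal M$-interalgebraic over $A$, hence $\rk(x_I/A) = \rk(y_I/A)$. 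Applying this to the full tuple transfers the first clause of $\ast$, and applying it to each pair $\{i,j\}$ transfers the second.

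For (4), I would first note that $\rk(x/B) \leq \rk(x/A) \leq r$ since enlarging parameters cannot increase rank, so the rank-$r$ bound transfers immediately. In the nontrivial case $\rk(x/B) = r \geq 2$, the same inequality forces $\rk(x/A) = r$, which is precisely the statement that $x$ is $\mathcal M$-independent from $B$ over $A$. The second condition of $\ast(x, B)$ then follows by invoking clause (1), since $\ast(x, A)$ is given. The only place requiring a moment's thought is this last step: one must recognize that surviving the top-level rank bound after adding parameters automatically yields independence, which then propagates the pairwise rank information down to any subtuple of $x$.
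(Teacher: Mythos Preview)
Your proof is correct and essentially identical to the paper's. The only cosmetic differences are that you prove (3) directly from the invariance of rank under $\operatorname{acl}_{\mathcal M}$ while the paper deduces it from (1), and your case split in (4) is phrased contrapositively to the paper's (you assume $\rk(x/B)=r$ and deduce independence; the paper assumes dependence and deduces $\rk(x/B)<r$).
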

	\begin{proof}
		\begin{enumerate}
			\item $*$ only references the rank data of the coordinates of $x$ over $A$. If $x$ and $B$ are $\mathcal M$-independent over $A$, then adding $B$ to the parameter set does not affect this rank data.
			\item Clear, again since $*$ only reference the rank data of the coordinates of $x$ over $A$, and these ranks are preserved by $\mathcal M$-interalgebraicity.
			\item By (1), since $x$ is $\mathcal M$-independent from $\operatorname{acl}_{\mathcal M}(A)$ over $A$.
			\item By (1), we can assume $x$ and $B$ are dependent over $A$. So $$\rk(x/B)<\rk(x/A)\leq r,$$ and thus $*(x,B)$ holds trivially.
		\end{enumerate}
	\end{proof}
	
	Now the main result is:
	
	\begin{proposition}\label{stationary non-trivial hypersurfaces} Let $X$ be a stationary non-trivial $r$-hypersurface which is $\mathcal M$-definable over a set $A$. Let $\hat x=(\hat x_0,...,\hat x_r)$ be coordinate-wise generic, and assume that $\hat x\in\overline X$. Then $\hat x$ satisfies $*$ over $A$.
	\end{proposition}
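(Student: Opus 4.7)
The plan is to reduce Proposition \ref{stationary non-trivial hypersurfaces} to Proposition \ref{generic non-trivial hypersurfaces} via a \emph{translation by generic plane curves}, as outlined in the overview. After enlarging $A$ to contain $\operatorname{Cb}(X)$, which is harmless by Lemma \ref{* preservation}(3), I invoke non-local modularity (Fact \ref{characterization of local modularity}) to fix an $\mathcal M$-definable almost faithful family $\mathcal C = \{C_s : s \in S\}$ of non-trivial plane curves over $\emptyset$ of $\mathcal M$-rank $k > \dim M$. I then pick $\hat s = (\hat s_0, \ldots, \hat s_r)$ as independent $\mathcal K$-generics in $S$ over $A\hat x$, and for each $i$ choose $\hat y_i \in M$ with $(\hat x_i, \hat y_i) \in C_{\hat s_i}$ (available by non-triviality of $C_{\hat s_i}$ and the genericity of $\hat x_i$). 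This lets me define the translated family
\[
H_s = \{y \in M^{r+1} : \exists x \in X,\ (x_i, y_i) \in C_{s_i}\ \forall i\}, \qquad s \in S^{r+1}.
\]

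Three routine verifications come first. (i) $\hat y \in \overline{H_{\hat s}}$: at each $(\hat x_i, \hat y_i)$ the Implicit Function Theorem yields a holomorphic local branch of $C_{\hat s_i}$ realized as the graph of a function $x_i \mapsto y_i$, so any sequence in $X$ approaching $\hat x$ pushes forward to a sequence in $H_{\hat s}$ approaching $\hat y$. (ii) $H_{\hat s}$ is a non-trivial $r$-hypersurface, since the coordinate-wise finite-to-one nature of the translation passes non-triviality from $X$ to $H_{\hat s}$. (iii) $\hat y$ is $\mathcal K$-generic in $M^{r+1}$ over $A$: a coordinate-by-coordinate computation using that each $\hat s_i$ is $\mathcal K$-generic in $S$ over $A\hat x\hat s_{<i}$ and $\mathcal K$-interalgebraic with $\hat y_i$ over $A\hat x_i$ yields $\dim(\hat y_i / A\hat x\hat y_{<i}) = \dim M$ for each $i$, whence $\dim(\hat y / A\hat x) = (r+1)\dim M$, and therefore $\dim(\hat y / A) = (r+1)\dim M$.

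The crux is to embed $H_{\hat s}$ into an almost faithful family of rank $> (r+1)\dim M$ over $A$. Let $H'$ be the stationary component of $H_{\hat s}$ with $\hat y \in \overline{H'}$, and let $c' = \operatorname{Cb}(H')$, so $c' \in \operatorname{acl}_{\mathcal M}(A\hat s)$ and is coherent over $A$ by Lemma \ref{coherent preservation}. Lemma \ref{coherent version of almost faithful family existence} then produces an almost faithful family $\{H_t : t \in T\}$ over $A$ of rank $\rk(c'/A)$ in which $H'$ is almost equal to the generic member $H_{\hat t}$. The main obstacle is establishing $\rk(c'/A) = (r+1)k$, equivalently $\hat s \in \operatorname{acl}_{\mathcal M}(Ac')$. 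The idea is that from the generic type of $H'$ together with the fixed $X$ one recovers each curve $C_{\hat s_i}$ up to almost equality by tracing the $i$-th projection of the incidence between $X$ and $H'$ in $M^2$; almost faithfulness of $\mathcal C$ then pins down each $\hat s_i$ up to $\operatorname{acl}_{\mathcal M}$. The non-triviality of $X$ is used essentially here, to guarantee the incidence has the correct dimensional behavior in each coordinate pair.

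Granting the rank claim, Proposition \ref{generic non-trivial hypersurfaces} applies to $\{H_t\}$ at $(\hat t, \hat y)$ and yields $*(\hat y, A\hat t)$. The final transfer is a clean application of Lemma \ref{* preservation}: since $\hat t$ and $\hat s$ are $\mathcal M$-interalgebraic over $A$, part (3) gives $*(\hat y, A\hat s)$; since $\hat x$ and $\hat y$ are coordinate-wise $\mathcal M$-interalgebraic over $A\hat s$ by non-triviality of each $C_{\hat s_i}$, part (2) gives $*(\hat x, A\hat s)$; and since $\hat s$ is $\mathcal M$-independent from $\hat x$ over $A$ by construction, part (1) gives $*(\hat x, A)$, completing the proof.
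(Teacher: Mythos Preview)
Your overall architecture matches the paper's: translate by generic plane curves, push $\hat x$ to a genuine generic $\hat y$, embed the translated hypersurface in a large almost faithful family, apply Proposition \ref{generic non-trivial hypersurfaces}, and transfer $*$ back via Lemma \ref{* preservation}. However, there are two genuine gaps.

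\textbf{The rank bound on $c'$.} Your claim that $\rk(c'/A)=(r+1)k$, equivalently that \emph{every} $\hat s_i$ is $\mathcal M$-algebraic over $Ac'$, is not justified by your sketch and is likely false. ``Tracing the $i$-th projection of the incidence between $X$ and $H'$'' does not define a curve over $Ac'$ alone: without already knowing the other $C_{\hat s_j}$ there is no $\mathcal M$-definable correspondence between $X$ and $H'$ to project. What the paper actually proves (Lemma \ref{z_<r alg}) is the weaker statement that \emph{one} curve is recoverable from $c'$ together with the remaining $r$ curves, yielding only $\rk(c'/A)\geq k$. This is why the paper chooses $k>(r+1)\dim M$ from the start, not $k>\dim M$ as you do; with your choice of $k$, the bound $\rk(c'/A)\geq k$ would be too weak to invoke Proposition \ref{generic non-trivial hypersurfaces}. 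The recovery argument itself is also nontrivial: one builds an auxiliary set $W$ over $Ac'c_{<r}$ whose projection to a particular pair of coordinates has rank $\leq 1$ and almost contains $C_r$, and here the non-triviality of $X$ enters essentially (to bound $\rk(W)$).

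\textbf{The case $\hat y\notin\overline{H_{\hat t}}$.} Lemma \ref{coherent version of almost faithful family existence} only gives $H'$ \emph{almost equal} to $H_{\hat t}$, so $\hat y\in\overline{H'}$ does not imply $\hat y\in\overline{H_{\hat t}}$. The paper handles the residual case $\hat y\in\overline{H'-H_{\hat t}}$ by the inductive hypothesis (Assumption \ref{inductive assumption}), since $\rk(H'-H_{\hat t})<r$; you omit this entirely.

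Finally, a minor slip in (iii): $\hat s_i$ is certainly \emph{not} $\mathcal K$-interalgebraic with $\hat y_i$ over $A\hat x_i$, since $\dim(\hat s_i/A\hat x_i)=k\cdot\dim M>\dim M\geq\dim(\hat y_i/A\hat x_i)$. The genericity of $\hat y$ is correct but needs the paper's counting argument via $\rk(c_i/\hat x_i\hat y_i)=k-1$.
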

	\begin{proof}
		We first define the curves we will use as translation maps:
		
		\begin{lemma}\label{translators} There are plane curves $C_0,...,C_r$, tuples $c_0,...,c_r$ from $\mathcal M^{\textrm{eq}}$, and a positive integer $k>(r+1)\cdot\dim M$, which satisfy the following:
			\begin{enumerate}
				\item Each $C_i$ is stationary and non-trivial.
				\item Each $C_i$ is $\mathcal M$-definable over $c_i$, and each $c_i=\operatorname{Cb}(C_i)$.
				\item The $c_i$ are independent over $\emptyset$, and each individual $c_i$ is coherent of rank $k$ over $\emptyset$.
				\item The tuple $(c_0,...,c_r)$ is independent from $A\hat x$ over $\emptyset$.
				\end{enumerate} 
			\end{lemma}
		\begin{proof} By Fact \ref{characterization of local modularity}, there is an almost faithful family $\{D_u\}_{u\in U}$ of non-trivial plane curves which has rank $k>(r+1)\cdot\dim M$ and is $\mathcal M$-definable over $\emptyset$. Let $(u_0,...,u_r)$ be generic in $U^{r+1}$ over $A\hat x$. For each $i$, let $S_i$ be a stationary component of $D_{u_i}$, and let $c_i=\operatorname{Cb}(S_i)$. By Lemma \ref{almost faithful facts}, each $c_i$ is $\mathcal M$-interalgebraic with $u_i$ over $\emptyset$; this is enough to verify (3) and (4), since by definition these two clauses hold of $(u_0,...,u_r)$.
			
		Now by Fact \ref{cb facts}, for each $i$ there is some set $C_i$ which is almost equal to $S_i$ and $\mathcal M$-definable over $c_i$. Note that $C_i$ is still stationary with canonical base $c_i$, so we get (2). Finally, the non-triviality of $D_{u_i}$ implies the non-triviality of $S_i$, and thus also of $C_i$. So we get (1).
		\end{proof}
	
		Fix $C_0,...,C_r,c_0,...,c_r,k$ as in Lemma \ref{translators}. For shorthand, we define $c=(c_0,...,c_r)$, and $c_{<r}=(c_0,...,c_{r-1})$. We will use $C_0,...,C_r$ as our translation maps, as described in the introduction to this subsection. Thus our next step, using the $C_i$, is to define our `translation' of $X$: 
		
		\begin{notation}\label{D'} For the remainder of the proof of Proposition \ref{stationary non-trivial hypersurfaces}, we set $$Y=\{(y_0,...,y_r):\textrm{ for some }(x_0,...,x_r)\in X\textrm{ we have that each }(x_i,y_i)\in C_i\}.$$
			\end{notation}
		
		So $Y$ is $\mathcal M$-definable over $Ac$. We note that the tuples $(x_0,...,x_r)$ and $(y_0,...,y_r)$ appearing in Notation \ref{D'} are coordinate-wise $\mathcal M$-interalgebraic over $c$, by the non-triviality of the $C_i$. In particular, each element of $Y$ is coordinate-wise $\mathcal M$-interalgebraic with an element of $X$ over $Ac$. Using this, our first observation is:
		
		\begin{lemma}\label{D' non-trivial} $Y$ is a non-trivial $r$-hypersurface.
			\end{lemma} 
		\begin{proof}
			By the preceding remarks we have $\rk(Y)\leq\rk(X)=r$. On the other hand, let $x=(x_0,...,x_r)$ be generic in $X$ over $Ac$. Since $X$ is non-trivial, and $r\geq 1$, it follows that each $x_i$ is generic in $M$ over $Ac$. In particular, by non-triviality of the $C_i$, for each $i$ we can find some $y_i$ such that $(x_i,y_i)\in C_i$. Then the tuple $y=(y_0,...,y_r)$ belongs to $Y$, and is coordinate-wise $\mathcal M$-interalgebraic with $x$ over $Ac$. Thus $\rk(y/Ac)=\rk(x/Ac)=r$, which shows that $\rk(Y)=r$.
			
			To see that $Y$ is non-trivial, let $y=(y_0,...,y_r)$ be generic in $Y$ over $Ac$. It will suffice to show that any $r$ coordinates of $y$ have rank $r$ over $Ac$. Now since $y\in Y$ there is some $x=(x_0,...,x_r)$ which is coordinate-wise $\mathcal M$-interalgebraic with $y$ over $Ac$. Since $\rk(X)=\rk(Y)=r$, $x$ is generic in $X$ over $Ac$. Since $X$ is non-trivial, we conclude that any $r$ coordinates of $x$ have rank $r$ over $Ac$; then, again by coordinate-wise $\mathcal M$-interalgebraicity, the same holds of the coordinates of $y$. 
			\end{proof}
		
		Our next goal is to find an appropriate translation of our closure point $\hat x$:
		
		\begin{lemma} There is some $\hat y=(\hat y_0,...,\hat y_r)\in M^{r+1}$ satisfying the following:
			\begin{enumerate}
				\item $\hat x$ and $\hat y$ are coordinate-wise $\mathcal M$-interalgebraic over $Ac$.
				\item $\hat y$ is generic in $M^{r+1}$ over $A\hat x$, and thus also over $A$.
				\item $\hat y\in\overline{Y}$.
			\end{enumerate}
		\end{lemma}
		\begin{proof}
			By definition, $\hat x$ and $c$ are independent over $\emptyset$. Thus $\hat x$ is coordinate-wise generic over $c$. In particular, for each $i=0,...,r$ we have that $\hat x_i$ is generic in $M$ over $c_i$. Since $C_i$ is non-trivial, we can thus find an element $\hat y_i\in M$ such that $(\hat x_i,\hat y_i)$ is a generic element of $C_i$ over $c_i$. We thus obtain a tuple $\hat y=(\hat y_0,...,\hat y_r)$ with each $(\hat x_i,\hat y_i)$ generic in $C_i$. Note that this is enough for (1) in the lemma. 
			
			Next we show (2). For each $i$, since $\rk(c_i)=k>0$, the generic $\mathcal M$-type of $C_i$ cannot have rank 1 over $\emptyset$ -- indeed otherwise $C_i$ would be a stationary component of a rank 1 set over $\emptyset$, so its canonical base would be of rank 0. In particular, since $(\hat x_i,\hat y_i)$ is generic in $C_i$ over $c_i$ for each $i$, we obtain that $\rk(\hat x_i\hat y_i)=2$. 
			
			On the other hand we have $\rk(c_i\hat x_i\hat y_i)=k+1$, counting from left to right, so we conclude that $\rk(c_i/\hat x_i\hat y_i)=k-1$. Adding across all $i$, we get $$\rk(c/A\hat x\hat y)\leq(r+1)(k-1).$$
			
			Now since $c$ is independent from $A\hat x$ over $\emptyset$, we obtain $$\rk(c\hat y/A\hat x)\geq\rk(c/A\hat x)=(r+1)k.$$ Combining with our previous assertion, we conclude that $$\rk(\hat y/A\hat x)\geq r+1.$$ Moreover, note that $c$ is coherent over $A\hat x$ by definition, and $\hat y$ is $\mathcal M$-algebraic over $Ac\hat x$ by (1). So by Lemma \ref{coherent preservation}, $\hat y$ is coherent over $A\hat x$. Thus $\hat y$ is coherent of rank $\geq r+1$ over $A\hat x$, which shows that $\hat y$ is indeed generic in $M^{r+1}$ over $A\hat x$. Of course this implies that $\hat y$ is also generic in $M^{r+1}$ over $A$, as desired.
			
			Finally we show (3). Let $V=V_0\times...\times V_r$ be any analytic neighborhood of $\hat y$ in $M^{r+1}$. We will find an element $y\in V\cap Y$. Now since each $(\hat x_i,\hat y_i)$ is generic in $C_i$ over $c_i$, Corollary \ref{generic openness} implies that the projection $\pi:C_i\rightarrow M$, $(x,y)\mapsto x$, is analytically open near $(\hat x_i,\hat y_i)$. After applying various shrinkings, it follows that we can find an analytic neighorhood $U=U_0\times...\times U_r$ of $\hat x$ in $M^{r+1}$, so that for each $i$ and each $u\in U_i$ there is some $v\in V_i$ with $(u,v)\in C_i$. Now since $\hat x\in\overline X$, there is some $x=(x_0,...,x_r)\in U\cap X$. We thus obtain $y=(y_0,...,y_r)\in V$ with each $(x_i,y_i)\in C_i$, which implies that $y\in Y$, as desired.
			
		\end{proof}
		We are almost ready to apply Proposition \ref{generic non-trivial hypersurfaces}. What remains is to construct an almost faithful family and, roughly, to realize $Y$ as a generic member of that family. What we will actually do is use a stationary set which is almost contained in $Y$. 
		
		First note that, since $Y$ is $\mathcal M$-definable over $Ac$, we can write $Y$ as a union of stationary components each of which is $\mathcal M$-definable over $\operatorname{acl}_{\mathcal M}(Ac)$. Since $\hat y\in\overline{Y}$, it belongs to the closure of one of these components. The following is now justified:
		
		\begin{notation} Let $Z$ be a stationary subset of $Y$ of rank $r$ which is $\mathcal M$-definable over $\operatorname{acl}_{\mathcal M}(Ac)$ and satisfies $\hat y\in\overline Z$. 
			\end{notation}
		
		Since $Z$ has rank $r$, it is also an $r$-hypersurface. Moreover, since it is contained in $Y$, it is non-trivial. What remains, essentially, is to show that $Z$ is `generic enough' to play the role of $H_{\hat t}$ from Proposition \ref{generic non-trivial hypersurfaces}. 
		
		We let $z=\operatorname{Cb}(Z)$. Our goal, precisely, is to show that $z$ is coherent of rank $\geq k$ over $A$. The first of these statements is already clear:
		
		\begin{lemma} $z\in\operatorname{acl}_{\mathcal M}(Ac)$. In particular, $z$ is coherent over $A$.
			\end{lemma}
		\begin{proof} The first clause follows because $Z$ is a stationary component of $Y$, and $Y$ is $\mathcal M$-definable over $Ac$. The second clause then follows from Lemma \ref{coherent preservation}.
		\end{proof}
	
		We next show that $\rk(z/A)\geq k$. The reason, essentially, is that one can recover $C_r$ from $C_0,...,C_{r-1}$ and $Z$. Precisely, we prove the following:
		
		\begin{lemma}\label{z_<r alg} $c_r\in\operatorname{acl}_{\mathcal M}(Azc_{<r})$. In particular, $\rk(z/A)\geq k$.
		\end{lemma}
		\begin{proof}
			Since $z=\operatorname{Cb}(Z)$, there is a set $Z'$ which is $\mathcal M$-definable over $z$ and almost equal to $Z$. Note that $Z'$ is therefore also a non-trivial $r$-hypersurface -- so in particular, each projection $Z'\rightarrow M^r$ is almost finite-to-one. By deleting all infinite fibers in such projections, we may assume that each projection $Z'\rightarrow M^r$ is everywhere finite-to-one.
			
			We will work with the following set:
			
			\begin{definition} Let $W$ be the set of all $(x_0,...,x_r,y_0,...,y_r)$ such that:				
				\begin{enumerate}
				\item $(x_0,...,x_r)\in X$.
				\item $(y_0,...,y_r)\in Z'$.
				\item For each $i=0,...,r-1$, $(x_i,y_i)\in C_i$.
			\end{enumerate}
		\end{definition}
		So $W$ is $\mathcal M$-definable over $Azc_{<r}$. We note:
			
			\begin{claim}\label{rk Y at most r} $\rk(W)\leq r$.
			\end{claim}
			\begin{proof} There is a natural projection $W\rightarrow X$ to the leftmost $r+1$ coordinates. We claim that this projection is finite-to-one, which gives that $\rk(W)\leq\rk(X)=r$, as desired.
				
				To see this, let $(x_0,...,x_r)\in X$. Since each $C_i$ is non-trivial, there are only finitely many extensions of $(x_0,...,x_r)$ by a tuple $(y_0,...,y_{r-1})$ with each $(x_i,y_i)\in C_i$. Then by our assumption on $Z'$, there are only finitely many extensions of each such tuple $(y_0,...,y_{r-1})$ to an element $y_r$ with $(y_0,...,y_r)\in Z'$. Thus there are only finitely many extensions of $(x_0,...,x_r)$ to an element of $W$, as desired.
				\end{proof}
			We also define the following set:
			\begin{definition} Let $V$ be the set of all $(x_r,y_r)$ such that the fiber $W_{(x_r,y_r)}$ has rank $\geq r-1$; that is, such that the set of $(x_0,...,x_{r-1},y_0,...,y_{r-1})$ with $(x_0,...,x_r,y_0,...,y_r)\in W$ has rank $\geq r-1$.
			\end{definition}
			So $V$ is also $\mathcal M$-definable over $Azc_{<r}$. By Claim \ref{rk Y at most r}, it is immediate that $\rk(V)\leq 1$. On the other hand, we conclude:
			\begin{claim}\label{Z_r generic type of U} $C_r$ is almost contained in $V$.
				\end{claim}
			\begin{proof} Fix $y=(y_0,...,y_r)$ generic in $Z'$ over $Azc$. So $y$ is also generic in $Y$ over $Azc$, and thus there is some $x=(x_0,...,x_r)\in X$ such that $(x_i,y_i)\in C_i$ for each $i$. Thus $(x,y)\in W$. It is clear that $\rk(xy/Azc)=r$, by definition of $y$ and the $\mathcal M$-interalgebraicity of $x$ and $y$ over $Ac$. 
				
			On the other hand, the non-triviality of $Z'$ implies that $\rk(y_r/Azc)=1$. Since $(x_r,y_r)\in C_r$, it follows that $\rk(x_ry_r/Azc)=1$, so that $(x_r,y_r)$ is in fact $\mathcal M$-generic in $C_r$ over $Azc$. Using additivity and the previous paragraph, it now follows that $\rk(xy/Azcx_ry_r)=r-1$. Since $(x,y)\in W$, this implies that $(x_r,y_r)\in V$. Then since $(x_r,y_r)$ is generic in $C_r$ over $Azc$, the same holds of any generic $(x_r',y_r')\in C_r$ over $Azc$. The claim now follows.
				\end{proof}
			By Claim \ref{Z_r generic type of U} and the fact that $\rk(V)\leq 1$, it follows that $C_r$ is (up to finitely many points) a stationary component of $V$. Since $V$ is $\mathcal M$-definable over $Azc_{<r}$, we conclude that $c_r$ is indeed $\mathcal M$-algebraic over $Azc_{<r}$.
			
			Now to finish the proof of Lemma \ref{z_<r alg}, we note the following two facts:
			\begin{enumerate}
				\item $\rk(c_r/Ac_{<r})=k$. Indeed, this follows since the $c_i$ are independent over $A$.
				\item $\rk(c_r/Azc_{<r})=0$. Indeed, this is what we have just proved.
			\end{enumerate}
			By the combination of (1) and (2), it must be that $\rk(z/Ac_{<r})\geq k$, and thus $\rk(z/A)\geq k$, as desired.
		\end{proof}
		
		We can now extract a family of hypersurfaces. Namely, let $\mathcal H=\{H_t:t\in T\}$ be an almost faithful family of non-trivial $r$-hypersurfaces, and $t\in T$ a fixed generic element, which satisfy Lemma \ref{coherent version of almost faithful family existence}(a) for $Z$ with parameter set $A$; so $Z$ and $H_t$ are almost equal, and $z$ and $t$ are $\mathcal M$-interalgebraic over $A$. Note also that $\rk(\mathcal H)=k$, so that Proposition \ref{generic non-trivial hypersurfaces} will apply. We now show:
		
		\begin{lemma}\label{* over extra parameters} $\hat y$ satisfies $*$ over $Azct$.
			\end{lemma}
		\begin{proof} We proceed in two cases:
			\begin{itemize}
				\item First suppose $\hat y\in\overline{H_t}$. Then all hypotheses of Proposition \ref{generic non-trivial hypersurfaces} are met, and we conclude that $\hat y$ satisfies $*$ over $At$. The lemma then follows from (4) of Lemma \ref{* preservation}.
				\item Now suppose $\hat y\notin\overline{H_t}$. Since $\hat y\in\overline Z$, it follows that $\hat y\in\overline{Z-H_t}$. But $Z$ and $H_t$ are almost equal $r$-hypersurfaces, so $\rk(Z-H_t)<r$. Moreover, note that $Z-H_t$ is $\mathcal M$-definable over $\operatorname{acl}_{\mathcal M}(Azct)$. By the inductive hypothesis, we conclude that $\rk(\hat y/\operatorname{acl}_{\mathcal M}(Azct))<r$, so that $\hat y$ trivially satisfies $*$ over $\operatorname{acl}_{\mathcal M}(Azct)$. The lemma then follows by (3) of Lemma \ref{* preservation}.
			\end{itemize}
		\end{proof}
	
	We now finish the proof of Proposition \ref{stationary non-trivial hypersurfaces}. Namely, recall that $z\in\operatorname{acl}_{\mathcal M}(Ac)$, and $z$ and $t$ are $\mathcal M$-interalgebraic over $A$. Thus $(z,t)\in\operatorname{acl}_{\mathcal M}(Ac)$. So by Lemma \ref{* over extra parameters} and (3) of Lemma \ref{* preservation}, $\hat y$ satisfies $*$ over $Ac$. Next, recall that $\hat x$ and $\hat y$ are coordinate-wise $\mathcal M$-interalgebraic over $Ac$. So by (2) of Lemma \ref{* preservation}, $\hat x$ also satisfies $*$ over $Ac$. Finally, recall by definition that $\hat x$ and $c$ are $\mathcal M$-independent over $A$. So by (1) of Lemma \ref{* preservation}, $\hat x$ satisfies $*$ over $A$. 
			
	\end{proof}
	\subsection{The Remaining Cases}
	
	In this subsection we finish the proof of Proposition \ref{closure prop} by extending the inductive step to arbitrary sets of rank $r$. So, we are still assuming for now that Proposition \ref{closure prop} holds for all sets of rank less than $r$, where $r\geq 1$ is fixed. There are three main things left to prove, but they are all significantly easier than the cases treated before now. So, rather than explaining the strategies, let us simply present the arguments. We begin with the case of arbitrary stationary hypersurfaces:
	
	\begin{proposition}\label{stationary hypersurfaces} Proposition \ref{closure prop} holds whenever $X$ is a stationary $r$-hypersurface.
		\end{proposition}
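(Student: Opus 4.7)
The proof divides along non-triviality. If $X$ is non-trivial, Proposition \ref{stationary non-trivial hypersurfaces} applies directly. Otherwise, some coordinate projection $\pi : M^{r+1} \to M^r$ (say omitting the $i$-th coordinate) fails to be almost finite-to-one on $X$, so its generic fiber is infinite; since each fiber lies in a single copy of $M$, this generic fiber has rank exactly $1$. Let $X_1 \subset X$ be the $\mathcal M$-definable set of points whose $\pi$-fiber is infinite, and set $Y = \pi(X_1)$. By stationarity of $X$ (two disjoint rank-$r$ subsets would violate it), $X_1$ is large in $X$, so $\rk(X - X_1) < r$; and a standard fiber-dimension argument using that $\pi^{-1}(y) \cap X$ has rank $1$ for each $y \in Y$ gives $\rk(Y) = r - 1$.

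I would then dichotomize based on whether $\hat x \in \overline{X_1}$. If not, then $\hat x \in \overline{X - X_1}$, and the inductive hypothesis applied to $X - X_1$ (of rank $< r$) immediately yields $\rk(\hat x/A) < r$, i.e., conclusion (1). So assume $\hat x \in \overline{X_1}$. By continuity of $\pi$ we have $\pi(\hat x) \in \overline{Y}$, and $\pi(\hat x)$ is coordinate-wise generic as a subtuple of $\hat x$. Applying the inductive hypothesis to $Y$ (of rank $r-1$) gives $\rk(\pi(\hat x)/A) \leq r - 1$, together with either the strict form $\rk(\pi(\hat x)/A) < r - 1$ or the pairwise independence condition for $Y$. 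Adding back the $i$-th coordinate yields $\rk(\hat x/A) \leq r$; if strict we are done, and otherwise rank equality forces $\rk(\pi(\hat x)/A) = r - 1$ with $\hat x_i$ being $\mathcal M$-generic in $M$ over $A\pi(\hat x)$, and it rules out the strict case for $Y$, so the pairwise independence condition for $Y$ must hold.

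To verify condition (2) for $X$, fix $j \neq k$ such that $\pi_j, \pi_k$ are independent on $X$. If $i \in \{j,k\}$, say $k = i$, then $\hat x_j$ is a coordinate of $\pi(\hat x)$, and the $\mathcal M$-genericity of $\hat x_i$ over $A\pi(\hat x)$ forces $\hat x_j$ and $\hat x_k$ to be $\mathcal M$-independent over $A$. If $i \notin \{j,k\}$, I claim $\pi_j, \pi_k$ are independent on $Y$, after which the inductive independence condition for $Y$ supplies the conclusion. For the claim, pick an $\mathcal M$-generic $y \in Y$ over $A$; since $\pi^{-1}(y) \cap X$ has rank $1$, one lifts $y$ to $x \in X$ with $x_i$ chosen $\mathcal M$-generic in this fiber over $Ay$, yielding $\rk(x/A) = r$, so $x$ is $\mathcal M$-generic in $X$. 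The independence assumption on $X$ then gives $x_j, x_k$ $\mathcal M$-independent over $A$, and since $j, k \neq i$ the coordinates of $y$ at positions $j, k$ coincide with those of $x$, proving the claim. The main subtlety is precisely this lifting step: it requires the projected image to be $Y = \pi(X_1)$ rather than $\pi(X)$, which is why the preliminary reduction to $\hat x \in \overline{X_1}$ via induction on the lower-rank remainder $X - X_1$ is essential.
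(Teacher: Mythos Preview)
Your proof is correct and follows essentially the same route as the paper's: the non-trivial case via Proposition~\ref{stationary non-trivial hypersurfaces}, and the trivial case by passing to the large subset $X_1$ of infinite fibers, projecting to the rank-$(r-1)$ image, and invoking the inductive hypothesis both on the small remainder $X-X_1$ and on the projection. The only point you elide is that in the non-trivial case the conclusion of Proposition~\ref{stationary non-trivial hypersurfaces} is the condition $*(\hat x,A)$ rather than the literal statement of Proposition~\ref{closure prop}, so one must still observe (as the paper does) that when $r=1$ no two projections are independent on a non-trivial plane curve, making clause~(2) vacuous---but this is a one-line check.
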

	\begin{proof} Let $X$, $A$, and $x=(x_0,...,x_r)$ be as in the statement of Proposition \ref{closure prop}, where $X$ is a stationary $r$-hypersurface. First suppose $X$ is non-trivial. Then by Proposition \ref{stationary non-trivial hypersurfaces}, $*(x,A)$ holds; in particular, $\rk(x/A)\leq r$. So it remains to show that, if $\rk(x/A)=r$ and $\pi_i$ and $\pi_j$ are distinct projections to $M$ which are independent on $X$, then $x_i$ and $x_j$ are $\mathcal M$-independent over $A$. But if $r\geq 2$, then this is automatic by $*(x,A)$, since then $x_i$ and $x_j$ are $\mathcal M$-independent $\mathcal M$-generics in $M$ over $A$. So we may assume that $r=1$. In this case there are only two projections to $M$, and by the non-triviality of $X$ they are not independent on $X$; so the desired statement holds vacuously. 
		
		Now assume $X$ is trivial, i.e. that there is a projection $\pi:M^{r+1}\rightarrow M^r$ which is not almost finite-to-one on $X$. Fix such a projection $\pi$. Without loss of generality $\pi$ projects to the first $r$ coordinates (with indices $0,...,r-1$). Let $X'$ be the union of the infinite fibers of $\pi$ in $X$; so $X'$ is also $\mathcal M$-definable over $A$. Since $\pi$ is not almost finite-to-one on $X$, $X'$ is generic in $X$, and thus also large in $X$ by stationarity. It is then easy to see that $\pi(X')\subset M^r$ is a stationary $r-1$-hypersurface, which is also $\mathcal M$-definable over $A$.
		
		We first show:
		\begin{lemma} We may assume that $x\in\overline{X'}$.
		\end{lemma}
		\begin{proof} Suppose not. Since $x\in\overline X$ by assumption, it follows that $x\in\overline{X-X'}$. But since $X'$ is large in $X$ we have $\rk(X-X')<r$, so by the inductive hypothesis we have $$\rk(x/A)\leq\rk(X-X')<r,$$ thus the desired statement holds automatically.
		\end{proof}
		So, assume that $x\in\overline{X'}$. Thus $\pi(x)\in\overline{\pi(X')}$. Note that $\pi(x)$ is also coordinate-wise generic, because $x$ is. Thus we may apply the inductive hypothesis to the stationary $r-1$-hypersurface $\pi(X')$ and the closure point $\pi(x)$. We conclude first that $\rk(\pi(x)/A)\leq r-1$; but since $\pi(x)$ contains all coordinates of $x$ but one, this easily implies that $\rk(x/A)\leq r$.
		
		Now assume that $\rk(x/A)=r$. By the same reasoning as above, this forces that $\rk(\pi(x)/A)=r-1$ and $\rk(x_r/A\pi(x))=1$. In particular $x_r$ is $\mathcal M$-independent from $(x_0,...,x_{r-1})$ over $A$.
		
		To complete the proof, let $\pi_i$ and $\pi_j$ be distinct projections to $M$ which are independent on $X$. By the previous paragraph, we may assume that $i$ and $j$ are among the leftmost $r$ projections, i.e. $i,j\leq r-1$. So we can factor $\pi_i$ and $\pi_j$ through our fixed projection $\pi:M^{r+1}\rightarrow M^r$. Now it is easy to see (since $\pi$ is not almost finite-to-one on $X$) that any generic element of $\pi(X')$ is the image of a generic element of $X$ under $\pi$. Using the definition of independent projections, it then follows easily that $\pi_i$ and $\pi_j$ are also independent on $X'$. So, by the inductive hypothesis again, it follows that $\pi_i(\pi(x))=x_i$ and $\pi_j(\pi(x))=x_j$ are $\mathcal M$-independent over $A$, as desired.
		\end{proof}
	
		Next, we extend to stationary sets which are not hypersurfaces. Recall from the statement of Proposition \ref{closure prop} that $n$ denotes the ambient dimension, i.e. $D\subset M^n$. We will now revert to notating points $x\in M^n$ with indices $1,...,n$ (rather than $0,...,r$ for $r$-hypersurfaces). 
		
		\begin{proposition}\label{m>r} Proposition \ref{closure prop} holds whenever $X$ is stationary of rank $r$ and $n\geq r+1$.
		\end{proposition}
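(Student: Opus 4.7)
The plan is to reduce Proposition \ref{m>r} to Proposition \ref{stationary hypersurfaces} by studying all coordinate projections $\pi_S \colon M^n \to M^{r+1}$ for $(r+1)$-subsets $S \subset \{1,\ldots,n\}$. For each such $S$, continuity yields $\pi_S(x) \in \overline{\pi_S(X)}$, and $\pi_S(x)$ remains coordinate-wise generic; moreover $\rk(\pi_S(X)) \leq r$. The key lemma I would prove first is that $\rk(\pi_S(x)/A) \leq r$ for every such $S$. The case $\rk(\pi_S(X)) < r$ is handled directly by the inductive hypothesis (Assumption \ref{inductive assumption}). For $\rk(\pi_S(X)) = r$, I decompose $\pi_S(X)$ into stationary components, which are $r$-hypersurfaces with canonical bases in $\operatorname{acl}_{\mathcal M}(A)$ by Fact \ref{cb facts}; I select a component $Z$ with $\pi_S(x) \in \overline{Z}$ and apply Proposition \ref{stationary hypersurfaces} over the enlarged parameter set $A' = A \cup \{\operatorname{Cb}(Z)\}$. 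Crucially, since $\operatorname{acl}_{\mathcal M}(A') = \operatorname{acl}_{\mathcal M}(A)$, ranks and $\mathcal M$-independence computed over $A'$ agree with those over $A$, so the conclusion transfers back to $A$.

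The global rank bound $\rk(x/A) \leq r$ follows immediately from this key lemma: because each coordinate lies in $M$, $\rk(x/A)$ equals the size of a maximal $\mathcal M$-independent subset of its coordinates over $A$, and if this exceeded $r$ we could exhibit an $S$ with $\rk(\pi_S(x)/A) = r+1$, contradicting the key lemma. For the independence clause, I assume $\rk(x/A) = r$ and that $\pi_i, \pi_j$ are independent on $X$, and argue by contradiction: suppose $x_i, x_j$ are $\mathcal M$-dependent over $A$. Using the pregeometry of $\mathcal M$-algebraic closure, I extend $\{i\}$ greedily while never adjoining $j$ to obtain a subset $T \ni i$, $T \not\ni j$, $|T| = r$, with $\rk(x_T/A) = r$; the dependence $x_j \in \operatorname{acl}_{\mathcal M}(Ax_i) \subset \operatorname{acl}_{\mathcal M}(Ax_T)$ blocks extension via $j$, which combined with $\rk(x/A) = r$ forces $|T| = r$. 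Setting $S = T \cup \{j\}$ gives $|S| = r+1$, $S \supset \{i,j\}$, and $\rk(\pi_S(x)/A) = r$.

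The main obstacle is now to apply Proposition \ref{stationary hypersurfaces} effectively at this $S$: the key lemma rules out $\rk(\pi_S(X)) < r$, so I fix a stationary component $Z$ of $\pi_S(X)$ with $\pi_S(x) \in \overline{Z}$, and must verify that the projections $\pi_i, \pi_j \colon M^{r+1} \to M$ remain independent on $Z$ in the sense needed for the hypersurface result to output independence of $x_i, x_j$. For this I plan to observe that since $\rk(\pi_S(X)) = \rk(X) = r$, the restriction $\pi_S \colon X \to \pi_S(X)$ is almost finite-to-one, so every $z' \in Z$ generic over $A' = A \cup \{\operatorname{Cb}(Z)\}$ arises as $\pi_S(y)$ for some $y \in X$ generic over $A$ (the finite fiber forcing $\rk(y/A) = r$); then $z'_i = y_i$ and $z'_j = y_j$ are $\mathcal M$-independent over $A$, hence over $A'$ by the equivalence of parameter sets. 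Proposition \ref{stationary hypersurfaces} applied to $Z$ and $\pi_S(x)$ over $A'$ then forces $x_i, x_j$ to be $\mathcal M$-independent over $A'$, and invoking $\operatorname{acl}_{\mathcal M}(A') = \operatorname{acl}_{\mathcal M}(A)$ transfers this independence back to $A$, contradicting the dependence assumption and completing the argument.
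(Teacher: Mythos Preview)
Your proposal is correct and follows essentially the same route as the paper: project to $M^{r+1}$, apply Proposition \ref{stationary hypersurfaces} or the inductive hypothesis to bound $\rk(\pi_S(x)/A)$, deduce the global bound, and for the independence clause extend $\{i,j\}$ to a well-chosen $(r+1)$-subset $S$ with $\rk(\pi_S(x)/A)=r$ and apply the hypersurface case there. The one place you work harder than necessary is the decomposition of $\pi_S(X)$ into stationary components: since $X$ is stationary and (in the relevant case) $\rk(\pi_S(X))=\rk(X)=r$, the image $\pi_S(X)$ is itself stationary---a disjoint rank-$r$ splitting of $\pi_S(X)$ would pull back to a disjoint rank-$r$ splitting of $X$---so the paper applies Proposition \ref{stationary hypersurfaces} directly to $\pi_S(X)$ over $A$, avoiding the auxiliary component $Z$, the enlarged parameter set $A'$, and the transfer back to $A$.
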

		\begin{proof} Let $X$, $n$, $A$, and $x=(x_1,...,x_n)$ be as in Proposition \ref{closure prop}. We first note:
			\begin{lemma}\label{rk x in projection} If $\pi:M^n\rightarrow M^{r+1}$ is any projection, then $\rk(\pi(x)/A)\leq r$.
			\end{lemma}
			\begin{proof} Note that $\pi(X)$ is also $\mathcal M$-definable over $A$, and has rank at most $r$. Moreover, note that $\pi(x)$ is also coordinate-wise generic, and since $x\in X$ it follows that $\pi(x)\in\overline{\pi(X)}$. We now proceed in two cases:
				\begin{itemize}
					\item First suppose $\rk(\pi(X))=r$. Using that $X$ is stationary, it follows easily that $\pi(X)$ is a stationary $r$-hypersuface. So we may apply Proposition \ref{stationary hypersurfaces}, and in particular we get that $\rk(\pi(x)/A)\leq r$.
					\item Now suppose $\rk(\pi(X))<r$. Then we may apply the inductive hypothesis, and similarly conclude that $\rk(\pi(x)/A)\leq\rk(\pi(X))<r$. 
				\end{itemize} 
			\end{proof}
			We thus conclude the first part of Proposition \ref{closure prop}:
			\begin{lemma} $\rk(x/A)\leq r$.
			\end{lemma}
			\begin{proof}
				If $\rk(x/A)\geq r+1$ then there would be a set of $r+1$ coordinates of $x$ of rank $r+1$ over $A$, contradicting Lemma \ref{rk x in projection}.
			\end{proof}
			Now to complete the proof of Proposition \ref{m>r}, assume that $\rk(x/A)=r$, and let $\pi_i$ and $\pi_j$ be distinct projections to $M$ which are independent on $X$. Without loss of generality assume that $i=1$ and $j=2$. We thus wish to show that $x_1$ and $x_2$ are $\mathcal M$-independent over $A$.
			\begin{lemma} We may assume that $\rk(x_1x_2/A)=1$.
			\end{lemma}
			\begin{proof} Otherwise $x_1$ and $x_2$ would automatically be $\mathcal M$-independent over $A$ -- either because they are $\mathcal M$-independent $\mathcal M$-generics in $M$ over $A$, or because they are both $\mathcal M$-algebraic over $A$.
			\end{proof}
			So, assume that $\rk(x_1x_2/A)=1$. Since $\rk(x/A)=r$, we can thus extend $(x_1,x_2)$ to a subtuple of $x$ which has length $r+1$ and rank $r$ over $A$. Without loss of generality this subtuple is $(x_1,...,x_{r+1})$. Let $\pi:M^n\rightarrow M^{r+1}$ project to the first $r+1$ coordinates; so we have $\rk(\pi(x)/A)=r$. Then we conclude:
			\begin{lemma}\label{rk D doesn't drop under projection} $\rk(\pi(X))=r$.
			\end{lemma}
			\begin{proof}
				Since $\rk(X)=r$ we automatically have $\rk(\pi(X))\leq r$. Now suppose that $\rk(\pi(X))<r$. Note that $\pi(x)\in\overline{\pi(X)}$, since $x\in\overline X$. Moreover $\pi(X)$ is $\mathcal M$-definable over $A$ because $X$ is, and $\pi(x)$ is coordinate-wise generic because $x$ is. So the inductive hypothesis applies. We conclude that $\rk(\pi(x)/A)\leq\rk(\pi(X))<r$, contradicting the assumption that $\rk(\pi(x)/A)=r$. 
			\end{proof}
			Now using Lemma \ref{rk D doesn't drop under projection} and the fact that $X$ is stationary, it follows easily that $\pi$ is almost finite-to-one on $X$, and moreover that $\pi(X)$ is a stationary $r$-hypersurface. As in Lemma \ref{rk D doesn't drop under projection}, $\pi(X)$ is $\mathcal M$-definable over $A$, and $\pi(x)$ is coordinate-wise generic and belongs to $\overline{\pi(X)}$. Thus we may apply Proposition \ref{stationary hypersurfaces}. Since $\rk(\pi(x)/A)=r$, we conclude that any two independent projections on $\pi(X)$, when evaluated at $\pi(x)$, output $\mathcal M$-independent elements over $A$. 
			
			On the other hand, using that $\rk(\pi(X))=r$ it is easy to see that any generic element of $\pi(X)$ is the image of a generic element of $X$. Then using the definition of independent projections, it follows that any two projections among $\pi_1,...,\pi_{r+1}$ which are independent on $X$, are also independent on $\pi(X)$. In particular, $\pi_1$ and $\pi_2$ are independent on $\pi(X)$. Thus we conclude that $x_1$ and $x_2$ are $\mathcal M$-independent over $A$, as desired.
			\end{proof}
			Our last main step is now to drop the assumption of stationarity:
			\begin{proposition}\label{m>r general} Proposition \ref{closure prop} holds whenever $X$ has rank $r$ and $n\geq r+1$.
			\end{proposition}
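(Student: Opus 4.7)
The plan is to reduce to the stationary case (Proposition \ref{m>r}) by decomposing $X$ into its stationary components. Precisely, by Fact \ref{rank exists}(4) I would write $X = X_1 \cup \ldots \cup X_k$ as a finite disjoint union of stationary rank $r$ sets, each $\mathcal{M}$-definable over $\operatorname{acl}_{\mathcal{M}}(A)$. Since Zariski (equivalently analytic) closure commutes with finite unions, $\overline{X} = \bigcup_l \overline{X_l}$, so I can pick an index $l$ with $x \in \overline{X_l}$ and then transfer the conclusion of Proposition \ref{m>r} applied to $X_l$ over $\operatorname{acl}_{\mathcal{M}}(A)$ back to $X$ over $A$.

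The rank inequality $\rk(x/A) \leq r$ is immediate from this: Proposition \ref{m>r} yields $\rk(x/\operatorname{acl}_{\mathcal{M}}(A)) \leq r$, and rank over $A$ agrees with rank over $\operatorname{acl}_{\mathcal{M}}(A)$. The less automatic issue is transferring the independent-projections hypothesis. Suppose $\rk(x/A) = r$ and $\pi_i, \pi_j$ are independent on $X$; I claim they are also independent on $X_l$ relative to $\operatorname{acl}_{\mathcal{M}}(A)$. Indeed, if $y$ is $\mathcal{M}$-generic in $X_l$ over $\operatorname{acl}_{\mathcal{M}}(A)$ then $\rk(y/A) = r = \rk(X)$, so $y$ is $\mathcal{M}$-generic in $X$ over $A$; by hypothesis $\pi_i(y)$ and $\pi_j(y)$ are $\mathcal{M}$-independent over $A$, hence also over $\operatorname{acl}_{\mathcal{M}}(A)$ since $\mathcal{M}$-independence is preserved under passage to the algebraic closure of the base. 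Applying Proposition \ref{m>r} to $X_l$, $x$, and the pair $\pi_i, \pi_j$ then gives that $x_i$ and $x_j$ are $\mathcal{M}$-independent over $\operatorname{acl}_{\mathcal{M}}(A)$, and hence over $A$.

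I do not expect any real obstacle here: the entire argument is a routine reduction using only the invariance of rank and $\mathcal{M}$-independence under algebraic closure, together with the observation that an $\mathcal{M}$-generic of a top-dimensional stationary component is automatically $\mathcal{M}$-generic in the whole set. Once this step is recorded, Proposition \ref{closure prop} itself follows by combining it with Proposition \ref{stationary hypersurfaces} (covering $n = r$, where any coordinate-wise generic point necessarily satisfies $\rk(x/A) \leq n = r$ trivially) and the base case $r = 0$ of the outer induction.
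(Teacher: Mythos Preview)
Your proof is correct and follows essentially the same approach as the paper: decompose $X$ into stationary components over $\operatorname{acl}_{\mathcal M}(A)$, apply Proposition~\ref{m>r} to the component whose closure contains $x$, and transfer the conclusion back to $A$ using that ranks and $\mathcal M$-independence are unchanged by passing to the algebraic closure of the base. One small correction to your closing remark: Proposition~\ref{stationary hypersurfaces} treats stationary $r$-hypersurfaces (so $n=r+1$), not the case $n=r$; the latter is handled separately and trivially, as you describe, in the final assembly of Proposition~\ref{closure prop}.
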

			\begin{proof}
				Let $X$, $n$, $A$, and $x=(x_1,...,x_n)$ be as in Proposition \ref{closure prop}. Write $X$ as a finite union $X_1\cup...\cup X_l$ of stationary sets of rank $r$; without loss of generality we may assume that each $X_k$ is $\mathcal M$-definable over $A':=\operatorname{acl}_{\mathcal M}(A)$. Since $x\in\overline X$, we get that $x\in\overline{X_k}$ for some $k$. 
				
				We will apply Proposition \ref{m>r} to $X_k$ and $x$, with parameter set $A'$. Since $A'$ is just the algebraic closure of $A$ (in $\mathcal M^{\textrm{eq}}$ that is), $x$ and $A'$ are $\mathcal M$-independent over $A$. This will allow us to transfer results about $x$ and $A'$ to the analogous results about $x$ and $A$.
				
				Now let us apply Proposition \ref{m>r}. We conclude first that $\rk(x/A')\leq r$; by $\mathcal M$-independence, this immediately gives that $\rk(x/A)=\rk(x/A')\leq r$ as well.
				
				Now assume that $\rk(x/A)=r$. By $\mathcal M$-independence this implies that $\rk(x/A')=r$. So by Proposition \ref{m>r}, any two distinct independent projections $X_k\rightarrow M$, evaluated on $x$, return elements which are $\mathcal M$-independent over $A'$. By $\mathcal M$-independence again, the same statement holds with $A'$ replaced by $A$.
				
				Now let $\pi_i$ and $\pi_j$ be distinct projections which are independent on $X$. By the definition of independent projections, it is clear that $\pi_i$ and $\pi_j$ are in particular independent on $X_k$ (since any generic of $X_k$ is also a generic of $X$). Thus, by the previous paragraph, $x_i$ and $x_j$ are $\mathcal M$-independent over $A$, as desired.
			\end{proof}
				At long last, we are ready to completely prove Proposition \ref{closure prop}. We now drop the inductive assumption (i.e. Assumption \ref{inductive assumption}). And, finally:
				
			\begin{proof}[Proof of Proposition \ref{closure prop}] Let $X$, $n$, $A$, and $x=(x_1,...,x_n)$ be as in Proposition \ref{closure prop}. We begin with two easy observations:
				\begin{lemma}\label{x in D case} Proposition \ref{closure prop} holds if \ref{closure prop} if $x\in X$.
				\end{lemma}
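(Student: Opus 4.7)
The plan is to dispatch this case by direct appeal to the two defining notions (rank of a set, and independence of projections), with no induction or geometry needed. Because $x$ is an actual element of $X$ rather than a boundary point, the bound on $\rk(x/A)$ is immediate from the very definition of rank of tuples: any $\mathcal M$-definable set over $A$ containing $x$ has rank at least $\rk(x/A)$, and $X$ itself is such a set of rank $r$, so $\rk(x/A)\leq r$. This already handles the first clause of Proposition \ref{closure prop}.

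For the dichotomy, I would split on whether equality holds. If $\rk(x/A)<r$, we are in clause (1) and there is nothing more to do. Otherwise $\rk(x/A)=r=\rk(X)$, which by definition says exactly that $x$ is $\mathcal M$-generic in $X$ over $A$. At this point the conclusion becomes a tautology: Definition \ref{independent projections} of independent projections says precisely that whenever $\pi_i,\pi_j$ are independent on $X$, every $\mathcal M$-generic point of $X$ over $A$ has $\pi_i$- and $\pi_j$-coordinates that are $\mathcal M$-independent over $A$. Applied to our $x$, this yields clause (2).

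There really is no obstacle here; the lemma is essentially a restatement of the definitions, and the only point worth flagging is that the coordinate-wise genericity hypothesis from Proposition \ref{closure prop} plays no role in the argument. Its purpose in the main proposition is to make the topology on $\overline X$ unambiguous at $x$, but once $x\in X$ the rank bound and independence assertion both follow formally.
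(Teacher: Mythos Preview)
Your proof is correct and follows essentially the same approach as the paper: both argue that $x\in X$ immediately gives $\rk(x/A)\leq r$, and that in the equality case $x$ is $\mathcal M$-generic in $X$ over $A$, whereupon clause (2) is a direct unwinding of Definition \ref{independent projections}. Your remark that coordinate-wise genericity is unused here is also accurate.
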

				\begin{proof} If $x\in X$ then of course $\rk(x/A)\leq r$, as witnessed by $X$. Now assume that $\rk(x/A)=r$; so $x$ is in fact $\mathcal M$-generic in $X$ over $A$. Let $\pi_i$ and $\pi_j$ be distinct projections to $M$ which are independent on $X$. Then by the definition of independent projections, $\pi_i(x')$ and $\pi_j(x')$ are $\mathcal M$-independent over $A$ whenever $x'$ is $\mathcal M$-generic in $X$ over $A$. In particular, setting $x'=x$ gives that $x_i$ and $x_j$ are $\mathcal M$-independent over $A$, which proves the proposition in this case.
				\end{proof}
				\begin{lemma}\label{D finite case} Proposition \ref{closure prop} holds if $X$ is finite.
					\end{lemma}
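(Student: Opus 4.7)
The plan is to observe that a finite set is automatically closed in both the Zariski and analytic topologies, so the hypothesis $x \in \overline{X}$ immediately forces $x \in X$; the result then follows directly from Lemma \ref{x in D case}.

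More precisely, if $X$ is finite then $X \subset \mathbb{C}^n$ is a finite (hence Zariski-closed) subset, so $\overline{X} = X$ by Fact \ref{zariski and analytic closures agree} (or even more trivially, since finite subsets of Hausdorff spaces are closed). Since we are given $x \in \overline{X}$, we conclude $x \in X$. Now Lemma \ref{x in D case} handles exactly this situation, yielding both clauses of Proposition \ref{closure prop}: the rank bound $\operatorname{rk}(x/A) \leq 0 = r$ holds because $X$ is $\mathcal{M}$-definable over $A$ and of rank $0$, and if equality holds then the second clause (about pairwise $\mathcal{M}$-independence of coordinates under projections independent on $X$) follows from the definition of independent projections applied with $x' = x$.

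There is no obstacle here; the entire content of the lemma is bookkeeping. It is worth noting, though, that this case genuinely serves as the base of the induction underpinning the entire section: everything beyond Assumption \ref{inductive assumption} was an inductive step assuming Proposition \ref{closure prop} for all smaller ranks, so one needs Lemma \ref{D finite case} in hand to launch that induction.
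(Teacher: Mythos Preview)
Your proof is correct and follows exactly the same approach as the paper: observe that a finite set is closed, so $x\in\overline{X}$ forces $x\in X$, and then invoke Lemma \ref{x in D case}. The paper's own proof is a single sentence to this effect, and your additional remarks (justifying closedness via Fact \ref{zariski and analytic closures agree} and noting the role of this lemma as the base case) are helpful elaboration but not substantively different.
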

				\begin{proof} If $X$ is finite and $x\in\overline X$ then $x\in X$, so we reduce to Lemma \ref{x in D case}.
					\end{proof}
				We now proceed in general by induction on $r$. If $r=0$ then we are done by Lemma \ref{D finite case}. So, we assume that $r\geq 1$ and Proposition \ref{closure prop} holds for all $r'<r$. That is, we are in the situation of Assumption \ref{inductive assumption}, so we can use all of the results from Section 8 up to this point.
				
				Now since $\rk(X)=r$ and $X\subset M^n$, we have $n\geq r$. We proceed in two cases:
				\begin{itemize}
					\item First suppose $n=r$. So $X$ is large in $M^n$. Now if $x\in X$, then we are done by Lemma \ref{x in D case}; so we may assume that $x\notin X$. But in this case the largeness of $X$ in $M^n$ implies that $\rk(x/A)<r$, from which Proposition \ref{closure prop} holds automatically.
					\item Now suppose $n\geq r+1$. Then we are precisely in the situation of Proposition \ref{m>r general}, and so we are done by that proposition. 
				\end{itemize}
				So, by induction, the proof of Proposition \ref{closure prop} is now complete.
				\end{proof}

	\section{Detecting Generic Non-transversalities}
	In the final section we use Proposition \ref{closure prop} to show that $\mathcal M$ detects generic non-transversalities. The idea is to mimic the proof of Lemma \ref{ideal scenario} while using the weaker bound on frontier points offered by Proposition \ref{closure prop}. The trick is that since the closure point in question has two pairs of equal coordinates, we are able to rule out the clause about independent projections, and show instead that the rank strictly drops. This will not quite work in one step, because we need to `preen' the set in consideration until its dimension-theoretic properties match the desired setup; however, Lemma \ref{ideal scenario} still serves as a good model for the argument.
	
	\begin{theorem}\label{detecting generic non-transversalities} $\mathcal M$ detects generic non-transversalities.
	\end{theorem}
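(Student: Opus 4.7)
The plan is to mimic the proof of Lemma~\ref{ideal scenario}, substituting Proposition~\ref{closure prop} for a hypothetical ``detection of frontiers''; the new ingredient is that the frontier point we must control carries two pairs of equal coordinates, and this coordinate equality is exactly what will rule out alternative~(2) of Proposition~\ref{closure prop} and force the rank-drop alternative~(1). By Lemma~\ref{detecting tangency equivalence}(3), it suffices to fix standard families $\mathcal C=\{C_t:t\in T\}$ and $\mathcal D=\{D_u:u\in U\}$, say definable over a set $A$, and a generic $(\mathcal C,\mathcal D)$-non-injectivity $(\hat x,\hat t,\hat u)$, and to show that $\hat t$ and $\hat u$ are $\mathcal M$-dependent over $\emptyset$.

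Following the setup of Lemma~\ref{ideal scenario}, let $Z=\{(t,u)\in T\times U:\rk(C_t\cap D_u)=1\}$; almost faithfulness forces $Z\to T$ to be finite-to-one, so $\rk(Z)<\rk(T\times U)$. Since $T,U$ are generic subsets of $M^2$, the tuple $(\hat t,\hat u)$ is coordinate-wise generic in $M^4$. If $(\hat t,\hat u)\in\overline Z$, Proposition~\ref{closure prop} yields $\rk(\hat t,\hat u/A)\leq\rk(Z)<\rk(T\times U)$, which is the desired dependence. Otherwise, we restrict $I$ to the part $I'$ lying over $T\times U\setminus\overline Z$ and form
\[P=\{(x,x',t,u)\in M^8 : x\neq x',\ (x,t,u),(x',t,u)\in I'\},\]
a set with $\rk(P)\leq\rk(T\times U)$ since $P\to T\times U$ is finite-to-one. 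Non-injectivity at $(\hat x,\hat t,\hat u)$ then places the coordinate-wise generic 8-tuple $(\hat x,\hat x,\hat t,\hat u)$ in $\overline P$.

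Now we apply Proposition~\ref{closure prop} to $P$ at this 8-tuple. Coordinates $1$ and $3$ of the tuple both equal $\hat x_1$, and coordinates $2$ and $4$ both equal $\hat x_2$; since $\hat x_1$ has $\mathcal M$-rank $1$, it is not $\mathcal M$-independent with itself. Hence, provided the pair of projections $\pi_1,\pi_3$ is independent on $P$, alternative~(2) of Proposition~\ref{closure prop} cannot hold, and alternative~(1) forces $\rk(\hat x,\hat t,\hat u/A)<\rk(P)\leq\rk(T\times U)$. Consequently $\rk(\hat t,\hat u/A)<\rk(T\times U)$, finishing the argument.

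The main obstacle is therefore verifying that the projections $\pi_1,\pi_3$ (or, symmetrically, $\pi_2,\pi_4$) are independent on $P$, i.e.\ that at a generic $(x,x',t,u)\in P$ the first coordinates of $x$ and $x'$ are $\mathcal M$-independent over $A$. We plan to arrange this by further restricting $P$ to require $x_1\neq x'_1$ --- which leaves a large subset, since Lemma~\ref{generic non-triviality} guarantees generic members of $\mathcal C,\mathcal D$ are non-trivial --- and by using the generic dominance of $I'\to M^2$ (Lemma~\ref{family ranks}) together with the swap involution $(x,x',t,u)\mapsto(x',x,t,u)$ to compute the rank of generic points of the restricted $P$. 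Should both pairs $(\pi_1,\pi_3)$ and $(\pi_2,\pi_4)$ somehow fail to be independent on $P$, this would force $x'\in\operatorname{acl}_{\mathcal M}(x)$ at generic points of $P$, a degenerate configuration to be ruled out either by almost faithfulness or by enlarging $\mathcal C,\mathcal D$ via Fact~\ref{characterization of local modularity}.
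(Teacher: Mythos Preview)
Your overall architecture matches the paper's proof: reduce via Lemma~\ref{detecting tangency equivalence}(3), handle the case $(\hat t,\hat u)\in\overline Z$ by a direct application of Proposition~\ref{closure prop}, and in the complementary case pass to the fiber-square set $P$ and try to force alternative~(1) of Proposition~\ref{closure prop} at the diagonal point $(\hat x,\hat x,\hat t,\hat u)$ by exploiting the equality of coordinates. The paper does exactly this.

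The gap is in the paragraph you flag as ``the main obstacle.'' You correctly identify that you must arrange for $\pi_1,\pi_3$ to be independent on (a suitable subset of) $P$, but your sketch does not accomplish this, and the difficulty is real. Restricting to $x_1\neq x_1'$ (your set $P_1$) does not by itself make the projections independent: $P_1$ may well have a stationary component on which $x_1$ and $x_1'$ are distinct but $\mathcal M$-dependent, and a single such component suffices to violate independence in the sense of Definition~\ref{independent projections}. Your fallback --- that failure of both $(\pi_1,\pi_3)$ and $(\pi_2,\pi_4)$ would force $x'\in\operatorname{acl}_{\mathcal M}(x)$ generically --- is not justified either, since non-independence only produces \emph{some} generic point with a dependence, not all of them; and ``enlarging $\mathcal C,\mathcal D$'' does not obviously help, because the bad component of $P$ could persist.

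What the paper does instead is introduce a further definable trimming. After passing to $P_1$, call a pair $(x_1,x_1')$ \emph{extendable} if its fiber in $P_1$ has rank at least $r_{\mathcal C}+r_{\mathcal D}-1$; the set $E$ of extendable pairs then has rank $\leq 1$. One checks (this is the crucial closure-preservation step you are missing) that $(\hat x_1,\hat x_1)\notin\overline E$, because $\hat x_1$ is generic in $M$ while $\dim E\leq\dim M$. Hence removing the extendable fibers yields a set $P'$ with $(\hat x,\hat x,\hat t,\hat u)$ still in its closure, and now by construction \emph{every} generic point of $P'$ has $\rk(x_1x_1')\geq 2$, so $\pi_1,\pi_3$ are genuinely independent on $P'$. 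Proposition~\ref{closure prop} applied to $P'$ then gives the strict rank drop. Your proposal has the right shape but lacks this device; without it the argument does not close.
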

	\begin{proof} As in Lemma \ref{ideal scenario}, we use (3) of Lemma \ref{detecting tangency equivalence}. So, let $\mathcal C=\{C_t:t\in T\}$ and $D=\{D_u:u\in U\}$ be standard families of plane curves, and let $\hat w=(\hat x,\hat t,\hat u)$ be a generic $(\mathcal C,\mathcal D)$-non-injectivity. By Lemma \ref{detecting tangency equivalence}, it suffices to show that $\hat t$ and $\hat u$ are $\mathcal M$-dependent over $\emptyset$. Note that $\hat w$ is coordinate-wise generic, by the setup of Lemma \ref{detecting tangency equivalence}(3); this will guarantee that any applications of Proposition \ref{closure prop} below are valid.
		
	Let $I\subset M^2\times T\times U$ be the graph of the family of intersections $\mathcal I_{\mathcal C,\mathcal D}$, so that by assumption $\hat w$ is non-injective in the projection $I\rightarrow T\times U$. Also let $Z\subset T\times U$ be the set of $(t,u)$ such that $C_t\cap D_u$ is infinite, and note by almost faithfulness that the projection $Z\rightarrow T$ is finite-to-one. Finally, let $r_{\mathcal C}=\rk(\mathcal F)$ and $r_{\mathcal D}=\rk(\mathcal G)$. So $\rk(\hat t)=r_{\mathcal C}$ and $\rk(\hat u)=r_{\mathcal D}$, and our goal is to show that $\rk(\hat t\hat u)<r_{\mathcal C}+r_{\mathcal D}$. 
		
		\begin{lemma} We may assume that $(\hat t,\hat u)\notin\overline Z$.
		\end{lemma}
		\begin{proof} Assume that $(\hat t,\hat u)\in\overline Z$. Then by Proposition \ref{closure prop} and the fact that $Z\rightarrow T$ is finite-to-one, we have $$\rk(\hat t\hat u)\leq\rk(Z)\leq\rk(T)=r_{\mathcal C}<r_{\mathcal C}+r_{\mathcal D},$$ which proves the theorem in this case.
		\end{proof}
		
		So, assume that $(\hat t,\hat u)\notin\overline Z$. Let $I'$ be the set of $(x,t,u)\in I$ such that $(t,u)\notin Z$. Then by assumption it follows that $\hat w$ is still non-injective in $I'\rightarrow T\times U$. Let $P$ be the set of $(x,x',t,u)$ such that $x\neq x'$ and $(x,t,u),(x',t,u)\in I'$ -- so by non-injectivitiy the tuple $\hat z=(\hat x,\hat x,\hat t,\hat u)$ belongs to the frontier $\textrm{Fr}(P)$. Moreover, note by definition of $P$ that the projection $P\rightarrow T\times U$ is finite-to-one, which shows that $\rk(P)\leq r_{\mathcal C}+r_{\mathcal D}$. 
		
		Now we would like to apply Proposition \ref{closure prop} to $\hat z\in\operatorname{Fr}(P)$, with parameter set $\emptyset$. Note in particular that $P$ is $\mathcal M$-definable over $\emptyset$ because $\mathcal C$ and $\mathcal D$ are, and $\hat z$ is coordinate-wise generic because $\hat w$ is -- so this would be a valid application. However, in order to conclude what we want, we still need one more layer of `preening'.
		
		Let $\pi_1,\pi_2:M^2\rightarrow M$ denote the two projections. Then we define the sets $$P_1=\{(x,x',t,u)\in P:\pi_1(x)\neq\pi_1(x')\},$$ $$P_2=\{(x,x',t,u)\in P:\pi_2(x)\neq\pi_2(x')\}.$$ It is evident from the definition of $P$ that $P=P_1\cup P_2$. Thus we get $\hat z\in\operatorname{Fr}(P_i)$ for some $i=1,2$. Without loss of generality, we assume that $\hat z\in\operatorname{Fr}(P_1)$. We then define:
		
		\begin{definition} Let $x_1\neq x_1'\in M$. Then $x_1$ and $x_1'$ are \textit{extendable} if the preimage of $(x_1,x_1')$ in $P_1$ under the projection to the first and third coordinates has rank at least $r_{\mathcal C}+r_{\mathcal D}-1$. 
		\end{definition}
		
		Let $E$ be the set of extendable pairs. It is evident from the definition, and the fact that $\rk(P)\leq r_{\mathcal C}+r_{\mathcal D}$, that $\rk(E)\leq 1$. Then, denoting $\hat x$ as the pair $(\hat x_1,\hat x_2)$, we conclude the following:
		
		\begin{lemma}\label{frontier of extendable points} $(\hat x_1,\hat x_1)\notin\overline E$.
		\end{lemma}
		\begin{proof} If so then $(\hat x_1,\hat x_1)\in\operatorname{Fr}(E)$, since the two coordinates of $(\hat x_1,\hat x_1)$ are equal. But since $\rk(E)\leq 1$, we have $\dim(E)\leq\dim M$; so if $(\hat x_1,\hat x_1)\in\operatorname{Fr}(E)$ then $\dim(\hat x_1,\hat x_1)<\dim M$, which contradicts that $\hat x$ is generic in $M^2$.
		\end{proof}
		
		Finally, let $P'$ be the set of all $(x,x',t,u)\in P_1$ such that $\pi_1(x)$ and $\pi_1(x')$ are not extendable. Note that $P'$ is $\mathcal M$-definable over $\emptyset$, because $P$ is. Moreover, by Lemma \ref{frontier of extendable points}, it follows that $\hat z\in\operatorname{Fr}(P')$. Then we note:
		
		\begin{lemma}\label{P projections independent} If $\rk(P')=r_{\mathcal C}+r_{\mathcal D}$, then the projections to the first and third $M$-coordinates are independent on $P'$.
		\end{lemma}
		\begin{proof} Assume that $\rk(P')=r_{\mathcal C}+r_{\mathcal D}$, and let $(x,x',t,u)\in P'$ be generic. Let $x_1$ and $x_1'$ be the first and third coordinates of $(x,x',t,u)$ -- that is, the first coordinate of each of $x$ and $x'$. Then by definition of $P'$, $x_1$ and $x_1'$ are not extendable. By definition of extendability, we conclude that $\rk(xx'tu/x_1x_1')\leq r_{\mathcal C}+r_{\mathcal D}-2$. But by the choice of $(x,x',t,u)$ we have $\rk(xx'tu)=r_{\mathcal C}+r_{\mathcal D}$. So by additivity, we obtain $\rk(x_1x_1')\geq 2$. Thus $x_1$ and $x_1'$ are $\mathcal M$-independent $\mathcal M$-generics in $M$ over $\emptyset$, which is enough to prove the lemma.	
		\end{proof}
		
		We now apply Proposition \ref{closure prop} to $P'$. Since $P'\subset P$ and $\hat z\in\operatorname{Fr}(P')$, we first get $$\rk(\hat z)\leq\rk(P')\leq\rk(P)\leq r_{\mathcal C}+r_{\mathcal D}.$$ We want this to be a strict inequality between $\hat z$ and $r_{\mathcal C}+r_{\mathcal D}$. But if equality holds then all terms above must be equal, so in particular $\rk(\hat z)=\rk(P')=r_{\mathcal C}+r_{\mathcal D}$. So by Lemma \ref{P projections independent}, the projections of $P'$ to the first and third coordinates are independent. By Proposition \ref{closure prop}, this forces the first and third coordinates of $\hat z$ -- i.e. $\hat x_1$ and $\hat x_1$ -- to be $\mathcal M$-independent over $\emptyset$. But this is clearly not true, since $\hat x_1$ and $\hat x_1$ are equal generics.
	
	So the inequality must be strict, i.e. $\rk(\hat z)<r_{\mathcal C}+r_{\mathcal D}$. But $(\hat t,\hat u)$ is a subtuple of $\hat z$ -- so we also get that $\rk(\hat t\hat u)<r_{\mathcal C}+r_{\mathcal D}$, which proves the theorem.
		\end{proof}
	
	At last, we conclude:
	
	\begin{theorem}\label{main theorem} $\mathcal M$ interprets an algebraically closed field. 
	\end{theorem}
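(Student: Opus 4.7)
\medskip

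\noindent\textbf{Proof proposal for Theorem \ref{main theorem}.} The plan is essentially to invoke the three main intermediate results proved earlier, namely Theorem \ref{n=1}, Theorem \ref{field interpretation}, and Theorem \ref{detecting generic non-transversalities}, and observe that together they cover all the remaining ground. There is no genuine new content to produce at this stage; this statement is the capstone whose purpose is to record that the previous sections combine in the expected way.

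First I would apply Theorem \ref{detecting generic non-transversalities} to conclude, with no extra hypotheses beyond those of Assumption \ref{M and K}, that $\mathcal M$ detects generic non-transversalities in the sense of Definition \ref{detect generic non-transversality}. This is the only truly unconditional statement among the three, and it unlocks the use of the other two. Second, I would feed this conclusion into Theorem \ref{n=1} to force $\dim M = 1$. Third, with both ``$\mathcal M$ detects generic non-transversalities'' and ``$\dim M = 1$'' in hand, Theorem \ref{field interpretation} applies directly and produces an algebraically closed field interpretable in $\mathcal M$, which is exactly the conclusion of Theorem \ref{main theorem}.

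There is no real obstacle left at this stage, since the difficult work has been done: the higher-dimensional obstruction was neutralized in Section 6, the interpretation of the field from slopes at generic points was handled in Section 7, and the detection of generic non-transversalities (the technical heart of the paper) was established in Sections 8 and 9 via the closure proposition. The only thing worth being careful about is making sure that the hypotheses of each invocation are actually satisfied in context, which is immediate from Assumption \ref{M and K} (strong minimality of $\mathcal M$, interpretability in $\mathcal K$, the existence of an $\emptyset$-definable excellent family of plane curves, and the adjustments made to the language). Once Theorem \ref{main theorem} is in place, the general characteristic zero version (Theorem \ref{general main theorem}) follows from the reductions discussed in the introduction via Fact \ref{1-based fact} and standard model theory, as already indicated there.
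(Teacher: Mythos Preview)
Your proposal is correct and matches the paper's own proof exactly: the paper's proof consists of a single line citing Theorems \ref{n=1}, \ref{field interpretation}, and \ref{detecting generic non-transversalities}, which is precisely the chain of implications you describe. There is nothing to add or correct.
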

	\begin{proof} By Theorems \ref{n=1}, \ref{field interpretation}, and \ref{detecting generic non-transversalities}.
	\end{proof}

	Armed with Theorem \ref{main theorem}, our last remaining task is to drop Assumption \ref{M and K} and deduce the main theorem in its original form (Theorem \ref{general main theorem} below). So, from now on we do not retain the fixed structures $\mathcal M$ and $\mathcal K$ used until this point. Note that the remaining steps are fairly well known, but we will still give the details for completeness.
	
	Before stating the theorem, we will need the following lemma:
	
	\begin{lemma}\label{enough to interpret infinite field} Let $K$ be an algebraically closed field, let $\mathcal M$ be a structure interpreted in $K$, and let $\mathcal N$ be a structure elementarily equivalent to $\mathcal M$. If an infinite field is interpretable in $\mathcal N$, then $K$ is interpretable in $\mathcal M$. 
	\end{lemma}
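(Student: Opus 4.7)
The plan is to proceed in two stages. First, I will leverage the hypothesis that $\mathcal M$ is interpreted in $K$ in order to transport the conclusion from $\mathcal N$ back to $\mathcal M$: namely, to show that $\mathcal M$ itself interprets an infinite field. Second, I will apply the classification of interpretable fields in algebraically closed fields (\cite{Poi}, Theorem 4.15) to identify that field with $K$.

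For the first stage, fix a tuple of formulas $\phi_\bullet$ and parameters $\bar b \in \mathcal N$ interpreting an infinite field $F$ in $\mathcal N$. For each integer $n \geq 1$, let $\theta_n(\bar y)$ be the single first-order sentence asserting that $\phi_\bullet(\bar y)$ defines a field (via the finitely many field axioms) having at least $n$ elements. Since $\bar b$ realizes every $\theta_n$ in $\mathcal N$, we have $\mathcal N \models \exists \bar y\, \theta_n(\bar y)$ for every $n$, and elementary equivalence promotes this to $\mathcal M \models \exists \bar y\, \theta_n(\bar y)$ for every $n$. Naively this does not furnish a single $\bar y \in \mathcal M$ witnessing all the $\theta_n$ simultaneously; but the ambient interpretation in $K$ does. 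Indeed, the $\mathcal M$-definable family $\{F_{\bar y}\}_{\bar y \models \theta_1}$ is a $K$-interpretable family of fields (by composition of interpretations), and since $\mathrm{ACF}$ eliminates $\exists^\infty$ there is a uniform $N$ such that any finite $F_{\bar y}$ in this family has size at most $N$. Hence the condition ``$F_{\bar y}$ is infinite'' is captured by the single sentence $\theta_{N+1}(\bar y)$, which by elementary equivalence already has a realization $\bar b' \in \mathcal M$. Thus $\mathcal M$ interprets an infinite field $F'$.

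For the second stage, composing the interpretation of $F'$ in $\mathcal M$ with the given interpretation of $\mathcal M$ in $K$ exhibits $F'$ as a field interpretable in $K$. By \cite{Poi}, Theorem 4.15, every infinite field interpretable in $K$ is $K$-definably isomorphic to $K$ itself; in particular $F' \cong K$ as pure fields. Since the paper's notion of interpretation requires only isomorphism of the interpreted structure, the same formulas and parameters that interpret $F'$ in $\mathcal M$ already interpret $K$ in $\mathcal M$.

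The main obstacle is the uniform-bound step in the first stage. Elementary equivalence alone produces a separate witness for each $\theta_n$ and gives no a priori coherence among them, so without further input one would only conclude that $\mathcal M$ interprets arbitrarily large \emph{finite} fields rather than a single infinite one. The interpretation in $K$ is precisely what allows us to collapse the infinite scheme $\{\theta_n : n \geq 1\}$ into the single formula $\theta_{N+1}$, after which the transfer is immediate. Once this is done, the composition of interpretations and the invocation of Poizat's classification complete the argument without further surprise.
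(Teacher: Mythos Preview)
Your proof is correct and follows essentially the same route as the paper's: both arguments use elimination of $\exists^\infty$ in $K$ to replace the infinite scheme ``$|F_{\bar y}| \geq n$'' by a single formula $\theta_{N+1}$, transfer the resulting existential sentence from $\mathcal N$ to $\mathcal M$ by elementary equivalence, and then invoke Poizat's classification of fields interpretable in $K$. The paper phrases the bound step as ``modulo the complete theory $T$, infiniteness is expressible by a single formula,'' but the content is identical to your argument.
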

	\begin{proof} Let $\mathcal L$ be the common language of $\mathcal M$ and $\mathcal N$, and let $T$ be their (complete by assumption) common $\mathcal L$-theory. Assume that $\mathcal N$ interprets an infinite field. Thus, working in $\mathcal N^{\textrm{eq}}$, there are a sort $S$, and formulas $\phi(x,a)$, $\psi(x,y,z,a)$, and $\theta(x,y,z,a)$ for $x,y,z\in S$ (where $a$ is a finite tuple of parameters from $\mathcal N$), such that the solution set of $\phi$ in $\mathcal N$ forms an infinite field when equipped with the formulas $\psi$ and $\theta$ (for the graphs of $+$ and $\cdot$, respectively). Let $w$ be a variable in the arity of $a$. 
		\begin{claim} Modulo the theory $T$, the assertion `$(\phi(x,w),\psi(x,y,z,w),\theta(x,y,z,w))$ defines an infinite field' is expressible by a single $\mathcal L$-formula in the arity of $w$.
		\end{claim}
		\begin{proof} The field axioms are clearly expressible by a single formula, so we need only show that `there are infinitely many $x$ satisfying $\phi(x,w)$' is expressible by a single formula. But this follows by the strong minimality of $K$. Indeed, note that for any $K$-interpretable family $\mathcal S$ of sets, there is a bound on the sizes of all finite members of $\mathcal S$. Since $\mathcal M$ is interpretable in $K$, it follows that the assertion $|\phi(x,w)|=\infty$ is equivalent in $\mathcal M$ (for all $w$ from $\mathcal M$) to the assertion $|\phi(x,w)|\geq N$ for some integer $N$ -- which can obviously be expressed by a single formula. Finally, since $\mathcal M$ is a model of $T$, and $T$ is complete, it follows that the same equivalence holds in all models of $T$. 
		\end{proof}
		
		Now let $\delta(w)$ be an $\mathcal L$-formula expressing the assertion in the claim. Then the sentence $\exists w\delta(w)$ is true in $\mathcal N$, as witnessed by $a$. Since $\mathcal M$ and $\mathcal N$ are elementarily equivalent, $\exists w\delta(w)$ is also true in $\mathcal M$. In particular it follows that $\mathcal M$ also interprets an infinite field. But it is well known that every infinite field interpreted in $K$ is $K$-definably isomorphic to $K$ (see Theorem 4.15 of \cite{Poi}) -- so in fact the field interpreted in $\mathcal M$ is isomorphic to $K$, as desired.
	\end{proof}

	Finally, we now end the paper by giving the main theorem:
	
	\begin{theorem}[Restricted Trichotomy in Characteristic Zero]\label{general main theorem} Let $K$ be an algebraically closed field of characteristic zero, and let $\mathcal M$ be a structure interpretable in $K$. If $\mathcal M$ is not 1-based, then $K$ is interpretable in $\mathcal M$.
	\end{theorem}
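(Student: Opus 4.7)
The plan is to reduce the theorem to Theorem \ref{main theorem} by two standard maneuvers: extracting a non-locally-modular strongly minimal structure via Fact \ref{1-based fact}, then transferring between arbitrary $K$ and $\mathbb C$ using the completeness of ACF$_0$.

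First I would reduce to the strongly minimal case. Since $\mathcal M$ is interpretable in $K$, it has finite Morley rank. Replacing $(K,\mathcal M)$ by a sufficiently saturated elementary extension if necessary (which preserves the hypothesis, as $1$-basedness is defined via saturated extensions, and is harmless for the conclusion, since an interpretation of the extended $K$ in the extended $\mathcal M$ restricts via the same formulas to an interpretation in the original structures), Fact \ref{1-based fact} produces a strongly minimal set $S\in\mathcal M^{\textrm{eq}}$ whose full induced structure $\mathcal S$ is not locally modular. Since every $\mathcal M$-definable set is $K$-definable, $\mathcal S$ is a strongly minimal, non-locally-modular structure interpretable in $K$.

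Next I would transfer to $\mathbb C$. Let $\phi(\bar x,\bar a)$ denote the finite collection of formulas with parameter tuple $\bar a\in K$ interpreting $\mathcal S$ in $K$. By Fact \ref{characterization of local modularity}, the properties ``$\phi(\bar x,\bar y)$ interprets a strongly minimal structure'' and ``that structure admits an excellent family of plane curves'' (equivalently, is not locally modular) are expressible as a first-order scheme in the variables $\bar y$ in the language of rings. Since ACF$_0$ is complete and $K\equiv\mathbb C$, there exists $\bar a'\in\mathbb C$ such that $\phi(\bar x,\bar a')$ interprets a strongly minimal, non-locally-modular structure $\mathcal S'$ in $\mathbb C$; moreover, being interpreted by the same formulas in elementarily equivalent fields, $\mathcal S'\equiv\mathcal S$.

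Finally I would apply the main theorem and close the loop. By Theorem \ref{main theorem}, $\mathcal S'$ interprets an algebraically closed field, in particular an infinite field. Applying Lemma \ref{enough to interpret infinite field} with $\mathcal S$ (interpretable in $K$) and its elementary equivalent $\mathcal S'$, we conclude that $K$ is interpretable in $\mathcal S$; since $\mathcal S$ is itself interpretable in $\mathcal M$, transitivity of interpretation yields $K$ interpretable in $\mathcal M$. The genuine obstacle is of course Theorem \ref{main theorem} itself; at this stage the only real subtlety is the uniform-in-parameters first-order expression of non-local modularity supplied by Fact \ref{characterization of local modularity}, which is precisely what makes the ACF$_0$ transfer go through cleanly.
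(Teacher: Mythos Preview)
Your approach is essentially identical to the paper's: reduce to a strongly minimal non-locally-modular structure via Fact \ref{1-based fact}, transfer to $\mathbb C$ using completeness of $\mathrm{ACF}_0$, apply Theorem \ref{main theorem}, and pull back via Lemma \ref{enough to interpret infinite field}.

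There is one genuine (though easily repaired) slip in your transfer step. You choose $\bar a'\in\mathbb C$ merely to satisfy the first-order scheme expressing ``strongly minimal and not locally modular,'' and then assert that $\mathcal S'\equiv\mathcal S$ because they are ``interpreted by the same formulas in elementarily equivalent fields.'' That inference is false as stated: structures interpreted by the same formulas $\phi(\bar x,\bar y)$ with \emph{different} parameter tuples in elementarily equivalent ambient models need not be elementarily equivalent (the interpreted theory depends on $\operatorname{tp}(\bar a)$, not just on the formulas). The fix is exactly what the paper does: choose $\bar a'\in\mathbb C$ realizing $\operatorname{tp}(\bar a/\emptyset)$, which is possible since $\mathbb C$ is an uncountable model of a strongly minimal theory and the type is over $\emptyset$. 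Then $(\mathbb C,\bar a')\equiv(K,\bar a)$, and elementary equivalence of the interpreted structures follows immediately. Your parenthetical in step one (``restricts via the same formulas'') has the same defect; the correct mechanism is precisely the one encoded in Lemma \ref{enough to interpret infinite field}, which you do invoke later.
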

	\begin{proof} By elimination of imaginaries in $K$ (\cite{PoiEli}), we may assume that each $\mathcal M$-definable set is constructible over $K$. Now we first show:
	 
		\begin{lemma}\label{assume strongly minimal} We may assume that $\mathcal M$ is strongly minimal.
		\end{lemma}
		\begin{proof}
			Let $K'$ be an algebraically closed extension of $K$ of strictly greater cardinality than each of $K$ and the language of $\mathcal M$ (so in particular $K'$ is uncountable). Let $\mathcal M'$ be the structure interpreted in $K'$ via the same formulas that interpret $\mathcal M$ in $K$. 
			
			Note that $K'$ is a saturated elementary extension of $K$, by the strong minimality and model completenss of algebraically closed fields. Using this and the above assumptions on $|K'|$, it is easy to conclude that $\mathcal M'$ is a saturated elementary extension of $\mathcal M$. In particular, $\mathcal M'$ is also not 1-based.
			
			Now assume that Theorem \ref{general main theorem} holds in the strongly minimal case. Then using Fact \ref{1-based fact} (and the fact that $\mathcal M'$ is saturated), it follows that $\mathcal M'$ interprets a non-locally modular strongly minimal structure, say $\mathcal N$. We thus conclude by assumption that $\mathcal N$ interprets an infinite field; but then so does $\mathcal M'$, since it interprets $\mathcal N$. By Lemma \ref{enough to interpret infinite field} (applied to $K$, $\mathcal M$, and $\mathcal M$'), this is enough.
		\end{proof}
			 
			Now assume that $\mathcal M$ is strongly minimal, and let $M$ be the universe of $\mathcal M$. By non-local modularity (see Remark \ref{local modularity when not saturated}), there is a rank 2 almost faithful family of plane curves definable in $\mathcal M$, say $\mathcal C$. Let $a$ be a tuple from $K$ over which $M$ and $\mathcal C$ are interpretable -- so the reduct $\mathcal M'=(M,\mathcal C)$ is clearly interpretable in $K$ over $a$. Now since the field $\mathbb C$ is saturated, and the theory $\textrm{ACF}_0$ is complete, there is a tuple $a'$ from $\mathbb C$ realizing the same type (in the language of rings) as $a$. In particular, using the same formulas that define $M$ and $\mathcal C$, it follows that there is a structure $\mathcal N'=(N,\mathcal D)$ which is elemenarily equivalent to $\mathcal M'$ and interpretable in $\mathbb C$ over $a'$. So $\mathcal N'$ is also strongly minimal, and $\mathcal D$ is a rank 2 almost faithful family of plane curves in $\mathcal N'$ -- which shows that $\mathcal N'$ is also not locally modular. Then after adding appropriate constants to the languages of $\mathcal N'$ and $\mathbb C$, we arrive at structures $\mathcal N$ and $\mathcal K$ satisfying Assumption \ref{M and K} (with the `$\mathcal M$' from Assumption \ref{M and K} replaced by $\mathcal N$). So we are in the situation of the paper up to this point, and thus we can apply Theorem \ref{main theorem}. We conclude that $\mathcal N$ interprets an infinite field, say $L$. But since $\mathcal N$ only expands $\mathcal N'$ by constants, it follows that $\mathcal N'$ also interprets $L$, which by Lemma \ref{enough to interpret infinite field} implies that $\mathcal M'$ interprets $K$. Finally, since $\mathcal M'$ is a reduct of $\mathcal M$, it follows that $\mathcal M$ also interprets $K$, as desired.
		\end{proof}
	
	\printbibliography
	\end{document}